\DeclareMathAlphabet{\mathscrbf}{OMS}{mdugm}{b}{n}
\definecolor{violet}{rgb}{0.0,0.2,0.7}
\definecolor{rouge2}{rgb}{0.8,0.0,0.2}
\renewcommand\subsection{\@startsection{subsection}{2}%
  \z@{.5\linespacing\@plus.7\linespacing}{-.5em}%
  {\normalfont\sffamily}}  
\renewcommand{\div}{\textup{div}}
\newcommand{\NS}{\textup{N}^1}
\renewcommand{\phi}{\varphi}
\newcommand{\map}{\dashrightarrow}
\newcommand{\wt}{\widetilde}
\newcommand{\wb}{\overline}
\renewcommand{\le}{\leqslant}
\renewcommand{\ge}{\geqslant}
\newcommand{\mult}{\textup{mult}}
\newcommand{\sA}{\mathscr{A}}
\newcommand{\sB}{\mathscr{B}}
\newcommand{\sC}{\mathscr{C}}
\newcommand{\sD}{\mathscr{D}}
\newcommand{\sE}{\mathscr{E}}
\newcommand{\sF}{\mathscr{F}}
\newcommand{\sG}{\mathscr{G}}
\newcommand{\sH}{\mathscr{H}}
\newcommand{\sI}{\mathscr{I}}
\newcommand{\sK}{\mathscr{K}}
\newcommand{\sL}{\mathscr{L}}
\newcommand{\sM}{\mathscr{M}}
\newcommand{\sN}{\mathscr{N}}
\newcommand{\sO}{\mathscr{O}}
\newcommand{\sQ}{\mathscr{Q}}
\newcommand{\sR}{\mathscr{R}}
\newtheorem{thm}{Theorem}[section]
\newtheorem{lemma}[thm]{Lemma}
\newtheorem{cor}[thm]{Corollary}
\newtheorem{prop}[thm]{Proposition}
\newtheorem*{thm*}{Theorem}
\theoremstyle{definition}
\newtheorem{defn}[thm]{Definition}
\newtheorem{defn-thm}[thm]{Definition-Theorem} 
\newtheorem{defn-lemma}[thm]{Definition-Lemma}
\theoremstyle{remark}
\newtheorem{claim}[thm]{Claim}
\newtheorem*{not-and-def}{Notation and definitions}
\newtheorem{assumption}[thm]{Assumption} 
\newtheorem{rem}[thm]{Remark}
\newtheorem{exmp}[thm]{Example}
\newtheorem{setup}[thm]{Setup}
\numberwithin{equation}{section}
\def\factor#1.#2.{\left. \raise 2pt\hbox{$#1$} \right/\hskip -2pt\raise -2pt\hbox{$#2$}}
\begin{document} 

\title[Projectively flat foliations]{Projectively flat foliations}

\author{St\'ephane \textsc{Druel}}

\address{St\'ephane Druel: CNRS, Université Claude Bernard Lyon 1, UMR 5208, Institut Camille Jordan, F-69622 Villeurbanne, France} 

\email{stephane.druel@math.cnrs.fr}

\subjclass[2010]{37F75, 32Q30, 32Q26, 14E20, 14E30, 53B10}

\begin{abstract}
We describe the structure of regular codimension $1$ foliations with numerically projectively flat tangent bundle on complex projective manifolds of dimension at least $4$. Along the way, we prove that either the normal bundle of a regular codimension $1$ foliation is pseudo-effective, or its conormal bundle is nef.
\end{abstract}

\maketitle

{\small\tableofcontents}

\section{Introduction}

Let $X$ be a smooth projective algebraic variety over the field of complex numbers, of dimension $n \ge 2$. If $T_X$ is numerically flat, then $X$ is a finite \'etale quotient of an abelian variety as a classical consequence of Yau's theorem on the existence of a K\"ahler-Einstein metric. In \cite{jahnke_radloff_13}, Jahnke and Radloff proved that if the normalized tangent bundle $\textup{S}^nT_X\otimes\sO_X(K_X)$ is numerically flat, then $K_X \equiv 0$, and hence $T_X$ is numerically flat. Thus $X$ is again a finite \'etale quotient of an abelian variety. Let $\sE$ be a vector bundle of rank $r \ge 1$ on $X$. Recall from \cite[Theorem 1.1]{jahnke_radloff_13} that the normalized vector bundle $\textup{S}^n\sE\otimes\det(\sE^*)$ is numerically flat if and only if $\sE$ is semistable with respect to some ample divisor $H$ on $X$ and equality holds in the Bogomolov-Gieseker inequality,
$$c_2(\sE)\cdot H^{n-2}=\frac{r-1}{2r}c_1(\sE)^2\cdot H^{n-2}.$$
The aim of this paper is to generalize the theorem of Jahnke and Radloff to codimension $1$ regular foliations.

\begin{thm}\label{thm_intro:main}
Let $X$ be a complex projective manifold of dimension $n\ge 4$, and let $\sF \subset T_X$ be a regular codimension $1$ foliation. Suppose that the normalized vector bundle $\textup{S}^{n-1}\sF\otimes \det(\sF^*)$ is numerically flat. Then one of the following holds.
\begin{enumerate}
\item There exists a $\mathbb{P}^1$-bundle structure $\phi\colon X \to Y$ onto a finite \'etale quotient of an abelian variety, and $\sF$ induces a flat Ehresmann connection on $\phi$.
\item There exists an abelian variety $A$ as well as a finite \'etale cover $\gamma\colon A \to X$ such that 
$\gamma^{-1}\sF$ is a linear foliation. 
\item There exists an abelian scheme $f\colon B \to C$ onto a smooth complete curve of genus at least $2$ as well as a finite \'etale cover $\gamma\colon B \to X$ such that $\gamma^{-1}\sF$ is induced by $f$.
\item The divisor $K_X$ is ample and $\kappa(K_\sF)=\dim X$. Moreover, the vector bundles $\sN^*$ and $\Omega_X^1$ are nef.
\end{enumerate}
\end{thm}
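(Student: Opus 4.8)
The plan is to translate the hypothesis into positivity and then run a case analysis driven by the dichotomy announced in the abstract. By the criterion recalled in the introduction, numerical projective flatness of $\textup{S}^{n-1}\sF\otimes\det(\sF^*)$ amounts to $\sF$ being semistable with respect to some ample $H$ together with the equality $c_2(\sF)\cdot H^{n-2}=\tfrac{n-2}{2(n-1)}\,c_1(\sF)^2\cdot H^{n-2}$; equivalently, the bundle $\sF^*\otimes\sF$ is numerically flat. By the structure theory of numerically flat bundles, $\sF^*\otimes\sF$ is then an iterated extension of Hermitian flat bundles and is given by a representation of $\pi_1(X)$, so $\sF$ carries a flat projective structure along its leaves. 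I would record from the start the numerical consequences of $0\to\sF\to T_X\to\sN\to 0$, namely $c_1(T_X)=c_1(\sF)+c_1(\sN)$, $c_2(T_X)=c_2(\sF)+c_1(\sF)\cdot c_1(\sN)$, and $\sN\equiv K_\sF-K_X$, as well as the fact that a numerically projectively flat bundle is nef if and only if its determinant is nef.

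Next I would invoke the dichotomy proved earlier: either $\sN$ is pseudo-effective or $\sN^*$ is nef. Assume first that $\sN$ is pseudo-effective but not numerically trivial (the numerically trivial case is subsumed into the next branch). Here the positivity carried by $\sN$, transverse to the leaves, is to be converted into uniruledness of $X$ and a covering family of rational curves; using semistability of $\sF$ one shows these minimal rational curves are transverse to the foliation, so they organize into a $\mathbb{P}^1$-bundle structure $\phi\colon X\to Y$ for which $\sF$ is an Ehresmann connection. Flatness of the projective structure on $\sF$ forces this connection to be flat and transports the structure to the base, making $T_Y$ numerically flat; by Yau's theorem $Y$ is then a finite \'etale quotient of an abelian variety, which is conclusion (1).

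Now suppose $\sN^*$ is nef. The first point is that $K_\sF=\det(\sF^*)$, and hence $\sF^*$, is nef: otherwise the induced negativity, incompatible with $\sN^*$ nef outside the previous branch, would again produce rational curves and return us to the uniruled case. Granting this, the extension $0\to\sN^*\to\Omega_X^1\to\sF^*\to 0$ shows $\Omega_X^1$ is nef, and $K_X=K_\sF+\sN^*$ is nef. The classification is then governed by $K_X$ and $\kappa(K_\sF)$: if $K_X\equiv 0$ then $T_X$ is numerically flat, so $X$ is \'etale covered by an abelian variety on which $\sF$ becomes linear, giving (2); if $K_X\not\equiv0$ while $\kappa(K_\sF)<\dim X$, the Iitaka analysis of $K_\sF$ produces, after a finite \'etale cover, a fibration onto a smooth curve whose relative tangent sheaf is $\sF$, which one identifies with an abelian scheme $f\colon B\to C$, the positivity of $\sN^*\equiv f^*K_C$ and the exclusion of (1)--(2) forcing $g(C)\ge2$ and giving (3); finally if $\kappa(K_\sF)=\dim X$, then $K_\sF$ and hence $K_X$ are big and nef, and I would upgrade $K_X$ to ample by excluding $K_X$-trivial subvarieties using nefness of $\Omega_X^1$ and regularity of $\sF$, giving (4).

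The main obstacle is the passage from numerical data to genuine geometric structures in (1) and (3): producing an actual $\mathbb{P}^1$-bundle carrying a flat Ehresmann connection, respectively an abelian scheme over a curve, and identifying the bases precisely. This requires marrying the flat projective structure coming from numerical projective flatness with algebraic integrability of $\sF$ and with the uniformization of the base through its own tangent or cotangent positivity; controlling the finite \'etale covers and pinning down the bases as quotients of abelian varieties (resp. verifying $g(C)\ge2$) is where the real difficulty lies. By comparison the ampleness upgrade in (4) is soft, though it still relies on regularity of $\sF$ to rule out leaves or subvarieties along which $K_X$ degenerates.
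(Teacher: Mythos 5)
Your top-level skeleton (reduce to the dichotomy ``$\sN$ pseudo-effective or $\sN^*$ nef'' and sort the four outcomes accordingly) matches the shape of the paper's conclusion, but the two implications that carry all the weight are asserted rather than proved, and the mechanisms you propose for them would not work. First, the claim that $\sN$ pseudo-effective and not numerically trivial ``converts into uniruledness of $X$'' and then into a $\mathbb{P}^1$-bundle is not a local implication at all: pseudo-effectivity of $\sN$ produces no rational curves. The paper's case division is by nefness of $K_X$, not of $\sN$. When $K_X$ is not nef, the $\mathbb{P}^1$-bundle structure is extracted from a long analysis of elementary Fano--Mori contractions (ruling out conic bundles and blow-ups by intersecting with $c_1(\sN)$ on sections of the exceptional $\mathbb{P}^1$-bundle, Theorem \ref{thm:suspension_complex_torus}); this is where the hypothesis $n\ge 4$ enters. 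When $K_X$ \emph{is} nef and $\sN$ is pseudo-effective, the correct conclusion is not uniruledness but $K_\sF\equiv 0$ (Theorem \ref{thm:case_normal_pseff_non_uniruled}), obtained by combining a Miyaoka--Yau-type inequality with $c_1(\sN)^2\equiv 0$ to kill intersection numbers, and then proving that the nef reduction map of $K_\sF$ is an abelian scheme up to finite \'etale cover via Koll\'ar's generically-large-fundamental-group machinery (Sections \ref{section:nef_reduction}--\ref{section:pseff_normal}). So the statement you want in that branch is true only as a corollary of the whole theorem; it cannot serve as a step.

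Second, in the $\sN^*$-nef branch your ``Iitaka analysis of $K_\sF$'' has nothing to stand on: you never establish $\kappa(K_X)=\nu(K_X)$, which is a substantial intermediate result of the paper (Proposition \ref{prop:abundance}, proved via the Shafarevich map and reduction to the case $\sF\cong\sL^{\oplus n-1}$), and without semiampleness of $K_X$ there is no fibration to analyze; note also that $\kappa(K_\sF)\ge 0$ is not known a priori. The actual separation of cases (2), (3), (4) uses H\"oring's structure theorem for nef cotangent bundles to realize $X$ as an abelian scheme over a base with ample canonical class, and then the Viehweg--Zuo Arakelov inequality together with a slope computation against $K_Y$ to force the base to be a point, a curve of genus at least $2$, or all of $X$ (Theorem \ref{thm:case_conormal_nef}). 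The ampleness upgrade in case (4) is, as you say, comparatively soft, and your preliminary reductions (nefness of $K_\sF$ via Bogomolov--McQuillan, nefness of $\Omega^1_X$ from the conormal sequence) are correct; but the two gaps above are exactly the content of the paper, so the proposal as written is not a proof.
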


Notice that no example of foliations satisfying the conclusion of Theorem \ref{thm_intro:main}~(4) is known to the author. In addition, if we relax the condition on the dimension of $X$ then the conclusion of Theorem \ref{thm_intro:main} may be false (see \cite{brunella_aens}). 

\medskip

The proof of Theorem \ref{thm_intro:main} makes use of the following result, which might be of independent interest.

\begin{thm}\label{thm_intro:divisor_zero_square}
Let $X$ be a complex projective manifold of dimension $n \ge 1$, and let $\sF \subset T_X$ be a regular codimension $1$ foliation with normal bundle $\sN$. Then, either $\sN$ is pseudo-effective or $\sN^*$ is nef.
\end{thm}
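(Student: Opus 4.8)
The plan is to reduce the statement to the theory of foliations on surfaces, the decisive point being a Bott-type vanishing that forces $\sN$ to behave like a class of numerical dimension at most one. First I would record that for a regular codimension $1$ foliation the normal bundle satisfies $c_1(\sN)^2=0$ in $H^4(X,\mathbb{C})$: this is Bott's vanishing theorem (here $\codim\sF=1$), and it gives $\sN^2\cdot\beta=0$ for every class $\beta$, so that $\sN^2\cdot A^{n-2}=0$ for every ample $A$ and $(\sN|_S)^2=0$ on every complete intersection surface $S$. A second, elementary input is that the Bott partial connection along the leaves restricts, on any curve $C$ tangent to $\sF$, to an honest holomorphic connection on $\sN|_C$; hence $\sN\cdot C=0$ for every $\sF$-tangent curve, and the assertion that $\sN^*$ be nef only has to be verified on curves transverse to $\sF$.

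Next I would isolate the sign that selects the alternative. Since $\sN^2\cdot A^{n-2}=0$, the class $\sN$ lies on the null cone of the Hodge-index form $q_A(D)=D^2\cdot A^{n-2}$, which has signature $(1,\rho-1)$ on $\NS(X)_{\mathbb{R}}$. A one-line computation shows that $\sN\cdot A^{n-1}=0$ together with $q_A(\sN)=0$ forces $\sN\equiv 0$ (orthogonality to $A$ puts $\sN$ in the negative definite part, where $q_A$ vanishes only at $0$); consequently, as $A$ ranges over the connected ample cone, the number $\sN\cdot A^{n-1}$ has constant sign. If it vanishes identically then $\sN\equiv 0$ is pseudo-effective and we are done. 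Otherwise we are in exactly one of two cases: $\sN\cdot A^{n-1}>0$ for all ample $A$, or $\sN\cdot A^{n-1}<0$ for all ample $A$; in the first I expect $\sN$ to be pseudo-effective and in the second $\sN^*$ to be nef. To exploit this I pass to a complete intersection surface $S$ and use the induced foliation $\sG:=\sF|_S$, whose conormal bundle agrees numerically with $\sN^*|_S$ (the tangency of $S$ with $\sF$ is supported in codimension $2$, hence divisorially trivial). By Bott vanishing $(\sN|_S)^2=0$, while $\sN|_S\cdot A|_S=\sN\cdot A^{n-1}$ keeps the sign above, so the theory of foliations on projective surfaces (Brunella, McQuillan) applies to $\sG$: a square-zero normal bundle of positive degree is pseudo-effective, and one of negative degree has nef conormal bundle. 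In the negative case this yields $\sN^*\cdot C=N^*_\sG\cdot C\ge 0$ for every transverse curve $C$ lying on such a surface; since any given curve can be placed on a surface $S$ of the required type (cut out by general members of a very ample $|A|$ through $C$, along which $\sG$ is still an honest foliation), we conclude that $\sN^*$ is nef. In the positive case we obtain that $\sN|_S$ is pseudo-effective for the general $S$.

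The step I expect to be the main obstacle is the globalization in the positive case: deducing pseudo-effectivity of $\sN$ on $X$ from that of its restriction to a general complete intersection surface. Unlike nefness, pseudo-effectivity cannot be tested curve by curve, so the curve-in-a-surface trick of the negative case is unavailable. The natural tool is a restriction theorem for pseudo-effectivity, which is delicate in general but ought to be available here precisely because $\sN^2=0$ makes the numerical dimension of $\sN$ at most one; equivalently, via the Boucksom--Demailly--P\u{a}un--Peternell duality one wants to check $\sN\cdot\gamma\ge 0$ on the movable cone, and to show that the movable classes witnessing non-pseudo-effectivity are represented by curves sweeping out such surfaces. A secondary difficulty, already on the surface, is to be sure that the square-zero, negative-degree hypothesis genuinely rules out a negative part of $\sN^*$, i.e.\ that no transverse curve $C$ has $\sN\cdot C>0$; this is exactly the foliated Camacho--Sad/index content packaged in the surface classification, and it is what the reduction ultimately rests on.
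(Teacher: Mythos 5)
There is a genuine gap, and it is the one you flagged yourself: your case division by the sign of $\sN\cdot A^{n-1}$ commits you, in the positive case, to actually \emph{proving} that $\sN$ is pseudo-effective, and restriction to general complete intersection surfaces cannot deliver this (pseudo-effectivity is not a property that can be checked, or globalized from, surface sections). The paper avoids the problem entirely by proving the dichotomy in its contrapositive form: assume $\sN$ is \emph{not} pseudo-effective and deduce that $\sN^*$ is nef. The missing ingredient is the theorem of Boucksom--Demailly--P\u{a}un--Peternell characterizing the pseudo-effective cone: non-pseudo-effectivity of $\sN$ produces a birational morphism $\beta\colon Z\to X$ and very ample divisors $H_1,\dots,H_{n-1}$ on $Z$ with $\beta^*c_1(\sN)\cdot H_1\cdots H_{n-1}<0$. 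One then runs your ``negative case'' argument on $Z$ rather than on $X$: through any curve $C\subset Z$ one cuts an irreducible surface $S$ by general members of $|\sI_C(m_iH_i)|$ (a Bertini-type statement the paper proves separately, since $S$ need not be smooth and one must pass to a resolution $S_1$), obtains that $-\beta^*c_1(\sN)|_{S_1}$ is pseudo-effective from the elementary square-zero lemma, and concludes nefness on $S_1$, hence $-\beta^*c_1(\sN)\cdot C\ge 0$ for every $C$ and so $\sN^*$ nef. Your framing, which only sees classes of the form $A^{n-1}$ for $A$ ample on $X$ itself, cannot detect this negativity and therefore cannot close either horn of the dichotomy.

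A secondary, but still substantive, imprecision is the surface-level input you invoke. What makes the pseudo-effective, square-zero divisor $-c_1(\sN)|_{S_1}$ nef is not Camacho--Sad or any index theorem for invariant curves (those give upper bounds on $\sN^*\cdot C$ for transverse curves, not lower bounds); it is the Bogomolov--Sommese vanishing theorem applied to the nonzero composition $\sO(-D)|_{S_1}\to \Omega^1_X|_{S_1}\to\Omega^1_{S_1}$ coming from the twisted $1$-form $\omega\in H^0(X,\Omega^1_X\otimes\sN)$ defining $\sF$, which forces $\kappa(\sO(-D)|_{S_1})\le 1$; combined with $D^2=0$ and pseudo-effectivity, the Zariski decomposition then has vanishing negative part, giving nefness. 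Note that this argument never needs an honest induced foliation on $S_1$ (only a nonzero map of the line bundle into $\Omega^1_{S_1}$), which sidesteps your worries about tangency divisors. Your preliminary reductions (Bott vanishing $c_1(\sN)^2=0$, the Hodge-index argument showing $\sN\equiv 0$ when $\sN\cdot A^{n-1}=0$, and triviality of $\sN$ on leafwise curves) are all correct but do not substitute for the two missing tools above.
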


In fact, a more general statement is true (see Theorem \ref{thm:divisor_zero_square}).

\subsection*{Previous results}
Greb, Kebekus, and Peternell generalized the theorem of Jahnke and Radloff to projective varieties with klt singularities in \cite{GKP_proj_flat_JEP}. More precisely, they proved that torus quotients are the only klt varieties with semistable tangent sheaf and extremal Chern classes. In \cite{druel_proj_flat}, the structure of projective log smooth pairs with numerically flat normalized logarithmic tangent bundle is described. Passing to an appropriate finite cover and up to isomorphism, these are projective spaces and log smooth pairs with numerically flat logarithmic tangent bundles blown-up at finitely many points away from the boundary. In addition, it has been shown in \cite{druel_lo_bianco} that log smooth pairs with numerically flat logarithmic tangent bundle are toric fiber bundles over finite \'etale quotients of abelian varieties.

\subsection*{Structure of the paper} 
Section \ref{section:notation} gathers notation, known results and global conventions that will be used throughout the paper. In Section \ref{section:preparation}, we provide technical tools for the proof of our main results. Section \ref{section:normal_bundle} is mostly taken up by the proof of Theorem \ref{thm_intro:divisor_zero_square}. The proof of Theorem \ref{thm_intro:main} is long and therefore subdivided into numerous steps: Sections \ref{section:non_minimal} to \ref{section:pseff_normal} prepare for it. In section \ref{section:non_minimal}, we describe regular codimension $1$ foliations with numerically projectively flat tangent bundle on non-minimal complex projective manifolds.
Next, we prove abundance for $K_X$ in Section \ref{section:abundance} following the strategy employed in \cite{jahnke_radloff_13}: we use a Shafarevich map construction to prove that it reduces to the special case where $\sF \cong \sL^{\oplus n}$ for some line bundle $\sL$.
In Section \ref{section:nef_conormal}, we address regular codimension $1$ foliations with numerically projectively flat tangent bundle and nef conormal bundle on minimal varieties.
Section \ref{section:pseff_normal} describes regular codimension $1$ foliations with numerically projectively flat tangent bundle and pseudo-effective normal bundle still on minimal varieties. To this end, we show in Section \ref{section:nef_reduction} that the nef reduction map for $K_\sF$ is an abelian scheme up to a finite \'etale cover. With these preparations at hand, the proof of Theorem \ref{thm_intro:main} which we give in Section \ref{section:proof} becomes short.

\subsection*{Acknowledgements} 

This article owns much to the ideas and work of Marco Brunella. We would also like to thank Michel Brion, Beno\^{i}t Claudon, and Adrien Dubouloz for their help and many interesting discussions.

\bigskip

\section{Notation, conventions, and used facts}\label{section:notation}

\subsection{Global conventions} Throughout the paper, all varieties are assumed to be defined over the field of complex numbers. We will freely switch between the algebraic and analytic context. 

\subsection{Projective space bundles}
If $\sE$ is a locally free sheaf of finite rank on a variety $X$, 
we denote by $\mathbb{P}_X(\sE)$ the variety $\textup{Proj}_X(\textup{S}^\bullet\sE)$.

\subsection{Stability}
The word \textit{stable} will always mean \textit{slope-stable with respect to a
given ample divisor}. Ditto for \textit{semistable} and \textit{polystable}.
We refer to \cite[Definition 1.2.12]{HuyLehn} for their precise definitions.

\subsection{Numerically flat vector bundles} For the reader's convenience, we recall the definition of numerically flat vector bundles following \cite[Definition 1.17]{demailly_peternell_schneider94}.

\begin{defn}
A vector bundle $\sE$ of rank $r\ge 1$ on a smooth projective variety is called \textit{numerically flat} if $\sE$ and $\sE^*$ are nef vector bundes.
\end{defn}

\begin{rem}\label{rem:num_flat_zero_first_Chern_class}
Let $X$ be a smooth projective variety and let $\sE$ be a vector bundle of rank $r \ge 1$ on $X$ with $c_1(\sE) \equiv 0$. Then 
$\sE$ is numerically flat if and only if $\sE$ is nef.
\end{rem}

\begin{thm}\label{thm:numerically_flat}
Let $X$ be a smooth projective variety and let $\sE$ be a vector bundle of rank $r \ge 1$ on $X$. Let $H$ be an ample divisor on $X$. Then the following conditions are equivalent.
\begin{enumerate}
\item The vector bundle $\sE$ is numerically flat.
\item The vector bundle $\sE$ is flat and semistable with respect to $H$.
\item The vector bundle $\sE$ admits a filtration 
$$\{0\}=\sE_0 \subset \sE_1 \subset \cdots \subset \sE_m=\sE$$
by vector subbundles such that the quotients $\sQ_k:=\sE_k/\sE_{k-1}$ are hermitian flat.
\end{enumerate}
\end{thm}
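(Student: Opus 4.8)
The plan is to establish the cycle $(3)\Rightarrow(1)\Rightarrow(3)$ and to fold in $(2)$, the analytic heart being the Kobayashi--Hitchin correspondence. The implication $(3)\Rightarrow(1)$ is the elementary one: a hermitian flat bundle carries a flat unitary metric, so it and its dual are nef with vanishing first Chern class, hence numerically flat; and in a short exact sequence $0\to\sA\to\sB\to\sC\to 0$ with $\sA$ and $\sC$ nef the bundle $\sB$ is again nef, so numerical flatness is stable under extensions. Running up the filtration then shows $\sE$ is numerically flat.

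For $(1)\Rightarrow(3)$ I would first extract the numerical consequences of nefness. Since $\sE$ and $\sE^*$ are both nef, $\det\sE$ and its inverse are nef, so $c_1(\sE)\equiv 0$. Restricting to a curve $C$ cut out by $n-1$ general members of $|H|$ and using that a quotient bundle of a nef bundle on a curve has nonnegative degree, every torsion-free quotient $\sQ$ of $\sE$ satisfies $\mu_H(\sQ)\ge 0=\mu_H(\sE)$, so $\sE$ is $H$-semistable. The crucial point is the vanishing of the discriminant: by the Fulton--Lazarsfeld positivity of Schur polynomials for the nef bundle $\sE$, both $c_2(\sE)\cdot H^{n-2}\ge 0$ and $\big(c_1(\sE)^2-c_2(\sE)\big)\cdot H^{n-2}\ge 0$; as $c_1(\sE)\equiv 0$ this forces $c_2(\sE)\cdot H^{n-2}=0$, \ie $\Delta(\sE)\cdot H^{n-2}=0$ for the discriminant $\Delta(\sE)=2rc_2(\sE)-(r-1)c_1(\sE)^2$.

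With these inputs in hand I would invoke the Kobayashi--Hitchin correspondence in the form of Bando--Siu: an $H$-semistable bundle with $\mu_H=0$ and $\Delta\cdot H^{n-2}=0$ is an iterated extension of $H$-stable bundles $\sG_k$ of slope $0$, and the discriminant cross-term identity together with the Bogomolov inequality $\Delta(\sG_k)\cdot H^{n-2}\ge 0$ forces $\Delta(\sG_k)\cdot H^{n-2}=0$ for each $k$. A stable bundle with $\mu_H=0$ and vanishing discriminant carries a projectively flat Hermite--Einstein metric, which combined with $c_1\equiv 0$ is a genuine flat unitary metric; hence each $\sG_k$ is hermitian flat and $\sE$ is an iterated extension of hermitian flat bundles, giving $(3)$. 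From $(3)$ all rational Chern classes of $\sE$ vanish, since the total Chern class is multiplicative in short exact sequences and trivial on the hermitian flat pieces; Simpson's correspondence then exhibits $\sE$ as the holomorphic bundle of a flat connection, and with the semistability already established this yields $(2)$. Finally $(2)\Rightarrow(1)$ runs the same machine in reverse: a flat bundle has vanishing real Chern classes, so $\Delta\cdot H^{n-2}=0$, and the semistability hypothesis of $(2)$ lets Bando--Siu produce again a filtration with hermitian flat graded pieces, whence $\sE$ is numerically flat by $(3)\Rightarrow(1)$.

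I expect the main obstacle to be purely analytic: everything hinges on the Uhlenbeck--Yau existence theorem for Hermite--Einstein metrics and its extension to semistable sheaves by Bando--Siu, and on the fact that a stable bundle of slope $0$ with vanishing discriminant is not merely projectively flat but, once $c_1\equiv 0$ is accounted for, honestly hermitian flat. A secondary subtlety is the passage from ``filtration with hermitian flat graded pieces'' to an actual flat connection on $\sE$ in $(2)$: the graded pieces are unitary but the extensions need not be, and justifying that the holomorphic extension class is realized by a flat connection requires the Hodge theory of the relevant unitary local systems, or equivalently Simpson's nonabelian Hodge correspondence for semistable bundles with vanishing rational Chern classes.
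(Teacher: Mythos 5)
Your proposal is correct and is, in substance, an unpacking of exactly the two results the paper cites for this theorem: the equivalence of (1) and (3) is \cite[Theorem 1.18]{demailly_peternell_schneider94} (proved there precisely via $c_1\equiv 0$, semistability, Fulton--Lazarsfeld positivity forcing $c_2\cdot H^{n-2}=0$, and the Uhlenbeck--Yau/Bando--Siu machinery), while the passage to the flat structure in (2) is Simpson's correspondence \cite{simpson_higgs_flat}. The analytic subtleties you flag (local freeness of the Jordan--H\"older graded pieces and the non-unitary extension classes) are exactly the points those references handle, so the argument is sound and matches the paper's intended route.
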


\begin{proof}
This follows easily from \cite[Theorem 1.18]{demailly_peternell_schneider94} and \cite{simpson_higgs_flat} together (see \cite[Theorem 2.9]{druel_proj_flat}).
\end{proof}

\begin{rem}
Setting and notation as in Theorem \ref{thm:numerically_flat}. If $\sE$ is numerically flat, then  it is semistable with respect to any ample divisor.
\end{rem}

\begin{lemma}[{\cite[Corollary 1.19]{demailly_peternell_schneider94}}]
Let $\sE$ be a vector bundle on a smooth projective variety. If $\sE$ is numerically flat, then $c_i(\sE)\equiv 0$ for any integer $i \ge 1$. 
\end{lemma}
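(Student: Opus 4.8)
The plan is to reduce to the case of hermitian flat bundles using the structural filtration already established, and then to propagate the vanishing of Chern classes through the filtration via the Whitney sum formula. First I would invoke Theorem \ref{thm:numerically_flat}, condition (3): since $\sE$ is numerically flat, it admits a filtration
$$\{0\} = \sE_0 \subset \sE_1 \subset \cdots \subset \sE_m = \sE$$
by vector subbundles whose successive quotients $\sQ_k := \sE_k/\sE_{k-1}$ are hermitian flat. Because the $\sE_k$ are subbundles (not merely subsheaves), each step of the filtration gives an honest short exact sequence of vector bundles, which is exactly what is needed to apply Whitney's formula.

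Next I would establish the key input: a hermitian flat bundle $\sQ$ has vanishing Chern classes. Indeed, such a $\sQ$ is given by a unitary representation of the fundamental group, hence carries a flat hermitian connection. By Chern-Weil theory the Chern forms of $\sQ$ are universal polynomial expressions in the curvature of any hermitian connection; choosing the flat connection, whose curvature vanishes identically, forces every Chern form to vanish. Consequently $c_i(\sQ) \equiv 0$ for all $i \ge 1$, or equivalently the total Chern class satisfies $c(\sQ) = 1$. In particular this applies to each quotient $\sQ_k$.

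Finally I would combine these two ingredients. Each short exact sequence
$$0 \to \sE_{k-1} \to \sE_k \to \sQ_k \to 0$$
yields, by the Whitney sum formula, the identity $c(\sE_k) = c(\sE_{k-1}) \cdot c(\sQ_k)$ in the Chow ring modulo numerical equivalence. Proceeding by induction on $k$ and using $c(\sQ_k) = 1$ at each stage, I obtain
$$c(\sE) = \prod_{k=1}^m c(\sQ_k) = 1,$$
which is precisely the assertion $c_i(\sE) \equiv 0$ for every integer $i \ge 1$. The only substantive point is the vanishing of Chern classes for hermitian flat bundles; everything else is formal bookkeeping with the filtration and Whitney's formula, so this verification is the main obstacle, and it is resolved cleanly by the flatness of the unitary connection.
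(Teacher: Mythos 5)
Your argument is correct and is essentially the proof given in the cited reference \cite[Corollary 1.19]{demailly_peternell_schneider94} (the paper itself only cites this result without proof): one uses the filtration with hermitian flat quotients from Theorem \ref{thm:numerically_flat}\,(3), Chern--Weil vanishing for hermitian flat bundles, and the Whitney formula. No issues.
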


\subsection{Projectively flat and numerically projectively flat vector bundles} Next, we recall the definition of projectively flat vector bundles.

\begin{defn}\label{def:proj_flat}
A vector bundle $\sE$ of rank $r\ge 1$ on a smooth projective variety $X$ is called \textit{projectively flat} if $\mathbb{P}(\sE)$ comes from a  
representation $\pi_1(X) \to \textup{PGL}(r,\mathbb{C})$.
\end{defn}

\begin{lemma}[{\cite[Proof of Proposition 1.1 (2)]{jahnke_radloff_13}}]\label{lemma:chern_classes}
Let $\sE$ be a vector bundle on a smooth projective variety. If $\sE$ is projectively flat, then $c_i(\sE)\equiv\frac{1}{r^i}\binom{r}{i}c_1(\sE)^i$ for any integer $i \ge 1$. 
\end{lemma}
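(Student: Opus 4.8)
The plan is to translate projective flatness into the existence of a connection on $\sE$ with scalar curvature, and then to read off the Chern classes by Chern--Weil theory.

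First I would produce a smooth connection $\nabla$ on $\sE$ whose curvature $F_\nabla$ has the form $\alpha\otimes\mathrm{Id}_\sE$ for a complex $2$-form $\alpha$ on $X$. The representation $\pi_1(X)\to\textup{PGL}(r,\mathbb{C})$ realizes $\mathbb{P}(\sE)$ as a flat $\mathbb{P}^{r-1}$-bundle; equivalently, the trace-free endomorphism bundle $\textup{End}_0(\sE)$—associated to the adjoint representation of $\textup{PGL}(r,\mathbb{C})$—carries a flat connection $\nabla^0$. Choose a cover of $X$ by contractible open sets. Over each such $U$ the $\textup{PGL}(r,\mathbb{C})$-local system trivializes, hence so does $\mathbb{P}(\sE)|_U$; lifting such a trivialization to $\sE|_U$ (the obstruction lies in $H^2(U,\mathbb{C}^*)=0$) equips it with a flat connection $\nabla_U$ whose induced connection on $\textup{End}_0(\sE)|_U$ is $\nabla^0$. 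On overlaps the lifts differ only by a scalar, so $\nabla_U$ and $\nabla_V$ differ by a scalar-valued $1$-form; therefore a partition-of-unity average $\nabla=\sum_\nu\chi_\nu\nabla_\nu$ is a global connection on $\sE$, and since forming the induced connection on $\textup{End}_0(\sE)$ is affine in $\nabla$ with the same coefficients $\chi_\nu$, this $\nabla$ still induces $\nabla^0$ there. Now the curvature of the induced connection on $\textup{End}_0(\sE)$ is $[F_\nabla,-]$; as $\nabla^0$ is flat, $[F_\nabla,-]=0$, whence $F_\nabla=\alpha\otimes\mathrm{Id}_\sE$ for a $2$-form $\alpha$, which is closed by the Bianchi identity.

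Once such a $\nabla$ is at hand, I would compute via Chern--Weil: the total Chern class of $\sE$ is represented by
$$\det\!\Big(\mathrm{Id}_\sE+\tfrac{i}{2\pi}F_\nabla\Big)=\Big(1+\tfrac{i}{2\pi}\alpha\Big)^{r},$$
so that $c_i(\sE)=\binom{r}{i}\big(\tfrac{i}{2\pi}\alpha\big)^{i}$ in $H^{2i}(X,\mathbb{C})$ for every $i\ge 1$. Taking $i=1$ gives $\tfrac{i}{2\pi}\alpha=\tfrac{1}{r}\,c_1(\sE)$, and substituting back yields $c_i(\sE)=\frac{1}{r^{i}}\binom{r}{i}c_1(\sE)^{i}$ in $H^{2i}(X,\mathbb{C})$. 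As both sides are rational cohomology classes, the identity already holds in $H^{2i}(X,\mathbb{Q})$, and since cohomological equivalence implies numerical equivalence it holds up to $\equiv$, as required.

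The heart of the matter is this first step—manufacturing from the flat projective structure an honest connection on $\sE$ with scalar curvature; this is the standard equivalence between flat $\mathbb{P}^{r-1}$-bundles and projectively flat connections. The only delicate point is that the partition-of-unity patching introduces no trace-free curvature, and this is exactly controlled by demanding that the induced connection on $\textup{End}_0(\sE)$ remain the flat connection $\nabla^0$. Alternatively, one may argue purely cohomologically: by the Grothendieck relation the desired identity is equivalent to the vanishing $\big(\xi-\tfrac{1}{r}\pi^*c_1(\sE)\big)^{r}=0$ in $H^*(\mathbb{P}(\sE),\mathbb{Q})$, where $\pi\colon\mathbb{P}(\sE)\to X$ and $\xi=c_1(\sO_{\mathbb{P}(\sE)}(1))$, the class $\xi-\tfrac{1}{r}\pi^*c_1(\sE)$ restricting to the hyperplane class on each fibre; this vanishing is what the flat $\mathbb{P}^{r-1}$-bundle structure supplies.
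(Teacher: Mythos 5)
Your argument is correct and is essentially the proof the paper points to: the cited proof of \cite[Proposition 1.1 (2)]{jahnke_radloff_13} likewise equips a projectively flat bundle with a connection whose curvature is a scalar $2$-form times the identity and reads off $c(\sE)=(1+\tfrac{1}{r}c_1(\sE))^r$ by Chern--Weil. The only (cosmetic) issue is the clash of notation between the index $i$ and the imaginary unit in $\tfrac{i}{2\pi}F_\nabla$.
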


One key notion is that of a numerically projectively flat vector bundle. We recall the definition.

\begin{defn}\label{def:num_proj_flat}
A vector bundle $\sE$ of rank $r\ge 1$ on a smooth projective variety $X$ is called \textit{numerically projectively flat} if $\textup{S}^r\sE \otimes \det \sE^*$ is numerically flat.
\end{defn}

\begin{rem}
Setting and notation as in Definition \ref{def:num_proj_flat}. Then $\sE$ is numerically projectively flat if and only if $\textup{S}^r\sE \otimes \det \sE^*$ is nef since 
$\det(\textup{S}^r\sE \otimes \det \sE^*)\cong\sO_X$ (see Remark \ref{rem:num_flat_zero_first_Chern_class}).
\end{rem}

\begin{rem}
Setting and notation as in Definition \ref{def:num_proj_flat}. Then $\sE$ is numerically flat if and only if $\sE$ is numerically projectively flat and $c_1(\sE)\equiv 0$.
\end{rem}

We will need the following characterization of numerically projectively flat vector bundles.

\begin{thm}\label{thm:numerically_projectively_flat}
Let $X$ be a smooth projective variety of dimension $n\ge 2$ and let $\sE$ be a vector bundle of rank $r \ge 1$ on $X$. Let $H$ be an ample divisor on $X$. Then the following conditions are equivalent.
\begin{enumerate}
\item The vector bundle $\sE$ is numerically projectively flat.
\item The vector bundle $\sE$ is projectively flat and semistable with respect to $H$.
\item The vector bundle $\sE$ admits a filtration 
$$\{0\}=\sE_0 \subset \sE_1 \subset \cdots \subset \sE_m=\sE$$
by vector subbundles such that the quotients $\sQ_k:=\sE_k/\sE_{k-1}$ are hermitian projectively flat with $$\frac{1}{\textup{rank}\,\sQ_k}
c_1(\sQ_k)\equiv \frac{1}{r}c_1(\sE).$$
\item The vector bundle $\sE$ is semistable with respect to $H$ and equality holds in the Bogomolov-Gieseker inequality,
$$c_2(\sE)\cdot H^{n-2}=\frac{r-1}{2r}c_1(\sE)^2\cdot H^{n-2}.$$
\item For any morphism $C \to X$ from a smooth complete curve $C$, the vector bundle $\sE|_C$ is semistable.
\end{enumerate}
\end{thm}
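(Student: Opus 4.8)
The plan is to run everything through the auxiliary bundle $\sV:=\textup{S}^r\sE\otimes\det\sE^*$, whose determinant is trivial, so that for it ``numerically flat'' and ``nef'' coincide (Remark \ref{rem:num_flat_zero_first_Chern_class}) and Theorem \ref{thm:numerically_flat} applies verbatim; condition (1) is literally the assertion that $\sV$ is numerically flat. Two dictionaries drive the argument. First, a \emph{categorical} dictionary: because we work in characteristic $0$, slope-semistability (with respect to a fixed $H$, and on curves) is preserved by the functor $\sE\mapsto\sV$ and reflected by it, since $\textup{S}^r\sE$ is a direct summand of $\sE^{\otimes r}$ and twisting by a line bundle is harmless. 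Second, a \emph{numerical} dictionary: $c_1(\sV)\equiv 0$ always, while $c_2(\sV)\cdot H^{n-2}$ is a fixed \emph{positive} multiple of the Bogomolov discriminant $(2rc_2(\sE)-(r-1)c_1(\sE)^2)\cdot H^{n-2}$. The positivity of this constant is the jahnke_radloff computation and is what converts equality in the Bogomolov--Gieseker inequality into the vanishing $c_2(\sV)\cdot H^{n-2}=0$.

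I would first prove $(1)\Leftrightarrow(5)$ by a curve criterion for nefness. A vector bundle is nef iff $\sO_{\mathbb{P}(\sV)}(1)$ is nef, iff it has nonnegative degree on every curve in $\mathbb{P}(\sV)$; every such curve lies, after normalizing its image, inside some $\mathbb{P}(\sV|_C)$ for a smooth complete curve $C\to X$. Hence $\sV$ is nef iff $\sV|_C=\textup{S}^r(\sE|_C)\otimes\det(\sE|_C)^*$ is nef for every such $C$. On a curve this bundle has degree $0$, so nef = numerically flat = slope-$0$ semistable, and by the categorical dictionary this holds iff $\sE|_C$ is semistable; this is exactly (5). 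Next, $(1)\Rightarrow(4)$: if $\sV$ is numerically flat then $c_i(\sV)\equiv 0$ for all $i\ge 1$ (Lemma \ref{lemma:chern_classes}'s numerically-flat analogue), so $c_2(\sV)\cdot H^{n-2}=0$, which by the numerical dictionary is equality in Bogomolov--Gieseker for $\sE$; and $H$-semistability of $\sE$ follows from (5) by restricting to complete-intersection curves cut out by general members of $|mH|$, $m\gg 0$, and invoking the Mehta--Ramanathan restriction theorem.

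For the reverse direction $(4)\Rightarrow(1)$: $H$-semistability of $\sE$ gives $H$-semistability of $\sV$, while Bogomolov--Gieseker equality for $\sE$ yields $c_1(\sV)\equiv 0$ and $c_2(\sV)\cdot H^{n-2}=0$; a slope-semistable bundle with vanishing $c_1$ and vanishing discriminant against $H^{n-2}$ is numerically flat---this is precisely the Donaldson--Uhlenbeck--Yau/Simpson input already packaged in Theorem \ref{thm:numerically_flat}. This closes $(1)\Leftrightarrow(4)\Leftrightarrow(5)$. The structural equivalences $(1)\Leftrightarrow(3)\Leftrightarrow(2)$ I would read off from Theorem \ref{thm:numerically_flat} applied to $\sV$: the filtration of $\sV$ with hermitian flat graded pieces from Theorem \ref{thm:numerically_flat}(3) is transported, through Simpson's correspondence, to a filtration of $\sE$ whose quotients are hermitian projectively flat with first Chern classes proportional to $c_1(\sE)$, giving (3); and (2) is the single-step incarnation, namely that $\mathbb{P}(\sE)$ arises from a representation $\pi_1(X)\to\textup{PGL}(r,\mathbb{C})$ together with semistability, using Kobayashi's theorem that a semistable projectively flat bundle is projectively unitary flat.

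The principal obstacle is this last step: extracting a filtration of $\sE$ itself, with the precise Chern-class proportionality of (3), out of the filtration of $\sV$, and matching ``projectively flat and semistable'' with ``the associated $\mathbb{P}$-bundle comes from a $\textup{PGL}(r,\mathbb{C})$-representation.'' This is where the nonabelian Hodge correspondence and the theory of projectively flat bundles do the real work, and where one must be careful that the functor $\sE\mapsto\sV$ is faithful enough to reconstruct $\mathbb{P}(\sE)$. By contrast, the numerical equivalences $(1)\Leftrightarrow(4)\Leftrightarrow(5)$ are comparatively formal once the discriminant identity $\Delta(\sV)\cdot H^{n-2}=c\,\Delta(\sE)\cdot H^{n-2}$ with $c>0$ is established.
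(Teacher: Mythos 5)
The paper itself gives no argument here: it simply cites \cite[Theorem 1.1 and Proposition 1.1]{jahnke_radloff_13}, whose proof is essentially the route you reconstruct. Your treatment of the purely numerical equivalences $(1)\Leftrightarrow(4)\Leftrightarrow(5)$ is sound: the curve criterion for nefness of $\sV:=\textup{S}^r\sE\otimes\det\sE^*$, the fact that on a curve a degree-zero bundle is nef iff semistable, Mehta--Ramanathan for $(5)\Rightarrow(4)$, the discriminant identity $c_2(\sV)\cdot H^{n-2}=c\bigl(2rc_2(\sE)-(r-1)c_1(\sE)^2\bigr)\cdot H^{n-2}$ with $c>0$, and Simpson's theorem that an $H$-semistable bundle with $c_1\equiv 0$ and $c_2\cdot H^{n-2}=0$ is flat (note this last input is Simpson's Chern-class criterion, not literally what Theorem \ref{thm:numerically_flat} states, which only equates ``numerically flat'' with ``flat and semistable''). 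All of this matches the cited argument.

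The genuine gap is in your route to $(3)$ and $(2)$. You propose to take the hermitian-flat filtration of $\sV$ supplied by Theorem \ref{thm:numerically_flat}(3) and ``transport'' it to a filtration of $\sE$ with hermitian projectively flat quotients. There is no such transport: a filtration of $\textup{S}^r\sE\otimes\det\sE^*$ by flat subbundles does not determine, functorially or otherwise, a filtration of $\sE$ itself, and you flag this as the principal obstacle without resolving it. The correct argument bypasses $\sV$ entirely at this stage: starting from $(4)$, take a Jordan--H\"older filtration of $\sE$ with respect to $H$; the Hodge index theorem together with the Bogomolov--Gieseker equality forces each stable graded piece $\sQ_k$ to satisfy $\frac{1}{\textup{rank}\,\sQ_k}c_1(\sQ_k)\equiv\frac{1}{r}c_1(\sE)$ and $\bigl(2\,\textup{rank}\,\sQ_k\, c_2(\sQ_k)-(\textup{rank}\,\sQ_k-1)c_1(\sQ_k)^2\bigr)\cdot H^{n-2}=0$; the equality case of the Bogomolov--Gieseker--L\"ubke inequality for stable reflexive sheaves (via the Kobayashi--Hitchin correspondence) then shows each $\sQ_k$ is locally free and Hermite--Einstein projectively flat, and that the filtration is by subbundles. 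This yields $(3)$, and $(2)$ follows by assembling the graded pieces into a $\textup{PGL}(r,\mathbb{C})$-representation as in \cite[Proposition 1.1]{jahnke_radloff_13}. Without this step your proof establishes only $(1)\Leftrightarrow(4)\Leftrightarrow(5)$.
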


\begin{proof}
This follows easily from \cite[Theorem 1.1]{jahnke_radloff_13} and \cite[Proposition 1.1]{jahnke_radloff_13} together (see also \cite[Theorem 2.13]{druel_proj_flat}).
\end{proof}

\begin{rem}
Setting and notation as in Theorem \ref{thm:numerically_projectively_flat}. If $\sE$ is numerically projectively flat, then it is semistable with respect to any ample divisor.
\end{rem}

\begin{exmp}
Let $\sL$ be a line bundle on a smooth projective variety $X$. Then $\sL^{\oplus m}$ is numerically projectively flat for any integer $m\ge 1$.
\end{exmp}

The following is an easy consequence of Theorem \ref{thm:numerically_projectively_flat}.

\begin{lemma}\label{lemma:numerically_projectively_flat_elementary properties}
Let $X$ be a smooth projective variety of dimension $n\ge 1$, and let $\sE$ be a vector bundle of rank $r \ge 1$ on $X$.
\begin{enumerate}
\item If $\sE$ is numerically projectively flat, then $\sE\otimes\sL$ is numerically projectively flat as well for any line bundle $\sL$ on $X$.
\item Let $Y$ be a smooth projective variety and let $f \colon Y \to X$ be a morphism. If $\sE$ is numerically projectively flat, then $f^*\sE$ is also numerically projectively flat. In addition, the converse statement is true if $f$ is surjective. 
\item Let $\sG$ be a vector bundle on $X$ of rank $s \ge 1$. Then $\sE\oplus\sG$ is numerically projectively flat if and only if $\sE$ and $\sG$ are numerically projectively flat and 
$$\frac{1}{r}c_1(\sE)\equiv \frac{1}{s}c_1(\sG)\equiv \frac{1}{r+s}c_1(\sE\oplus \sG).$$
\end{enumerate}
\end{lemma}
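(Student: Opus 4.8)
The plan is to reduce statements (1) and (2) to direct manipulations of the bundle $\textup{S}^r\sE \otimes \det\sE^*$ appearing in Definition \ref{def:num_proj_flat}, and to treat statement (3) via the restriction-to-curves characterization, Theorem \ref{thm:numerically_projectively_flat} (5). Throughout we use that, since $\det(\textup{S}^r\sE\otimes\det\sE^*)\cong\sO_X$, numerical flatness of this bundle is the same as nefness (Remark \ref{rem:num_flat_zero_first_Chern_class}).

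For (1), I would simply compute that twisting leaves the relevant bundle unchanged. For a rank-$r$ bundle one has $\textup{S}^r(\sE\otimes\sL)\cong \textup{S}^r\sE\otimes\sL^{\otimes r}$ and $\det(\sE\otimes\sL)\cong\det\sE\otimes\sL^{\otimes r}$, so
$$\textup{S}^r(\sE\otimes\sL)\otimes\det(\sE\otimes\sL)^*\cong \textup{S}^r\sE\otimes\det\sE^*;$$
hence $\sE\otimes\sL$ is numerically projectively flat exactly when $\sE$ is. For (2), pulling back commutes with symmetric powers and determinants, so $\textup{S}^r(f^*\sE)\otimes\det(f^*\sE)^*\cong f^*\big(\textup{S}^r\sE\otimes\det\sE^*\big)$. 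Since the pullback of a nef bundle is nef, numerical projective flatness of $\sE$ implies that of $f^*\sE$. When $f$ is surjective I would invoke the converse fact that a vector bundle whose pullback under a surjective morphism is nef is itself nef, which gives the reverse implication.

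For (3) I would argue curve by curve using Theorem \ref{thm:numerically_projectively_flat} (5). For any morphism $g\colon C\to X$ from a smooth complete curve one has $(\sE\oplus\sG)|_C=\sE|_C\oplus\sG|_C$. On a smooth curve a direct sum is semistable if and only if both summands are semistable of the same slope; writing this out, $(\sE\oplus\sG)|_C$ is semistable for every $C$ precisely when $\sE|_C$ and $\sG|_C$ are semistable for every $C$ and $\mu(\sE|_C)=\mu(\sG|_C)$ for every $C$. The first condition is numerical projective flatness of $\sE$ and $\sG$; the second, being the equality $\tfrac1r\deg(\sE|_C)=\tfrac1s\deg(\sG|_C)$ for all $C$, is equivalent to $\tfrac1r c_1(\sE)\equiv\tfrac1s c_1(\sG)$, and this in turn is equivalent to the displayed three-term condition since $c_1(\sE\oplus\sG)=c_1(\sE)+c_1(\sG)$.

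The only genuinely delicate points are bookkeeping rather than conceptual. In (2) the reverse implication rests on the descent of nefness along surjective morphisms; concretely, every curve in $X$ is dominated by a curve in $Y$ after passing to the fibre product $C\times_X Y$ and cutting by general hyperplane sections, and semistability is invariant under finite pullbacks of smooth curves in characteristic zero, so a destabilizing subsheaf downstairs would produce one upstairs. For $n=1$, Theorem \ref{thm:numerically_projectively_flat} does not literally apply, but then $X$ is itself a curve and numerical projective flatness coincides with semistability, so the restriction-to-curves reasoning used in (3) holds tautologically and the same arguments go through unchanged.
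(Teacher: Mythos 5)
Your proof is correct; the paper offers no written proof of this lemma (it is stated as an easy consequence of Theorem \ref{thm:numerically_projectively_flat}), and your argument --- the isomorphism $\textup{S}^r(\sE\otimes\sL)\otimes\det(\sE\otimes\sL)^*\cong \textup{S}^r\sE\otimes\det\sE^*$ for (1), compatibility of pullback with $\textup{S}^r$ and $\det$ plus descent of nefness along surjective morphisms for (2), and the curve-by-curve semistability criterion of Theorem \ref{thm:numerically_projectively_flat}~(5) for (3) --- is exactly the intended routine verification. The handling of the side cases (the $n=1$ situation and the slope bookkeeping identifying $\tfrac1r c_1(\sE)\equiv\tfrac1s c_1(\sG)$ with the displayed three-term condition) is also accurate.
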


We will need the following easy observation.

\begin{lemma}\label{lemma:descent_vb}
Let $f\colon X \to Y$ be a smooth projective morphism with connected fibers between smooth algebraic varieties, and let $\sE$ be  a vector bundle of rank $r \ge 1$ on $X$. Suppose that the restriction of $\sE$ to every fiber of $f$ is numerically projectively flat.
Suppose in addition that $\sE|_F\cong \sO_F^{\oplus r}$, where $F$ is a general fiber of $f$. Then there exists a vector bundle $\sG$ on $Y$ such that $\sE \cong f^*\sG$.
\end{lemma}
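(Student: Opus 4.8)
The plan is to descend $\sE$ by setting $\sG := f_*\sE$ and proving two things: that $f_*\sE$ is locally free of rank $r$ and that the evaluation morphism $f^*f_*\sE \to \sE$ is an isomorphism. Everything hinges on first upgrading the hypothesis from the \emph{general} fiber to \emph{every} fiber, i.e. on showing that $\sE|_{X_y}\cong \sO_{X_y}^{\oplus r}$ for all $y \in Y$. Once this is known, the function $y\mapsto \h^0(X_y,\sE|_{X_y})$ is constant equal to $r$ and Grauert's base change theorem finishes the descent (see the last paragraph). So the substance of the proof is the fiberwise triviality.

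First I would reduce to the case where $Y$ is connected (hence irreducible, as $Y$ is smooth) and establish that $c_1(\sE|_{X_y})\equiv 0$ for every $y$. By Ehresmann's theorem the smooth projective morphism $f$ is a locally trivial $C^\infty$-fibration, so the $R^if_*\mathbb{Z}$ are local systems, and $y\mapsto c_1(\sE|_{X_y})\in \H^2(X_y,\mathbb{Z})$ is the monodromy-invariant section of $R^2f_*\mathbb{Z}$ cut out by the global class $c_1(\sE)$. This section vanishes on the general fiber $F$, where $\sE|_F\cong\sO_F^{\oplus r}$, hence it vanishes identically. Since $\sE|_{X_y}$ is numerically projectively flat by hypothesis and now has $c_1\equiv 0$, the remark following Definition \ref{def:num_proj_flat} shows that $\sE|_{X_y}$ is in fact \emph{numerically flat} for all $y$.

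Next I would prove the cohomological facts that force triviality. For any numerically flat bundle $\sV$ of rank $r$ on a connected projective variety, the filtration of Theorem \ref{thm:numerically_flat}(3) by hermitian flat quotients (unitary local systems, whose spaces of invariants have dimension at most their rank) gives $\h^0(\sV)\le \operatorname{rank}\sV$. Applying this fiberwise, together with upper semicontinuity of $\h^0$ whose generic value is exactly $r$, pins down $\h^0(X_y,\sE|_{X_y})=r$ for \emph{every} $y$. It then remains to show the key point: a numerically flat $\sV$ with $\h^0(\sV)=\operatorname{rank}\sV$ is trivial. I would argue by induction on the rank. A nonzero section generates a saturated line subbundle $\sL\subseteq\sV$ which, being effective and of slope $0$ by semistability, satisfies $\sL\cong\sO_V$ and is a subbundle; the quotient $\sV'$ is nef (as a quotient of a nef bundle) with $c_1(\sV')\equiv 0$, hence numerically flat by Remark \ref{rem:num_flat_zero_first_Chern_class}. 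The section sequence yields $\h^0(\sV')=\operatorname{rank}\sV'$ together with a surjection $\H^0(\sV)\twoheadrightarrow\H^0(\sV')$; by induction $\sV'\cong\sO_V^{\oplus(r-1)}$, and lifting its sections to $\sV$ alongside $\sO_V\hookrightarrow\sV$ produces a morphism $\sO_V^{\oplus r}\to\sV$ that a five-lemma comparison of short exact sequences identifies as an isomorphism. I expect \emph{this} to be the main obstacle: numerical flatness by itself is far too weak, since iterated extensions of trivial bundles (e.g. the Atiyah bundle on an elliptic curve) are numerically flat but nontrivial. It is precisely the equality $\h^0=\operatorname{rank}$, and the evaluation/extension bookkeeping above, that excludes such examples.

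Finally, with $\h^0(X_y,\sE|_{X_y})=r$ constant and $\sE|_{X_y}\cong\sO_{X_y}^{\oplus r}$, Grauert's theorem shows that $\sG:=f_*\sE$ is locally free of rank $r$ and that base change holds, $(f_*\sE)\otimes k(y)\xrightarrow{\sim}\H^0(X_y,\sE|_{X_y})$. The natural morphism $f^*\sG\to\sE$ restricts on each fiber to the evaluation map $\H^0(X_y,\sE|_{X_y})\otimes\sO_{X_y}\to\sE|_{X_y}$, which is an isomorphism because $\sE|_{X_y}$ is trivial and generated by its $r$ independent sections. A morphism of vector bundles of equal rank that is an isomorphism at every point of every fiber is an isomorphism: its cokernel vanishes by Nakayama, and a surjection of locally free sheaves of equal rank is an isomorphism. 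Hence $f^*\sG\cong\sE$, as desired.
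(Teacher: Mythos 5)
Your overall strategy coincides with the paper's: prove that $\sE|_{X_y}\cong\sO_{X_y}^{\oplus r}$ for \emph{every} fiber, then descend via Grauert's theorem and cohomology-and-base-change. The final descent paragraph, and the use of upper semicontinuity to get $h^0(X_y,\sE|_{X_y})\ge r$ on every fiber, are exactly what the paper does. Where you diverge is the fiberwise triviality: the paper obtains it in one line by citing \cite[Proposition 1.16]{demailly_peternell_schneider94} (the evaluation map of a numerically flat bundle is an injective morphism of vector bundles, so $r$ independent sections of a rank-$r$ numerically flat bundle trivialize it), whereas you re-derive it by an induction on the rank. Your explicit verification that $c_1(\sE|_{X_y})\equiv 0$ on every fiber via the local system $R^2f_*\mathbb{Z}$ is a point the paper leaves implicit, and is welcome.

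The induction, however, has one step whose justification does not hold up: ``a nonzero section generates a saturated line subbundle $\sL\subseteq\sV$ which, being effective and of slope $0$ by semistability, satisfies $\sL\cong\sO_V$ \emph{and is a subbundle}.'' Semistability and effectivity only give that the saturation has trivial determinant; they do not prevent the section from vanishing on a set of codimension at least $2$, in which case the saturation is still isomorphic to $\sO_V$ and still saturated (torsion-free quotient) but is \emph{not} a subbundle --- compare $\sO^{\oplus 2}$ on a surface with the local section $(x,y)$, whose saturation is itself and whose quotient is an ideal sheaf. Since the fibers $X_y$ may have dimension at least $2$, this case must be excluded, and without that the next sentence (``the quotient $\sV'$ is nef as a quotient of a nef bundle'') does not even parse, because the quotient is only torsion-free rather than locally free. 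The missing input is precisely the nowhere-vanishing statement: a nonzero section of a bundle whose dual is nef has empty zero locus (restrict to a curve $C$ through a hypothetical zero and not contained in the zero locus; the section then generates a sub-line bundle of positive degree in $\sV|_C$, contradicting the degree-$0$ semistability of the numerically flat bundle $\sV|_C$). This is exactly \cite[Proposition 1.16]{demailly_peternell_schneider94}, which the paper invokes instead. Once you insert it, your induction, the $h^0$-bookkeeping via the hermitian flat filtration, and the five-lemma comparison all go through.
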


\begin{proof}
Let $F$ be any fiber of $f$. Because the function
$y \mapsto h^0(X_y,\sE|_{X_y})$ are upper semicontinuous in the Zariski topology on $Y$ (see \cite[Theorem 12.8]{hartshorne77}), we have 
$h^0(F,\sE|_F) \ge r$.  
By \cite[Proposition 1.16]{demailly_peternell_schneider94}, the evaluation map $H^0(F,\sE|_F)\otimes\sO_F \to \sE|_F$ is an injective morphism of vector bundles. This implies that $\sE|_F \cong \sO_F^{\oplus r}$. By \cite[Corollary 12.9]{hartshorne77} together with the base change theorem \cite[Theorem 12.11]{hartshorne77}, the sheaf $\sG:=f_*\sE$ is locally free of rank $r$ and the formation of $f_*\sE$ commutes with arbitrary base change. This easily implies that $\sE \cong f^*\sG$.
\end{proof}

\subsection{Foliation} In this section, we have gathered a number of results and facts concerning foliations which will later be used
in the proofs.

\begin{defn}
A \emph{foliation} on a smooth variety $X$ is a coherent subsheaf $\sF\subseteq T_X$ such that
\begin{enumerate}
\item $\sF$ is closed under the Lie bracket, and
\item $\sF$ is saturated in $T_X$. In other words, the quotient $T_X/\sF$ is torsion-free.
\end{enumerate}

\medskip

The \emph{rank} $r$ of $\sF$ is the generic rank of $\sF$.
The \emph{codimension} of $\sF$ is defined as $q:=\dim X-r$. 

\medskip

The \textit{canonical class} $K_{\sF}$ of $\sF$ is any Weil divisor on $X$ such that 
$\sO_X(-K_{\sF})\cong \det\sF$. 

\medskip

Let $X^\circ \subseteq X$ be the open set where $\sF$ is a subbundle of $T_X$. The foliation is called \emph{regular} if $X^\circ = X$.

\medskip

A \emph{leaf} of $\sF$ is a maximal connected and immersed holomorphic submanifold $L \subseteq X^\circ$ such that
$T_L=\sF_{|L}$.
\end{defn}

The \emph{normal sheaf} of $\sF$ is $\sN:=(T_X/\sF)^{**}$.
The $q$-th wedge product of the inclusion
$\sN^*\subseteq \Omega^1_X$ gives rise to a non-zero global section $\omega\in H^0(X,\Omega^{q}_X\otimes \det\sN)$ whose zero locus has codimension at least $2$ in $X$. 
Moreover, $\omega$ is \emph{locally decomposable} and \emph{integrable}.
To say that $\omega$ is locally decomposable means that, 
in a neighborhood of a general point of $X$, $\omega$ decomposes as the wedge product of $q$ local $1$-forms 
$\omega=\omega_1\wedge\cdots\wedge\omega_q$.
To say that it is integrable means that for this local decomposition one has 
$d\omega_i\wedge \omega=0$ for every  $i\in\{1,\ldots,q\}$. 
The integrability condition for $\omega$ is equivalent to the condition that $\sF$ 
is closed under the Lie bracket. Conversely, let $\sL$ be a line bundle on $X$, and let
$\omega\in H^0(X,\Omega^{q}_X\otimes \sL)$ be a global section
whose zero locus has codimension at least $2$ in $X$.
Suppose that $\omega$  is locally decomposable and integrable.
Then the kernel of the morphism $T_X \to \Omega^{q-1}_X\otimes \sL$ given by the contraction with $\omega$ defines a foliation of codimension $q$ on $X$. 
These constructions are inverse of each other. 

\medskip

Let $X$ and $Y$ be smooth complex algebraic varieties, and let $f\colon X\to Y$ be a dominant morphism that restricts to a smooth morphism $f^\circ\colon X^\circ\to Y^\circ$,
where $X^\circ\subseteq X$ and $Y^\circ\subseteq Y$ are dense open subsets.
Let $\sF$ be a foliation on $Y$. \emph{The pull-back $f^{-1}\sF$ of $\sF$ via $f$} is the foliation on $X$ whose restriction to $X^\circ$ is $(df^\circ)^{-1}\big(\sF_{|Y^\circ}\big)$.
We will need the following easy observation.

\begin{lemma}\label{lemma:finite_etale_cover}
Let $f\colon X \to Y$ be a finite \'etale cover of smooth projective varieties, and let $\sF$ be a regular foliation on $Y$. Then the following holds.
\begin{enumerate}
\item The foliation $f^{-1}\sF$ is regular. Moreover $f^{-1}\sF \cong f^*\sF$, and $\sN_{f^{-1}\sF} \cong f^*\sN_\sF$.
\item We have $\kappa(K_{f^{-1}\sF})=\kappa(K_\sF)$, and $\nu(K_{f^{-1}\sF})=\nu(K_\sF)$.
\end{enumerate}
\end{lemma}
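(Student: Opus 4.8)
The plan is to reduce everything to the single observation that, since $f$ is \'etale, the differential $df\colon T_X \to f^*T_Y$ is an isomorphism of vector bundles; all the claims then follow by transporting structure across this isomorphism.

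First I would prove (1). As $f$ is \'etale it is smooth everywhere, so in the definition of the pull-back one has $X^\circ = X$ and $Y^\circ = Y$, and therefore $f^{-1}\sF = (df)^{-1}(f^*\sF)$ as a subsheaf of $T_X$. Since $df$ is an isomorphism, it restricts to an isomorphism $f^{-1}\sF \xrightarrow{\sim} f^*\sF$; and because $\sF$ is a subbundle of $T_Y$ (regularity), its pull-back $f^*\sF$ is a subbundle of $f^*T_Y$, so $f^{-1}\sF$ is a subbundle of $T_X$. This yields simultaneously that $f^{-1}\sF$ is regular and that $f^{-1}\sF \cong f^*\sF$. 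For the normal bundle, I would apply the exact functor $f^*$ to the short exact sequence $0 \to \sF \to T_Y \to \sN_\sF \to 0$ (a sequence of vector bundles, again by regularity), and use $df$ to identify $f^*T_Y \cong T_X$ and $f^*\sF \cong f^{-1}\sF$; the resulting sequence $0 \to f^{-1}\sF \to T_X \to f^*\sN_\sF \to 0$ identifies $\sN_{f^{-1}\sF} = T_X/f^{-1}\sF$ with $f^*\sN_\sF$.

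For (2), taking determinants in $f^{-1}\sF \cong f^*\sF$ gives $\sO_X(-K_{f^{-1}\sF}) \cong f^*\det\sF \cong f^*\sO_Y(-K_\sF)$; as $f$ is unramified there is no correction term, so $K_{f^{-1}\sF} \sim f^*K_\sF$. The equalities $\kappa(K_{f^{-1}\sF}) = \kappa(K_\sF)$ and $\nu(K_{f^{-1}\sF}) = \nu(K_\sF)$ then follow from the invariance of the Iitaka dimension and of the numerical dimension under pull-back by a finite surjective morphism. For $\kappa$ this is the classical behaviour of the Iitaka dimension under finite covers; for $\nu$ it is a consequence of the fact that $f$ multiplies intersection numbers by $\deg f$, so that pull-back along $f$ preserves the vanishing or non-vanishing of the numerical invariants that compute $\nu$.

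The argument is essentially formal, and I expect no serious obstacle. The only points needing care are bookkeeping: checking that the sheaf-theoretic definition of $f^{-1}\sF$ literally agrees with $f^*\sF$ (which rests entirely on $df$ being an isomorphism, hence on $f$ being \'etale rather than merely smooth), and quoting the correct invariance statements for $\kappa$ and $\nu$ under finite \'etale pull-back.
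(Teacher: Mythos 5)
Your proposal is correct and follows the same route as the paper: part (1) via the isomorphism $df\colon T_X \to f^*T_Y$ (which the paper dismisses as obvious), and part (2) by reducing to $K_{f^{-1}\sF}=f^*K_\sF$ and invoking the invariance of $\kappa$ and $\nu$ under finite covers (the paper cites Ueno, Theorem 5.13, and Nakayama, Proposition V.2.22, for these two facts, which are exactly the "correct invariance statements" you flag as needing to be quoted).
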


\begin{proof}
The first part of the statement is obvious. For the second part, note that $K_{f^{-1}\sF}=f^*K_\sF$ by (1). Then we have $\kappa(K_{f^{-1}\sF})=\kappa(K_\sF)$ by \cite[Theorem 5.13]{uenoLN439}, and $\nu(K_{f^{-1}\sF})=\nu(K_\sF)$ by \cite[Proposition V.2.22]{nakayama04}.
\end{proof}

\subsection{Ehresmann connection} Let $f \colon X \to Y$ be a surjective proper morphism between smooth varieties. An \textit{Ehresmann connection} on $f$ is a distribution $\sD \subseteq T_X$ such that the restriction of the tangent map 
$Tf\colon T_X \to f^*T_Y$ to $\sD$ induces an isomorphism $\sD \cong f^*T_Y$. The Ehresmann connection $\sD$ is said to be \textit{flat} if $\sD$ is a foliation.

If $f$ has an Ehresmann connection, then $f$ is a locally trivial fibration for the analytic topology since complex flows of vector fields on analytic spaces exist. 

Suppose that $f$ has a flat Ehresmann connection $\sD$. Let $F$ be any fiber of $f$, and set $y:=f(F)$. Let also $\rho\colon \pi_1(Y,y) \to \textup{Aut}(F)$ be the monodromy representation of $\sD$. Then $$X \cong (\wt{Y} \times F)/\pi_1(Y,y),$$
where $\wt{Y}$ denotes the universal cover of $Y$ based at $y$ and $\pi_1(Y,y)$ acts diagonally on the product.

\begin{exmp}\label{example:suspension} Let $n \ge 2$ be an integer.
Let $A=\mathbb{C}^{n-1}/\Lambda$ be a complex abelian variety, and let 
$\rho\colon \pi_1(A) \to \textup{PGL}(2,\mathbb{C})$ be a representation of the fundamental group $\pi_1(A)\cong\Lambda$ of $A$.
Then the group $\pi_1(A)$ acts diagonally on $\mathbb{C}^{n-1}\times \mathbb{P}^1$ by 
$\gamma\cdot(z,p)=\big(\gamma(z),\rho(\gamma)(p)\big)$. Set $X:=(\mathbb{C}^{n-1} \times \mathbb{P}^1)/\pi_1(A)$, 
and denote by $\psi\colon X \to A \cong \mathbb{C}^{n-1}/\pi_1(A)$ the projection morphism, which is $\mathbb{P}^1$-bundle.
The foliation on $\mathbb{C}^{n-1} \times \mathbb{P}^1$ induced by the projection $\mathbb{C}^{n-1} \times \mathbb{P}^1 \to \mathbb{P}^1$ is invariant under the action of $\pi_1(A)$ and gives a flat Ehresmann connection $\sF$ on $\psi$. Then $\rho$ identifies with the monodromy representation of $\sF$.
\end{exmp}

\subsection{The standard setting} Throughout the present paper, we will be working in the following setup and use the notation below.

\begin{setup}\label{setup:main}
Let $X$ be a complex projective manifold of dimension $n\ge 3$, and let $\sF \subset T_X$ be a regular codimension $1$ foliation with normal bundle $\sN$. Suppose that $\sF$ is numerically projectively flat. Let $\rho\colon \pi_1(X) \to \textup{PGL}(n-1,\mathbb{C})$ denote a representation that defines the projectively flat structure on $\sF$.
\end{setup}

\begin{rem}
Setting and notation as in Setup \ref{setup:main}. Let $f \colon Y \to X$ be a finite \'etale cover. Then the pull-back $f^{-1}\sF$ of $\sF$ on $Y$ satisfies all the conditions listed in Setup \ref{setup:main} (see Lemma \ref{lemma:finite_etale_cover}).
\end{rem}

We will need the following observation.

\begin{lemma}\label{lemma:minimal}
Setting and notation as in \ref{setup:main}. Then $\sF^*$ is a nef vector bundle.
\end{lemma}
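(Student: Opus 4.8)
The plan is to test nefness of $\sF^*$ on curves and thereby reduce the statement to the nefness of $K_\sF$. Recall that $\sF^*$ is nef if and only if $\nu^*\sF^*$ is nef on $C$ for every morphism $\nu\colon C \to X$ from a smooth projective curve, equivalently $\mu_{\max}(\nu^*\sF)\le 0$. Since $\sF$ is numerically projectively flat, Theorem \ref{thm:numerically_projectively_flat} guarantees that $\nu^*\sF$ is semistable, so that $\mu_{\max}(\nu^*\sF)=\mu(\nu^*\sF)=\frac{1}{n-1}\deg(\nu^*\sF)=-\frac{1}{n-1}K_\sF\cdot\nu_*[C]$. Hence $\sF^*$ is nef precisely when $K_\sF\cdot\nu_*[C]\ge 0$ for every such $\nu$, that is, exactly when $K_\sF$ is nef. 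So the entire content of the lemma is the nefness of $K_\sF$, which I would establish by contradiction.

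Suppose $K_\sF$ is not nef. Then there is a morphism $\nu\colon C\to X$ with $\deg(\nu^*\sF)=-K_\sF\cdot\nu_*[C]>0$. As $\nu^*\sF$ is semistable of positive slope on the curve $C$, it is an ample vector bundle; descending along the finite surjective morphism $C\to\nu(C)$, the restriction $\sF|_{C'}$ to the reduced image curve $C'\subset X$ is ample as well, and $C'$ lies in the smooth locus of $X$ and of $\sF$ since $\sF$ is regular. At this point I would invoke the theorem of Bogomolov and McQuillan on foliations admitting an ample restriction to a curve: it yields that the leaves of $\sF$ through the points of $C'$ are algebraic and that the closure of a general such leaf is rationally connected. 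In particular $\sF$ is algebraically integrable, and after passing to a smooth projective model its general leaf $X_b$ is a rationally connected manifold of dimension $n-1\ge 2$.

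The final step contradicts this using the Jahnke--Radloff theorem recalled in the introduction. Since the inclusion $X_b\to X$ is a morphism and $\sF$ is numerically projectively flat, Lemma \ref{lemma:numerically_projectively_flat_elementary properties} shows that $T_{X_b}\cong \sF|_{X_b}$ is numerically projectively flat, that is, $\textup{S}^{n-1}T_{X_b}\otimes\sO_{X_b}(K_{X_b})$ is numerically flat. By \cite{jahnke_radloff_13} this forces $K_{X_b}\equiv 0$, which is impossible for a rationally connected manifold of positive dimension, whose canonical class is not pseudo-effective. This contradiction shows that $K_\sF$ is nef, and hence that $\sF^*$ is nef.

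I expect the main obstacle to be the middle step: extracting a genuine rationally connected leaf from the single ample curve $C'$. This requires the Bogomolov--McQuillan machinery together with some care to pass from ``the leaf through a general point of $C'$ is rationally connected'' to a \emph{smooth projective} rationally connected model $X_b$ carrying a numerically projectively flat tangent bundle, in particular to handle the possible singularities or non-compactness of leaf closures. Note that the hypothesis $n\ge 3$ is used exactly here: it forces the leaves to have dimension at least $2$, which is where the Jahnke--Radloff rigidity genuinely applies.
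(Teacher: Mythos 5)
Your proof is correct and follows the paper's argument for most of its length: the reduction of nefness of $\sF^*$ to nefness of $K_\sF$ via semistability, the contradiction hypothesis producing a curve on which $\sF$ restricts to an ample bundle, and the appeal to Bogomolov--McQuillan to conclude that $\sF$ is induced by a fibration onto a curve whose general fiber $F$ is a rationally connected manifold of dimension $n-1\ge 2$. You diverge only at the final contradiction. The paper takes a minimal free rational curve $\mathbb{P}^1\to F$, for which $\sF|_{\mathbb{P}^1}\cong T_F|_{\mathbb{P}^1}\cong \sO_{\mathbb{P}^1}(2)\oplus\sO_{\mathbb{P}^1}(1)^{\oplus p}\oplus\sO_{\mathbb{P}^1}^{\oplus n-2-p}$; projective flatness forces all summands to have equal degree, hence $p=0$ and $n=2$, contradicting $n\ge 3$. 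You instead apply the Jahnke--Radloff theorem to $T_F\cong\sF|_F$, which is numerically projectively flat by Lemma \ref{lemma:numerically_projectively_flat_elementary properties}, to conclude $K_F\equiv 0$, impossible for a positive-dimensional rationally connected manifold. Both closings are valid --- the paper itself applies \cite[Theorem 0.1]{jahnke_radloff_13} to the $(n-1)$-dimensional leaves in Proposition \ref{prop:special_case_algebraically_integrable} --- but the paper's is more elementary, needing only the standard splitting type of a minimal free curve together with projective flatness, whereas yours trades that short computation for the full strength of the Jahnke--Radloff classification. The point you flag as the main obstacle (passing from an ample restriction on one curve to a smooth projective rationally connected leaf) is handled in the paper exactly as you suggest: regularity of $\sF$ plus algebraicity and rational connectedness of the leaves yield a fibration over a curve, and one works with a general (smooth) fiber.
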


\begin{proof}In order to prove the lemma, it suffices to show that $K_\sF$ is nef since $\sF$ is numerically projectively flat by assumption.
We argue by contradiction and assume that $K_\sF$ is not nef. Then there exists a smooth complete curve $C \to X$ such that $\deg_C K_\sF <0$. This implies that the vector bundle 
$\sF|_C$ is ample since it is semistable with positive slope by our current assumption.
By \cite[Main Theorem]{bogomolov_mcquillan01} (see also \cite[Theorem 1]{kebekus_sola_conde_07}), the leaf $F$ of $\sF$ through a general point in $C$ is algebraic and rationally connected. It follows that $\sF$ is induced by a morphism with connected fibers $f \colon X \to B$
onto a smooth complete curve $B$. By \cite[Theorem IV.2.10]{kollar96} applied to a general fiber $F$ of $f$, there exists a minimal free rational curve $\mathbb{P}^1 \to F$
which means that there exists an integer $0 \le p \le n-2$ such that 
$$\sF|_{\mathbb{P}^1}\cong T_F|_{\mathbb{P}^1}\cong \sO_{\mathbb{P}^1}(2)\oplus\sO_{\mathbb{P}^1}(1)^{\oplus p} \oplus \sO_{\mathbb{P}^1}^{\oplus n-2-p}.$$
Since $\sF|_{\mathbb{P}^1}$ is projectively flat by assumption, we must have $p=0$ and $n=2$, yielding a contradiction. This finishes the proof of the lemma.
\end{proof}

\section{Preparations for the proof of Theorem \ref{thm_intro:main}}\label{section:preparation}

In this section we provide technical tools for the proof of our main results.

\subsection{Bott (partial) connection and applications} We briefly recall the relevant notions first. Let $X$ be a complex manifold, and let $\sF\subset T_X$ be a regular foliation with normal bundle $\sN$. Let $p\colon T_X\to \sN$ denotes the natural projection. For sections $U$ of $\sN$, $T$ of $T_X$, and $V$ of $\sF$ over some open subset of $X$ with $U=p(T)$, set $D_V U=p([V,U])$. This expression is well defined, $\sO_X$-linear in $V$, and satisfies the Leibnitz rule 
$D_V(fU)=fD_V U+(Vf)U$ so that $D$ is an $\sF$-connection on $\sN$
(see \cite{baum_bott70}). We refer to it as the \textit{Bott connection} on $\sN$.

\begin{lemma}\label{lemma:BB_vanishing}
Let $X$ be a complex manifold of dimension $n\ge 2$, and let $\sF\subset T_X$ be a regular foliation of codimension $1 \le q \le n-1$ with normal bundle $\sN$.
\begin{enumerate}
\item The Bott connection is flat when restricted to the leaves of $\sF$.
\item The cohomology class $c_1(\sN)\in H^1(X,\Omega^1_X)$ lies in the image of the natural map $$H^1(X,\sN^*) \to H^1(X,\Omega^1_X).$$ In particular, we have 
$c_1(\sN)^{q+1}=0 \in H^{q+1}(X,\Omega^{q+1}_X)$.
\end{enumerate}
\end{lemma}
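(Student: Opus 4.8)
The plan is to treat the two parts separately, with part (2) resting on the Atiyah-class formalism.

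For part (1), I would compute the curvature of the Bott connection directly. The curvature of an $\sF$-connection $D$ is $R(V_1,V_2)U = D_{V_1}D_{V_2}U - D_{V_2}D_{V_1}U - D_{[V_1,V_2]}U$ for local sections $V_1,V_2$ of $\sF$ and $U$ of $\sN$. Writing $U=p(T)$ for a local lift $T$ of $U$ to $T_X$, the definition of $D$ gives
$$R(V_1,V_2)U = p\big([V_1,[V_2,T]]\big) - p\big([V_2,[V_1,T]]\big) - p\big([[V_1,V_2],T]\big),$$
which vanishes by the Jacobi identity for the Lie bracket of vector fields. Along the way one checks that the expression is independent of the lift $T$: replacing $T$ by $T+W$ with $W$ a section of $\sF$ changes the brackets by sections of $\sF$, which $p$ kills because $\sF$ is involutive. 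This is exactly the flatness of $D$ along the leaves.

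For part (2), I would use the conormal exact sequence
$$0 \to \sN^* \to \Omega^1_X \to \sF^* \to 0,$$
dual to $0 \to \sF \to T_X \to \sN \to 0$. Its long exact cohomology sequence contains $H^1(X,\sN^*) \to H^1(X,\Omega^1_X) \xrightarrow{j} H^1(X,\sF^*)$, so it suffices to prove that $j(c_1(\sN))=0$. Here I would invoke the Atiyah class: $c_1(\sN)$ is, up to a nonzero scalar, $\textup{tr}\,\textup{at}(\sN)$, where $\textup{at}(\sN)\in H^1(X,\Omega^1_X\otimes\textup{End}(\sN))$ is the obstruction to a global holomorphic connection on $\sN$. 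The restriction map $\Omega^1_X\to\sF^*$ sends $\textup{at}(\sN)$ to the partial Atiyah class $\textup{at}_\sF(\sN)\in H^1(X,\sF^*\otimes\textup{End}(\sN))$, the obstruction to a holomorphic $\sF$-connection on $\sN$, and it commutes with the trace. The crucial point is that the Bott connection is a holomorphic $\sF$-connection: for holomorphic $V$ in $\sF$ and holomorphic $U$ in $\sN$, the expression $D_VU=p([V,T])$ is holomorphic because the bracket of holomorphic vector fields is holomorphic. Hence $\textup{at}_\sF(\sN)=0$, so $j(c_1(\sN))=\textup{tr}\,\textup{at}_\sF(\sN)=0$, and $c_1(\sN)$ lifts to $H^1(X,\sN^*)$.

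Finally, for the \emph{in particular} I would factor the cup product through the inclusion $\phi\colon\sN^*\hookrightarrow\Omega^1_X$. Writing $c_1(\sN)=\phi_*(\beta)$ with $\beta\in H^1(X,\sN^*)$, the $(q+1)$-fold cup product $c_1(\sN)^{q+1}\in H^{q+1}(X,\Omega^{q+1}_X)$ is the image of $\beta^{\cup(q+1)}$ under the map induced by $\wedge^{q+1}\phi\colon \wedge^{q+1}\sN^*\to\Omega^{q+1}_X$; here one uses that wedging elements of the subsheaf $\sN^*\subset\Omega^1_X$ is alternating, so the multiplication factors through $\wedge^{q+1}\sN^*$. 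Since $\sN^*$ has rank $q$, we have $\wedge^{q+1}\sN^*=0$, whence $c_1(\sN)^{q+1}=0$. The main obstacle is the middle step: identifying the image of $c_1(\sN)$ in $H^1(X,\sF^*)$ with the trace of the partial Atiyah class and verifying that the Bott connection genuinely provides a holomorphic $\sF$-connection, so that this obstruction vanishes. Everything else is either the Jacobi identity or the rank count $\wedge^{q+1}\sN^*=0$.
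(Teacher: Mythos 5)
Your proof is correct and follows essentially the same route as the paper, which disposes of (1) as an easy local computation (your Jacobi-identity calculation) and cites the Baum--Bott argument for (2) --- that argument being precisely the observation that the Bott connection is a holomorphic partial connection along $\sF$, so the Atiyah class of $\sN$ dies in $H^1(X,\sF^*\otimes\sE nd(\sN))$ and hence lifts through $H^1(X,\sN^*\otimes\sE nd(\sN))$, with the vanishing $c_1(\sN)^{q+1}=0$ then coming from $\wedge^{q+1}\sN^*=0$. Your write-up is a faithful, spelled-out version of exactly that.
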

\begin{proof}
Item (1) follows from an easy (local) computation. The arguments of \cite[Proof of Corollary 3.4]{baum_bott70} give a proof of Item (2).
\end{proof}

The following easy consequence of Lemma \ref{lemma:BB_vanishing} above will prove to be crucial.

\begin{prop}\label{prop:vanishing_chern_classes}
Let $X$ be a complex manifold of dimension $n\ge 2$ and let $Y\subseteq X$ be a compact K\"ahler submanifold. Let $\sF \subset T_X$ be a regular codimension $1$ foliation with normal bundle $\sN$. Suppose that $\sF\cong \sL_1\oplus\cdots\oplus\sL_{n-1}$ for some line bundles $\sL_i$
with $c_1(\sL_1)=\cdots=c_1(\sL_{n-1})\in H^1(X,\Omega_X^1)$.  
Then $$c_1(\sL_i|_Y)^2=c_1(\sN|_Y)^2=c_1(\sL_i|_Y)\cdot c_1(\sN|_Y)\equiv 0.$$ Moreover, the linear subspace $$L:=\langle c_1(\sL_1|_Y),\ldots,c_1(\sL_{n-1}|_Y), c_1(\sN|_Y)\rangle \subseteq H^1(Y,\Omega^1_Y)$$ has dimension at most one.
\end{prop}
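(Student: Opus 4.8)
The plan is to reduce everything to two cup-product computations in $H^2(X,\Omega_X^2)$ and then invoke the Hodge index theorem on $Y$. Write $\ell:=c_1(\sL_1)=\cdots=c_1(\sL_{n-1})\in H^1(X,\Omega_X^1)$. Since all the classes $c_1(\sL_i|_Y)$ coincide with $\ell|_Y$, the space $L$ is spanned by the two classes $\ell|_Y$ and $c_1(\sN|_Y)$, and the whole statement follows once we show that the three products $\ell^2$, $\ell\cdot c_1(\sN)$ and $c_1(\sN)^2$ vanish in $H^2(X,\Omega_X^2)$. Indeed, restricting along $Y\hookrightarrow X$ then gives the three asserted vanishings in $H^2(Y,\Omega_Y^2)$, and the classes $\ell|_Y,c_1(\sN|_Y)\in H^{1,1}(Y)_{\mathbb R}$ span a totally isotropic subspace for the Lorentzian form $a,b\mapsto\int_Y a\wedge b\wedge\omega_Y^{\dim Y-2}$ (with $\omega_Y$ a Kähler class), whence $\dim L\le 1$ by the Hodge index theorem.

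The key point is that both $\ell$ and $c_1(\sN)$ lie in the subspace $W:=\textup{im}\big(H^1(X,\sN^*)\to H^1(X,\Omega_X^1)\big)$ attached to the conormal sequence $0\to\sN^*\to\Omega_X^1\to\sF^*\to 0$. For $c_1(\sN)$ this is exactly Lemma \ref{lemma:BB_vanishing}(2). For $\ell$, by exactness it suffices to prove that the image of $\ell$ in $H^1(X,\sF^*)=\bigoplus_j H^1(X,\sL_j^*)$ vanishes, i.e. that each component $r_j(\ell)\in H^1(X,\sL_j^*)$ is zero. Here I would use that, for $i\ne j$, the line bundle $\sL_i$ carries a partial holomorphic connection along $\sL_j$: for local sections $V$ of $\sL_j$ and $s$ of $\sL_i$ set $\nabla_V s:=\textup{pr}_i([V,s])$, where $\textup{pr}_i\colon\sF\to\sL_i$ is the projection coming from the splitting. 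This is well defined because $[V,s]$ is a section of $\sF$ ($\sF$ being closed under the Lie bracket), it satisfies the Leibniz rule in $s$, and it is $\sO_X$-linear in $V$ precisely because $\textup{pr}_i(V)=0$ for $V$ a section of $\sL_j$ with $j\ne i$ (this is where the splitting is used). Existence of such a connection forces the $\sL_j$-component of the Atiyah class of $\sL_i$ to vanish, i.e. $r_j\big(c_1(\sL_i)\big)=0$ whenever $j\ne i$. Since $c_1(\sL_i)=\ell$ for every $i$ and there are at least two summands, for each fixed $j$ one may choose $i\ne j$ and conclude $r_j(\ell)=0$; hence $\ell\in W$.

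With both classes in $W$ the products vanish for a soft reason: $\sN^*$ has rank one, so the composite $\sN^*\otimes\sN^*\to\Omega_X^1\otimes\Omega_X^1\xrightarrow{\ \wedge\ }\Omega_X^2$ is zero (two sections of a line subbundle of $\Omega_X^1$ are proportional, so their wedge vanishes). Consequently the cup product of any two elements of $W$ is zero in $H^2(X,\Omega_X^2)$, which gives $\ell^2=\ell\cdot c_1(\sN)=c_1(\sN)^2=0$ there. Restricting along $Y\hookrightarrow X$ yields the three displayed vanishings in $H^2(Y,\Omega_Y^2)$, and the Hodge index step above produces $\dim L\le 1$.

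The main obstacle is the second paragraph: producing the partial connections on the $\sL_i$ and observing that the equality of the $c_1(\sL_i)$ collapses every component of $r(\ell)$ to zero, so that $\ell$, and not merely $c_1(\sN)$, lands in $W$. Note this step genuinely requires at least two summands, i.e. $n\ge 3$; the remaining cases $n=2$ or $\dim Y\le 1$ are degenerate, since then either $H^2(Y,\Omega_Y^2)$ carries no nonzero products or $H^{1,1}(Y)$ is itself at most one-dimensional. Everything else—Lemma \ref{lemma:BB_vanishing}(2), the rank-one wedge vanishing, and the Hodge index theorem—is routine.
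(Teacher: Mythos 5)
Your proof is correct and reaches the conclusion by the same overall strategy as the paper: show that both $\ell$ and $c_1(\sN)$ lie in $W=\textup{Im}\big(H^1(X,\sN^*)\to H^1(X,\Omega_X^1)\big)$, kill all cup products using the fact that $\sN^*$ has rank one, and finish with the Hodge index theorem. The only step where you genuinely diverge is the proof that $\ell\in W$. The paper observes that each $\sL_i\subset T_X$ is itself a regular (rank-one) foliation, applies Lemma \ref{lemma:BB_vanishing}~(2) to it to get $c_1(\sN_i)=c_1(\sN)+(n-2)c\in\textup{Ker}\,r_i$ for every $i$, intersects these kernels to conclude $c_1(\sN)+(n-2)c\in W$, and then subtracts $c_1(\sN)\in W$ to get $(n-2)c\in W$. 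You instead build, for $i\neq j$, a partial holomorphic connection on $\sL_i$ along $\sL_j$ via $\nabla_Vs=\textup{pr}_i([V,s])$ — the verification of $\sO_X$-linearity in $V$ and of the Leibniz rule is exactly as you say, using the splitting and the integrability of $\sF$ — and read off $r_j(c_1(\sL_i))=0$ directly as the vanishing of the corresponding component of the Atiyah class. This is really the same Bott-connection mechanism transposed from the normal bundles $\sN_i$ to the summands $\sL_i$ themselves; what it buys you is a slightly cleaner bookkeeping (no need to compute $c_1(\sN_i)$ or to subtract), at the cost of having to justify the obstruction-theoretic statement about partial connections and Atiyah classes rather than citing the packaged lemma. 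Both arguments require $n\ge 3$, so you are in good company there: the paper's own proof only yields $(n-2)c\in W$ and is equally silent on $n=2$. Your closing remark that the case $n=2$ is degenerate is, however, not quite right — if $\dim Y=2$ (so $Y$ is a two-dimensional component of $X$) then $H^{1,1}(Y)$ need not be one-dimensional and $H^2(Y,\Omega_Y^2)$ does carry nonzero products — but since the proposition is only ever invoked with $n\ge 3$, this does not affect anything downstream.
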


\begin{proof}
The second assertion follows immediately from the first using the Hodge index theorem.

Set $c:=c_1(\sL_1)=\cdots=c_1(\sL_{n-1})\in H^1(X,\Omega_X^1)$. Notice that $\sL_i \subset T_X$ defines a regular foliation for any index $i\in I$. Denote by $\sN_i$ its normal bundle. Then Lemma \ref{lemma:BB_vanishing} shows that
$c_1(\sN_i)=c_1(\sN)+(n-2)c\in H^1(X,\Omega^1_X)$ lies in the image of the natural map
$$H^1(X,\sN^*_i) \to H^1(X,\Omega^1_X).$$ On the other hand, we have
\begin{align*}
\bigcap_{1 \le i \le n-1} \textup{Im}\left (H^1(X,\sN^*_i) \to H^1(X,\Omega^1_X)\right)  
& = \bigcap_{1 \le i \le n-1} \textup{Ker}\left (H^1(X,\Omega^1_X) \to H^1(X,\sL^*_i) \right) \\
& = \textup{Ker}\left ( H^1(X,\Omega^1_X) \to \bigoplus_{1 \le i \le n-1} H^1(X,\sL^*_i) \right) 
\\
& = \textup{Ker}\left ( H^1(X,\Omega^1_X) \to H^1(X,\sF^*) \right) \\
& = \textup{Im}\left (H^1(X,\sN^*) \to H^1(X,\Omega^1_X)\right),\\
\end{align*}
and hence $c_1(\sN)+(n-2)c\in H^1(X,\Omega^1_X)$ lies in the image of the natural map
$H^1(X,\sN^*) \to H^1(X,\Omega^1_X)$. By Lemma \ref{lemma:BB_vanishing} again, $c_1(\sN)\in H^1(X,\Omega^1_X)$ lies in the image of the natural map $$H^1(X,\sN^*) \to H^1(X,\Omega^1_X).$$

Now, we have a commutative diagram
\begin{center}
\begin{tikzcd}[row sep=large, column sep=huge]
H^1(X,\sN^*) \ar[r] \ar[d] & H^1(X,\Omega^1_X) \ar[d]\\
H^1(Y,\sN^*|_Y) \ar[r] & H^1(Y,\Omega^1_Y). 
\end{tikzcd}
\end{center}
The first assertion follows easily since $\sN$ has rank $1$ by assumption.
\end{proof}

The proof of Proposition \ref{prop:trivial_representation_torus_quotient} below makes use of the following result.

\begin{lemma}\label{lemma:conductor}
Let $X$ be a complex projective variety, and let $\nu\colon \wt{X} \to X$ be the normalization morphism. Suppose that $\omega_X$ is invertible. Then the following holds.
\begin{enumerate}
\item There exists an effective Weil divisor on $\wt{X}$ such that $\omega_{\wt{X}}\cong \nu^*\omega_X(-E)$.
\item We have $E=0$ if and only if $X$ is normal.
\end{enumerate}
\end{lemma}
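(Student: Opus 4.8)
The plan is to compare the reflexive rank-one sheaves $\omega_{\wt X}$ and $\nu^*\omega_X$ on the normal variety $\wt X$ using Grothendieck duality for the finite birational morphism $\nu$, the defect between them being measured by the conductor. Both sheaves are reflexive ($\omega_{\wt X}$ since $\wt X$ is normal, and $\nu^*\omega_X$ since it is a line bundle), so any isomorphism between them, up to a twist by a divisor, may be tested on an open subset whose complement has codimension at least $2$. I would therefore first prove the statement in codimension $1$ and then extend it by reflexivity.

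First I would observe that $X$ is Gorenstein at every point of codimension at most $1$: for $\xi \in X$ of codimension $1$ the local ring $\cO_{X,\xi}$ is reduced of dimension $1$, hence Cohen--Macaulay, and as $(\omega_X)_\xi$ is free of rank $1$ it is even Gorenstein. Let $U\subseteq X$ be the (open) Gorenstein locus, so $\codim_X(X\setminus U)\ge 2$, and put $\wt U:=\nu^{-1}(U)$; since $\nu$ is finite, $\codim_{\wt X}(\wt X\setminus\wt U)\ge 2$ as well. Over $U$, Grothendieck duality for the finite morphism $\nu$ gives a canonical isomorphism of $\nu_*\cO_{\wt X}$-modules $\nu_*\omega_{\wt X}\cong \cH om_{\cO_X}(\nu_*\cO_{\wt X},\omega_X)$. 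As $\omega_X$ is invertible, the right-hand side equals $\mathfrak{c}\otimes_{\cO_X}\omega_X$, where $\mathfrak{c}:=\cH om_{\cO_X}(\nu_*\cO_{\wt X},\cO_X)$ is, via evaluation at $1$ (an injection because $\nu$ is birational), the conductor ideal; it is an ideal sheaf in both $\cO_X$ and $\nu_*\cO_{\wt X}$. Writing $\mathfrak{c}=\nu_*\mathfrak{c}_{\wt X}$ for the associated ideal sheaf $\mathfrak{c}_{\wt X}\subseteq\cO_{\wt X}$ and using the projection formula, the isomorphism becomes $\omega_{\wt X}\cong\mathfrak{c}_{\wt X}\otimes\nu^*\omega_X$ over $\wt U$.

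It follows that $\mathfrak{c}_{\wt X}\cong\omega_{\wt X}\otimes(\nu^*\omega_X)^{-1}$ is a reflexive rank-one ideal sheaf, hence of the form $\cO_{\wt X}(-E)$ for a unique effective Weil divisor $E$ (effectivity because it is a subsheaf of $\cO_{\wt X}$). This yields $\omega_{\wt X}\cong\nu^*\omega_X(-E)$ over $\wt U$, and since both sides are reflexive and $\wt X\setminus\wt U$ has codimension at least $2$, the isomorphism extends to all of $\wt X$, proving (1). For (2): if $X$ is normal then $\nu$ is an isomorphism and $E=0$. Conversely, $E=0$ means the conductor is trivial at every codimension-one point of $\wt X$, so $\nu$ is an isomorphism over the codimension-one points of $X$, i.e. $X$ is regular in codimension $1$; since $\omega_X$ is invertible, $X$ is also $S_2$ (the dualizing sheaf is always $S_2$, and a locally free $S_2$ sheaf forces $\cO_X$ to be $S_2$), so Serre's criterion shows $X$ is normal.

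The step I expect to be the main obstacle is the duality isomorphism: Grothendieck duality naturally identifies the dualizing complexes, $\nu^!\omega_X^\bullet\cong\omega_{\wt X}^\bullet$, and passing from complexes to the dualizing sheaves requires a Cohen--Macaulay hypothesis on $X$. Restricting to codimension $1$ is exactly what removes this difficulty, because there $X$ is automatically Gorenstein; reflexivity of $\omega_{\wt X}$ and $\nu^*\omega_X$ then transports the conclusion across the non-Gorenstein locus, which has codimension at least $2$.
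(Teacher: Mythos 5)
Your argument is correct and follows essentially the same route as the paper: both identify $\nu_*\omega_{\wt X}$ with $\sH om_{\sO_X}(\nu_*\sO_{\wt X},\omega_X)\cong \mathfrak{c}\otimes\omega_X$ via duality for the finite morphism $\nu$ (the paper simply cites Koll\'ar--Mori, Propositions 5.67--5.68, for this, where you rederive it on the Gorenstein locus and extend across the codimension-$2$ complement using reflexivity of $\omega_{\wt X}$ and $\nu^*\omega_X$), and then read off $E$ from the reflexive conductor ideal on the normal variety $\wt X$. The only divergence is in (2), where the paper argues directly that $E=0$ forces $\sC\cong\sO_X$ and hence that $\nu$ is an isomorphism, while you pass through $R_1$ plus $S_2$ and Serre's criterion; both are fine.
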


\begin{proof}
By \cite[Proposition 5.67]{kollar_mori} and \cite[Proposition 5.68]{kollar_mori} together, we have $\nu_*\omega_{\wt{X}}\cong \sC \otimes \omega_X$, where $\sC:=\sH om_{\sO_X}(\nu_*\sO_{\wt{X}},\sO_X)\subseteq \sO_X$ is the conductor ideal of $\wt{X}$ in $X$. This gives $\omega_{\wt{X}}\cong \wt{\sC}\otimes\nu^*\omega_X$, where 
$\wt{\sC}:=\sC\cdot \sO_{\wt{X}}\subseteq \sO_{\wt{X}}$. Recall that $\omega_{\wt{X}}$ is reflexive since $\wt{X}$ is normal. It follows that $\wt{\sC}$ is reflexive as well. Hence, there exists an effective Weil divisor on $\wt{X}$ such that $\wt{\sC}\cong \sO_{\wt{X}}(-E)$, proving (1).

If $E=0$, then $\sC\cong \sO_{X}$. This immediately implies that $\nu$ is an isomorphism, finishing the proof of the lemma. 
\end{proof}

\begin{prop}\label{prop:trivial_representation_torus_quotient}
Let $X$ be a complex manifold of dimension $n\ge 3$, and let $Y\subseteq X$ be a compact K\"ahler submanifold of dimension $m \ge 2$. Let $\sF \subset T_X$ be a regular codimension $1$ foliation with normal bundle $\sN$. Suppose that $\sF\cong \sL_1\oplus\cdots\oplus\sL_{n-1}$ for some line bundles $\sL_i$ with $c_1(\sL_1)=\cdots=c_1(\sL_{n-1})\in H^1(X,\Omega_X^1)$. 
Let $\sM$ be a line bundle on $Y$ such that $c_1(\sM)\in \langle c_1(\sL_1|_Y),\ldots,c_1(\sL_{n-1}|_Y), c_1(\sN|_Y)\rangle \subseteq H^1(Y,\Omega^1_Y)$. Suppose that 
$\sM$ is nef and $\sM \equiv \sum_{i\in I}b_i B_i$, where $b_i\in\mathbb{Q}_{>0}$ and the $B_i$ are distinct prime divisors. 
Suppose in addition that $\sN_{Y/X}^*|_{B_i}$ is nef if $m<n$. Then the following holds.
\begin{enumerate}
\item For every index $i\in I$, $B_i$ is a finite \'etale quotient of a complex torus.
\item We have $B_i\cap B_j =\emptyset$ if $i\neq j$.
\item The vector bundles $\sN_{B_i/Y}$ and $\sN_{B_i/X}$ are numerically flat. In particular $B_i\subset Y$ is a nef divisor.
\end{enumerate}
\end{prop}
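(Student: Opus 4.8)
The plan is to first pin down how the relevant line bundles restrict to each $B_i$, then propagate numerical flatness through the conormal/normal sequences, and finally extract the torus structure. Throughout I may assume $I\neq\emptyset$.

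\emph{Step 1: the key vanishing $\sM|_{B_i}\equiv 0$.} Since $\sM\equiv\sum_i b_iB_i$ is a nontrivial effective combination, $c_1(\sM)\not\equiv 0$, so by Proposition \ref{prop:vanishing_chern_classes} the space $L$ is a line and every class in it has square zero; in particular $c_1(\sM)^2\equiv 0$. Fixing a Kähler class $\omega$ on $Y$ and expanding
\[
0=c_1(\sM)^2\cdot\omega^{m-2}=\sum_i b_i\big(\sM\cdot B_i\cdot\omega^{m-2}\big)
\]
as a sum of nonnegative terms ($\sM$ nef, $B_i$ effective) forces $\sM\cdot B_i\cdot\omega^{m-2}=0$ for each $i$. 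Thus $\sM|_{B_i}$ is nef of $\omega$-degree zero, and the Hodge index theorem (applied on a resolution of $B_i$ if needed) gives $\sM|_{B_i}\equiv 0$. As $c_1(\sM)$ generates $L$, every class of $L$ restricts to $0$ on $B_i$; hence $c_1(\sL_j)|_{B_i}\equiv 0$ and $c_1(\sN)|_{B_i}\equiv 0$, so $\sL_j^*|_{B_i}$ and $\sN^*|_{B_i}$ are numerically trivial.

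\emph{Step 2: numerical flatness through the normal sequences.} Restricting $0\to\sN^*\to\Omega^1_X\to\sF^*\to 0$ to $B_i$ exhibits $\Omega^1_X|_{B_i}$ as an extension of $\sF^*|_{B_i}=\bigoplus_j\sL_j^*|_{B_i}$ by $\sN^*|_{B_i}$, both numerically trivial; as numerical flatness passes to extensions, $\Omega^1_X|_{B_i}$ is numerically flat. A numerically flat bundle is semistable of slope $0$, so the subbundle $\sN^*_{Y/X}|_{B_i}$ (nef by hypothesis when $m<n$, absent when $m=n$) has slope both $\le 0$ and $\ge 0$, hence $=0$; being nef with $c_1\equiv 0$ it is numerically flat by Remark \ref{rem:num_flat_zero_first_Chern_class}. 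The quotient $\Omega^1_Y|_{B_i}$ is then nef with $c_1\equiv 0$, hence numerically flat. Finally $\sN^*_{B_i/Y}=\sO_Y(-B_i)|_{B_i}$ injects into $\Omega^1_Y|_{B_i}$, so has slope $\le 0$; but restricting $\sM\equiv\sum_jb_jB_j$ and using $\sM|_{B_i}\equiv 0$ yields $\sN^*_{B_i/Y}\equiv\frac1{b_i}\sum_{j\neq i}b_j(B_j\cap B_i)$, an effective class, so slope $\ge 0$. Hence this effective class is numerically trivial and therefore zero: $B_i\cap B_j=\emptyset$ for $j\neq i$, which is (2), and $\sN_{B_i/Y}\equiv 0$.

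\emph{Step 3: torus structure and the remaining bundles.} From $\Omega^1_Y|_{B_i}$ numerically flat we get $K_Y|_{B_i}\equiv 0$, and with $\sN_{B_i/Y}\equiv 0$ and adjunction $\omega_{B_i}\cong(\omega_Y\otimes\sO_Y(B_i))|_{B_i}$ this gives $\omega_{B_i}\equiv 0$. To obtain (1), pull the numerically flat sheaf $\Omega^1_Y|_{B_i}$ back to the normalization $\nu\colon\wt{B_i}\to B_i$ and use the induced generic surjection onto the reflexive differentials of $\wt{B_i}$ to see that $\wt{B_i}$ has numerically flat cotangent sheaf; by the structure theorem for compact Kähler manifolds with numerically flat tangent bundle, $\wt{B_i}$ is smooth and a finite \'etale quotient of a complex torus, so $\omega_{\wt{B_i}}\equiv 0$. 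As $B_i$ is Gorenstein with $\omega_{B_i}\equiv 0$, Lemma \ref{lemma:conductor} forces the effective conductor divisor $E$ (with $\omega_{\wt{B_i}}\cong\nu^*\omega_{B_i}(-E)$) to be numerically trivial, hence $E=0$ and $B_i$ normal; thus $B_i=\wt{B_i}$, giving (1). For (3), in $0\to\sN_{B_i/Y}\to\sN_{B_i/X}\to\sN_{Y/X}|_{B_i}\to 0$ the outer terms are numerically flat ($\sN_{B_i/Y}\equiv 0$, and $\sN_{Y/X}|_{B_i}$ is the dual of the numerically flat $\sN^*_{Y/X}|_{B_i}$), so $\sN_{B_i/X}$ is numerically flat; finally $\sN_{B_i/Y}\equiv 0$ makes $B_i$ a nef divisor.

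The \textbf{main obstacle} is Step 3. The numerical statements, namely (2), the vanishing $\sN_{B_i/Y}\equiv 0$, and the numerical flatness of the restricted ambient (co)tangent bundles, follow cleanly from Steps 1--2; but passing from ``$\Omega^1_Y|_{B_i}$ is numerically flat'' to ``$B_i$ is a \emph{smooth} torus quotient'' requires controlling the a priori singularities of the prime divisor $B_i$. This is precisely where the normalization, together with the conductor computation of Lemma \ref{lemma:conductor} and the relation $\omega_{B_i}\equiv 0$, is needed to force normality and hence smoothness.
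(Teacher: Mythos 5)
Your Steps 1 and 2 track the paper's argument closely and are correct: the vanishing $\sM|_{B_i}\equiv 0$, the numerical flatness of $\Omega^1_X|_{B_i}$, $\sN^*_{Y/X}|_{B_i}$ and $\Omega^1_Y|_{B_i}$, the disjointness $B_i\cap B_j=\emptyset$, and $\sN_{B_i/Y}\equiv 0$ all come out as in the paper (you use slope/semistability language where the paper argues directly with nefness of sub- and quotient bundles, but this is cosmetic). Part (3) is also fine.

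The genuine gap is in your Step 3, precisely at the point you yourself flag as the main obstacle. You pass to the normalization $\nu\colon\wt{B}_i\to B_i$, assert that $\wt{B}_i$ ``has numerically flat cotangent sheaf'' via a generic surjection onto its reflexive differentials, and then invoke ``the structure theorem for compact K\"ahler manifolds with numerically flat tangent bundle'' to conclude that $\wt{B}_i$ is \emph{smooth} and a torus quotient. This is circular: that structure theorem (numerically flat $T$ implies \'etale torus quotient) is a statement \emph{about manifolds} and cannot be used to prove that the a priori singular normal variety $\wt{B}_i$ is a manifold. Moreover ``numerically flat'' is not defined for the reflexive, possibly non-locally-free sheaf $\Omega^{[1]}_{\wt{B}_i}$, and a generically surjective map from a numerically flat bundle onto such a sheaf gives no control of local freeness. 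Since your derivation of $E=0$ (and hence of normality of $B_i$) relies on already knowing $\omega_{\wt{B}_i}\equiv 0$ from the torus structure, the whole of conclusion (1) is unsupported. The paper's route is the reverse of yours and avoids this: from numerical flatness of $\Omega^1_Y|_{B_i}$ one first shows $B_i$ contains \emph{no rational curve} (a numerically flat bundle restricted to $\mathbb{P}^1$ is trivial and admits no nonzero map to $\Omega^1_{\mathbb{P}^1}$); then, if the conductor divisor $E_i\neq 0$, the divisor $K_{\wt{B}_i}$ is not pseudo-effective and \cite[Corollary 0.3]{bdpp} would force $B_i$ to be uniruled, a contradiction, so $E_i=0$ and $B_i$ is normal; BDPP applied again shows $B_i$ has canonical singularities since $K_{B_i}\equiv 0$; the conormal sequence exhibits $\sO_{B_i}(-B_i)$ as a \emph{subbundle} of $\Omega^1_Y|_{B_i}$ (via \cite[Proposition 1.16]{demailly_peternell_schneider94}), so $\Omega^1_{B_i}$ is locally free, and the Zariski--Lipman conjecture for canonical spaces then yields smoothness; only at that point do $c_1\equiv c_2\equiv 0$ and Yau's theorem give the torus quotient. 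You would need to supply some version of this chain (no rational curves $\Rightarrow$ normality $\Rightarrow$ canonical $\Rightarrow$ Zariski--Lipman) to close the gap.
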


\begin{proof}
Suppose that $m<n$. If $m=n$, the proof of the proposition is very similar and therefore omitted.

We may assume without loss of generality that $c_1(\sM)\not\equiv 0$. In other words, assume that $I$ is not empty. By Proposition \ref{prop:vanishing_chern_classes}, we have $c_1(\sM)^2 \equiv 0$. It follows that $\sM|_{B_i}\equiv 0$ for any $i \in I$ since 
$\sM$ is nef by assumption. By Proposition \ref{prop:vanishing_chern_classes} again, 
$$L:=\langle c_1(\sL_1|_Y),\ldots,c_1(\sL_{n-1}|_Y), c_1(\sN|_Y)\rangle = \langle c_1(\sM|_Y)\rangle \subseteq H^1(Y,\Omega^1_Y).$$ 
Thus $\sL_j|_{B_i}\equiv 0$ and $\sN|_{B_i}\equiv 0$.
As a consequence, the vector bundle $\Omega_X^1|_{B_i}$ is numerically flat.
Consider the exact sequence
$$0 \to \sN_{Y/X}^*|_{B_i} \to \Omega_X^1|_{B_i} \to \Omega_Y^1|_{B_i}\to 0.$$
Since $\sN_{Y/X}^*|_{B_i}$ is nef by assumption, we infer that $\sN_{Y/X}^*|_{B_i}$ and
$\Omega_Y^1|_{B_i}$ are numerically flat as well.

Next, consider the exact sequence of sheaves
$$0 \to \sO_{B_i}(-B_i) \to \Omega_Y^1|_{B_i} \to \Omega_{B_i}^1 \to 0.$$
Since $-B_i|_{B_i}\equiv \frac{1}{b_i}\sum_{j\neq i}b_j B_j|_{B_i}$, we conclude that $-B_i|_{B_i}\equiv 0$ and that $\sO_{B_i}(-B_i)$ is a subbundle of $\Omega_Y^1|_{B_i}$ (see \cite[Proposition 1.16]{demailly_peternell_schneider94}). As a consequence, the sheaf 
$\Omega_{B_i}^1$ is locally free and numerically flat. Moreover, $B_i \cap B_j =\emptyset$ if $i\neq j$.

Let $\nu_i\colon \wt{B}_i \to B_i$ be the normalization morphism. By the adjunction formula, 
$$\omega_{B_i} \cong \sO_Y(K_Y+B_i)|_{B_i}\equiv 0.$$ 
In particular, $\omega_{B_i}$ is locally free. By Lemma \ref{lemma:conductor} above, there exists an effective Weil divisor $E_i$ on $\wt{B}_i$ such that $\omega_{\wt{B}_i}\cong \nu_i^*\omega_{B_i}(-E_i)$.

Suppose that $E_i \neq 0$. Then $K_{\wt{B}_i}$ is not pseudo-effective, and hence $B_i$ is uniruled by \cite[Corollary 0.3]{bdpp} applied to a resolution of $\wt{B}_i$.
Let $C \subseteq B_i$ be a rational curve. Then $\Omega^1_Y|_C$ is the trivial vector bundle since $\Omega^1_Y|_C$ is numerically flat, yielding a contradiction. This shows that $E_i=0$, and hence $B_i$ is normal by Lemma \ref{lemma:conductor}. Then \cite[Corollary 0.3]{bdpp} applies to show that $B_i$ has canonical singularities since $K_{B_i}\equiv 0$. By the solution of the Zariski-Lipman conjecture for canonical spaces (see \cite[Theorem 6.1]{greb_kebekus_kovacs_peternell10} or \cite[Theorem 1.1]{druel_zl}), we conclude that $B_i$ is smooth. In addition, we have $c_1(B_i)\equiv 0$ and $c_2(B_i)\equiv 0$ by \cite[Corollary 1.19]{demailly_peternell_schneider94}.
As a classical consequence of Yau's theorem on the existence of a K\"ahler-Einstein metric, $B_i$ is then covered by a complex torus (see \cite[Chapter IV Corollary 4.15]{kobayashi_diff_geom_vb}). This finishes the proof of the proposition.
\end{proof}

\begin{prop}\label{prop:vanishing_chern_classes2}
Let $X$ be a complex manifold of dimension $n\ge 3$, and let $Y\subseteq X$ be a compact K\"ahler submanifold of dimension $m\ge 2$. Let $\sF \subset T_X$ be a regular codimension $1$ foliation. Suppose that $\sF\cong \sL_1\oplus\cdots\oplus\sL_{n-1}$ for some line bundles $\sL_i$
with $c_1(\sL_1)=\cdots=c_1(\sL_{n-1})\in H^1(X,\Omega_X^1)$. Suppose furthermore that $c_k(\sN_{Y/X})=0$ for any $k \in\{1,\ldots,m\}$ if $m<n$. 
Then $c_1(Y)^2=0$ and $c_k(Y)=0$ for any $k \in\{2,\ldots,m\}$. 
Moreover, we have $\chi(Y,\sO_Y)=0$.
\end{prop}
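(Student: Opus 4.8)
The plan is to compute the total Chern class $c(T_Y)$ explicitly in $H^\bullet(Y,\mathbb{C})$, read off the first two assertions, and then feed the resulting vanishings into Hirzebruch--Riemann--Roch to handle $\chi(Y,\sO_Y)$. First I dispose of the case $m=n$, where $Y$ is open in $X$, the normal bundle hypothesis is vacuous, and $T_Y\cong T_X|_Y$; there the computation below applies verbatim with $\sN_{Y/X}$ replaced by the zero bundle. So assume $m<n$. From the normal bundle sequence $0\to T_Y\to T_X|_Y\to \sN_{Y/X}\to 0$ one gets $c(T_Y)\cdot c(\sN_{Y/X})=c(T_X)|_Y$. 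Since $c_k(\sN_{Y/X})=0$ for $1\le k\le m=\dim Y$ by hypothesis, while $c_k(\sN_{Y/X})|_Y=0$ for $k>m$ trivially on the $m$-dimensional $Y$, we have $c(\sN_{Y/X})=1$ in $H^\bullet(Y,\mathbb{C})$, and therefore $c(T_Y)=c(T_X)|_Y$.

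Next I compute $c(T_X)$ from the exact sequence $0\to\sF\to T_X\to\sN\to 0$ together with the splitting $\sF\cong\bigoplus_{i=1}^{n-1}\sL_i$. Writing $c:=c_1(\sL_1)=\cdots=c_1(\sL_{n-1})$ and $\nu:=c_1(\sN)$, this gives $c(T_X)=(1+c)^{n-1}(1+\nu)$. Restricting to $Y$ and invoking Proposition \ref{prop:vanishing_chern_classes}, which yields $(c|_Y)^2=(\nu|_Y)^2=c|_Y\cdot\nu|_Y=0$, every product of two of the classes $c|_Y,\nu|_Y$ vanishes on $Y$, so the expansion collapses to $c(T_Y)=c(T_X)|_Y=1+\big((n-1)\,c|_Y+\nu|_Y\big)$. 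Hence $c_1(Y)=(n-1)\,c|_Y+\nu|_Y$, which squares to $0$ by the same three vanishings, and $c_k(Y)=0$ for all $k\ge 2$, in particular for $2\le k\le m$. This proves the first two assertions.

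For the Euler characteristic I apply Hirzebruch--Riemann--Roch, $\chi(Y,\sO_Y)=\int_Y \textup{td}_m(T_Y)$, where $\textup{td}_m$ denotes the weight-$m$ component of the Todd class, a universal polynomial in $c_1(Y),\ldots,c_m(Y)$. The decisive observation is that for $k\ge 2$ every monomial of $\textup{td}_k$ is either the pure power $c_1^{\,k}$ or contains a factor $c_j$ with $j\ge 2$: the former vanishes because $c_1(Y)^2=0$ forces $c_1(Y)^k=0$ for $k\ge 2$, and the latter because $c_j(Y)=0$. Thus $\textup{td}_k(T_Y)=0$ for every $k\ge 2$, and since $m\ge 2$ we obtain $\chi(Y,\sO_Y)=\int_Y\textup{td}_m(T_Y)=0$. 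The only point that requires care is this structural description of the Todd polynomials in degrees $\ge 2$; everything else is formal Chern class bookkeeping.
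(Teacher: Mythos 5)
Your argument is correct and follows essentially the same route as the paper's proof: Whitney's formula applied to the normal bundle sequence of $Y$ and to $0\to\sF\to T_X\to\sN\to 0$, the vanishing of all degree-two products of $c_1(\sL_i|_Y)$ and $c_1(\sN|_Y)$ from Proposition \ref{prop:vanishing_chern_classes}, and then Hirzebruch--Riemann--Roch. The only difference is that you spell out explicitly the Todd-class bookkeeping (every weight-$k$ monomial with $k\ge 2$ is either $c_1^k$ or contains some $c_j$ with $j\ge 2$) that the paper leaves implicit.
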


\begin{proof}
Suppose that $m<n$. If $m=n$, the proof of the proposition is very similar and therefore omitted.

The Chern polynomial $c_t(T_X|_Y)$ of $T_X|_Y$ satisfies $$c_t(T_X|_Y)=c_t(\sN|_Y)\cdot\prod_{1\le i\le n-1}c_t(\sL_i|_Y),$$ where $\sN$ denotes the normal bundle of $\sF$. Therefore, by Proposition \ref{prop:vanishing_chern_classes}, we have $c_1(T_X|_Y)^2=0$ and 
$c_k(T_X|_Y)=0$ for any $k \in\{2,\ldots,n\}$. On the other hand, 
$$c_t(T_X|_Y)=c_t(\sN_{Y/X})\cdot c_t(T_Y)=c_t(T_Y)$$ since $c_t(\sN_{Y/X})=1$ by assumption.
This easily implies $c_1(Y)^2=0$ and $c_k(Y)=0$ for any $k \in\{2,\ldots,m\}$. Then 
$\chi(Y,\sO_Y)=0$ by the Hirzebruch-Riemann-Roch theorem. This finishes the proof of the proposition. 
\end{proof}

\subsection{Abelian schemes} The proof of Theorem \ref{thm_intro:main} uses Koll\'ar's
characterisation of \'etale quotients of abelian schemes \cite[Theorem 6.3]{kollar_sh_inventiones}. We recall the relevant notion first. Let $X$ be a normal projective variety and let $Y \subseteq X$ be a closed subvariety. We say that X has \textit{generically large fundamental group on $Y$} if for any very general point $y\in Y$ and for every closed and positive-dimensional subvariety $y\in Z \subseteq Y$ with normalization $\wb{Z}$, the image of the natural morphism $\pi_1(\wb{Z}) \to \pi_1(X)$ is infinite. The following observation will prove to be crucial.

\begin{lemma}\label{lemma:injectivity_fundamental_groups}
Let $f \colon X \to Y$ be a projective morphism with connected fibers between smooth algebraic varieties, and let $F$ be a general fiber of $f$. Suppose that every fiber of $f$ $($with its reduced structure$)$ is a finite \'etale quotient of an abelian variety.
Then the homomorphism $\pi_1(F) \to \pi(X)$ is injective. In particular, $X$ has generically large fundamental group on $F$. 
\end{lemma}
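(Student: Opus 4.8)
The plan is to prove injectivity by comparing the homotopy of a general fibre with that of the total space, and then to read off the statement about generically large fundamental groups. First I would reduce to a fibre–bundle situation: by generic smoothness there is a dense Zariski open $Y^\circ\subseteq Y$ over which $f$ is smooth, and since $f$ is also proper, $f^\circ\colon X^\circ:=f^{-1}(Y^\circ)\to Y^\circ$ is a locally trivial fibre bundle for the analytic topology (Ehresmann), with fibre the general fibre $F$. By hypothesis $F$ is a finite \'etale quotient of an abelian variety, so its universal cover is $\mathbb{C}^{\dim F}$; in particular $F$ is aspherical and $\pi_1(F)$ is virtually $\mathbb{Z}^{2\dim F}$. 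The homotopy exact sequence of $f^\circ$ reads $\pi_2(Y^\circ)\xrightarrow{\,\partial\,}\pi_1(F)\to\pi_1(X^\circ)\to\pi_1(Y^\circ)\to 1$, so $\ker\!\big(\pi_1(F)\to\pi_1(X^\circ)\big)=\operatorname{im}\partial$. As $\pi_1(X^\circ)\to\pi_1(X)$ is only surjective, the argument splits into two tasks: showing $\operatorname{im}\partial=1$, and controlling the extra kernel created in passing from $X^\circ$ to $X$.

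For the first task, the image of $\partial$ is contained in the centre of $\pi_1(F)$ (a classical fact about evaluation maps of fibre bundles, i.e. Gottlieb groups). The vanishing $\operatorname{im}\partial=1$ is the crux, and it is precisely here that the projectivity of $X$ is indispensable: the analogous statement is false for general complex-analytic total spaces, since a nontrivial holomorphic principal torus bundle over $\mathbb{P}^1$ is non-K\"ahler and has $\pi_1(\text{fibre})\to\pi_1(\text{total space})$ non-injective. I would derive $\operatorname{im}\partial=1$ from the Hodge theory of the \emph{smooth projective} morphism $f^\circ$ (degeneration at $E_2$ of the Leray spectral sequence and semisimplicity of the monodromy representation, after Deligne), which controls the central, homologically constrained classes in $\operatorname{im}\partial$ and forces them to vanish. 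This yields $\pi_1(F)\hookrightarrow\pi_1(X^\circ)$.

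For the second task I would use the hypothesis on \emph{all} fibres. The kernel of $\pi_1(X^\circ)\to\pi_1(X)$ is normally generated by the meridians of the divisorial components of $f^{-1}(Y\setminus Y^\circ)$. Since every reduced fibre is a smooth abelian quotient, the only codimension-one degeneration of $f$ is a multiple fibre, whose analytic-local model is a free quotient $(\Delta\times A)/G$ with $A$ an abelian variety and $G$ finite; a direct computation in this model shows that the reduced fibre is a smooth divisor whose meridian is already trivial in the local fundamental group while the fibre $\pi_1$ injects, from which $\pi_1(F)\cap\ker\!\big(\pi_1(X^\circ)\to\pi_1(X)\big)=1$ follows. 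Hence $\pi_1(F)\to\pi_1(X)$ is injective. I expect the vanishing of $\partial$ in the previous paragraph to be the main obstacle, exactly because the assertion genuinely requires the K\"ahler hypothesis and fails without it.

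Finally, the assertion that $X$ has generically large fundamental group on $F$ follows formally. Given injectivity, it suffices to show that for every positive-dimensional closed subvariety $Z\subseteq F$ with normalization $\wb{Z}$, the image of $\pi_1(\wb{Z})\to\pi_1(F)$ is infinite. Pulling $Z$ back to the abelian variety covering $F$ and passing to a component, the resulting subvariety generates, via its Albanese map, a positive-dimensional abelian subvariety onto whose (infinite) fundamental group the image of $\pi_1$ surjects up to finite index; since the fundamental group of that abelian variety sits with finite index in $\pi_1(F)$, the image of $\pi_1(\wb{Z})$ in $\pi_1(F)$, and therefore in $\pi_1(X)$ by injectivity, is infinite.
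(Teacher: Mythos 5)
Your overall skeleton (restrict to a good open set $X^\circ$, prove injectivity of $\pi_1(F)\to\pi_1(X^\circ)$ there, then compare $\pi_1(X^\circ)$ with $\pi_1(X)$) matches the paper's, but both of your key steps have genuine gaps. For the first step, the mechanism you propose does not close the argument. Writing $\Lambda=\pi_1(F)$ (abelian, or virtually abelian, so that $\operatorname{im}\partial\subseteq\ker(H_1(F,\mathbb{Z})\to H_1(X^\circ,\mathbb{Z}))$), Deligne's $E_2$-degeneration of the Leray spectral sequence identifies $\ker\bigl(H_1(F,\mathbb{Q})\to H_1(X^\circ,\mathbb{Q})\bigr)$ with the kernel of the projection onto monodromy coinvariants, i.e.\ with the span of the elements $(\gamma-1)v$; semisimplicity then says this is the sum of the nontrivial isotypic pieces of the monodromy representation. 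That is a constraint on $\operatorname{im}\partial$, not a vanishing statement: a nonzero monodromy-stable sublattice inside the nontrivial isotypic part is not excluded by anything you have said. The actual proofs in the literature (and the one the paper invokes, via the proof of Lemma~2.3 in Claudon's paper after first passing, by Nakayama's Theorem~3.14, to a finite \'etale cover on which the fibers become honest abelian varieties) use the structure of the family as a torsion torsor under an abelian scheme --- projectivity produces a multisection, hence finiteness of $\operatorname{im}\partial$, and torsion-freeness of $\pi_1(F)$ then kills it. You would need to supply this (or cite it); the Hodge-theoretic sketch as written does not prove $\operatorname{im}\partial=1$.

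The second step is where your route diverges most from the paper's, and it is also where the more serious problem lies. You take $Y^\circ$ to be the smooth locus, so $X\setminus X^\circ$ may contain divisors and $\pi_1(X^\circ)\to\pi_1(X)$ is only surjective. Your repair has two defects. First, the assertion that the only codimension-one degeneration is a free-quotient multiple fibre $(\Delta\times A)/G$ is precisely the hard content of Nakayama's structure theory of torus fibrations ([Nakayama, Lemma~7.3 and Theorem~7.8]); it is not a consequence of "every reduced fibre is smooth" that can simply be asserted. Second, even granting the local model, knowing that $\pi_1(F)$ injects into the fundamental group of each local model does not yield $\pi_1(F)\cap\ker\bigl(\pi_1(X^\circ)\to\pi_1(X)\bigr)=1$: that kernel is the normal closure \emph{in the global group} $\pi_1(X^\circ)$ of the meridians, and local injectivity gives no control over its intersection with $\pi_1(F)$. (Also, in the model $(\Delta\times A)/\mu_m$ the meridian of the reduced central fibre is generally \emph{not} trivial in the local fundamental group --- it maps onto a generator of $\Lambda'/\Lambda\cong\mu_m$ --- so the "direct computation" you describe is not correct as stated.) The paper avoids this entirely: using Nakayama's results it chooses $Y^\circ$ with complement of codimension at least two over which $f$ is locally homotopically $Q$-smooth, so that $X\setminus X^\circ$ has codimension at least two and $\pi_1(X^\circ)\to\pi_1(X)$ is an isomorphism, and no meridian analysis is needed. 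Your final deduction of generic largeness from injectivity is fine.
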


\begin{proof}
By \cite[Proof of Lemma 2.2]{hoering_nef_cotan} and \cite[Lemma 7.3]{nakayama_tori},
there exists an open subset $Y^\circ\subseteq Y$ with complement of codimension at least $2$ 
such that the restriction $f^\circ$ of $f$ to $X^\circ:=f^{-1}(Y^\circ)$ is locally homotopically  $Q$-smooth (we refer the reader to \cite[Definition 7.2]{nakayama_tori} for this notion). Then 
\cite[Theorem 7.8]{nakayama_tori} applies to show that $f^\circ$ is birational to a $Q$-smooth
$Q$-torus fibration $g^\circ \colon Z^\circ \to Y^\circ$ over $Y^\circ$. In other words, 
there exists a finite morphism $Y_1^\circ \to Y^\circ$ such that the normalization 
$Z_1^\circ$ of $Y_1^\circ \times_{Y^\circ} Z^\circ$ is \'etale over $Z^\circ$ and such that the morphism $Z_1^\circ \to Y_1^\circ$ is smooth with fibers being finite \'etale quotients of abelian varieties. This immediately implies that any fiber of $g^\circ$ (with its reduced structure) is a finite \'etale quotient of an abelian variety. It then follows that $X^\circ/Y^\circ \cong Z^\circ/Y^\circ$. Let $X_1^\circ \to X^\circ$ be the induced finite \'etale cover and denote by $f_1^\circ\colon X_1^\circ \to Y_1^\circ$ the corresponding 
smooth fibration. By \cite[Theorem 3.14]{nakayama_tori}, there exists a finite \'etale cover $X_2^\circ \to X_1^\circ$ such that any fiber of the composition $f_2^\circ \colon X_2^\circ \to X_1^\circ \to Y_1^\circ=:Y_2^\circ$ is an abelian variety.
Let $F_2$ be a general fiber of $f_2^\circ$. By \cite[Proof of Lemma 2.3]{claudon_invariance}, 
the homomorphism $\pi_1(F_2) \to \pi_1(X_2^\circ)$ is injective. Moreover, there is a commutative diagram
\begin{center}
\begin{tikzcd}[row sep=large, column sep=huge]
  \pi_1(F_2)  \ar[d, hook]\ar[r, hook] & \pi_1(X_2^\circ)\ar[d, hook]\\
  \pi_1(F)\ar[r] & \pi_1(X^\circ),   
\end{tikzcd}
\end{center}
where $F$ denotes the image of $F_2$ in $X^\circ$. By \cite[Corollary 3.7]{nakayama_tori}, 
$\pi_1(F)$ is torsion free. This easily implies that the homomorphism $\pi_1(F) \to \pi(X)$ is also injective since $\pi_1(F_2)$ has finite index in $\pi(F)$. Finally, the inclusion induces an isomorphism $\pi_1(X^\circ)\cong \pi_1(X)$ of fundamental groups since $X\setminus X^\circ$ is a closed subset of codimension at least $2$. This finishes the proof of the lemma.
\end{proof}

We will also need the following auxiliary result.

\begin{lemma}\label{lemma:smooth_deformation_torus_quotient}
Let $f \colon X \to Y$ be a smooth projective morphism with connected fibers between smooth algebraic varieties. Suppose that there exists $y_0 \in Y$ such that the fiber $f^{-1}(y_0)$ of $f$ over $y_0$ is a finite \'etale quotient of an abelian variety. Then every fiber of $f$ is a finite \'etale quotient of an abelian variety. Moreover, there exist a dense open set $Y^\circ\subseteq Y$ and a finite \'etale cover $X_1^\circ \to X^\circ:=f^{-1}(Y^\circ)$ such that the fibration $X_1^\circ \to Y_1^\circ$ obtained as the Stein factorization of the composition $X_1^\circ \to X^\circ \to Y^\circ$ is an abelian scheme equipped with a level three structure.
\end{lemma}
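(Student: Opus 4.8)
The plan is to prove the two assertions in turn. First I would establish the deformation invariance by arguing that the rational Chern classes of the fibers vanish identically. Since $Y$ is a variety, it is connected, and since $f$ is smooth the relative tangent bundle $T_{X/Y}$ is a vector bundle on $X$ with $T_{X/Y}|_{F_y}=T_{F_y}$, so that $c_i(T_{X/Y})|_{F_y}=c_i(F_y)$ for every $y$ and every $i\ge 1$. The point is that $c_i(T_{X/Y})$ is a single global class in $H^{2i}(X,\mathbb{Q})$, whence $y\mapsto c_i(F_y)$ is a monodromy-invariant, i.e. Gauss--Manin horizontal, section of the local system $R^{2i}f_*\mathbb{Q}$ over the connected base $Y$. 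At the special point $y_0$ these classes vanish: if $A\to F_{y_0}$ is the finite \'etale cover by an abelian variety, then $c_i(A)=0$ forces $c_i(F_{y_0})=0$ in $H^{2i}(F_{y_0},\mathbb{Q})$ by injectivity of $H^*(F_{y_0},\mathbb{Q})\to H^*(A,\mathbb{Q})$. A horizontal section of a local system over a connected base that vanishes at one point vanishes identically, so $c_i(F_y)=0$ in $H^{2i}(F_y,\mathbb{Q})$ for every $y$ and every $i\ge 1$.

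In particular $c_1(F_y)\equiv 0$ and $c_2(F_y)\equiv 0$. Since $F_y$ is a smooth projective (hence compact K\"ahler) manifold, Yau's theorem in the form of \cite[Chapter IV Corollary 4.15]{kobayashi_diff_geom_vb}, applied exactly as in the proof of Proposition \ref{prop:trivial_representation_torus_quotient}, shows that $F_y$ is covered by a complex torus; being projective, that torus is an abelian variety, so $F_y$ is a finite \'etale quotient of an abelian variety. This proves the first assertion.

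For the second assertion I would follow the line of argument used in the proof of Lemma \ref{lemma:injectivity_fundamental_groups}. Over a suitable dense open subset $Y^\circ\subseteq Y$, \cite[Theorem 3.14]{nakayama_tori} (applied to the smooth fibration $f$, all of whose fibers are now finite \'etale quotients of abelian varieties) produces a finite \'etale cover $X_1^\circ\to X^\circ:=f^{-1}(Y^\circ)$ whose Stein factorization $X_1^\circ\to Y_1^\circ\to Y^\circ$ has $Y_1^\circ\to Y^\circ$ finite \'etale and $g\colon X_1^\circ\to Y_1^\circ$ smooth and proper with every fiber an abelian variety of some fixed dimension $d$. The family $g$ is a torsor under an associated abelian scheme, and such a torsor acquires a section after a further finite \'etale base change (a point exists over the generic point after a finite extension, then spread out and shrink $Y^\circ$); absorbing this cover into $X_1^\circ\to X^\circ$, we may assume $g$ is an abelian scheme carrying the relative polarization coming from projectivity. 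Finally I would trivialize the $3$-torsion: the monodromy representation $\pi_1(Y_1^\circ)\to \textup{GL}\big(H^1(F,\mathbb{Z}/3\mathbb{Z})\big)\cong \textup{GL}_{2d}(\mathbb{Z}/3\mathbb{Z})$ has finite image, so after replacing $Y_1^\circ$ by the corresponding finite \'etale cover (again absorbed into the data) the local system $R^1g_*\,\mathbb{Z}/3\mathbb{Z}$ is trivial and $g$ is an abelian scheme equipped with a level three structure. Since a level three structure rigidifies the moduli problem ($\mathcal{A}_{d,3}$ being a fine moduli scheme), $g$ is then classified by a morphism to $\mathcal{A}_{d,3}$ and is the pullback of the universal abelian scheme, as required.

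The main obstacle is the second assertion: one must combine Nakayama's structure theorems correctly to pass from ``fibers are quotients of abelian varieties'' to a finite \'etale cover with honest abelian-variety fibers whose Stein factorization has finite \'etale base, and then upgrade the resulting smooth family to an actual abelian scheme with level three structure, which requires producing a zero section (trivializing the torsor after a finite cover) and killing the mod-$3$ monodromy while keeping track of the bookkeeping of the several finite \'etale covers. By contrast, the first assertion is essentially routine once the horizontality of the fiberwise Chern classes is in place, since it then reduces to the Yau-type criterion already invoked in Proposition \ref{prop:trivial_representation_torus_quotient}.
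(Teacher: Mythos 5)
Your proof is correct, and the two halves deserve separate comments. For the second assertion (section after a finite \'etale base change, then killing the mod-$3$ monodromy to get a level three structure) your argument is essentially the paper's. For the first assertion, however, you take a genuinely different route. The paper first applies \cite[Theorem 3.14]{nakayama_tori} to replace $X$ by a finite \'etale cover whose fiber over $y_0$ is an honest abelian variety, then uses Ehresmann's theorem to see that every fiber of the new family is homeomorphic to that abelian variety, and finally invokes Catanese's rigidity theorem \cite[Theorem 4.8]{catanese_deformation_types} (a compact K\"ahler manifold homeomorphic to a complex torus is a complex torus) to conclude. You instead observe that $c_i(T_{X/Y})$ restricts on each fiber to $c_i(F_y)$, that these fiberwise classes form a global (hence monodromy-invariant) section of the local system $R^{2i}f_*\mathbb{Q}$, and that the vanishing at $y_0$ (via injectivity of $H^*(F_{y_0},\mathbb{Q})\to H^*(A,\mathbb{Q})$ for the finite cover $A\to F_{y_0}$) propagates to all fibers; the case $i=1,2$ then feeds into the same Yau-type criterion \cite[Chapter IV Corollary 4.15]{kobayashi_diff_geom_vb} already used in Proposition \ref{prop:trivial_representation_torus_quotient}. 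Your route avoids Catanese's topological rigidity theorem entirely and stays within the Chern-class-plus-K\"ahler--Einstein toolkit the paper deploys elsewhere, and it establishes the first assertion for $f$ itself without first passing to an \'etale cover; the paper's route avoids differential geometry in this step and gets the fibers of the cover to be abelian varieties directly, which is what the second assertion needs anyway. Both are valid, and your bookkeeping of the successive finite \'etale covers in the second half is handled correctly.
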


\begin{proof}
By \cite[Theorem 3.14]{nakayama_tori}, there exists a finite \'etale cover $X_1 \to X$ such that the fiber $f_1^{-1}(y_0)$ of the composition $f_1 \colon X_1 \to X \to Y=:Y_1$ is an abelian variety. Notice that $f_1$ is a smooth morphism. By Ehresmann's theorem, any fiber $F_1$ of $f_1$ is homeomorphic to an abelian variety. Then \cite[Theorem 4.8]{catanese_deformation_types} applies to show that $F_1$ is an abelian variety. Finally, shrinking $Y_1$ if necessary, we can find a suitable finite \'etale cover $Y_2\to Y_1$ 
such that the projection morphism $Y_2 \times_{Y_1} X_1 \to Y_2$ has a section.
Then $Y_2 \times_{Y_1} X_1$ is an abelian scheme over $Y_2$. Replacing $Y_2$ by a further finite \'etale cover, we may assume that $Y_2 \times_{Y_1} X_1/Y_2$ admits a level three structure, finishing the proof of the lemma.
\end{proof}

The proof of Theorem \ref{thm_intro:main} makes use of the following result, which might be of independent interest.

\begin{lemma}\label{lemma:contraction_abelian_scheme}
Let $f\colon X \to Y$ be an abelian scheme over a projective base equipped with a level three structure, and let $\beta\colon X \to M$ be a birational morphism onto a smooth projective variety. Then $M$ is an abelian scheme over a smooth projective base $N$ and there exists a birational morphism $Y \to N$ such that $X/Y \cong Y \times_N M/Y$.   
\end{lemma}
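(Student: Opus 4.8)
The plan is to recover the base $N$ and the abelian–scheme structure on $M$ from the family of abelian varieties swept out by the general fibres of $f$, and then to match it with $X/Y$ using the rigidity provided by the level-three structure. First I would record the geometric input. Since $\beta$ is a birational morphism of smooth projective varieties, $\pi_1$ is a birational invariant, so $\pi_1(M) \cong \pi_1(X)$; and $\beta$ is an isomorphism away from its exceptional locus $\Exc(\beta)$. Hence for general $y \in Y$ the fibre $F := f^{-1}(y)$ is disjoint from $\Exc(\beta)$ and maps isomorphically onto a subvariety $\beta(F) \subseteq M$ isomorphic to the abelian variety $F$; as $y$ varies these subvarieties cover a dense open subset of $M$. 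By Lemma~\ref{lemma:injectivity_fundamental_groups} the homomorphism $\pi_1(F) \to \pi_1(X)$ is injective and $X$ has generically large fundamental group on $F$, whence $M$ has generically large fundamental group on the general fibre $\beta(F)$.

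Next I would organise this covering family into a fibration and promote it to an abelian scheme. Normalising the closure of the locus $\{[\beta(F)]\}$ inside a suitable Chow variety of $M$ produces a smooth projective variety $N$, an almost holomorphic map $M \map N$ with general fibre $\beta(F)$, and a birational map $Y \map N$, $y \mapsto [\beta(F)]$, which is generically injective since $\beta|_F$ recovers $F$ and hence $y$. Using that the general fibre of $M \map N$ is an abelian variety, that $M$ has generically large fundamental group along it, and that $\pi_1(M)\cong\pi_1(X)$, I would then apply Koll\'ar's characterisation of \'etale quotients of abelian schemes \cite[Theorem 6.3]{kollar_sh_inventiones} to conclude that, after replacing $M \map N$ by a genuine morphism onto a smooth projective base, $M \to N$ is an abelian scheme.

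Finally I would identify the two structures. The zero section $e\colon Y \to X$ of the abelian scheme $X/Y$ composes with $\beta$ to a morphism $Y \cong e(Y) \to M$ whose image is the zero section of $M/N$; identifying the latter with $N$ gives a morphism $\tau\colon Y \to N$, which is birational because the rational map $Y \map N$ above is. Since a level-three structure admits no nontrivial automorphisms, the structure on $X/Y$ descends to $M/N$ and the classifying morphism of $X/Y$ to $\mathcal{A}_{g,3}$ factors through $\tau$. Pulling back the universal family along $\tau$ then yields an isomorphism $X \cong Y \times_N M$ of $Y$-schemes under which the second projection is $\beta$, so $M/N$ and $\tau\colon Y \to N$ are as required.

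The main obstacle is the passage carried out in the second paragraph: turning the algebraic family $\{\beta(F)\}$ of abelian varieties into a genuine abelian scheme over a smooth base while simultaneously promoting $M \map N$ and $Y \map N$ to morphisms. The delicate point is that a morphism from an abelian variety may contract subvarieties carrying no rational curves, so the absence of rational curves in the fibres of $f$ does not by itself prevent $\beta$ from contracting a ``vertical'' subvariety inside some $f^{-1}(y)$; ruling this out and controlling the singular fibres is precisely what the generically–large–fundamental–group hypothesis and \cite[Theorem 6.3]{kollar_sh_inventiones} are designed to handle, and checking their applicability together with the smoothness of $N$ is the crux of the argument.
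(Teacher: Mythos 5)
Your overall shape (recover $N$ from the family of fibres, factor the classifying map to $\sA(3)$ through $Y\to N$, pull back) matches the paper's, but the engine you propose for the hard step does not work, and the hard step is left unproved. Kollár's Theorem 6.3 is a \emph{birational} statement: it produces, after passing to a finite \'etale cover, a model that is merely \emph{birational} to an abelian group scheme. It cannot certify that $M$ itself --- with no cover and no birational modification --- is an abelian scheme over a smooth projective base, and that is exactly the content of the lemma. Indeed, in Section \ref{section:nef_reduction} the paper invokes this very lemma precisely to upgrade the output of Theorem 6.3 from ``birational to an abelian scheme'' to ``is an abelian scheme''; your argument would feed Theorem 6.3 back into the proof and still face the same upgrade problem, which you yourself flag as ``the crux'' without resolving it. The same issue affects your last paragraph: the claim that the classifying morphism $Y\to\sA(3)$ factors through $\tau$, and that $M$ is isomorphic to the resulting pullback family (rather than merely birational to it), is asserted but not argued.

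The missing idea is \cite[Lemma 5.9.3]{kollar_sh_inventiones}: for any rational curve $B\subseteq Y$ the restricted family trivializes, $\big(f^{-1}(B)/B,\Sigma|_{f^{-1}(B)}\big)\cong(B\times A/B,B\times\{0_A\})$, so every rational curve in $X$ lying over $B$ is a horizontal slice $B\times\{a\}$. Since every positive-dimensional fibre of the birational morphism $\beta$ is rationally chain connected, such fibres are unions of horizontal slices; applied to the neutral section $\Sigma$ this gives $\Sigma=\beta^{-1}(\beta(\Sigma))$, so with $N$ the normalization of $\beta(\Sigma)$ the rigidity lemma factors $Y\to\sA(3)$ through $\eta\colon Y\cong\Sigma\to N$. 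This produces the pullback abelian scheme $M_1$ over $N$ and a morphism $\beta_1\colon X\to M_1$, and a second application of Lemma 5.9.3 plus rigidity shows that $\beta$ and $\beta_1$ contract exactly the same subvarieties, whence $M\cong M_1$. This is entirely elementary and uses neither fundamental groups nor Chow varieties; your appeal to Lemma \ref{lemma:injectivity_fundamental_groups} and generic largeness of $\pi_1$ is not needed and does not substitute for the rational-curve analysis that actually controls $\Exc(\beta)$.
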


\begin{proof}
Let $\Sigma\subseteq X$ be the neutral section of $f$, and let $N$ be the normalization of $\Lambda:=\beta(\Sigma)$. Let also $\sA(3)$ be the fine moduli space of polarized abelian varieties with a level three structure.

Let $C\subseteq X$ be a rational curve such that $\dim \beta(C)=0$, and set $B:=f(C)$. By \cite[Lemma 5.9.3]{kollar_sh_inventiones}, $\big(f^{-1}(B)/B,\Sigma|_{f^{-1}(B)}\big)\cong (B \times A/B,  B\times\{0_A\})$, where $(A,0_A)$ is an abelian variety. Notice that $C=B 
\times \{a_0\}$ for some point $a_0 \in A$. In particular, either $C$ is disjoint from $\Sigma$ or $C$ is contained in $\Sigma$. This immediately implies that $\Sigma=\beta^{-1}(\Lambda)$
since every positive-dimensional fiber of $\beta$ is rationally chain connected. By the rigidity lemma (see \cite[Lemma 1.15]{debarre}), the morphism $Y \to \sA(3)$ corresponding to $f$ factorizes through the map $\eta\colon Y\cong \Sigma \to N$ induced by $\beta|_\Sigma$. In other words, the abelian scheme $f$ is the pull-back of an abelian scheme $M_1$ over $N$ via $\eta$. Let $\beta_1\colon X \to M_1$ be the induced morphism. 

To prove the statement, it suffices to show that there exists an isomorphism $\tau \colon M \to M_1$ such that $\beta_1=\tau\circ\beta$ or equivalently that $\beta$ (resp. $\beta_1$) factorizes through $\beta_1$ (resp. $\beta$).

Let $F$ be a positive-dimensional fiber of $\beta$. Recall that $F$ is rationally chain connected. Set $G:=f(F)$. By \cite[Lemma 5.9.3]{kollar_sh_inventiones} again, $\big(f^{-1}(G)/G,\Sigma|_{f^{-1}(G)}\big)\cong (G \times A_2/G,  G\times\{0_{A_2}\})$, where $(A_2,0_{A_2})$ is an abelian variety. Moreover, $F=G \times \{a_2\}$ for some point $a_2 \in A_2$. Since $G$ is a fiber of $\eta$, $F$ is contracted by $\beta_1$. By the rigidity lemma (see \cite[Lemma 1.15]{debarre}), $\beta_1$ factorizes through $\beta$. Let now $F_1$ be a fiber of $\beta_1$.
Set $G_1:=f(F_1)$. Then $G_1$ is a fiber of $\eta$. Moreover, $\big(f^{-1}(G_1)/G_1,\Sigma|_{f^{-1}(G_1)}\big) \cong (G_1 \times A_1/G_1, G_1 \times \{0_{A_1}\})$ for some abelian variety $(A_1,0_{A_1})$, and $F_1\cong G_1 \times \{a_1\}$ for some point $a_1 \in A_1$. Since $G_1 \times \{0_{A_1}\}\cong \Sigma\cap f^{-1}(G_1)$ is contracted by $\beta$, $F_1$ is contracted by $\beta$ as well. By the rigidity lemma (see \cite[Lemma 1.15]{debarre}), $\beta$ factorizes through $\beta_1$. This finishes the proof of the lemma.
\end{proof}

\subsection{Descent of line bundles} Let $f \colon X \to Y$ be a morphism of algebraic varieties, and let $\sL$ be a line bundle on $X$. We give sufficient conditions that guarantee that there exist a line bundle $\sM$ on $Y$ and a positive integer $m$ such that $\sL^{\otimes m} \cong f^*\sM$. The following observation is rather standard. We include a proof here for the reader's convenience.  

\begin{lemma}\label{lemma:pull_back}
Let $f \colon X \to Y$ be a flat projective morphism with integral fibers between smooth complex algebraic varieties. Let $\sL$ be a line bundle on $X$, and let $F$ be a general fiber of $f$. Suppose that $\sL|_F$ is a torsion line bundle. Then there exists a line bundle $\sM$ on $Y$ and a positive integer $m$ such that
$\sL^{\otimes m} \cong f^*\sM$.
\end{lemma}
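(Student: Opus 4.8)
The plan is to first reduce the statement to the claim that some positive power $\sL^{\otimes m}$ restricts to the trivial line bundle on \emph{every} fibre of $f$, and then to descend that power along $f$ by cohomology and base change. The descent step is routine once fibrewise triviality is known: if $\sN:=\sL^{\otimes m}$ satisfies $\sN|_{X_y}\cong\sO_{X_y}$ for all $y\in Y$, then the function $y\mapsto h^0(X_y,\sN|_{X_y})$ is constant equal to $1$ (here the fibres are integral and proper, so $H^0(X_y,\sO_{X_y})=\mathbb{C}$), and \cite[Corollary 12.9]{hartshorne77} together with the base change theorem \cite[Theorem 12.11]{hartshorne77} show that $\sM:=f_*\sN$ is a line bundle on $Y$ whose formation commutes with base change. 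The natural map $f^*f_*\sN\to\sN$ is then an isomorphism on each fibre — on $X_y$ it is the evaluation map $H^0(X_y,\sN|_{X_y})\otimes\sO_{X_y}\to\sN|_{X_y}$ of a trivial bundle, hence an isomorphism — and a morphism of line bundles that is an isomorphism on every fibre is itself an isomorphism. This yields $f^*\sM\cong\sL^{\otimes m}$.

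The whole point is therefore to produce a single integer $m$ that works \emph{simultaneously} on all fibres, and this is where the main work, and the main obstacle, lies: a priori the torsion order of $\sL|_{X_y}$ could vary with $y$, and the restriction to special fibres need not be torsion at all. To control this I would, for each $k\ge 1$, consider the locus
$$T_k:=\{\,y\in Y:\sL^{\otimes k}|_{X_y}\cong\sO_{X_y}\,\}.$$
The key observation is that each $T_k$ is \emph{closed}. Indeed, by upper semicontinuity of cohomology \cite[Theorem 12.8]{hartshorne77} applied to the flat sheaves $\sL^{\otimes k}$ and $(\sL^*)^{\otimes k}$, both loci $\{y:h^0(X_y,\sL^{\otimes k}|_{X_y})\ge 1\}$ and $\{y:h^0(X_y,(\sL^*)^{\otimes k}|_{X_y})\ge 1\}$ are closed; and on an integral projective fibre a line bundle is trivial precisely when it and its dual both carry a nonzero section (multiplying the two sections gives a nonzero element of $H^0(X_y,\sO_{X_y})=\mathbb{C}$, so the section of $\sL^{\otimes k}|_{X_y}$ vanishes nowhere). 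Hence $T_k$ is the intersection of these two closed sets.

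It remains to locate a suitable $m$. Let $\eta$ denote the generic point of $Y$; I claim $\eta\in T_m$ for some $m$, i.e.\ $\sL$ restricts to a torsion bundle on the generic fibre $X_\eta$. If "general fibre" is read as the fibre over $\eta$, this is the hypothesis; otherwise the hypothesis gives $y\in\bigcup_{k\ge 1}T_k$ for all $y$ in a dense subset of $Y$, and since each $T_k$ is closed, were $\eta$ to avoid every $T_k$ then each $T_k$ would be a proper closed subset, so $\bigcup_k T_k$ would be a countable union of proper closed subsets and could not contain a very general point of the variety $Y$ over the uncountable field $\mathbb{C}$ — a contradiction. Thus $\eta\in T_m$ for some $m\ge 1$. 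Now $T_m$ is closed and contains the generic point of the irreducible variety $Y$, so $T_m=Y$; that is, $\sL^{\otimes m}|_{X_y}\cong\sO_{X_y}$ for every $y\in Y$, which is exactly the fibrewise triviality needed to run the descent above. The only delicate point is the uniformity of $m$, which the closedness of the $T_k$ together with the irreducibility of $Y$ resolves cleanly.
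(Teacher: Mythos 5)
Your proof is correct and follows essentially the same route as the paper: semicontinuity of $h^0$ applied to both $\sL^{\otimes m}$ and its dual to get triviality on every integral fibre, followed by cohomology and base change (\cite[Corollary 12.9 and Theorem 12.11]{hartshorne77}) to descend. The only difference is that you spell out, via the closedness of the loci $T_k$ and a very-general-point argument, why a single exponent $m$ works uniformly — a point the paper's proof takes for granted in its opening sentence.
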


\begin{proof}
Let $m$ be a positive integer such that the set of points $y\in Y$ with 
$\sL^{\otimes m}|_{X_y} \cong \sO_{X_y}$ is dense.
Because the functions $y \mapsto h^0\big(X_y,\sL^{\otimes \pm m}|_{X_y}\big)$ are upper semicontinuous in the Zariski topology on $Y$ (see \cite[Theorem 12.8]{hartshorne77}), we have 
$h^0\big(X_y,\sL^{\otimes \pm m}|_{X_y}\big)=1$ for every $y\in Y$. This implies that 
$\sL^{\otimes m}|_{X_y} \cong \sO_{X_y}$ since $X_y$ is integral by assumption. By 
\cite[Corollary 12.9]{hartshorne77} together with the base change theorem \cite[Theorem 12.11]{hartshorne77}, $\sM:=f_*\sL^{\otimes m}$ is a line bundle and the formation of $f_*\sL^{\otimes m}$ commutes with arbitrary base change. This easily implies that $\sL^{\otimes m} \cong f^*\sM$.
\end{proof}

\begin{rem}
Setting and notation as in Lemma \ref{lemma:pull_back}. Suppose in addition that $\sL|_F \cong \sO_F$. Then the proof of Lemma \ref{lemma:pull_back} shows that there exists a line bundle $\sM$ on $Y$ such that $\sL\cong f^*\sM$. 
\end{rem}

\begin{prop}\label{prop:pull-back}
Let $X$ be a complex projective manifold and let $f \colon X \to B$ be a surjective morphism with connected fibers onto a smooth complete curve. Let $\sL$ be a nef line bundle on $X$ such that $\sL|_F$ is torsion, where $F$ is a general fiber of $f$. Then there exist a line bundle $\sM$ on $B$ and a positive integer $m$ such that $\sL^{\otimes m}\cong f^*\sM$. In particular, $\sL\equiv_B 0$.
\end{prop}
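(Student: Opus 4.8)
The plan is to reduce to Lemma \ref{lemma:pull_back} over the open part of $B$ where the fibres are integral, and then to control the resulting vertical correction divisor over the finitely many remaining points by combining the nefness of $\sL$ with the negativity of fibre components.

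First, since $X$ is smooth (hence Cohen--Macaulay) and $B$ is a smooth curve, every fibre of $f$ is a Cartier divisor of pure dimension $n-1$, so $f$ is flat and $\sO_X(X_b)\cong f^*\sO_B(b)$ for every $b\in B$. The generic fibre is integral, so by openness of the integral-fibre locus there is a dense open $B^\circ\subseteq B$ with finite complement over which $f^\circ\colon X^\circ:=f^{-1}(B^\circ)\to B^\circ$ has integral fibres. Applying Lemma \ref{lemma:pull_back} to $f^\circ$ yields a positive integer $m_0$ and a line bundle $\sM^\circ$ on $B^\circ$ with $\sL^{\otimes m_0}|_{X^\circ}\cong (f^\circ)^*\sM^\circ$. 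As $B$ is a smooth curve, $\sM^\circ$ extends to a line bundle $\sM_0$ on $B$; then $\sN:=\sL^{\otimes m_0}\otimes f^*\sM_0^{-1}$ is trivial on $X^\circ$, so $\sN\cong\sO_X(D)$ for some divisor $D$ supported on the finitely many fibres lying over $B\setminus B^\circ$.

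Next I would analyse $D$. For any curve $\gamma$ contracted by $f$ one has $f^*\sM_0\cdot\gamma=0$, hence $D\cdot\gamma=m_0\,(\sL\cdot\gamma)\ge 0$ because $\sL$ is nef; thus $D$ is $f$-nef. To upgrade this to numerical $f$-triviality I would cut $X$ by $n-2$ general very ample divisors to obtain a smooth surface $S$ with induced fibration $g=f|_S\colon S\to B$ (the cases $n\le 2$ being trivial or direct). The restriction $D|_S$ is again $g$-nef and is supported on fibres of $g$, so it meets a general fibre in degree $0$; writing a bad fibre of $g$ as $S_b=\sum_i m_i C_i$ and using $\sum_i m_i\,(D|_S\cdot C_i)=D|_S\cdot S_b=D|_S\cdot(\text{general fibre})=0$ together with $D|_S\cdot C_i\ge 0$ forces $D|_S\cdot C_i=0$ for every $i$. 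Zariski's lemma on $S$ then shows $D|_S$ is numerically proportional to the fibre, and running this over the finitely many bad points gives $D\equiv_B 0$. By the negativity lemma for vertical divisors over a curve (the radical of the fibrewise intersection form being spanned by the full fibre), a vertical divisor that is numerically $f$-trivial is a pullback: $D=f^*\delta$ for some $\bQ$-divisor $\delta$ on $B$. Choosing $p>0$ to clear the denominators of $\delta$, the divisor $pD=f^*(p\delta)$ is the pullback of an integral divisor, so $\sN^{\otimes p}\cong f^*\sO_B(p\delta)$ and hence $\sL^{\otimes m}\cong f^*\sM$ with $m:=pm_0$ and $\sM:=\sM_0^{\otimes p}\otimes\sO_B(p\delta)$. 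Since $\sM$ lives on the curve $B$, we get $f^*\sM\cdot\gamma=0$ for every contracted curve $\gamma$, whence $\sL\equiv_B 0$.

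The main obstacle is the presence of non-integral (in particular multiple or reducible) fibres: Lemma \ref{lemma:pull_back} applies only over the integral-fibre locus, and all the real work lies in showing that the vertical correction divisor $D$ over the bad points is a genuine pullback. Here the two inputs---nefness of $\sL$ (to make $D$ be $f$-nef) and the negativity of fibre components (Zariski's lemma, accessed by cutting down to a surface)---are exactly what force $D$ to be fibrewise numerically trivial and hence a vertical pullback; allowing the power $m$ is what lets us pass from the a priori rational divisor $\delta$ to an honest line bundle $\sM$.
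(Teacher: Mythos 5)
Your proof is correct and follows essentially the same route as the paper's: trivialize a power of $\sL$ over a dense open subset of $B$ (the paper does this directly via semicontinuity over the smooth locus of $f$ rather than citing Lemma \ref{lemma:pull_back}, but the content is identical), write the discrepancy as a vertical divisor $D$, cut down to a general complete-intersection surface $S$, and combine nefness with Zariski's Lemma to show that a multiple of $D|_S$ is pulled back from $B$. The only real difference is the final descent from $S$ to $X$: the paper invokes the Grothendieck--Lefschetz hyperplane theorem to lift the relation $m_2D|_S=f|_S^*G$ to $X$, whereas you match coefficients of the vertical prime divisors (each of which meets a general $S$ in a distinct irreducible curve), which works equally well.
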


\begin{proof}
Let $B^\circ \subseteq B$ be a dense open set such that $f^\circ:=f|_{X^\circ}$ is a smooth morphism, where $X^\circ:=f^{-1}(B^\circ)$. Let $m$ be a positive integer. Because the functions $b \mapsto h^0\big(X_b,\sL^{\otimes \pm m}|_{X_b}\big)$ are upper semicontinuous in the Zariski topology on $B$ (see \cite[Theorem 12.8]{hartshorne77}), the set of points $b$ on $B^\circ$ such that $\sL^{\otimes m}|_{X_b}\cong \sO_{X_b}$ is closed. As a consequence, there exists a positive integer $m_1$ such that $\sL^{\otimes m_1}|_{X_b}\cong \sO_{X_b}$ for all $b \in B^\circ$.
From \cite[Corollary 12.9]{hartshorne77}, we see that $f^\circ_*(\sL^{\otimes m_1}|_{X^\circ})$ is a line bundle. Thus, there exists a divisor $D$ on $X$ with $f(\textup{Supp}\,D)\subsetneq B$
such that $\sL^{\otimes m_1}\cong \sO_X(D)$.

Let $S \subset X$ be a $2$-dimensional complete intersection of general elements of a very ample linear system on $X$. This is a smooth surface in $X$ and the restriction $f|_S$ of $f$ to $S$ is a surjective morphism with connected fibers onto $B$. By Zariski's Lemma, we have $D|_S^2 \le 0$. Since $D$ is nef by assumption, we must have $D|_S^2 = 0$. By Zariski's Lemma again, there exist a positive integer $m_2$ and a divisor $G$ on $B$ such that $m_2 D|_S=f|_S^*G$. The claim now follows from the Grothendieck-Lefchetz hyperplane theorem.
\end{proof}

The following result is an easy consequence of Proposition \ref{prop:pull-back} above.

\begin{cor}\label{cor:numerically trivial}
Let $f\colon X \to Y$ be an equidimensional morphism with connected fibers between normal projective varieties, and let $\sL$ be a nef line bundle on $X$ such that $\sL|_F$ is torsion, where $F$ is a general fiber of $f$. Then $\sL\equiv_Y 0$.
\end{cor}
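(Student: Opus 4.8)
The strategy is to reduce to the situation of Proposition \ref{prop:pull-back} by restricting the family to a suitable curve in the base passing through a prescribed point. Since $\sL$ is nef, to prove $\sL\equiv_Y 0$ it is enough to show that $\sL\cdot C=0$ for every irreducible curve $C\subseteq X$ contracted by $f$; so I fix such a curve and set $y_0:=f(C)\in Y$. First I would produce an irreducible curve $B\subseteq Y$ passing through $y_0$ whose general point is a general point of $Y$: if $\dim Y=1$ take $B=Y$, and if $\dim Y\ge 2$ take $B$ to be a general complete intersection of hyperplane sections of $Y$ passing through $y_0$. As the hyperplanes through $y_0$ vary the resulting curves sweep out $Y$, so the general point of a general such $B$ is indeed general in $Y$.

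Next I would restrict the family to $B$. Because $f$ is equidimensional, $X_B:=f^{-1}(B)$ is of pure dimension $\dim F+1$, and each of its irreducible components dominates $B$ (a component contracted to a point of $B$ would be a fiber of $f$ of dimension $\dim F+1$, absurd). In particular the component $X_B^{\,\prime}$ of $X_B$ containing $C$ dominates $B$. Let $\tilde B\to B$ be the normalization, let $W$ be a resolution of singularities of $X_B^{\,\prime}$, and let $p\colon W\to X$ be the induced morphism; since $W$ is smooth and dominates $B$, the morphism $W\to B$ factors through $\tilde B$, giving $h\colon W\to\tilde B$ onto a smooth complete curve. Replacing $\tilde B$ by the base of the Stein factorization of $h$ (again a smooth complete curve), I may assume that $h$ has connected fibers.

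Now I would apply Proposition \ref{prop:pull-back} to $h\colon W\to\tilde B$ and the nef line bundle $p^*\sL$. The general fiber of $h$ is birational to a component of a general fiber $F$ of $f$ (the general point of $B$ being general in $Y$), and $\sL|_F$ is torsion by hypothesis, so $p^*\sL$ restricts to a torsion line bundle on the general fiber of $h$. Proposition \ref{prop:pull-back} then yields $p^*\sL\equiv_{\tilde B}0$; in particular $(p^*\sL)\cdot\gamma=0$ for every curve $\gamma$ contained in a fiber of $h$. To recover the original curve, note that by equidimensionality the fiber of $h$ over a point of $\tilde B$ lying above $y_0$ surjects onto the component of $f^{-1}(y_0)\cap X_B^{\,\prime}$ containing $C$, so there is a curve $\gamma$ in that fiber with $p_*\gamma=(\deg p|_\gamma)\,C$. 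Then $0=(p^*\sL)\cdot\gamma=(\deg p|_\gamma)\,(\sL\cdot C)$ forces $\sL\cdot C=0$, as required.

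The main obstacle is the geometric bookkeeping in the middle steps: one must arrange \emph{simultaneously} that the general fiber of the auxiliary family over $\tilde B$ is a component of a general fiber of $f$—so that the torsion hypothesis transfers—and that its special fiber over a point above $y_0$ captures the given contracted curve $C$. It is precisely the equidimensionality of $f$ that reconciles these two demands, by forcing every component of $f^{-1}(B)$ to dominate $B$ and to meet $f^{-1}(y_0)$ in the expected dimension. The subsequent passage through normalization, resolution and Stein factorization alters $W$ and $\tilde B$ only birationally and is harmless for the numerical conclusion.
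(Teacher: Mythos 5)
Your argument is correct and follows essentially the same route as the paper's proof: both reduce to Proposition \ref{prop:pull-back} by restricting the family over a curve in $Y$ passing through the image of the given contracted curve and through a general point, and then passing to a resolution of the total space over that curve. The paper packages the middle steps slightly differently (it forms the fiber product $B\times_Y X$ and deduces its irreducibility from Chevalley's theorem that an equidimensional morphism is universally open, rather than taking a component of $f^{-1}(B)$ for a complete-intersection curve $B$), but this is only a cosmetic difference, and your explicit push-pull at the end correctly supplies the step the paper leaves implicit.
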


\begin{proof}
Notice that a general fiber of $f$ is irreducible by \cite[5.5]{debarre}. By a theorem of Chevalley (see \cite[Corollaire 14.4.4]{ega28}, the morphism $f$ is universally open.

Let $y \in Y$ be a point and let $B \to Y$ be a morphism from a smooth complete curve $B$ whose image contains $y$ and a general point in $Y$. Observe that product $B \times_Y X$ is irreducible since the projection $B \times_Y X \to B$ is open with irreducible general fibers. Let $Z$ be a resolution of $(B \times_Y X)_{\textup{red}}$. Our claim follows from Proposition \ref{prop:pull-back} applied to the pull-back of $\sL$ to $Z$. 
\end{proof}

We will also need the following elementary observation.

\begin{lemma}\label{lemma:hodge_index_theorem}
Let $f \colon X \to Y$ be a surjective morphism of normal projective varieties with $n:=\dim X \ge 2$ and $m:=\dim Y  \ge 1$. Let $D$ be a prime $\mathbb{Q}$-Cartier divisor on $X$ such that $\dim f(D)=0$. Suppose in addition that $D^2\cdot H^{n-2} \ge 0$ for some very ample divisor $H$ on $X$. Then $\dim Y = \kappa(D) = 1$, and $f$ is induced by the linear system $|mD|$ for $m$ sufficiently large.
\end{lemma}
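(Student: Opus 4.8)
The plan is to cut $X$ down to a surface and run a Hodge-index argument there. Fix $n-2$ general members $H_1,\dots,H_{n-2}\in|H|$ and set $S:=H_1\cap\cdots\cap H_{n-2}$. By Bertini $S$ is a normal projective surface (and we may pass to a resolution without affecting the intersection numbers of the $\mathbb{Q}$-Cartier divisor $D|_S$ below), and $D_S:=D|_S$ is a nonzero effective $\mathbb{Q}$-divisor which, since $D$ is prime of dimension $n-1$, is irreducible by Bertini irreducibility. By construction $D_S^2=D^2\cdot H^{n-2}\ge 0$, and because $D$ is contracted by $f$ the restriction $f_S:=f|_S\colon S\to Y$ contracts $D_S$ to a point.

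First I would prove $\dim Y=1$ by contradiction. Suppose $m=\dim Y\ge 2$. A general fibre of $f$ has dimension $n-m\le n-2$, so a general codimension-$(n-2)$ linear section meets it in dimension $\le 0$; hence $f_S$ is generically finite onto a surface, so $\dim f(S)=2$. For an ample divisor $A$ on $Y$ the class $N:=f_S^*A$ is then nef with $N^2=\deg(f_S)\,(A|_{f(S)})^2>0$, hence big and nef, while $N\cdot D_S=0$ since $D_S$ is $f_S$-contracted. As $D_S\cdot H|_S>0$ we have $D_S\not\equiv 0$, and the Hodge index theorem forces $D_S^2<0$, contradicting $D_S^2\ge 0$. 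Therefore $\dim Y=1$.

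With $\dim Y=1$ the map $f_S\colon S\to Y$ is a fibration onto a curve contracting the irreducible curve $D_S$ into a fibre. Passing to the Stein factorisation $g\colon X\to Y''$ of $f$ (so that $Y''$ is a smooth curve and $g$ has connected fibres), $D$ is $g$-contracted to a point $y_0''$, and for general $S$ the morphism $g|_S\colon S\to Y''$ is the Stein factorisation of $f_S$. Zariski's lemma applied to $g|_S$ gives $D_S^2\le 0$, whence $D_S^2=0$ and $D_S$ is a positive rational multiple of a connected fibre $\Phi=\sum_j m_j C_j$ of $g|_S$. Since $D_S$ is irreducible, comparing coefficients forces $\Phi=m_1 D_S$, so $(g|_S)^*y_0''=m_1 D_S=(m_1 D)|_S$. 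A general surface $S$ meets every divisorial component of $g^{-1}(y_0'')$ in a curve, so this relation shows that $D$ is the only codimension-one component of $g^{-1}(y_0'')$ and that $m_1 D=g^*y_0''$ as Cartier divisors on $X$.

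Finally, from $\sO_X(m_1 D)\cong g^*\sO_{Y''}(y_0'')$ and $g_*\sO_X\cong\sO_{Y''}$ I would deduce $H^0\big(X,\sO_X(km_1 D)\big)\cong H^0\big(Y'',\sO_{Y''}(k y_0'')\big)$ for all $k\ge 0$; since $\deg_{Y''} y_0''=1$ this dimension grows linearly in $k$, giving $\kappa(D)=1$, and for $m$ a large multiple of $m_1$ the system $|mD|$ is the pull-back under $g$ of the very ample system $|\tfrac{m}{m_1}y_0''|$ on $Y''$, so $|mD|$ induces $g$, i.e.\ $f$ (resp.\ its Stein factorisation). The main obstacle is the middle step: verifying that after cutting to a surface the pulled-back ample class is genuinely big (equivalently $\dim f(S)=2$ when $\dim Y\ge 2$), together with the lifting in the third step of the fibre-theoretic relation from the general surface $S$ back to $X$ while controlling the possible non-connectedness of the fibres of $f$.
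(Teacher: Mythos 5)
Your argument is correct and follows essentially the same route as the paper: cut $X$ down to a general complete-intersection surface $S\in|H|^{n-2}$, rule out $\dim Y\ge 2$ by the negativity of the self-intersection of the curve $D\cap S$ contracted by the generically finite map $f|_S$ (your explicit Hodge-index computation with $f_S^*A$ is exactly what the paper invokes implicitly), and then apply Zariski's Lemma to conclude that a multiple of $D$ is a fibre. Your extra care with the Stein factorisation and with lifting the fibre relation from $S$ back to $X$ only adds detail to the same proof.
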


\begin{proof}
Let us first show that $\dim Y =1$. We argue by contradiction and assume that $\dim Y \ge 2$. Let $S \subseteq X$ be a $2$-dimensional complete intersection of general elements of $|H|$, and set $C:= S \cap D$. By Bertini's theorem (see \cite[Th\'eor\`eme I.6.10]{jouanolou_bertini}), we may assume without loss of generality that $S$ and $C$ are reduced and irreducible. 
Then $D^2\cdot S = (D|_S)^2 <0$ since $C$ is contracted by the generically finite morphism 
$f|_S$, yielding a contradiction since $D^2\cdot S = D^2\cdot H^{n-2} \ge 0$ by assumption. This shows $\dim Y =1$. 

Zariski's Lemma then applies to show that some non-zero multiple of $D|_S=C$ is a fiber of 
$f|_S$. This easily implies that some non-zero multiple of $D$ is a fiber of $f$, and hence
$\kappa(D) = 1$. This finishes the proof of the lemma.
\end{proof}

\subsection{Abundance}

We will later use the following special case of the abundance
conjecture, whose proof relies on an extension theorem proved in \cite{demailly_hacon_paun}.

\begin{prop}\label{prop:log_abundance}
Let $X$ be a smooth complex projective manifold of dimension $n \ge 2$, and let $B$ be a non-zero divisor on $X$. Let $B:=\sum_{i \in I} B_i$ be its decomposition into irreducible components. Suppose that $B_i$ is smooth with $K_{B_i} \equiv 0$ and $B_i^2 \equiv 0$. Suppose moreover that $B_i \cap B_j = \emptyset$ if $i \neq j$. Suppose finally $\kappa(X)\ge 0$ and $K_X$ nef. Then there exists a morphism with connected fibers $f \colon X \to C$ onto a smooth complete curve such that $B_i$ is a fiber of $f$ for every $i\in I$. In addition, $K_X$ is semiample, $K_X|_F\sim_\mathbb{Q}0$ for a general fiber $F$ of $f$ and 
$\kappa(X)=\nu(K_X)\le 1$. In particular, $K_X+B$ is semiample with $\kappa(K_X+B)=\nu(K_X+B)\le 1$.
\end{prop}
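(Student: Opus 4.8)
The plan is to extract the numerical content of the hypotheses, then prove abundance for $K_X$ with the extension theorem, and finally organise everything around a fibration over a curve.

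\emph{Numerical preliminaries.} First I would observe that each $B_i$ is nef: for a curve $C\not\subseteq B_i$ one has $B_i\cdot C\ge 0$ since $B_i$ is effective, while for $C\subseteq B_i$ one has $B_i\cdot C=\deg\big((\sO_X(B_i)|_{B_i})|_C\big)=0$, because $B_i^2\equiv 0$ means precisely that the normal bundle $\sO_X(B_i)|_{B_i}$ is numerically trivial. By adjunction $K_{B_i}=(K_X+B_i)|_{B_i}$, and combining $K_{B_i}\equiv 0$ with $B_i|_{B_i}\equiv 0$ gives $K_X|_{B_i}\equiv 0$. Cutting $X$ by $n-2$ general members of a very ample linear system $|H|$ produces a smooth surface $S$ on which $\beta_i:=B_i|_S$ is a nonzero nef class with $\beta_i^2=B_i^2\cdot H^{n-2}=0$ and $K_X|_S\cdot\beta_i=K_X\cdot B_i\cdot H^{n-2}=0$. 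The Hodge index theorem on $S$ then forces $(K_X|_S)^2=K_X^2\cdot H^{n-2}\le 0$; since $K_X$ is nef this is $0$, so $\nu(K_X)\le 1$, and moreover $K_X|_S$ is proportional to $\beta_i$. For $n\ge 3$ the restriction $\NS(X)\to\NS(S)$ is injective (weak Lefschetz), whence $K_X\equiv\lambda_i B_i$ for some $\lambda_i\ge 0$; for $n=2$ this is automatic.

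\emph{Abundance via the extension theorem.} Since $B_i$ is smooth with $K_{B_i}\equiv 0$, abundance in numerical dimension zero shows $K_{B_i}$ is torsion; fix $m>0$ with $mK_{B_i}\sim 0$. As $(K_X+B_i)|_{B_i}=K_{B_i}$, the line bundle $m(K_X+B_i)|_{B_i}$ is trivial and carries a nowhere-vanishing section. The pair $(X,B_i)$ is log smooth and $K_X+B_i$ is nef, hence pseudoeffective, so the extension theorem of \cite{demailly_hacon_paun} applies and lifts this section to a nonzero $s\in H^0\big(X,m(K_X+B_i)\big)$ with $s|_{B_i}$ nowhere vanishing. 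Thus $Z:=\div(s)\sim m(K_X+B_i)$ is an effective divisor \emph{disjoint} from $B_i$, and the proportionality above gives $Z\equiv c_i B_i$ with $c_i=m(\lambda_i+1)>0$. Having two disjoint, numerically proportional effective divisors $B_i$ and $Z$ is what upgrades a single section to genuine positivity: it yields $\kappa(X)=\nu(K_X)$ and the semiampleness of $K_X$ (when $\nu(K_X)=0$ directly, since then $K_X$ is torsion; when $\nu(K_X)=1$ because $B_i$ together with the disjoint member $Z$ produces a base-point-free pencil, after correcting the $\Pic^0$-ambiguity between $Z$ and $c_iB_i$).

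\emph{The fibration and conclusion.} The base-point-free pencil just described has image a curve, its numerical dimension being bounded by $1$ because $B_i^2\cdot H^{n-2}=0$, so taking its Stein factorisation gives $f\colon X\to C$ onto a smooth complete curve, for which $B_i$ is a fibre. Applying Lemma \ref{lemma:hodge_index_theorem} to each prime component $B_j$, which is contracted by $f$ and satisfies $B_j^2\cdot H^{n-2}=0\ge 0$, shows that every $B_j$ is a (multiple of a) fibre of $f$. By construction $K_X\sim_\mathbb{Q}f^*A$ for some class $A$ on $C$ (ample if $\nu(K_X)=1$, torsion if $\nu(K_X)=0$), so $K_X|_F\sim_\mathbb{Q}0$ on a general fibre $F$ and $\kappa(X)=\nu(K_X)\le 1$. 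Finally $B=\sum_i B_i$ is a sum of fibres, hence $B\sim_\mathbb{Q}f^*D$ for an effective divisor $D$ on $C$ and is semiample; as both $K_X$ and $B$ are pulled back from $C$, so is $K_X+B$, which is therefore semiample with $\kappa(K_X+B)=\nu(K_X+B)\le 1$.

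The main obstacle is the abundance step. In numerical dimension $1$ the equality $\kappa=\nu$ is not available in general, so one cannot simply invoke abundance; the entire strategy is to exploit the exceptionally favourable geometry along the $B_i$ (numerically trivial normal bundle, $K$-triviality, torsion canonical class) and feed it into the Demailly--Hacon--P\u{a}un extension theorem to manufacture sections of $m(K_X+B_i)$ not vanishing on $B_i$. Verifying the hypotheses of that theorem, and then passing cleanly from the resulting numerical proportionality to an honest base-point-free pencil while controlling the $\Pic^0$-ambiguity between $Z$ and $c_iB_i$, is the delicate part of the argument.
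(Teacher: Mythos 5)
Your overall strategy---use the extension theorem of \cite{demailly_hacon_paun} to manufacture an effective divisor $\mathbb{Q}$-linearly equivalent to a log canonical divisor and disjoint from $B_i$, then organise everything into a pencil and a fibration over a curve---is the same as the paper's, and your numerical preliminaries ($B_i$ nef, $K_X|_{B_i}\equiv 0$, $\nu(K_X)\le 1$, $K_X\equiv\lambda_iB_i$) are correct. The central step, however, is not justified. The extension results of \cite{demailly_hacon_paun} do \emph{not} apply to an arbitrary log smooth plt pair $(X,S+\Delta)$ with $K_X+S+\Delta$ nef; they require in addition an effective $\mathbb{Q}$-divisor $G\sim_\mathbb{Q}K_X+S+\Delta$ with $S\subseteq\textup{Supp}(G)\subseteq\textup{Supp}(S+\Delta)$. (If nefness or pseudo-effectivity alone sufficed, one would obtain non-vanishing in essentially full generality.) For your bare pair $(X,B_i)$ this hypothesis reads $K_X+B_i\sim_\mathbb{Q}cB_i$ for some $c>0$, which is, up to torsion, what you are trying to prove; the numerical proportionality $K_X\equiv\lambda_iB_i$ does not supply it. The paper's proof is organised precisely around manufacturing this hypothesis: starting from $K_X\sim_\mathbb{Q}D\ge 0$ (this is where $\kappa(X)\ge 0$ is really used), it decomposes $D=D_1+D_2$ with $\textup{Supp}\,D_2\subseteq\textup{Supp}\,B$ and $\textup{Supp}\,D_1\cap\textup{Supp}\,B=\emptyset$, checks that $D_1$ is nef, and applies the extension theorem to the enlarged plt boundary $B_{i_0}+\varepsilon\big(D_1+\sum_{i\ne i_0}B_i\big)$, for which $G_2:=D+B_{i_0}+\varepsilon\big(D_1+\sum_{i\ne i_0}B_i\big)$ is an effective representative supported in the boundary and containing $B_{i_0}$.

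A second gap occurs after the extension. Your divisors $Z\sim m(K_X+B_i)$ and $c_iB_i$ are only \emph{numerically} equivalent, so they do not span a pencil; the ``$\Pic^0$-ambiguity'' you name is a genuine obstruction that your argument does not remove, and since $B_i$ is only assumed to satisfy $K_{B_i}\equiv 0$ (it need not be an abelian variety), the usual cohomological devices for killing such ambiguities are not available. The paper sidesteps this entirely: it produces two effective divisors $G_1\sim_\mathbb{Q}G_2$ that are $\mathbb{Q}$-linearly equivalent by construction, with $B_{i_0}\cap G_1=\emptyset$ and $\mult_{B_{i_0}}G_2\ge 1$, and the pencil they span, combined with Zariski's Lemma on a resolution, yields the morphism $f\colon X\to C$. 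Granting that, your concluding steps (each $B_j$ vertical because $B_j\cdot G_2=0$, hence a multiple of a fibre by Zariski's Lemma; $K_X|_F$ torsion on a general fibre; semiampleness via Proposition \ref{prop:pull-back}) do go through essentially as you describe.
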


\begin{proof}
Let $D$ be an effective $\mathbb{Q}$-divisor such that $K_X \sim_\mathbb{Q} D$. 
Notice that $D \cdot B_i = K_X\cdot B_i \equiv 0$ since $B_i^2\equiv 0$ and $K_{B_i}\equiv 0$ by assumption. Thus, there is a decomposition $D=D_1+D_2$, where $D_1$ and $D_2$ are effective $\mathbb{Q}$-divisor with 
$\textup{Supp}\, D_1 \cap \textup{Supp}\, D_2 = \emptyset$ and $\textup{Supp}\, D_2 \subseteq \textup{Supp}\, B$.  

Let us show that $D_1$ is nef. We argue by contradiction and assume that there exists a curve $C \subset X$ such that $D_1 \cdot C < 0$. Then $C \subseteq \textup{Supp}\, D_1$  and hence $C \cap \textup{Supp}\, D_2 =\emptyset$. Thus $K_X \cdot D =D \cdot C = D_1 \cdot C <0$, yielding a contradiction. Notice also that $B_i$ is nef since $B_i^2 \equiv 0$ by assumption.

Pick $i_0 \in I$. Then $K_X+B_{i_0}+\varepsilon \big(D_1+\sum_{i \in I\setminus \{i_0\}}B_i\big)$ is nef as well for every $\varepsilon >0$. Let $0<\varepsilon \ll 1$ be a rational number such that 
the pair $\big(X,B_{i_0}+\varepsilon \big(D_1+\sum_{i \in I\setminus \{i_0\}}B_i\big)\big)$ is plt. Then \cite[Corollary 1.8]{demailly_hacon_paun} applies to show that there exists an effective $\mathbb{Q}$-divisor $G_1$ such that 
$K_X+B_{i_0}+\varepsilon \big(D_1+\sum_{i \in I \setminus \{i_0\}}B_i\big)\sim_\mathbb{Q} G_1$ and $B_{i_0} \cap G_1 =\emptyset$ since
$\big(K_X+B_{i_0}+\varepsilon \big(D_1+\sum_{i \in I \setminus \{i_0\}}B_i\big)\big)|_{B_{i_0}}\sim_\mathbb{Q} K_{B_{i_0}}$ is torsion. Set $G_2:=D+B_{i_0}+\varepsilon \big(D_1+\sum_{i \in I \setminus \{i_0\}}B_i\big)$. Then $G_2 \sim_\mathbb{Q} G_1$ and $\mult_{B_{i_0}}\, G_2 \ge 1$.  

Let $m$ be a positive integer such that $mG_1$ and $m G_2$ are $\mathbb{Z}$-divisors, and let $X \map \mathbb{P}^1$ be the corresponding rational map. Let $p\colon Y \to X$ and $q\colon Y \to \mathbb{P}^1$ be a common resolution of singularities. We may assume without loss of generality that $p$ restricts to an isomorphism over some Zariski open neighborhood of $B_{i_0}$ since $B_{i_0} \cap G_1 =\emptyset$ by construction. Let $g \colon Y \to C$ be the Stein factorization of $q$. By Zariski's Lemma, some positive multiple of the strict transform of $B_{i_0}$ in $Y$ is a fiber of $g$ since $B_{i_0}^2\equiv 0$. This immediately implies that the rational map $f \colon X \map C$ is a morphism. By Zariski's Lemma again, for every $i\in I$, some positive multiple of $B_i$ is a fiber of $f$. Let now $F$ be a general fiber of $f$. Then, by construction, $\big(K_X+B_{i_0}+\varepsilon \big(D_1+\sum_{i \in I \setminus \{i_0\}}B_i\big)\big)|_F = K_X|_F$ is torsion. But $K_X|_F \sim_\mathbb{Z} K_F$ by the adjunction formula. The statement easily follows from Proposition \ref{prop:pull-back}.
\end{proof}

\subsection{Foliations and rational curves} In order to study foliations on non-minimal varieties it is useful to understand their behavior with respect to rational curves.
The following result is a minor generalization of \cite[Lemma 2.18]{figueredo_rcc}. 

\begin{lemma}\label{lemma:rational_surface}
Let $X$ be a complex projective manifold of dimension $n\ge 2$, and let $\sF \subset T_X$ be a regular codimension $1$ foliation. Let $S$ be a rational ruled surface, and let $\gamma\colon S \to X$ be a generically finite morphism. Then there exists a rational curve $C \subset S$ which is not contracted by $\gamma$ such that $\gamma(C)$ is tangent to $\sF$.
\end{lemma}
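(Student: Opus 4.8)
The plan is to pull the foliation back to $S$ and reduce the statement to the existence of a suitable invariant rational curve. Let $p\colon T_X \to \sN$ be the projection defining $\sF$, and let $\eta\colon T_S \to \gamma^*\sN$ be the composition of the differential $d\gamma\colon T_S \to \gamma^*T_X$ with $\gamma^*p$; equivalently, $\eta$ is a global section of $\Omega^1_S\otimes\gamma^*\sN$. If $\eta=0$, then $d\gamma(T_S)\subseteq \gamma^*\sF$ everywhere, so the image of every curve is tangent to $\sF$, and a general fibre of the ruling does the job: it is rational, and it is not contracted since $\gamma$ is generically finite (otherwise $\gamma$ would factor through the ruling and have $1$-dimensional image). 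So I assume $\eta\neq 0$. Then the saturation of $\ker\eta$ is a rank one foliation $\sG\subseteq T_S$, and for a non-contracted rational curve $C\subset S$ the image $\gamma(C)$ is tangent to $\sF$ if and only if $C$ is $\sG$-invariant. Thus it suffices to produce a $\sG$-invariant rational curve that is not contracted by $\gamma$.

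Next I would exploit the ruling $\pi\colon S \to \mathbb{P}^1$. For a general fibre $F\cong\mathbb{P}^1$ the restriction $\eta|_F$ is a section of $\Omega^1_F\otimes\gamma^*\sN|_F\cong\sO_{\mathbb{P}^1}(\gamma^*\sN\cdot F-2)$, which vanishes identically as soon as $\gamma^*\sN\cdot F\le 1$; in that case $F$ is $\sG$-invariant and we are done. Hence I may assume $\gamma^*\sN\cdot F\ge 2$ and, excluding the case where the general fibre is already $\sG$-invariant (which again finishes the proof), that the general fibre is transverse to $\sG$.

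The remaining transverse case is the heart of the matter, and here I would invoke the structure of Riccati foliations together with the simple connectivity of the base. First, by a sequence of elementary transformations centred at the finitely many points where $\sG$ is tangent to the fibres, I would reduce to the situation where the general fibre is everywhere transverse to $\sG$, that is, $\sG$ is a Riccati foliation on a $\mathbb{P}^1$-bundle over $\mathbb{P}^1$; each such transformation strictly lowers the tangency of $\sG$ with a general fibre, so the process terminates, and if along the way a fibre becomes invariant we already win. A Riccati foliation is transverse to all but finitely many fibres, which are then $\sG$-invariant: if such an invariant fibre exists, it is the desired rational curve. Otherwise $\sG$ is everywhere transverse and defines a flat Ehresmann connection on the $\mathbb{P}^1$-bundle, whose holonomy factors through $\pi_1(\mathbb{P}^1)=\{1\}$; the connection is therefore trivial and $\sG$ is the horizontal foliation, whose leaves form a one-parameter family of invariant rational sections. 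In either case I transport the invariant rational curve back to $S$ through the elementary transformations, which are isomorphisms away from finitely many fibres; since $\gamma$ contracts only finitely many curves and I have produced either an invariant fibre over a general point or a moving family of invariant sections, I can choose the curve non-contracted.

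The main obstacle is exactly this transverse case: reducing to a Riccati foliation by elementary transformations while keeping track of the correspondence of curves, and guaranteeing that the invariant rational curve descends to a genuine, non-contracted curve on $S$. The decisive geometric input is that the base of the ruling is $\mathbb{P}^1$, so the holonomy of any transverse structure is trivial and forces either an invariant fibre or a whole family of invariant rational sections.
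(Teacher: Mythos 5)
Your reduction of the problem to producing a $\sG$-invariant rational curve on $S$ for the induced foliation $\sG=(\ker\eta)^{\textup{sat}}$, and your treatment of the cases $\eta=0$, $\gamma^*\sN\cdot F\le 1$ and ``general fibre invariant'', are all correct. The gap is in the remaining transverse case. The locus where $\sG$ is tangent to the fibres is not a finite set of points: it is the zero divisor of the composition $T_\sG\to T_S\to f^*T_{\mathbb{P}^1}$, and its horizontal part meets a general fibre $F$ in $\textup{tang}(\sG,F)=K_\sG\cdot F+F^2=K_\sG\cdot F$ points. This number is computed over the generic point of the base, and an elementary transformation is an isomorphism over all but one point of $\mathbb{P}^1$, so it cannot change it; the claim that each transformation ``strictly lowers the tangency of $\sG$ with a general fibre'' is false, and no sequence of elementary transformations turns $\sG$ into a Riccati foliation when $K_\sG\cdot F>0$. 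That case genuinely occurs here: $\sN_\sG\cong\gamma^*\sN(-E)$ where $E$ is the divisorial part of the zero locus of $\eta$, so $K_\sG\cdot F=\gamma^*\sN\cdot F-2-E\cdot F$, which is positive as soon as, say, $\gamma^*\sN\cdot F\ge 3$ and $E\cdot F=0$. A foliation on a rational ruled surface with positive generic tangency to the ruling need not carry any invariant algebraic curve, so the argument cannot be closed along these lines without further input.

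The missing input is the regularity of $\sF$, which your proof never uses --- and the statement is false for singular foliations (a generic degree $\ge 2$ foliation on $\mathbb{P}^2$ has no invariant algebraic curve, by Jouanolou). Regularity enters through the Bott connection: $c_1(\sN)$ lifts to $H^1(X,\sN^*)$, whence $c_1(\sN|_S)^2=0$ (Lemma \ref{lemma:BB_vanishing}). This is exactly how the paper argues: writing $c_1(\sN|_S)\equiv aC_0+bF$ in $\NS(S)=\mathbb{Z}[C_0]\oplus\mathbb{Z}[F]$ with $C_0^2=-e\le 0$, the vanishing gives $a(2b-ae)=0$; if $F$ is not tangent then $a=c_1(\sN|_S)\cdot F\ge2$, so $2b=ae$ and $b\ge0$; if moreover $C_0$ is neither contracted nor tangent, then $-ae+b=c_1(\sN|_S)\cdot C_0\ge2$ forces $b\le-2$, a contradiction. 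Hence either a fibre, or $C_0$, or (when $C_0$ is contracted, which forces $e=0$ and $S\cong\mathbb{P}^1\times\mathbb{P}^1$) a general fibre of the second ruling is the desired curve. You should replace the Riccati reduction by this intersection-theoretic argument, or by some other mechanism that actually exploits $c_1(\sN|_S)^2=0$.
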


\begin{proof}
Let $\sN$ denote the normal bundle of $\sF$, and let $f\colon S \to \mathbb{P}^1$ denote the natural morphism. Recall that $\NS(S)\cong \mathbb{Z}[C_0]\oplus \mathbb{Z}[F]$, where $F$ is any fiber of $f$ and $C_0$ is a section of $f$ with $C_0^2=-e\le 0$.

Set $\sN_S:=\sN|_S$. By Lemma \ref{lemma:BB_vanishing}, we have $c_1(\sN_S)^2 = 0$. 
Write $c_1(\sN_S)\equiv a C_0 + b F$. Then 
$$0=c_1(\sN_S)^2=(a C_0 + b F)^2=a(-ae+2b),$$ 
$$c_1(\sN_S)\cdot F=a$$ and $$c_1(\sN_S)\cdot C_0=-ae+b.$$

We may obviously assume that $F$ is not tangent to $\sF$. Then the composition
$T_{\mathbb{P}^1}\cong T_F \to T_X|_F \to \sN|_F=\sN_S|_F$
is non-zero, and hence $a=c_1(\sN_S)\cdot F\ge 2>0$. It follows that $2b=ae$. In particular, we have $b \ge 0$.

If $C_0$ is contracted by $\gamma$, then $c_1(\sN_S)\cdot C_0=0$. This immediately implies $b=e=0$. Then $S \cong \mathbb{P}^1 \times \mathbb{P}^1$ and $f$ identifies with the projection onto the first factor. Moreover, the image of a general fiber of the projection onto the second factor is tangent to $\sF$.

Suppose from now on that $C_0$ is not contracted by $\gamma$ and that $\gamma(C_0)$ is not tangent to $\sF$. Then, as above, we must have $c_1(\sN_S) \cdot C_0 \ge 2$. It follows that $b \ge ae+2$, and hence $b+2 \le 0$, yielding a contradiction. This finishes the proof of the lemma.
\end{proof}

We will also need the following auxiliary results.

\begin{lemma}\label{lemma:extremal_contraction_fol_transverse}
Let $X$ be a complex projective manifold of dimension $n\ge 2$, and let $\sF \subset T_X$ be a regular codimension $1$ foliation. Let $\phi\colon X \to Y$ be an elementary Fano-Mori contraction. Suppose that there exists an irreducible component $F_1$ of a positive-dimensional fiber $F$ of $\phi$ which is not tangent to $\sF$. Then $Y$ is smooth, and either $\phi$ is the blow-up of a smooth codimension $2$ submanifold or $\phi$ is a conic bundle.
\end{lemma}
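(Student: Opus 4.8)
The plan is to play the Baum--Bott relation $c_1(\sN)^2=0$ of Lemma~\ref{lemma:BB_vanishing} against the positivity of $\sN$ along the fibres of $\phi$. Write $R$ for the extremal ray contracted by $\phi$. The heart of the argument is to show that the transversality hypothesis forces $\sN$ to be $\phi$-ample; granting this, the vanishing of $c_1(\sN)^2$ will force every positive-dimensional fibre to have dimension $1$, and the conclusion will follow from the structure theory of Fano--Mori contractions with one-dimensional fibres.

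First I would record the basic numerical input. For a rational curve $C\subseteq X$ with normalisation $\nu\colon\mathbb{P}^1\to C$, the composite $\sO_{\mathbb{P}^1}(2)\cong T_{\mathbb{P}^1}\to\nu^*T_X\to\nu^*\sN$ is non-zero exactly when $C$ is not tangent to $\sF$, and in that case $\deg\nu^*\sN\ge 2$; if moreover $C$ is contracted by $\phi$ then $\sN\cdot R>0$. So it is enough to exhibit one rational curve inside a fibre which is transverse to $\sF$. Here the hypothesis on $F_1$ is used: since $F_1$ is not tangent to $\sF$, the section $\omega$ of $\Omega^1_X\otimes\sN$ defining $\sF$ restricts to a non-zero twisted $1$-form on $F_1$, so $\sF$ induces a codimension-$1$ foliation $\sG$ on the smooth locus of $F_1$. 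Now $F_1$, being a component of a fibre of a $K_X$-negative extremal contraction, is rationally chain connected; if every rational curve in $F_1$ were tangent to $\sG$, a chain of rational curves joining two general points would lie in a single leaf of $\sG$, forcing that leaf to be dense and hence $\sG=T_{F_1}$, contradicting that $\sG$ has codimension $1$. Therefore some rational curve $C\subseteq F_1$ is transverse to $\sG$, equivalently to $\sF$, and the computation above yields $\sN\cdot R>0$. I expect this to be the main obstacle: extracting a genuinely transverse rational curve from the mere non-tangency of $F_1$, and running the leaf argument carefully on the possibly singular $F_1$.

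Once $\sN\cdot R>0$ is established, $\sN$ is $\phi$-ample, so $\sN|_{F'}$ is ample for every fibre component $F'$. I would then derive the dimension bound by contradiction: for any positive-dimensional $F'$, pull the relation $c_1(\sN)^2=0\in H^2(X,\Omega^2_X)$ back along a resolution $\pi\colon\widetilde{F'}\to F'\hookrightarrow X$ to get $c_1(\pi^*\sN|_{F'})^2=0$; if $m:=\dim F'\ge 2$, then expanding $c_1(\pi^*\sN|_{F'})^m=c_1(\pi^*\sN|_{F'})^2\cdot c_1(\pi^*\sN|_{F'})^{m-2}$ forces the top self-intersection $(\sN|_{F'})^m$ to vanish, contradicting the ampleness of $\sN|_{F'}$. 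Hence every fibre of $\phi$ has dimension at most $1$. At this point I would invoke the classification of elementary Fano--Mori contractions from a smooth projective variety all of whose fibres have dimension $\le 1$ (Ando; see also Wi\'sniewski and Andreatta--Wi\'sniewski): the base $Y$ is smooth, and $\phi$ is either the blow-up of a smooth codimension-$2$ submanifold or a conic bundle. This is precisely the assertion of the lemma.
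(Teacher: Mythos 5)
Your argument is correct and follows essentially the same route as the paper: the degree estimate $\sN\cdot C\ge 2$ for rational curves transverse to $\sF$ versus $\sN\cdot C=0$ for tangent ones, rational chain connectedness of the fibre to produce a transverse contracted curve from the non-tangency of $F_1$, the Baum--Bott relation $c_1(\sN)^2\equiv 0$ against the resulting $\phi$-ampleness of $\sN$ to bound the fibre dimension by $1$, and finally the classification of elementary contractions with one-dimensional fibres (the paper cites Wi\'sniewski's Theorem~1.2 in \cite{wisn_crelle}). The only cosmetic difference is that the paper runs the transversality step by contradiction ($\sN\cdot C=0$ for one contracted rational curve forces $\sN|_F\equiv 0$ and hence every component of $F$ tangent to $\sF$), whereas you argue directly inside $F_1$; both reduce to the same leaf-chain argument.
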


\begin{proof}
Let $\sN$ denote the normal bundle of $\sF$, and let $C \subseteq F$ be a rational curve. Suppose that $\sN\cdot C=0$. Then $C$ is tangent to $\sF$ and moreover $\sN|_F\equiv 0$ since $\phi$ is elementary by assumption. This in turn implies that any irreducible component of $F$ is tangent to $\sF$ since $F$ is rationally chain connected by \cite[Corollary 1.4]{hacon_mckernan}, yielding a contradiction. This shows that $\sN\cdot C \neq 0$. On the other hand, $c_1(\sN)^2\equiv 0$ by Lemma \ref{lemma:BB_vanishing}. This immediately implies that any positive-dimensional fiber of $\phi$ is $1$-dimensional. The claim now follows from \cite[Theorem 1.2]{wisn_crelle}. 
\end{proof}

\begin{lemma}\label{lemma:rational_curve_uniruled}
Let $X$ be a complex projective manifold of dimension $n\ge 2$, and let $\sF$ be a regular codimension $1$ foliation. Suppose that $\sF$ is projectively flat, and let $C \subset X$ be a rational curve such that $K_\sF \cdot C \le 0$. Then $T_X|_C$ is nef. In particular, $X$ is uniruled.
\end{lemma}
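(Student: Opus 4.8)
The plan is to pull everything back along a parametrisation of $C$ and read off splitting types on $\mathbb{P}^1$. Let $g\colon \mathbb{P}^1 \to X$ be the normalisation of $C$, so that $L\cdot C = \deg g^*L$ for any line bundle $L$ on $X$. Because $\sF$ is projectively flat, $\mathbb{P}(\sF)$ is defined by a representation $\pi_1(X)\to \textup{PGL}(n-1,\mathbb{C})$ (Definition \ref{def:proj_flat}), and pulling back along $g$ gives a flat projective bundle on $\mathbb{P}^1$ whose monodromy factors through $\pi_1(\mathbb{P}^1)=1$; it is therefore trivial, so $\mathbb{P}(g^*\sF)\cong \mathbb{P}^{n-2}\times \mathbb{P}^1$ and hence $g^*\sF\cong \sO_{\mathbb{P}^1}(a)^{\oplus(n-1)}$ for some integer $a$. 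Since $\sO_X(-K_\sF)\cong\det\sF$, we get $\deg g^*K_\sF = -(n-1)a$, and the hypothesis $K_\sF\cdot C\le 0$ forces $a\ge 0$.

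Writing $g^*\sN\cong \sO_{\mathbb{P}^1}(b)$ (recall $\sN$ is a line bundle), the pullback of $0\to \sF\to T_X\to \sN\to 0$ reads
$$0\to \sO_{\mathbb{P}^1}(a)^{\oplus(n-1)}\to g^*T_X\to \sO_{\mathbb{P}^1}(b)\to 0,$$
and the heart of the matter is to prove $b\ge 0$. I would distinguish two cases according to the composite $\sigma\colon T_{\mathbb{P}^1}\xrightarrow{dg}g^*T_X\to g^*\sN$. If $\sigma\neq 0$ then it is a nonzero map $\sO_{\mathbb{P}^1}(2)\to \sO_{\mathbb{P}^1}(b)$, so $b\ge 2$. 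If $\sigma = 0$ then $dg$ factors through the subbundle $g^*\sF$, i.e.\ $g$ is tangent to $\sF$ and its image lies in a leaf; the Bott connection, which is flat along the leaves by Lemma \ref{lemma:BB_vanishing}(1), then restricts along $C$ to a holomorphic connection on the line bundle $g^*\sN$, whence $b=\deg g^*\sN = 0$ by Atiyah's degree-zero criterion. In either case $b\ge 0$.

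It then remains to assemble the pieces. Both the sub-bundle $\sO_{\mathbb{P}^1}(a)^{\oplus(n-1)}$ and the quotient $\sO_{\mathbb{P}^1}(b)$ are nef since $a,b\ge 0$, and an extension of a nef bundle by a nef bundle is again nef; hence $g^*T_X$ is nef, which is exactly the assertion that $T_X|_C$ is nef. Finally, a nef bundle on $\mathbb{P}^1$ is globally generated, so $g$ is a free rational curve, and the existence of a free rational curve implies that $X$ is uniruled.

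I expect the tangent case to be the only genuinely delicate point: one must verify that the partial Bott connection — which a priori differentiates sections of $\sN$ only in directions tangent to $\sF$ — really does restrict to an honest connection on $g^*\sN$ over $\mathbb{P}^1$ once $\textup{Im}(dg)\subseteq g^*\sF$, since this is precisely what activates Atiyah's degree-zero criterion and pins down $b=0$. The non-tangent estimate and the closing nef/uniruledness steps are routine.
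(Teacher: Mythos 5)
Your proposal is correct and follows essentially the same route as the paper: split $\sF|_{\mathbb{P}^1}\cong\sO_{\mathbb{P}^1}(a)^{\oplus(n-1)}$ with $a\ge 0$ from projective flatness and $K_\sF\cdot C\le 0$, then show $\deg\sN|_{\mathbb{P}^1}\ge 0$ by the same dichotomy (nonzero composite $T_{\mathbb{P}^1}\to\sN|_{\mathbb{P}^1}$ giving degree $\ge 2$, versus tangency giving a flat line bundle via the Bott connection), and conclude nefness from the extension. The point you flag as delicate is exactly what the paper's Lemma \ref{lemma:BB_vanishing}(1) supplies, so there is no gap.
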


\begin{proof}
Let $\sN$ denote the normal bundle of $\sF$, and let $\mathbb{P}^1 \to C \subset X$ be the normalization morphism. Since $\sF$ is projectively flat, there exists $a \in \mathbb{Z}$ such that $\sF|_{\mathbb{P}^1}\cong \sO_{\mathbb{P}^1}(a)^{\oplus n-1}$. Notice that $a \ge 0$ since $K_\sF \cdot C \le 0$ by assumption. 

If $C$ is tangent to $\sF$, then $\sN|_{\mathbb{P}^1}$ is a flat line bundle by Lemma \ref{lemma:BB_vanishing}, and hence $\sN|_{\mathbb{P}^1}\cong \sO_{\mathbb{P}^1}$. If  
$C$ is not tangent to $\sF$, then the composition
$T_{\mathbb{P}^1} \to T_X|_{\mathbb{P}^1} \to \sN|_{\mathbb{P}^1}$
is non-zero, and hence $\sN \cdot C \ge 2$. In either case, the exact sequence
$$0 \to \sF|_{\mathbb{P}^1} \to T_X|_{\mathbb{P}^1} \to \sN|_{\mathbb{P}^1} \to 0$$
shows that $T_X|_{\mathbb{P}^1}$ is nef. But then $X$ is uniruled by \cite[Theorem IV.1.9]{kollar96}. 
\end{proof}

\subsection{Special cases} In this subsection we treat a few easy special cases of Theorem \ref{thm_intro:main}. 

\begin{prop}\label{prop:classification_zero_canonical_class}
Setting and notation as in \ref{setup:main}. Suppose in addition that $K_\sF\equiv 0$. Then one of the following holds.
\begin{enumerate}
\item There exists a $\mathbb{P}^1$-bundle structure $\phi\colon X \to Y$ onto a finite \'etale quotient of an abelian variety, and $\sF$ induces a flat Ehresmann connection on $\phi$.
\item There exists an abelian variety $A$ and a finite \'etale cover $\gamma\colon A \to X$ such that $\gamma^{-1}\sF$ is a linear foliation on $A$.
\item There exists a smooth complete curve $B$ of genus at least $2$ as well as an abelian variety $A$, and a finite \'etale cover $\gamma \colon B \times A \to X$ such that $\gamma^{-1}\sF$ is
induced by the projection morphism $B \times A \to B$.
\end{enumerate}  
\end{prop}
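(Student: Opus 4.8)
The plan is to run everything through the dichotomy of Theorem~\ref{thm_intro:divisor_zero_square} applied to the normal bundle $\sN$. The starting point is that, since $\sF$ is numerically projectively flat and $c_1(\sF)=-K_\sF\equiv 0$, the bundle $\sF$ is in fact \emph{numerically flat} (so $\sF$ and $\sF^*$ are nef, consistent with Lemma~\ref{lemma:minimal}). From the tangent sequence $0\to\sF\to T_X\to\sN\to 0$ we read off $-K_X\equiv c_1(\sN)$, while Lemma~\ref{lemma:BB_vanishing} gives $c_1(\sN)^2=0$. Throughout I replace $X$ by finite étale covers freely, which preserves all hypotheses (see the remark following Setup~\ref{setup:main}) and is compatible with the three conclusions. \textbf{Case $c_1(\sN)\equiv 0$.} Then $\sN$ is numerically flat, hence so is $T_X$ (an extension of nef by nef, with $\Omega^1_X$ nef via the dual sequence), and $X$ is a finite étale quotient of an abelian variety by Yau's theorem as recalled in the introduction. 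On an abelian cover $\gamma\colon A\to X$ the foliation $\gamma^{-1}\sF$ is cut out by a twisted form $\omega\in H^0(A,\Omega^{n-1}_A\otimes\det\gamma^*\sN)$ with $\det\gamma^*\sN$ numerically trivial; as $\Omega^{n-1}_A$ is trivial and $\omega\neq 0$, necessarily $\det\gamma^*\sN\cong\sO_A$ and $\omega$ is a constant form, so $\gamma^{-1}\sF$ is linear. This is conclusion~(2).

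\textbf{Case $c_1(\sN)\not\equiv 0$ with $\sN^*$ nef.} Here $K_X\equiv c_1(\sN^*)$ is nef and nonzero with $K_X^2\equiv 0$, so $\nu(K_X)=1$; moreover $\Omega^1_X$ is nef (an extension of $\sF^*$ by $\sN^*$), whence $X$ is not uniruled. The key step is to produce a fibration $f\colon X\to B$ onto a smooth complete curve with $K_X\sim_{\mathbb{Q}}f^*(\text{ample})$, i.e.\ abundance for $K_X$ in this $\nu=1$ situation. Granting this, a slope argument on a general complete-intersection curve $C$ (using that $\sF|_C$ is semistable of degree $0$ while $f^*T_B$ has negative degree along curves dominating $B$) shows that the composite $\sF\hookrightarrow T_X\xrightarrow{df}f^*T_B$ vanishes, so $\sF\subseteq T_{X/B}$ and hence $\sF=T_{X/B}$ by rank. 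Thus a general fibre $F$ is a leaf with $T_F\cong\sF|_F$ numerically flat and $K_F\equiv 0$, so $F$ is a torus quotient; Lemma~\ref{lemma:smooth_deformation_torus_quotient} then realises $f$ as an abelian scheme after a finite étale cover, numerical flatness of $T_{X/B}=\sF$ forces it to be isotrivial, and $g(B)\ge 2$ (forced by $\kappa(X)=1$). This yields conclusion~(3).

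\textbf{Case $\sN$ pseudo-effective but $\sN^*$ not nef.} Then some curve $C$ has $K_X\cdot C<0$, and since every rational curve satisfies $K_\sF\cdot C=0$, Lemma~\ref{lemma:rational_curve_uniruled} shows $X$ is uniruled. Let $\phi\colon X\to Y$ be a $K_X$-negative elementary contraction. A curve tangent to $\sF$ has $\sN\cdot C=0$ by flatness of the Bott connection along leaves (Lemma~\ref{lemma:BB_vanishing}), hence $K_X\cdot C=0$; so the extremal rational curves are transverse to $\sF$, and Lemma~\ref{lemma:extremal_contraction_fol_transverse} applies. Transversality forces $\sN\cdot C\ge 2$ on such curves, which excludes both the blow-up case and degenerate conics (whose components would give $-K_X\cdot\ell=1$); therefore $\phi$ is a smooth $\mathbb{P}^1$-bundle and $\sF$ is everywhere transverse to its fibres, i.e.\ a flat Ehresmann connection. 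Finally $\phi^*T_Y\cong\sF$ is numerically flat, so $T_Y$ is numerically flat (Lemma~\ref{lemma:numerically_projectively_flat_elementary properties}, together with injectivity of $\phi^*$ on $\NS$) and $Y$ is a torus quotient; the suspension description of Example~\ref{example:suspension} gives conclusion~(1).

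The main obstacle is the fibration step in the second case, namely semiampleness of $K_X$ when $\nu(K_X)=1$. I expect to obtain it from the nef reduction of $K_X$ combined with the special structure available here (nef $\Omega^1_X$ and numerically flat $\sF$), ultimately reducing to Proposition~\ref{prop:log_abundance} once a suitable reduced fibre divisor has been produced; the subsequent identification of the general fibres as leaves and the isotriviality of the resulting abelian scheme are then comparatively routine.
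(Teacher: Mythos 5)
The paper does not argue this proposition from scratch: it is obtained as a direct consequence of Touzet's classification of regular codimension~$1$ foliations with numerically trivial canonical class on compact K\"ahler manifolds (\cite[Th\'eor\`eme 1.2]{touzet}), combined with \cite[Lemma 5.9]{bobo}. Your proposal instead tries to reprove the classification by hand via the dichotomy of Theorem \ref{thm_intro:divisor_zero_square}. Your case $c_1(\sN)\equiv 0$ and your case where $K_X$ is not nef are essentially sound (the latter closely mirrors the arguments of Section \ref{section:non_minimal}, and the transversality computation $\sN\cdot C\ge 2$ correctly rules out blow-ups and singular conics).

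The genuine gap is in the case $\sN^*$ nef, $c_1(\sN)\not\equiv 0$. There the entire argument hinges on producing a fibration $f\colon X\to B$ with $K_X\sim_{\mathbb{Q}}f^*(\textup{ample})$, i.e.\ abundance for $K_X$ when $\nu(K_X)=1$, and you explicitly leave this unproved (``I expect to obtain it\dots''). This is not a routine verification: it is the hard core of this case, and it is exactly the kind of statement Touzet's theorem is invoked to avoid. Your suggested route via Proposition \ref{prop:log_abundance} cannot be run as stated, since that proposition requires as input both $\kappa(X)\ge 0$ and a divisor $\sum B_i$ with smooth, pairwise disjoint components satisfying $K_{B_i}\equiv 0$ and $B_i^2\equiv 0$ --- none of which you produce (and when $q(X)=0$ and $h^0$ of all the relevant pluricanonical twists vanish, there is no obvious candidate). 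Note also that you cannot appeal to Proposition \ref{prop:abundance}: its proof uses Proposition \ref{prop:classification_zero_canonical_class} twice, so this would be circular. A secondary problem in the same case: your slope argument showing $\sF\subseteq T_{X/B}$ needs $\deg T_B<0$, i.e.\ $g(B)\ge 2$, but you only assert $g(B)\ge 2$ afterwards, ``forced by $\kappa(X)=1$'' --- which is false in general (properly elliptic fibrations over $\mathbb{P}^1$ or an elliptic curve have $\kappa=1$). One would have to rule out $g(B)\le 1$ separately, e.g.\ using nefness of $\Omega_X^1$ to exclude $B=\mathbb{P}^1$ and a further argument for $g(B)=1$. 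As written, the proposal therefore does not constitute a proof.
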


\begin{proof} This follows easily from \cite[Th\'eor\`eme 1.2]{touzet} together with \cite[Lemma 5.9]{bobo}.
\end{proof}

\begin{rem}
In the setup of Proposition \ref{prop:classification_zero_canonical_class}, there exists a finite \'etale cover $\eta\colon Z \to X$ such that $\eta^{-1}\sF \cong \sO_Z^{\oplus n-1}$. 
\end{rem}

\begin{prop}\label{prop:special_case_algebraically_integrable}
Setting and notation as in \ref{setup:main}. Suppose in addition that $\sF$ is algebraically integrable. Then one of the following holds.
\begin{enumerate}
\item There exists an abelian variety $A$ as well as a smooth complete curve $C$ of genus at most $1$ and a finite \'etale cover $\gamma\colon  A \times C \to X$ such that $\gamma^{-1}\sF$ is induced by the projection $A \times C \to C$.
\item There exists a smooth abelian scheme $f\colon B \to C$ over a smooth complete curve $C$ of genus at least $2$ as well as a finite \'etale cover $\gamma\colon  B \to X$ such that $\gamma^{-1}\sF$ is induced by $f$.
\end{enumerate}
In particular, $K_\sF$ is semiample with $\kappa(\sF)=\nu(\sF)\in\{0,1\}$.
\end{prop}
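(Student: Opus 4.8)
The plan is to use algebraic integrability to realize $\sF$ as a fibration by tori, and then to identify it, up to a finite \'etale cover, with an abelian scheme over a curve. First, for any leaf $F$ of $\sF$ one has $T_F = \sF|_F$, and since $\sF$ is numerically projectively flat so is $\sF|_F = i_F^*\sF$ by Lemma \ref{lemma:numerically_projectively_flat_elementary properties}; as $\dim F = n-1 \ge 2$ and $K_F = K_\sF|_F$ is nef by Lemma \ref{lemma:minimal}, the theorem of Jahnke and Radloff \cite{jahnke_radloff_13} gives $K_F \equiv 0$, so $T_F$ is numerically flat and $F$ is a finite \'etale quotient of an abelian variety. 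Because $\sF$ is regular and algebraically integrable, it is induced by a morphism with connected fibers $f \colon X \to C$ onto a smooth curve whose general fiber is a leaf; by the previous point every reduced fiber of $f$ is a smooth finite \'etale quotient of an abelian variety, so the only degeneration $f$ can have is a multiple fiber.

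With the fibration $f \colon X \to C$ at hand, the next step is to turn it into a smooth abelian scheme after a finite \'etale cover. Since every reduced fiber of $f$ is a finite \'etale quotient of an abelian variety, Lemma \ref{lemma:injectivity_fundamental_groups} shows that $\pi_1(F) \to \pi_1(X)$ is injective and that $X$ has generically large fundamental group on the fibers; more importantly, the structure theory of Nakayama invoked in its proof produces, after a finite base change of $C$ and a compatible finite \'etale cover $\gamma \colon B \to X$, a smooth morphism $g \colon B \to C_1$ onto a smooth curve $C_1$ whose fibers are abelian varieties and for which $\gamma^{-1}\sF$ is induced by $g$. (The multiple fibers of $f$ are precisely what this base change unwinds; on the locus where $f$ is already smooth, Lemma \ref{lemma:smooth_deformation_torus_quotient} furnishes the abelian scheme with a level three structure directly, and Lemma \ref{lemma:contraction_abelian_scheme} is on hand should one need to compare birational models.) Thus $g \colon B \to C_1$ is a smooth abelian scheme.

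I would then distinguish cases by the genus of $C_1$. If $g(C_1) \le 1$, the universal cover of $C_1$ is $\mathbb{P}^1$ or $\mathbb{C}$, so the period map of $g$ --- valued in the Siegel upper half-space, a bounded domain --- is constant; hence $g$ is isotrivial, and after one more finite \'etale cover $B \cong A \times C_1$ with $\gamma^{-1}\sF$ induced by the projection to $C_1$. This gives conclusion (1), with $A \times \mathbb{P}^1$ when $g(C_1)=0$ and an abelian variety when $g(C_1)=1$. If $g(C_1) \ge 2$, we are directly in conclusion (2).

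For the final assertion, on the abelian scheme $g$ one has $\Omega^1_{B/C_1} \cong g^*(g_*\Omega^1_{B/C_1})$, whence $K_{\gamma^{-1}\sF} = \omega_{B/C_1} = g^*\lambda$ with $\lambda := \det g_*\Omega^1_{B/C_1}$ of non-negative degree; therefore $K_{\gamma^{-1}\sF}$ is semiample with $\kappa = \nu \in \{0,1\}$, equal to $0$ in the isotrivial case (where $\lambda$ is torsion) and to $1$ otherwise. The equalities $\kappa = \nu$ descend to $K_\sF$ by Lemma \ref{lemma:finite_etale_cover}, and semiampleness descends because $\gamma$ is finite surjective. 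I expect the genuine obstacle to be the second step: upgrading the fibration by tori to a smooth abelian scheme over the \emph{entire} base curve, that is, controlling and removing the multiple fibers by finite \'etale covers and pinning down the global family. Once that global model is in place, the dichotomy on $g(C_1)$ and the numerical statements are straightforward.
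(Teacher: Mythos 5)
Your proposal is correct and follows essentially the same route as the paper's proof: the leaves are finite \'etale quotients of abelian varieties by \cite[Theorem 0.1]{jahnke_radloff_13}, the fibration becomes an abelian scheme after a finite \'etale cover via Lemma \ref{lemma:injectivity_fundamental_groups} combined with Koll\'ar's characterization of \'etale quotients of abelian schemes \cite[Theorem 6.3]{kollar_sh_inventiones} (the step you rightly identify as the crux, and which the paper also treats tersely), and the genus dichotomy comes from rigidity of the classifying map. Your bounded-period-domain argument for isotriviality when $g(C_1)\le 1$ is just a repackaging of the paper's appeal to the moduli map $C \to \sA(3)$ being constant unless $C$ has genus at least $2$.
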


\begin{proof}By assumption, $\sF$ is induced by a morphism with irreducible fibers $f\colon X \to C$ onto a smooth complete curve $C$. By \cite[Theorem 0.1]{jahnke_radloff_13}, any leaf of $\sF$ is a finite \'etale quotient of an abelian variety. By Lemma \ref{lemma:injectivity_fundamental_groups} and \cite[Theorem 6.3]{kollar_sh_inventiones} together, after replacing $X$ by a finite \'etale cover, the fibration $f$ is birational to an abelian scheme over $C$. This easily implies that $f$ is an abelian scheme.
Replacing $X$ by a further finite \'etale cover, we may also assume that $f$ is polarized with a level three structure. Let $\sA(3)$ be the corresponding fine moduli space of polarized abelian varieties with a level three structure, and let $C \to \sA(3)$ be the moduli map. By \cite[Lemma 5.9.3]{kollar96}, either the moduli map is the constant morphism or $C$ has genus at least $2$. The lemma now follows easily.
\end{proof}

\begin{rem}
In the setup of Proposition \ref{prop:special_case_algebraically_integrable} (2), we have
$\Omega^1_{B/C} \cong f^*\sE$ with $\sE:=f_*\Omega^1_{B/C}$. Then $\sF$ is numerically projectively flat if and only if $\sE$ is semistable by Lemma \ref{lemma:numerically_projectively_flat_elementary properties}.
\end{rem}

\begin{lemma}\label{lemma:abelian_variety}
Setting and notation as in \ref{setup:main}. Suppose in addition that $X$ is a finite \'etale quotient of an abelian variety. Then $K_\sF \equiv 0$.
\end{lemma}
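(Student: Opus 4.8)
The plan is to reduce to the case of an abelian variety and then rule out positive numerical dimension of $K_\sF$ by exploiting integrability. By Lemma \ref{lemma:minimal} the canonical class $K_\sF$ is nef, so it suffices to prove $\nu(K_\sF)=0$. By hypothesis there is a finite \'etale cover $\gamma\colon A \to X$ with $A$ an abelian variety; since $\nu(K_{\gamma^{-1}\sF})=\nu(K_\sF)$ by Lemma \ref{lemma:finite_etale_cover} and $\gamma^{-1}\sF$ again satisfies Setup \ref{setup:main}, I may replace $X$ by $A$. On $A$ the tangent bundle is trivial, so $\sN=\det\sF^*=\sO_A(K_\sF)$ is a quotient of $T_A\cong\sO_A^{\oplus n}$, hence globally generated and in particular semiample and nef. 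By Lemma \ref{lemma:BB_vanishing} we have $c_1(\sN)^2=0$, so $\nu(\sN)\le 1$ and $\kappa(\sN)=\nu(\sN)$. If $\nu(\sN)=0$ then $K_\sF\equiv 0$ and we are done, so assume for contradiction that $\nu(\sN)=1$.

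In that case the morphism defined by a suitable power of $\sN$ (Stein factored) is a fibration $g\colon A \to C$ onto a smooth curve with $\sN\equiv g^*\sH$ for some ample class $\sH$ on $C$. A standard Albanese/rigidity argument shows that, up to translation, $g$ is a surjective homomorphism onto its image, which must be an elliptic curve; hence, after replacing $A$ by an isogenous abelian variety (a further \'etale cover, which preserves all hypotheses), I may assume $A=A_0\times C$ with $C$ elliptic and $g=\mathrm{pr}_2$. Writing $\Omega^1_A\cong\sO_A^{\oplus n-1}\oplus\mathrm{pr}_2^*\Omega^1_C$ in a flat frame and using $\Omega^1_C\cong\sO_C$, the twisted $1$-form $\omega\in H^0(A,\Omega^1_A\otimes\sN)$ defining $\sF$ takes the form $\omega=\sum_{i=1}^{n-1}(\mathrm{pr}_2^*P_i)\,du_i+(\mathrm{pr}_2^*Q)\,dt$ with $P_i,Q\in H^0(C,\sH)$.

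The decisive step is to extract instability from integrability. Computing $\omega\wedge d\omega$ in this frame, the integrability condition $\omega\wedge d\omega=0$ forces the Wronskian relations $P_iP_j'-P_jP_i'=0$, so the $P_i$ are mutually proportional and $\omega=(\mathrm{pr}_2^*P)\,\ell+(\mathrm{pr}_2^*Q)\,dt$ for a single $P\in H^0(C,\sH)$ and a constant $1$-form $\ell$ on $A_0$; moreover $\ell\neq 0$, since $\ell=0$ would give $\sN\cong\mathrm{pr}_2^*T_C\cong\sO_A$, contradicting $\nu(\sN)=1$. Regularity of $\sF$ means $\omega$ is nowhere zero, so $P$ and $Q$ have no common zero on $C$. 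Restricting the defining sequence $0\to\sF\to\sO_A^{\oplus n}\to\sN\to 0$ to the transverse curve $C':=\{a_0\}\times C\cong C$, the map $\sO_{C'}^{\oplus n}\to\sH$ is given by sections spanning only $\langle P,Q\rangle$, so its kernel $\sF|_{C'}$ contains a trivial subbundle $\sO_{C'}^{\oplus n-2}$, while $\det(\sF|_{C'})\cong\sH^{-1}$ has negative degree. Since $n\ge 3$, this trivial subbundle of slope $0$ destabilizes $\sF|_{C'}$, contradicting the semistability of $\sF|_{C'}$ guaranteed by numerical projective flatness (Theorem \ref{thm:numerically_projectively_flat}). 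Hence $\nu(\sN)=0$ and $K_\sF\equiv 0$.

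I expect the integrability computation to be the main obstacle: turning $\omega\wedge d\omega=0$ into the proportionality of the $P_i$, and thereby into a destabilizing trivial subbundle on a transverse curve, is where the argument really uses the foliation structure rather than just the numerics. The subsidiary point that $g$ becomes an elliptic projection after an isogeny also requires care, although it follows from familiar facts about morphisms from abelian varieties to curves.
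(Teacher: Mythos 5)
Your argument is correct and follows essentially the same route as the paper: reduce to an abelian variety, use global generation of $\sN$ together with $c_1(\sN)^2\equiv 0$ and Poincar\'e reducibility to split off an elliptic factor with $\sN$ pulled back from it, and then combine the integrability computation with the semistability coming from numerical projective flatness. The paper packages your explicit $\omega\wedge \textup{d}\omega$ computation as Lemma \ref{lemma:product} and extracts the final contradiction from the direct-sum criterion of Lemma \ref{lemma:numerically_projectively_flat_elementary properties} rather than from a destabilizing trivial subsheaf on a transverse curve, but these are the same ideas.
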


\begin{proof}
Replacing $X$ by a finite \'etale cover, if necessary, we may assume without loss of generality that $X$ is an abelian variety. Then $\sN$ is generated by its global section. On the other hand, we have $\sN^2\equiv 0$ by Lemma \ref{lemma:BB_vanishing}. If $\sN\cong \sO_X$, then $K_\sF\equiv 0$ by the adjunction formula. Suppose that $\sN \not\equiv 0$. Then, after a further finite \'etale cover, we have $X \cong A \times E$, where $A$ is an abelian variety of dimension $n-1$ and $E$ is an elliptic curve by Poincar\'e's complete reducibility theorem. Moreover, $\sN\cong f^*\sM$ for some line bundle $\sM$ on $E$, where $f\colon X \to E$ denotes the morphism induced projection onto $E$. 
Then Lemma \ref{lemma:product} below applies to show that $K_\sF \equiv 0$ since $n \ge 3$ by assumption. This implies $\sN\equiv 0$, yielding a contradiction. This finishes the proof of the lemma.
\end{proof}

\begin{lemma}\label{lemma:product}
Let $C$ be a smooth complete curve, and let $A$ be an abelian variety of dimension $n-1 \ge1$. Set $X:=C \times A$ and $f:=\textup{pr}_C$. Let $\sF$ be a codimension $1$ foliation on $X$ with normal bundle $\sN\cong f^*\sL$ for some line bundle $\sL$ on $C$. Then $\sF$ is regular and
one of the following conditions holds.
\begin{enumerate}
\item We have $\sF\cong \sO_X^{\oplus n-1}$.
\item We have $\sF\cong \sO_X^{\oplus n-2}\oplus f^*(T_C \otimes \sL^*)$.
\item We have $\sL\cong\sO_C$ and there is an exact sequence
$$0 \to \sO^{\oplus n-2} \to \sF \to f^*T_C\to 0.$$
\end{enumerate}
\end{lemma}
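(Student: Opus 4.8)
The plan is to write down the defining $1$-form of $\sF$ explicitly in the product coordinates and to read off its structure from the Frobenius integrability condition. Since $A$ is an abelian variety, $T_A\cong\sO_A^{\oplus n-1}$, so $T_X\cong f^*T_C\oplus\sO_X^{\oplus n-1}$ and dually $\Omega^1_X\cong f^*\Omega^1_C\oplus\sO_X^{\oplus n-1}$, where the trivial summand is spanned by the pull-backs $dx_1,\dots,dx_{n-1}$ of a basis of translation-invariant $1$-forms on $A$ (in particular the $dx_i$ are closed). The foliation is given by a twisted $1$-form $\omega\in H^0(X,\Omega^1_X\otimes\sN)=H^0(X,\Omega^1_X\otimes f^*\sL)$ whose zero locus has codimension at least $2$, and since $\sF$ has codimension one integrability is the only condition on $\omega$. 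Using the decomposition above together with the projection formula and the K\"unneth formula, I would expand $\omega=\alpha+\sum_{i=1}^{n-1}\beta_i\,dx_i$ with $\alpha\in H^0(C,\Omega^1_C\otimes\sL)$ and $\beta_i\in H^0(C,\sL)$; thus every component of $\omega$ is pulled back from $C$.

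First I would settle regularity. The zero locus of $\omega$ is $Z\times A$, where $Z\subseteq C$ is the common zero set of $\alpha,\beta_1,\dots,\beta_{n-1}$; since $\omega\neq0$ this set $Z$ is finite, so $Z\times A$ is either empty or a divisor. As the zero locus must have codimension at least $2$, it is empty, $\omega$ is nowhere vanishing, and $\sF$ is regular with $\sF=\ker(\omega\colon T_X\to f^*\sL)$.

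Next I would extract the integrability condition. Trivializing $\sL$ over a disc in $C$ with coordinate $t$ and writing $\alpha=a(t)\,dt$, $\beta_i=b_i(t)$, the relation $d\omega\wedge\omega=0$ becomes, after using that the $dx_i$ are closed and that $a,b_i$ depend only on $t$, the vanishing of $\sum_{i<j}(b_ib_j'-b_jb_i')\,dt\wedge dx_i\wedge dx_j$. Hence all Wronskians $b_ib_j'-b_jb_i'$ vanish, which forces the sections to be proportional: there are constants $c_i\in\mathbb{C}$ and a single section $\beta\in H^0(C,\sL)$ with $\beta_i=c_i\beta$. If $\beta=0$, then $\omega=\alpha$ is a nowhere-vanishing section of $f^*(\Omega^1_C\otimes\sL)$, so $\Omega^1_C\otimes\sL\cong\sO_C$, i.e. $\sL\cong T_C$, and $\sF=\ker\omega=\textup{pr}_A^*T_A\cong\sO_X^{\oplus n-1}$; this is conclusion $(1)$.

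Finally, suppose $\beta\neq0$. Applying a constant linear change of the trivialization of $\textup{pr}_A^*T_A$ --- an automorphism of the $\sO_X$-module $T_X$, which therefore does not alter the isomorphism class of $\sF$ --- I may assume $\beta_1=\beta$ and $\beta_i=0$ for $i\ge2$. Then the last $n-2$ trivial directions lie in $\sF$, and $\sF\cong\sO_X^{\oplus n-2}\oplus\sG$ with $\sG=\ker\big((\alpha,\beta)\colon f^*T_C\oplus\sO_X\to f^*\sL\big)$. Regularity says $\alpha$ and $\beta$ have no common zero, so $(\alpha,\beta)$ is a surjection of bundles and $\sG$ is a line bundle with $\det\sG=f^*(T_C\otimes\sL^*)$, whence $\sG\cong f^*(T_C\otimes\sL^*)$. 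This gives conclusion $(2)$; and in the remaining possibility, where $\beta$ is nowhere vanishing and hence $\sL\cong\sO_C$, the same computation yields the extension $0\to\sO_X^{\oplus n-2}\to\sF\to f^*T_C\to0$ of conclusion $(3)$. The hard part is the integrability step: reducing the a priori $(n-1)$-dimensional space of vertical components $\beta_i$ to a single line via the Wronskian identity is what turns the remaining kernel computation into a rank-two problem and produces the clean trichotomy; it is also the only place where $\dim A=n-1\ge2$ genuinely enters, the condition being vacuous when $\dim A=1$, in which case one simply sets $\beta:=\beta_1$.
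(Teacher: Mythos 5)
Your proof is correct, and its core coincides with the paper's: the paper likewise decomposes $\omega=\alpha+\beta$ with $\alpha\in H^0(X,f^*\omega_C\otimes\sN)$ and $\beta\in H^0(A,\Omega_A^1)\otimes H^0(C,\sL)$, observes that $d\omega\wedge\omega=d\beta\wedge\beta$, and uses exactly your Wronskian identity (written there as $\frac{\textup{d}t_i}{t_i}=\frac{\textup{d}t_j}{t_j}$) to conclude that $\beta=\gamma\otimes s$ for a single covector $\gamma\in H^0(A,\Omega_A^1)$ and a single section $s\in H^0(C,\sL)$. Where you genuinely diverge is the endgame. The paper descends $\sF$ to a bundle $\sE$ on $C$, obtains the extension $0\to\sO_C^{\oplus n-2}\to\sE\to T_C\otimes\sL^*\to 0$, and decides splitting cohomologically via Serre duality, $h^1(C,\omega_C\otimes\sL)=h^0(C,\sL^*)$: this vanishes unless $h^0(C,\sL^*)\ge 1$, which combined with $h^0(C,\sL)\ge 1$ forces $\sL\cong\sO_C$ and produces case (3). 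You instead note that $\sO_X\otimes\ker\gamma$ is a trivial rank $n-2$ direct summand of $T_X$ contained in $\ker\omega$, so that $\sF\cong\sO_X^{\oplus n-2}\oplus\ker\big((\alpha,s)\colon f^*T_C\oplus\sO_X\to f^*\sL\big)$, with the kernel line bundle computed by its determinant. This is more direct and in fact proves slightly more than the statement: whenever $\beta\neq 0$ you land in case (2), so the possibly non-split extension of case (3) never actually occurs; your closing sentence invoking a ``remaining possibility'' for case (3) is therefore redundant on your own argument. Two minor points: the constant linear change of trivialization of $\textup{pr}_A^*T_A$ need not preserve the foliation structure, but it is harmless since you only need the isomorphism class of $\sF$ (and splitting $H^0(A,T_A)$ as $\ker\gamma\oplus\mathbb{C}v_1$ avoids the issue altogether); and in conclusion (1) your extra observation $\sL\cong T_C$ is correct but not needed.
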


\begin{proof}
Let $\omega\in H^0(X,\Omega_X^1\otimes\sN)$ be a twisted $1$-form defining $\sF$.
Let $F \cong A$ be a fiber of $f$. Then $T_X|_F\cong \sO_F^{\oplus n}$ and $\sN|_F \cong \sO_F$. This immediately implies that $\sF|_F\cong \sO_F^{\oplus n-1}$ and that $\sF$ is regular since
the zero set of $\omega$ has codimension at least $2$ in $X$. Moreover, there exists a vector bundle $\sE$ on $C$ such that $\sF \cong f^*\sE$.

We may obviously assume that $\sF\neq T_{X/C}$. If $F$ is a general fiber of $f$, then $\sF\cap T_{X/C}|_F \subset T_F$ is a linear codimension $1$ foliation on $F \cong A$. Hence there exists a subbundle $\sK \subset \sE$ of rank $n-2$ such that $\sF\cap T_{X/C} = f^*\sK \subset \sF$. 
Set $g:=\textup{pr}_A$. Write $\omega=\alpha + \beta \in H^0(X,\Omega_X^1\otimes\sN) \cong H^0(X,f^*\omega_C\otimes \sN) \oplus H^0(X,g^*\Omega_A^1\otimes \sN)$ 
with
$\alpha \in H^0(X,f^*\omega_C\otimes \sN)$ and $\beta \in H^0(X,g^*\Omega_A^1\otimes \sN)\cong H^0(A,\Omega_A^1) \otimes H^0(C,\sL)$. By construction, there is an effective divisor $D_1$ on $C$ and an exact sequence
$$0 \to \sK \to H^0(A,\Omega_A^1)^*\otimes \sO_C \to \sL(-D_1) \to 0,$$
where the map $H^0(A,\Omega_A^1)^*\otimes \sO_C \to \sL(-D_1)$ is induced by $\beta$.
Since $\sF\neq T_{X/C}$, the natural map $\sO_X^{\oplus n-1}\cong T_{X/C} \to \sN$ is non-zero, and hence $h^0(X,\sL) \ge 1$. Let $D_2$ be any effective divisor on $C$ such that $\sL \cong \sO_C(D_2)$. We view $\omega$, $\alpha$ and $\beta$ as rational $1$-forms on $X$ with poles along the divisor $f^*D_2$. We have $\textup{d}\alpha=0$ and $\textup{d}\beta \wedge \alpha=0$.
But then $\textup{d}\beta \wedge \beta=\textup{d}\omega\wedge \omega=0$.
Let $\gamma_1,\ldots,\gamma_{n-1}$ be a basis of $H^0(A,\Omega_A^1)$ and let $U \subset C\setminus \textup{Supp}\, D_2$ be an open affine subset. Then
$$\beta|_{U \times A}=\sum_{1 \le i \le n-1} t_i \gamma_i$$
for some regular functions $t_i$ on $U$. Moreover, $\textup{d}\beta \wedge \beta=0$ if and only if $\frac{\textup{d}t_i}{t_i}=\frac{\textup{d}t_j}{t_j}$ for all indices $i$ and $j$. 
Suppose that $t_j\neq 0$. Then $t_i=c_{ij}t_j$ for some $c_{ij}\in \mathbb{C}$.
This implies that there exists $s \in H^0(Y,\sL)$ such that $\beta = \gamma \otimes s$, where $\gamma:= \sum_{1 \le i \le n-1} c_{ij} \gamma_i \in H^0(A,\Omega_A^1)$. Then $\div \, s = D_1$ and $\sK\cong \sO_C^{\oplus n-2}$.

Set $\sQ:=\sE/\sK$. Then $\sQ \cong T_C\otimes\sL^*$. If $h^0(C,\sL^*)=0$, then 
$h^1(C,\sQ^*)=h^1(C,\omega_C\otimes \sL)=h^0(C,\sL^*)=0$ by Serre duality, and hence the exact sequence
$$0 \to \sK \to \sE \to \sQ \to 0$$
splits. Then $\sF\cong \sO_X^{\oplus n-2}\oplus f^*(T_C \otimes \sL^*)$.

Suppose finally that $h^0(C,\sL^*) \ge 1$. Then $\sL\cong\sO_C$ since we also have $h^0(C,\sL) \ge 1$. Then $\sQ\cong T_C$, finishing the proof of the lemma.
\end{proof}

\section{Normal bundles of regular foliations}\label{section:normal_bundle}

In this section we prove Theorem \ref{thm_intro:divisor_zero_square} and give a few applications to codimension $1$ foliations on varieties of general type. If $C$ is a smooth complete curve, then $\pm K_C$ is pseudo-effective. Theorem \ref{thm_intro:divisor_zero_square} can be seen as the analogous statement for the canonical divisor of the space of leaves of a codimension $1$ regular foliation. 

\medskip

We will need the following Bertini-type result.

\begin{lemma}\label{lemma:bertini}
Let $X$ be a projective variety of dimension $n \ge 2$, and let $Y \subset X$ be a subvariety of codimension $c \ge 2$. Let $1\le k \le c-1$ be an integer, and let $H_1,\ldots,H_k$ be very ample divisors on $X$. Suppose that $X$ is Cohen-Macaulay. Then there exist large enough positive integers $m_1,\ldots,m_k$ and dense open subsets $V_i \subseteq  |\sI_Y(m_iH_i)|$ such that for any tuple 
$(A_1,\ldots,A_k)\in V_1\times \cdots \times V_k$, the associated complete intersection 
$S:=A_1\cap \cdots \cap A_k$ is reduced and irreducible. In addition, $S$ is Cohen-Macaulay.  
\end{lemma}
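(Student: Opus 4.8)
The plan is to reduce to the base-point-free situation on a blow-up, invoke the Bertini irreducibility theorem of Jouanolou \cite[Théorème I.6.10]{jouanolou_bertini} there, and then deduce reducedness and Cohen-Macaulayness downstairs from the Cohen-Macaulay hypothesis on $X$. First I would use Serre vanishing to choose $m_i\gg 0$ so that each $\sI_Y(m_iH_i)$ is globally generated. Let $\pi\colon\wt X\to X$ be the blow-up of $\sI_Y$ with exceptional divisor $E$, so that $\pi^*\sI_Y\cdot\sO_{\wt X}=\sO_{\wt X}(-E)$. Since $\sO_{\wt X}(-E)$ is $\pi$-ample, after enlarging the $m_i$ we may assume that $\wt L_i:=\pi^*\sO_X(m_iH_i)\otimes\sO_{\wt X}(-E)$ is very ample on $\wt X$; moreover global generation of $\sI_Y(m_iH_i)$ makes the subspace $W_i:=H^0\big(X,\sI_Y(m_iH_i)\big)\subseteq H^0(\wt X,\wt L_i)$ base-point-free on $\wt X$. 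The associated morphism $\wt\phi_i\colon\wt X\to\mathbb{P}(W_i^*)$ is an immersion over $X\setminus Y$, so its image has dimension $n\ge 2$; and for a general $A_i\in|\sI_Y(m_iH_i)|$, its strict transform is $\wt A_i=\wt\phi_i^{-1}(H)$ for a general hyperplane $H$, with $A_i=\pi(\wt A_i)$.

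Second, I would run Jouanolou's theorem inductively on $\wt X$. Applied to $\wt\phi_1$ it yields, for $A_1$ in a dense open $V_1$, a geometrically irreducible divisor $\wt A_1$ of dimension $n-1\ge 2$; restricting $W_2$ to $\wt A_1$ stays base-point-free with image of dimension $n-1\ge 2$, and so on. Because $k\le c-1\le n-1$, the current dimension $n-(j-1)$ is $\ge 2$ at each of the steps $j=1,\dots,k$, so $\wt S:=\wt A_1\cap\cdots\cap\wt A_k$ is irreducible of dimension $n-k$; since $\pi$ is an isomorphism over $X\setminus Y$ and $\wt S\setminus E$ maps onto $S\setminus Y$, the image $S=\pi(\wt S)$ is irreducible. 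For the remaining properties I would argue on $X$. The decisive inequality is $\dim Y=n-c<n-k$, coming precisely from $k\le c-1$: by Krull's height theorem every component of $S$ has dimension $\ge n-k$, so no component lies in $Y$, while off $Y$ the systems are very ample and Bertini gives that $S\setminus Y$ is a proper complete intersection of pure dimension $n-k$; hence $S$ is of pure dimension $n-k$. Thus the $k$ Cartier divisors $A_i$ cut out $S$ in the expected codimension, their local equations form a regular sequence on the Cohen-Macaulay variety $X$, and $S$ is Cohen-Macaulay. Being Cohen-Macaulay, $S$ has no embedded points, so it is reduced once it is generically reduced; generic reducedness follows from Bertini's smoothness theorem applied off $Y\cup\Sing X$ — a closed subset meeting $S$ in dimension $<n-k$ by the same general-position count — since there $S$ is a transverse intersection of smooth divisors inside the smooth locus of $X$.

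The step I expect to require the most care is the interface with the base locus $Y$. Two things must be checked: that the members produced by Jouanolou on $\wt X$ genuinely correspond to members of $|\sI_Y(m_iH_i)|$ — handled by working with the sub-linear system $W_i=H^0(X,\sI_Y(m_iH_i))$ rather than the full $|\wt L_i|$, so that $A_i=\pi(\wt A_i)$ lies in the prescribed system — and that the intersection acquires no components, of excess or deficient dimension, along $Y$, which is exactly where the hypothesis $k\le c-1$ enters through $\dim Y<n-k$. Everything else is a routine combination of Serre vanishing, the relative ampleness of $-E$, and the standard behaviour of Cohen-Macaulayness and reducedness under hyperplane sections.
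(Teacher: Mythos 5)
Your proof is correct, and it reaches the conclusion by a noticeably different organization of the argument than the paper's. The paper inducts on $k$: the case $k=1$ is handled essentially as in your first paragraph (blow up $\sI_Y$, apply Jouanolou's irreducibility theorem to the morphism defined by $|m_1\beta^*H_1-E|$, conclude that a general $A_1$ is generically reduced and irreducible, then upgrade to reduced via Serre's criterion because a Cartier divisor on a Cohen--Macaulay variety is Cohen--Macaulay); for $k\ge 2$ it restricts everything to $A_1$ and applies the induction hypothesis to $Y\subset A_1$, at the cost of an extra Serre-vanishing condition $h^1(X,\sO_X(m_iH_i-m_1H_1))=0$ needed to identify $|\sI_Y(m_iH_i)|\big|_{A_1}$ with $|\sI_Y(m_iH_i|_{A_1})|$. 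You instead perform the whole intersection on a single blow-up to get irreducibility of $\wt S$ (hence of $S$, once the Krull bound $\dim Y=n-c<n-k$ rules out components inside $Y$), and you obtain Cohen--Macaulayness not from the inductive structure but from unmixedness: $k$ local equations cutting out a subscheme of pure codimension $k$ in a Cohen--Macaulay variety form a regular sequence. Both proofs end with the same mechanism, namely that a Cohen--Macaulay scheme is $S_1$, so generic reducedness implies reducedness. What your route buys is that you never have to compare linear systems on $X$ with linear systems on $A_1$; what it costs is the extra bookkeeping around the base locus $Y$ (the dimension count $\dim Y<n-k$, and the check that each iterated Jouanolou step still has image of dimension $\ge 2$, which follows since the intersection is never contained in $E$ and $k\le c-1\le n-1$), and you correctly identify and handle exactly these points. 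The one caveat---that in the iterated Bertini/Jouanolou steps the dense open condition on $A_j$ a priori depends on $A_1,\dots,A_{j-1}$, whereas the statement asks for a product $V_1\times\cdots\times V_k$ of opens chosen independently---is equally present in the paper's inductive proof, so it is not a defect of your argument relative to the paper's standard of rigor.
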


\begin{proof}
To prove the statement, we argue by induction on $k$.

Suppose $k=1$. Let $\beta \colon Z \to X$ be the blow-up of $X$ along $Y$, and let $E$ be the effective divisor on $Z$ corresponding to the invertible sheaf $\beta^{-1}\sI_Y\cdot\sO_Z$. Notice that $Z$ is reduced and irreducible. Let $m_1$ be a positive integer such that $\sI_Y(m_1H_1)$ is generated by its global sections. Then the linear system $|m_1\beta^*H_1-E|$ induces a morphism $\phi_1\colon Z \to \mathbb{P}^{N_1}$, where $N_1:=\dim |\sI_Y(m_1H_1)|$. We may also assume without loss of generality that $\dim \phi_1(Z) \ge 2$. By Bertini's theorem (see \cite[Th\'eor\`eme I.6.10]{jouanolou_bertini}), a general member of the linear system $|m_1\beta^*H_1-E|$ is reduced and irreducible. This immediately implies that a general member $A_1$ of $|\sI_Y(m_1H_1)|$ is generically reduced and irreducible. Since $A_1$ is Cohen-Macaulay (see \cite[Proposition 18.13]{eisenbud}), we see that $A_1$ is reduced by Serre's criterion. 

Suppose $k\ge 2$. Then the induction hypothesis applied to $A_1$ and $H_2|_{A_1},\ldots,H_k|_{A_1}$ show that there exist positive integers $m_2,\ldots,m_k$ and dense open subsets $V_i \subseteq  |\sI_Y(m_iH_i|_{A_1})|$ such that for any tuple 
$(B_2,\ldots,B_k)\in V_1\times \cdots \times V_k$, the associated complete intersection 
$S:=B_2\cap \cdots \cap B_k$ is reduced and irreducible. In addition, $S$ is Cohen-Macaulay.
We may also assume without loss of generality that 
$h^1(X,\sO_X(m_iH_i-m_1H_1))=0$ for any $i \ge 2$. This implies
that $|\sI_Y(m_iH_i)|_{A_1}=|\sI_Y(m_iH_i|_{A_1})|$. The claim now follows easily.
\end{proof}

The proof of Theorem \ref{thm:divisor_zero_square} below makes use of the following elementary results.

\begin{lemma}\label{lemma:divisor_zero_square_1}
Let $S$ be a smooth projective surface, and let $D$ be a divisor on $S$ such that $D^2=0$. Then $\pm D$ is pseudo-effective.
\end{lemma}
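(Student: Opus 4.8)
The plan is to deduce the statement from the fact that the pseudo-effective cone of $S$ is closed, after a small ample perturbation that turns $D$ into a big class. Since pseudo-effectiveness depends only on the numerical class, I work in $N^1(S)_{\mathbb{R}}$ throughout. First I fix an ample divisor $H$ on $S$. As $(-D)^2 = D^2 = 0$, it is enough to treat the case $D \cdot H \ge 0$, replacing $D$ by $-D$ otherwise; in that case I will show that $D$ itself is pseudo-effective, and this reduction handles both signs at once. For each rational $\epsilon > 0$ set $D_\epsilon := D + \epsilon H$. Using $D^2 = 0$, $D \cdot H \ge 0$ and $H^2 > 0$, one finds
$$D_\epsilon^2 = 2\epsilon\,(D\cdot H) + \epsilon^2 H^2 > 0 \qquad\text{and}\qquad D_\epsilon \cdot H = D\cdot H + \epsilon H^2 > 0,$$
so that $D_\epsilon$ is a $\mathbb{Q}$-divisor of positive self-intersection lying on the same side of the light cone as the ample class $H$.

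Next I claim that each $D_\epsilon$ is big, which is the standard asymptotic Riemann--Roch argument. For $m$ sufficiently divisible, Serre duality gives $h^2(S, \mathcal{O}_S(mD_\epsilon)) = h^0(S,\mathcal{O}_S(K_S - mD_\epsilon)) = 0$, because $(K_S - mD_\epsilon)\cdot H < 0$ for $m \gg 0$ forces $K_S - mD_\epsilon$ to have no sections; meanwhile $\chi(S,\mathcal{O}_S(mD_\epsilon)) = \tfrac{1}{2}m^2 D_\epsilon^2 - \tfrac{1}{2}m\,D_\epsilon\cdot K_S + \chi(S,\mathcal{O}_S)$ grows like $m^2$ since $D_\epsilon^2 > 0$. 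Hence $h^0(S,\mathcal{O}_S(mD_\epsilon)) \ge \chi(S,\mathcal{O}_S(mD_\epsilon)) \to +\infty$ at quadratic rate, so $D_\epsilon$ is big and in particular pseudo-effective.

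Finally I let $\epsilon \to 0^+$ through rational values. The classes $D_\epsilon$ converge to $D$ in $N^1(S)_{\mathbb{R}}$, and since the pseudo-effective cone is closed, the limit $D$ is pseudo-effective as well; combined with the initial reduction this shows that $\pm D$ is pseudo-effective. There is no genuine obstacle in this argument: the only step requiring any care is the bigness of $D_\epsilon$, which is entirely routine, everything else being formal manipulation of the intersection form and the closedness of the cone.
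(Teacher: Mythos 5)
Your proof is correct and follows essentially the same route as the paper: after reducing to $D\cdot H\ge 0$, perturb to $D_\varepsilon=D+\varepsilon H$, check $D_\varepsilon^2>0$ and $D_\varepsilon\cdot H>0$, conclude $D_\varepsilon$ is ($\mathbb{Q}$-)effective, and let $\varepsilon\to 0$ using closedness of the pseudo-effective cone. The only difference is that where you spell out the asymptotic Riemann--Roch and Serre duality argument, the paper simply cites \cite[Corollary V.1.8]{hartshorne77}, whose proof is exactly that computation.
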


\begin{proof}
Let $H$ be an ample divisor on $S$. Replacing $D$ by $-D$ if necessary, we may assume without loss of generality that $D\cdot H \ge 0$.
Let $\varepsilon>0$ be a rational number, and set
$D_\varepsilon:=D+\epsilon H$. Then 
$$D_\varepsilon\cdot H = D\cdot H + \varepsilon H^2 >0 \quad\text{and}\quad D_\varepsilon^2=\varepsilon(2D\cdot H + \varepsilon H^2)>0.$$ Applying \cite[Corollary V.1.8]{hartshorne77}, we see that $D_\varepsilon$ is $\mathbb{Q}$-effective. This shows that $D$ is pseudo-effective, proving the lemma.
\end{proof}

\begin{lemma}\label{lemma:divisor_zero_square_2}
Let $S$ be a smooth projective surface, and let $D$ be a pseudo-effective divisor on $S$ with $D^2=0$. Suppose in addition that $\kappa(D) \le 1$. Then $D$ is nef.
\end{lemma}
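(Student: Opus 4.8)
The plan is to run the argument through the \emph{Zariski decomposition} of $D$, which exists because $D$ is pseudo-effective on the smooth projective surface $S$ (Fujita's extension of Zariski's theorem to the pseudo-effective case). Write $D = P + N$, where $P$ is a nef $\mathbb{Q}$-divisor, $N = \sum_i a_i N_i$ is an effective $\mathbb{Q}$-divisor whose prime components $N_i$ have negative-definite intersection matrix, and $P \cdot N_i = 0$ for every $i$; in particular $P \cdot N = 0$. I will also use the standard section-preserving property of this decomposition, namely that the inclusions $H^0(S, \lfloor mP \rfloor) \hookrightarrow H^0(S, \lfloor mD \rfloor)$ are isomorphisms for all $m \ge 0$, so that $\kappa(D) = \kappa(P)$.

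The numerical input comes from expanding the self-intersection using orthogonality: $0 = D^2 = P^2 + 2\,P\cdot N + N^2 = P^2 + N^2$, so that $P^2 = -N^2$. Since $P$ is nef we have $P^2 \ge 0$, and since the support of $N$ has negative-definite intersection matrix we have $N^2 \le 0$, with $N^2 = 0$ if and only if $N = 0$. The goal is precisely to force $N = 0$, for then $D = P$ is nef.

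I would then argue by contradiction. Suppose $D$ is not nef; since $P$ is nef this forces $N \neq 0$, hence $N^2 < 0$ and therefore $P^2 = -N^2 > 0$. A nef $\mathbb{Q}$-divisor with strictly positive self-intersection on a surface is big (by asymptotic Riemann--Roch, $h^0(S, \lfloor mP \rfloor)$ grows like $\tfrac{P^2}{2}m^2$), so $\kappa(P) = 2$. By the section-preserving property above, $\kappa(D) = \kappa(P) = 2$, contradicting the hypothesis $\kappa(D) \le 1$. Hence $N = 0$ and $D = P$ is nef.

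The argument is essentially formal once the decomposition is in hand, so the only genuine inputs — and thus the main point to get right — are the two standard facts about Zariski decomposition on surfaces: its existence in the pseudo-effective setting and the equality $\kappa(D) = \kappa(P)$ relating the Iitaka dimensions of $D$ and of its positive part. I expect no computational obstacle beyond these citations; notice also that the hypothesis $\kappa(D)\le 1$ is sharp, as a sum $D=P+N$ with $P$ nef, $P^2>0$, $N^2=-P^2$ and $P\cdot N=0$ produces a non-nef pseudo-effective divisor with $D^2=0$ and $\kappa(D)=2$.
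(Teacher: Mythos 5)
Your proof is correct and follows essentially the same route as the paper: Zariski decomposition $D=P+N$, the orthogonality $P\cdot N=0$ giving $0=D^2=P^2+N^2$, and the fact that a nef divisor with positive self-intersection is big, forcing $P^2=0$ and hence $N=0$. The only cosmetic difference is that you argue by contradiction and invoke the full section-preserving property $\kappa(D)=\kappa(P)$, whereas the paper argues directly using only the trivial inequality $\kappa(P)\le\kappa(D)$ coming from $N$ being effective.
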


\begin{proof}
Let $D=P+N$ be the Zariski decomposition of $D$. Then $P$ is a nef $\mathbb{Q}$-divisor while $N$ is an effective $\mathbb{Q}$-divisor. In addition, $P\cdot N=0$ and, either $N=0$ or the intersection matrix $(N_i\cdot N_j)_{i,j \in I}$ is negative definite, where the $N_i$ with $i \in I$ are the irreducible components of $N$. Now $\kappa(P)\le \kappa(D) \le 1$ by assumption, and hence $P^2=0$. It follows that  $0=D^2=N^2$, and hence $N=0$, proving the lemma.
\end{proof}

\begin{thm}\label{thm:divisor_zero_square}
Let $X$ be a complex projective manifold of dimension $n \ge 1$, and let $D$ be a divisor on $X$ such that $D^2\equiv 0$. Suppose in addition that $D$ is not pseudo-effective and that 
$h^0(X,\Omega^1_X(D))\ge 1$. Then $-D$ is nef.
\end{thm}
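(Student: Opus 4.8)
The plan is to use the twisted $1$-form only to bound Kodaira dimension, to reduce the nefness of $-D$ to a two-dimensional statement via Lemma~\ref{lemma:bertini}, and then to close on surfaces with Lemmas~\ref{lemma:divisor_zero_square_1} and~\ref{lemma:divisor_zero_square_2}. First I would fix a non-zero section $\omega\in H^0(X,\Omega^1_X(D))$, let $E\ge 0$ be its divisorial zero locus, and factor $\omega=s_E\cdot\omega'$ with $\omega'\in H^0(X,\Omega^1_X(D-E))$ whose zero locus has codimension at least $2$. Then $\omega'$ is a saturated inclusion $\sO_X(E-D)\hookrightarrow\Omega^1_X$, so by the Bogomolov--Sommese vanishing theorem $\kappa(E-D)\le 1$; since $\sO_X(-D)\hookrightarrow\sO_X(E-D)$ via $s_E$ (as $E\ge 0$), this already gives $\kappa(-D)\le 1$, and the same bound will persist after restriction to surfaces. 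The cases $n\le 2$ are then immediate: on a surface the hypotheses are exactly those of Lemmas~\ref{lemma:divisor_zero_square_1} and~\ref{lemma:divisor_zero_square_2}, so I may assume $n\ge 3$.

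Next I would argue by contradiction: if $-D$ is not nef, choose an irreducible curve $C\subset X$ with $D\cdot C>0$. Applying Lemma~\ref{lemma:bertini} with $Y=C$ produces a reduced irreducible complete intersection surface $S\supseteq C$; let $\nu\colon\wt S\to S\hookrightarrow X$ be a resolution, $\wt C$ the strict transform of $C$, and $D_{\wt S}:=\nu^*D$. Because $D^2\equiv 0$ holds as a codimension-$2$ class, $D_{\wt S}^2=\nu^*(D^2)=0$, while the projection formula gives $D_{\wt S}\cdot\wt C=D\cdot C>0$. For $S$ chosen general through $C$ (so that $S$ is not contained in the zero locus of $\omega'$), the induced section $\omega_{\wt S}\in H^0(\wt S,\Omega^1_{\wt S}(D_{\wt S}))$ is non-zero, whence Bogomolov--Sommese on $\wt S$ yields $\kappa(-D_{\wt S})\le 1$. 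By Lemma~\ref{lemma:divisor_zero_square_1}, one of $\pm D_{\wt S}$ is pseudo-effective.

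If I can show that it is $-D_{\wt S}$ that is pseudo-effective, then, since $(-D_{\wt S})^2=0$ and $\kappa(-D_{\wt S})\le 1$, Lemma~\ref{lemma:divisor_zero_square_2} forces $-D_{\wt S}$ to be nef, and then $D\cdot C=-(-D_{\wt S})\cdot\wt C\le 0$, a contradiction that proves the theorem. Thus the entire argument rests on pinning the sign, i.e.\ on \emph{excluding} the possibility that $D_{\wt S}$ is pseudo-effective while $-D_{\wt S}$ is not. This is exactly where the hypothesis that $D$ is not pseudo-effective must be used, and I expect it to be the main obstacle. The intended mechanism is that the surfaces $S\supseteq C$ furnished by Lemma~\ref{lemma:bertini} (varying the complete intersection data, and imposing an additional general base point) sweep out $X$; were $D_{\wt S}$ pseudo-effective for the general member of this covering family, then pairing against the ample slices $H|_{\wt S}$ (so that pseudo-effective~$\cdot$~ample~$\ge 0$ on the surface) together with the \cite{bdpp} duality between the pseudo-effective cone and the cone of movable curves would force $D$ itself to be pseudo-effective, contrary to hypothesis.

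The delicate point, and what I would spend most care on, is making this last implication rigorous: one must guarantee that pseudo-effectivity of $D$ is \emph{faithfully detected} on these covering surfaces, equivalently that $D$ not pseudo-effective forces $D_{\wt S}$ not pseudo-effective for a suitable surface through $C$. Because psef-detection by the complete intersection slices only controls $D$ against complete intersection movable classes (and the movable cone may require modifications), I would take the complete intersections of sufficiently large degree and invoke a restriction theorem for the pseudo-effective cone in the spirit of \cite{bdpp} and \cite{nakayama04}; an alternative is to feed the saturated inclusion $\sO_X(E-D)\hookrightarrow\Omega^1_X$ into the Campana--Paun uniruledness criterion and treat the resulting uniruled case separately. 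Once the sign is secured the two surface lemmas finish the proof as above.
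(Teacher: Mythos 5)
Your overall strategy---reduce to surfaces through a given curve via Lemma~\ref{lemma:bertini}, then close with Bogomolov--Sommese together with Lemmas~\ref{lemma:divisor_zero_square_1} and~\ref{lemma:divisor_zero_square_2}---is the paper's, and the parts you carry out (the base cases $n\le 2$, the square-zero and degree computations on the resolved surface, the non-vanishing of the induced twisted form for a general member, the bound $\kappa\le 1$) are correct. But the step you yourself flag as the main obstacle, pinning the sign so that it is $-D_{\wt S}$ rather than $D_{\wt S}$ that is pseudo-effective, is a genuine gap, and the mechanism you sketch for it would fail. Testing $D$ against complete intersection surfaces inside $X$ only controls the numbers $D\cdot H_1\cdots H_{n-1}$ for ample divisors on $X$ itself, whereas by \cite[Theorem 1.5]{bdpp} the cone dual to the pseudo-effective cone is the cone of \emph{movable} curve classes, i.e.\ limits of pushforwards $\beta_*(H_1\cdots H_{n-1})$ from birational modifications $\beta\colon Z\to X$; this cone is in general strictly larger than the closure of the cone of complete intersections of amples on $X$. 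So a non-pseudo-effective $D$ may well satisfy $D\cdot H_1\cdots H_{n-1}\ge 0$ for every ample tuple on $X$, in which case $D|_{\wt S}$ is the pseudo-effective side on every surface you construct and your contradiction never materializes. Raising the degrees of the complete intersections does not help, there is no restriction theorem of the kind you invoke, and the Campana--P\u{a}un alternative is too vague to assess.

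The paper closes this gap by reversing the order of quantifiers: it first applies \cite[Theorem 1.5]{bdpp} to the non-pseudo-effective class $D$ to produce a birational morphism $\beta\colon Z\to X$ and very ample divisors $H_1,\dots,H_{n-1}$ on $Z$ with $\beta^*D\cdot H_1\cdots H_{n-1}<0$, and then, for an \emph{arbitrary} curve $C\subset Z$, takes the Bertini surface $S=A_1\cap\cdots\cap A_{n-2}$ with $A_i\in|\sI_C(m_iH_i)|$. Since the numerical class of $S$ equals $m_1\cdots m_{n-2}\,H_1\cdots H_{n-2}$ independently of the incidence condition, one gets $\beta^*D|_S\cdot H_{n-1}|_S<0$ for free, which by Lemma~\ref{lemma:divisor_zero_square_1} forces $-\beta^*D$ to be the pseudo-effective side on a resolution $S_1$ of $S$; Bogomolov--Sommese and Lemma~\ref{lemma:divisor_zero_square_2} then give nefness there, hence $-\beta^*D\cdot C\ge 0$ for every curve $C\subset Z$, i.e.\ $-D$ is nef. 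To repair your write-up, replace ``choose a curve $C$ in $X$ with $D\cdot C>0$ and look for a good surface through it'' by ``fix the BDPP modification once and verify nefness of $-\beta^*D$ against every curve of $Z$, using surfaces that are slices of the fixed negative movable class.''
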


\begin{proof}
If $n=1$, then $\deg D \le 0$ since $D$ is not pseudo-effective by assumption, and hence $-D$ is $\mathbb{Q}$-effective. Suppose from now on that $n\ge 2$.
By \cite[Theorem 1.5]{bdpp}, there exists a smooth projective variety $Z$, as well as a birational morphism $\beta\colon Z \to X$, and very ample divisors $H_1,\ldots,H_{n-1}$ on $Z$ such that 
$$\beta^*D\cdot H_1\cdots H_{n-1}<0.$$ 

Let $C \subset Z$ be a curve. By Lemma \ref{lemma:bertini} 
there exist positive integers $m_1,\ldots,m_{n-2}$ and a tuple 
$(A_1,\ldots,A_{n-2})\in |\sI_C(m_1H_1)|\times \cdots \times|\sI_C(m_{n-2}H_{n-2})|$
such that the surface $S:=A_1\cap \cdots \cap A_{n-2}$ is reduced and irreducible.

We have $\beta^*D|_S\cdot H_{n-1}|_S<0$ and $\beta^*D|^2_S=0$. Let $S_1 \to S$ be a resolution of singularities. Applying Lemma \ref{lemma:divisor_zero_square_1} to the pull-back 
$\beta^*D|_{S_1}$ of $\beta^*D|_S$ on $S_1$, we conclude that $-\beta^*D|_{S_1}$ is pseudo-effective. 

Let $\omega \in H^0(X,\Omega^1_X(D))\setminus \{0\}$. The twisted $1$-form $\omega$ yields an injective map $\sO_X(-D) \to \Omega^1_X$ of sheaves. We may assume without loss of generality that the composition 
$\sO_X(-D)|_{S_1} \to \Omega^1_X|_{S_1} \to \Omega^1_{S_1}$ is non-zero. By the Bogomolov-Sommese vanishing theorem, we then have $\kappa(\sO_X(-D)|_{S_1})\le 1$. Applying Lemma \ref{lemma:divisor_zero_square_2}, we conclude that $-\beta^*D|_{S_1}$ is nef. In particular,
$-\beta^* D \cdot C \ge 0$. This shows that $-D$ is nef, completing the proof of the proposition.
\end{proof}

\begin{proof}[Proof of Theorem \ref{thm_intro:divisor_zero_square}]This is an immediate consequence of Theorem \ref{thm:divisor_zero_square} since $\sN^2\equiv 0$ by Lemma \ref{lemma:BB_vanishing}.
\end{proof}

The same argument used in the proof of Theorem \ref{thm:divisor_zero_square} above shows that the following holds.

\begin{thm}
Let $X$ be a complex projective manifold of dimension $n \ge 2$, and let $\sL\subseteq \Omega_X^1$ be a locally free subsheaf of rank $1$. Suppose $\sL\cdot H^{n-1}=0$
and $\sL^2\cdot H^{n-2}=0$ for some ample divisor $H$ on $X$. Then $\sL$ is nef.
\end{thm}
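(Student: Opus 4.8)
The plan is to follow, essentially verbatim, the strategy of the proof of Theorem \ref{thm:divisor_zero_square}, reducing the nefness of $\sL$ to a computation on a surface cut out \emph{through a prescribed curve}. Write $\sL \cong \sO_X(D)$. Since nefness is tested on curves, it suffices to prove $D \cdot C \ge 0$ for every irreducible curve $C \subset X$. Fix such a $C$. Applying Lemma \ref{lemma:bertini} to the ideal sheaf $\sI_C$, I would choose large integers $m_1,\dots,m_{n-2}$ and general members $A_i \in |\sI_C(m_iH)|$ so that the complete intersection $S := A_1 \cap \cdots \cap A_{n-2}$ is a reduced and irreducible (Cohen--Macaulay) surface containing $C$. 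Passing to a resolution $\pi\colon S_1 \to S$, I set $\wt D := \pi^*(D|_S)$ and $\wt H := \pi^*(H|_S)$; the latter is nef and big because $H|_S$ is ample. Since $[A_i] \equiv m_iH$, the projection formula gives $\wt D^2 = (\prod_i m_i)\,(\sL^2 \cdot H^{n-2}) = 0$ and $\wt D \cdot \wt H = (\prod_i m_i)\,(\sL \cdot H^{n-1}) = 0$, while the strict transform $\wt C$ of $C$ satisfies $\wt D \cdot \wt C = D \cdot C$.

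Next, exactly as in Theorem \ref{thm:divisor_zero_square}, I would exploit the inclusion $\sL \subseteq \Omega^1_X$. Restricting and composing with the natural map $\Omega^1_X|_{S_1} \to \Omega^1_{S_1}$ yields a morphism $\sO_{S_1}(\wt D) \to \Omega^1_{S_1}$, which may be assumed non-zero for a sufficiently general choice of $S$; the Bogomolov--Sommese vanishing theorem then forces $\kappa(\wt D) \le 1$. As $\wt D^2 = 0$, Lemma \ref{lemma:divisor_zero_square_1} shows that either $\wt D$ or $-\wt D$ is pseudo-effective. If $\wt D$ is pseudo-effective, Lemma \ref{lemma:divisor_zero_square_2} upgrades this to $\wt D$ being nef, whence $D \cdot C = \wt D \cdot \wt C \ge 0$.

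The remaining and genuinely delicate point is the opposite-sign case, in which $-\wt D$ is pseudo-effective. Here the extra hypothesis $\sL \cdot H^{n-1} = 0$ is decisive: since $(-\wt D) \cdot \wt H = 0$ with $\wt H$ nef and big, a Zariski-decomposition argument (equivalently the Hodge index theorem, using $\wt H^2 > 0$ to see that $\wt H^\perp$ is negative definite) forces $-\wt D \equiv 0$, so that $D \cdot C = 0 \ge 0$ as well. In either case $D \cdot C \ge 0$, and as $C$ was arbitrary, $\sL = \sO_X(D)$ is nef.

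I expect the main obstacle to be organising the surface construction so that it simultaneously passes through the given curve $C$ \emph{and} keeps the restricted cotangent form non-zero, so that Bogomolov--Sommese applies; a secondary subtlety is pinning down which of $\pm\wt D$ is pseudo-effective, since Lemma \ref{lemma:divisor_zero_square_1} leaves the sign undetermined. It is precisely the orthogonality $\wt D \cdot \wt H = 0$, inherited from $\sL \cdot H^{n-1} = 0$, that resolves this ambiguity and closes the argument.
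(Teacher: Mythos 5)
Your proof is correct and follows the route the paper itself takes: the paper's entire proof of this statement is the remark that the argument of Theorem \ref{thm:divisor_zero_square} applies, and you have reproduced exactly that argument (Bertini through the prescribed curve via Lemma \ref{lemma:bertini}, restriction to a resolved complete-intersection surface, Lemmas \ref{lemma:divisor_zero_square_1} and \ref{lemma:divisor_zero_square_2} plus Bogomolov--Sommese, with the orthogonality $\wt D\cdot\wt H=0$ settling the sign that in Theorem \ref{thm:divisor_zero_square} was settled by the BDPP criterion). One remark: the Hodge-index computation you invoke in the opposite-sign case does not use pseudo-effectivity at all --- on the smooth surface $S_1$ the class $\wt H$ is nef with $\wt H^2>0$, so the intersection form is negative definite on $\wt H^{\perp}$, and $\wt D\cdot\wt H=\wt D^{\,2}=0$ already force $\wt D\equiv 0$. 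This disposes of both cases at once, renders the Bogomolov--Sommese and pseudo-effectivity steps redundant for this particular statement, and in fact yields the stronger conclusion $\sL\equiv 0$; so your sign dichotomy is harmless but not actually needed.
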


The following is an immediate consequence of Theorem \ref{thm:divisor_zero_square}. Note that Touzet described the structure of codimension $1$ foliations with pseudo-effective conormal bundle in \cite{touzet_conpsef}. 

\begin{prop}\label{prop:ambient_general_type_conormal_nef}
Let $X$ be a smooth projective variety of general type, and let $\sF$ be a codimension $1$ foliation on $X$ with normal bundle $\sN$. Suppose $\sN^2\equiv 0$ and $\nu(K_\sF)\le \dim X - 1$. Then $\sN^*$ is nef.
\end{prop}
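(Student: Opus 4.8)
The plan is to deduce the statement directly from Theorem \ref{thm:divisor_zero_square} applied to the divisor class $D:=\sN$ (more precisely, to a divisor $D$ with $\sO_X(D)\cong\sN$). By hypothesis $\sN^2\equiv 0$, so $D^2\equiv 0$, and the two remaining conditions to verify in order to invoke that theorem are that $h^0(X,\Omega^1_X(\sN))\ge 1$ and that $\sN$ is not pseudo-effective.

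The first condition is immediate. Since $\sF$ has codimension $1$, its conormal sheaf $\sN^*$ is a line bundle embedding into $\Omega^1_X$, and twisting the inclusion $\sN^*\subseteq\Omega^1_X$ by $\sN$ produces a nonzero global section of $\Omega^1_X\otimes\sN=\Omega^1_X(\sN)$, namely the twisted $1$-form defining $\sF$. Hence $h^0(X,\Omega^1_X(\sN))\ge 1$. I would note that this uses only that $\sN$ is a line bundle, which holds on the smooth variety $X$ because $\sF$ is saturated.

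For the second condition, I would first record the numerical identity $K_\sF\equiv K_X+c_1(\sN)$, which follows from $\det T_X\cong\det\sF\otimes\sN$. Then I argue by contradiction: suppose $\sN$ is pseudo-effective. Since $X$ is of general type, $K_X$ is big, and the sum of a big class and a pseudo-effective class is again big; therefore $K_\sF$ is big, whence $\nu(K_\sF)=\dim X$. This contradicts the hypothesis $\nu(K_\sF)\le\dim X-1$, so $\sN$ is not pseudo-effective. With $D^2\equiv 0$, $h^0(X,\Omega^1_X(\sN))\ge 1$, and $\sN$ not pseudo-effective all established, Theorem \ref{thm:divisor_zero_square} gives that $-\sN$ is nef, i.e. $\sN^*$ is nef.

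The only step demanding genuine care is the numerical-dimension argument, which is precisely where the general-type hypothesis enters: one must confirm that a big divisor has numerical dimension equal to $\dim X$, so that bigness of $K_\sF$ is incompatible with the bound $\nu(K_\sF)\le\dim X-1$. Everything else is a routine assembly of the ingredients already available, so I expect the proof to be short.
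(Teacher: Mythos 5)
Your proposal is correct and follows essentially the same route as the paper: the paper's proof likewise notes that pseudo-effectivity of $\sN$ would force $\nu(K_\sF)=\dim X$ via the adjunction formula $K_\sF=K_X+c_1(\sN)$ and bigness of $K_X$, and then invokes Theorem \ref{thm:divisor_zero_square}. You are merely more explicit in checking the hypothesis $h^0(X,\Omega^1_X(\sN))\ge 1$, which the paper leaves implicit (it is the standard twisted $1$-form defining the codimension $1$ foliation).
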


\begin{proof}
If $\sN$ is pseudo-effective, then $\nu(K_\sF)=\dim X$ since $K_\sF=K_X+c_1(\sN)$
by the adjunction formula and $\nu(K_X)=\dim X$ by assumption, yielding a contradiction. 
The claim now follows from Theorem \ref{thm:divisor_zero_square}.
\end{proof}

The following result extends \cite[Lemme 9]{brunella_aens} to arbitrary regular codimension $1$ foliations on complex projective varieties of general type.

\begin{cor}\label{cor:ambient_general_type_conormal_nef}
Let $X$ be a smooth projective variety of general type, and let $\sF$ be a regular codimension $1$ foliation on $X$ with normal bundle $\sN$. Suppose $\nu(K_\sF)\le \dim X - 1$. Then $\sN^*$ is nef.
\end{cor}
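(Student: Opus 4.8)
The plan is to reduce immediately to Proposition \ref{prop:ambient_general_type_conormal_nef}. That proposition has exactly the same conclusion and essentially the same hypotheses as the corollary, except that it carries the extra numerical assumption $\sN^2 \equiv 0$ in place of the regularity of $\sF$. So the only thing to verify is that regularity of a codimension $1$ foliation forces $\sN^2 \equiv 0$; once this is in hand the proof is complete.

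This required numerical vanishing is precisely the Baum--Bott type statement recorded in Lemma \ref{lemma:BB_vanishing}. Since $\sF$ is regular of codimension $q = 1$, part (2) of that lemma gives $c_1(\sN)^{q+1} = c_1(\sN)^2 = 0$ in $H^2(X,\Omega^2_X)$, equivalently $\sN^2 \equiv 0$. (The vanishing of the genuine cohomology class $c_1(\sN)^2$ is stronger than, and in particular implies, the numerical triviality $\sN^2\equiv 0$.) With this established, Proposition \ref{prop:ambient_general_type_conormal_nef} applies verbatim and yields that $\sN^*$ is nef.

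There is essentially no obstacle here: all the analytic content sits in Theorem \ref{thm:divisor_zero_square}, funneled through Proposition \ref{prop:ambient_general_type_conormal_nef}, and the corollary is a repackaging obtained by trading the hypothesis $\sN^2 \equiv 0$ for the more geometric hypothesis of regularity. The only point meriting an explicit sentence is the confirmation that regularity supplies $\sN^2 \equiv 0$ via Lemma \ref{lemma:BB_vanishing}; everything else is a direct citation. This is consistent with how $\sN^2\equiv 0$ was already invoked for regular foliations in the proof of Theorem \ref{thm_intro:divisor_zero_square}.
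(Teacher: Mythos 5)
Your proposal is correct and coincides with the paper's own argument: the paper likewise invokes Lemma \ref{lemma:BB_vanishing} to get $\sN^2\equiv 0$ from regularity and then applies Proposition \ref{prop:ambient_general_type_conormal_nef}. Nothing further is needed.
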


\begin{proof}
By Lemma \ref{lemma:BB_vanishing}, we have $\sN^2\equiv 0$. The statement follows from
Proposition \ref{prop:ambient_general_type_conormal_nef} above.
\end{proof}

\section{Projectively flat foliations on non-minimal varieties}\label{section:non_minimal}

In this section, we address regular codimension $1$ foliations with numerically projectively flat tangent bundle on non-minimal complex projective manifolds. The following is the main result of this section. 

\begin{thm}\label{thm:suspension_complex_torus}
Setting and notation as in \ref{setup:main}. Suppose in addition that $n \ge 4$, and that $K_X$ is not nef. Then there exists a $\mathbb{P}^1$-bundle structure $\phi\colon X \to Y$ onto a finite \'etale quotient of an abelian variety, and $\sF$ induces a flat Ehresmann connection on $\phi$.
\end{thm}

\begin{proof}
For the reader's convenience, the proof is subdivided into a number of steps. 

\medskip

\noindent\textit{Step 1.} We will need the following easy observation.

\begin{claim}\label{claim:step_1}
Any rational curve on $X$ is generically transverse to $\sF$.
\end{claim}

\begin{proof}Let $C$ be a rational curve on $X$, and let 
$\mathbb{P}^1\to C$ be the normalization morphism. Suppose that $T_{\mathbb{P}^1} \subset \sF|_{\mathbb{P}^1}$. Then $\sF|_{\mathbb{P}^1}$ is an ample vector bundle since $\sF$ is numerically projectively flat by assumption. But this contradicts Lemma \ref{lemma:minimal}, proving the claim. 
\end{proof}

Let $\phi\colon X \to Y$ be an elementary Fano-Mori contraction. By Lemma \ref{lemma:extremal_contraction_fol_transverse} and Claim \ref{claim:step_1} together, 
$Y$ is a smooth variety, and either $\phi$ is the blow-up of a smooth codimension $2$ submanifold or $\phi$ is a conic bundle. The proof of Theorem \ref{thm:suspension_complex_torus} uses the following notation.

\begin{setup}
Suppose that $\phi$ is a singular conic bundle and let $\Delta:=\{y \in Y \, | \, \phi^{-1}(y)\textup{ is not smooth}\}$ be the discriminant locus of $\phi$. The same argument used in the proof of \cite[Proposition 1.2]{beauville_prym} shows that $\Delta$ has pure codimension $1$. Moreover, $\Delta$ has normal crossings in codimension $1$. Let $Z_{-2} \subseteq \Delta$ be an irreducible component, and let $E_{-1}$ be the normalization of $E_{-2}:=\phi^{-1}(Z_{-2})$. Then there exists a normal projective variety $Z_{-1}$ as well as morphisms $E_{-1} \to Z_{-1}$ and $Z_{-1} \to Z_{-2}$, and a commutative diagram
\begin{center}
\begin{tikzcd}[row sep=large, column sep=large]
  E_{-1}  \ar[d, "{p_{-1}}"']\ar[r, "{\nu}"] & E_{-2} \ar[d, "{p_{-2}}"]\\
  Z_{-1}\ar[r, "\iota"] & Z_{-2}.   
\end{tikzcd}
\end{center}
In addition, $p_{-1}$ is a $\mathbb{P}^1$-bundle and $\iota$ is a finite morphism of degree $2$. By \cite[Lemme 1.5.2 ]{beauville_prym}, there exists an open set 
$Z_{-2}^\circ\subseteq Z_{-2}$ with complement of codimension at least $2$ such that 
$Z_{-1}^\circ:=\iota^{-1}(Z_{-2}^\circ)$ is smooth and such that the morphisms $E_{-1}^\circ \to X$ and
$Z_{-1}^\circ \to Y$ induced by $\nu|_{E_{-1}^\circ}$ and $\iota|_{Z_{-1}^\circ}$ are unramified, where $E_{-1}^\circ:=p_{-1}^{-1}(Z_{-1}^\circ)$. Let $Z \to Z_{-1}$ be a quasi-\'etale cover with $Z$ normal and projective, and let $E:=Z \times_{Z_{-1}} E_{-1}$ with natural morphism $p\colon E \to Z$. Let $Z^\circ$ be the inverse image of $Z_{-1}^\circ$ and set $E^\circ:=p^{-1}(Z^\circ)$ and $p^\circ:=p|_{E^\circ}$.
By the Nagata-Zariski purity theorem, the morphism $Z^\circ \to Z_{-1}^\circ$ is \'etale. As a consequence, the natural morphisms $E^\circ \to X$ and $Z^\circ \to Y$ are unramified.

Suppose now that $\phi$ is a $\mathbb{P}^1$-bundle. Set $Z_{-1}:=Y$. Let $Z \to Z_{-1}$ be a finite \'etale cover and set $E:=Z\times_{Z_{-1}} E_{-1}$ with natural morphism $p\colon E \to Z$. Set also $E^\circ:=E$ and $Z^\circ:=Z$.

Suppose finally that $\phi$ is the blow-up of a smooth subvariety $Z_{-1}$ of codimension $2$ in $Y$ with exceptional locus $E_{-1}$. Let $Z \to Z_{-1}$ be a finite \'etale cover, and set $E:=Z\times_{Z_{-1}} E_{-1}$ with natural morphism $p\colon E \to Z$. Set also
$E^\circ:=E$ and $Z^\circ:=Z$.  

We obtain a diagram

\begin{center}
\begin{tikzcd}[row sep=large, column sep=large]
  E^\circ  \ar[d, "{p^\circ}"']\ar[r, hook]\ar[rr, bend left, "{\textup{unramified}}"] 
  & E \ar[d, "{p,\,\mathbb{P}^1\textup{-bundle}}"] 
  \ar[r] & X \ar[d, "{\phi}"] \\
  Z^\circ \ar[r, hook] & Z \ar[r] & Y.   
\end{tikzcd}
\end{center}
By choice of $E^\circ$, $T_{E^\circ}$ is a subbundle of $T_X|_{E^\circ}$.

\end{setup}

\noindent\textit{Step 2.} By Claim \ref{claim:step_1}, the composition $T_{E/Z} \to T_X|_{E} \to \sN|_{E}$ is non-zero.

\begin{claim}
To prove Theorem \ref{thm:suspension_complex_torus}, it suffices to show that the map $T_{E/Z} \to \sN|_{E}$ is an isomorphism. 
\end{claim}

\begin{proof}
Suppose that the map $T_{E/Z} \to \sN|_{E}$ is an isomorphism, and let $F$ be a fiber of $p$. Since $\sF$ is projectively flat, there exists $a \in \mathbb{Z}$ such that $\sF|_{\mathbb{P}^1}\cong \sO_{\mathbb{P}^1}(a)^{\oplus n-1}$. In particular, we have 
$K_\sF \cdot F=(n-1)a$.
If $\phi$ is a singular conic bundle or a blow-up, then $K_X\cdot F = -1$ by choice of $F$. The adjunction formula $K_\sF=K_X+c_1(\sN)$
then gives 
$$(n-1)a = K_\sF \cdot F = K_X\cdot F + c_1(\sN)\cdot F = -1 +2 =1.$$
This is impossible and hence $\phi$ is a $\mathbb{P}^1$-bundle. Moreover, $\sF$ is a flat Ehresmann connection on $\phi$. It follows that $T_Y$ is numerically projectively flat. But then \cite[Theorem 0.1]{jahnke_radloff_13} applies to show that $Y$ is a finite \'etale quotient of an abelian variety.
\end{proof}

We argue by contradiction and assume the following.

\begin{assumption}\label{assumption}
The map $T_{E/Z} \to \sN|_{E}$ is not an isomorphism.
\end{assumption}

There are finitely many prime divisors $(\Sigma_i)_{i\in I}$ on $E$ and positive integers $m_i$ for $i\in I$ such that $$\sN|_{E}\cong T_{E/Z}(\sum_{i\in I}m_i\Sigma_i).$$
By Claim \ref{claim:step_1} again, the restriction of $p$ to $\Sigma_i$ is a finite morphism $\Sigma_i \to Z$.
\begin{claim}\label{claim:step_2}
Let $i \in I$. Then $\Sigma_i$ is smooth in codimension $1$, and
the finite morphism $\Sigma_i \to Z$ is quasi-\'etale. Moreover, the vector bundles $T_{E/Z}|_{\Sigma_i}$, $T_X|_{\Sigma_i}$, $\sN_{\Sigma_i/E}$, $\sF|_{\Sigma_i}$ and 
$\sN|_{\Sigma_i}$ are numerically flat. In addition, $\sO_E(\Sigma_j)|_{\Sigma_i}\equiv 0$
for all indices $j \in I$.
\end{claim}

\begin{proof}
Set $\Sigma_i^\circ:=\Sigma_i \cap E^\circ$. Let $C \subset Z$ be a complete intersection curve of general elements of very ample linear systems on $Z$. By general choice of $C$, we may assume without loss of generality that $C \subset Z^\circ$. Set $S:=p^{-1}(C)\subset E^\circ$ and $f:=p|_S$. Write $T_f:=T_{S/C}=T_{E/Z}|_S$. By Bertini's theorem (see \cite[Th\'eor\`eme I.6.10]{jouanolou_bertini}), we may assume that $S$ is smooth and that the complete curve $B_i:=\Sigma_i\cap S$ is reduced and irreducible.

\medskip 

We show that $B_i \to C$ is \'etale and that 
$$T_f\cdot B_i=\sN\cdot B_i=B_j\cdot B_i =0$$
for all indices $j \in I$. Let $\sE_C$ be a normalized vector bundle on $C$ such that $S\cong \mathbb{P}_C(\sE_C)$.
Recall that $\NS(S)\cong \mathbb{Z}[C_0]\oplus \mathbb{Z}[F]$, where 
$F\cong \mathbb{P}^1$ is any fiber of $f$ and
$C_0$ is a section of $f$ with $C_0^2=-e=\deg \sE_C$. In addition, $T_f\equiv 2C_0+eF$, and hence $T_f^2=0$. By Lemma \ref{lemma:BB_vanishing}, we have $(\sN|_S)^2 = 0$. This easily implies that $$\sN|_S\equiv \frac{\sN|_S\cdot F}{2}T_f$$ so that $(\sum_{i\in I}m_iB_i)^2=0$.

Let $a_i$ and $b_i$ be integers such that $B_i\equiv a_i C_0+b_i F$. Then $a_i=B_i\cdot F > 0$.
By construction, we have $T_f|_{B_i}\subset \sF|_{B_i}$. By Theorem \ref{thm:numerically_projectively_flat} and Lemma \ref{lemma:minimal} together, we have 
$$T_f \cdot B_i = (2C_0+eF)\cdot(a_iC_0+b_iF)=-ea_i+2b_i \le 0.$$

Suppose first that $e \ge 0$. 

\medskip

If $I=\{i\}$ and $B_i=C_0$, then $e=0$ since $0=(\sum_{i\in I}m_iB_i)^2=-m_i^2 e$. Moreover $T_f\cdot B_i=\sN\cdot B_i=B_i\cdot B_i =0$. Otherwise, there exists $j \in I$ such that $B_j \neq C_0$. Then $b_j \ge ea_j$ by \cite[Proposition 2.20]{hartshorne77}, and hence $e=0$, $b_j=0$ and $B_j\equiv a_j C_0$. Hence we have $T_f\cdot B_i=\sN\cdot B_i=B_j\cdot B_i =0$ for all indices $j \in I$. Moreover, the morphism $B_i \to C$ is \'etale by \cite[Proof of Proposition 2.21]{hartshorne77}.

\medskip

Suppose now that $e<0$. 

\medskip

Then $2b_i \le ea_i<0$. Moreover, $B_i\neq C_0$ since $T_f\cdot C_0=-e>0$. Therefore, we have $a_i\ge 2$ and $2b_i \ge ea_i$ by \cite[Proposition 2.21]{hartshorne77}, and hence $2b_i=ea_i$. This implies $B_i\cdot B_j=0$ for all $j \in I$ and $T_f\cdot B_i=\sN\cdot B_i=0$. In addition, the morphism $B_i \to C$ is \'etale by \cite[Proof of Proposition 2.21]{hartshorne77}. 

\medskip

Since the morphism $B_i \to C$ is \'etale, we see that $\Sigma_i$ is smooth in codimension $1$ and that $\Sigma_i \to Z$ is quasi-\'etale. Let $\wt{\Sigma}_i$ denote the normalization of $\Sigma_i$.
Replacing $E$ by $\wt{\Sigma}_i\times_Z E$, if necessary, we may assume without loss of generality that $\Sigma_i$ is a section of $p$.
Then Kleiman's criterion for numerical triviality (see \cite[Lemma 4.1]{gkp_bo_bo}) applies to show that the vector bundles $T_{E/Z}|_{\Sigma_i}$, $\sN_{\Sigma_i/E}$ and  $\sN|_{\Sigma_i}$ are numerically flat. In addition, $\sO_E(\Sigma_j)|_{\Sigma_i}\equiv 0$ for all indices $j \in I$.
Recall that $T_f|_{B_i}\subset \sF|_{B_i}$. By Theorem \ref{thm:numerically_projectively_flat} and Lemma \ref{lemma:minimal} together, we see that $\sF|_{B_i}$ is numerically flat. As a consequence, 
$K_\sF\cdot B_i =0$. By Kleiman's criterion for numerical triviality again, we conclude that $K_\sF|_{\Sigma_i}\equiv 0$. This in turn implies that $\sF|_{\Sigma_i}$ is numerically flat by Theorem \ref{thm:numerically_projectively_flat}. Since $\sN|_{\Sigma_i}\equiv 0$, the vector bundle 
$T_X|_{\Sigma_i}$ is numerically flat as well, finishing the proof of Claim
\ref{claim:step_2}.
\end{proof}

\noindent\textit{Step 3.} Let us show $Z$ is a finite quasi-\'etale quotient of an abelian variety. 

\medskip

By Claim \ref{claim:step_2}, the morphisms $\Sigma_i \to Z$ are quasi-\'etale covers. Let $i\in I$. To lighten notation, set $\Sigma:=\Sigma_i$ and $\Sigma^\circ:=\Sigma_i^\circ$. 
Let $\wt{\Sigma}$ denote the normalization of $\Sigma$.
Replacing $E$ by $\wt{\Sigma}\times_Z E$, if necessary, we may assume without loss of generality that $\Sigma$ is a section of $p$. Set $\sE:=p_*\sO_E(\Sigma)$. Then $E\cong \mathbb{P}_Z(\sE)$ and there is an exact sequence 
$$0 \to \sO_Z \to \sE \to \sL \to 0,$$
where $\sL \cong \sN_{\Sigma/E}$ is a flat line bundle on $Z$ by Claim \ref{claim:step_2}. Notice that $T_{E/Z}\cong \sO_E(2\Sigma)\otimes p^*\sL^*$. 

Let $F$ be any fiber of $p$. Notice that $\sN|_E\equiv c \, T_{E/Z}\equiv 2c \Sigma$ with $c:=\frac{\sN \cdot F}{2}$ since 
$\sN|_E\cdot F = c \, T_{E/Z} \cdot F$ and $\sN|_{\Sigma} \equiv T_{E/Z}|_\Sigma\equiv 0$ by Claim \ref{claim:step_2}.

Let now $k$ be an integer such that $\sF(k\Sigma)|_F \cong \sO_F^{\oplus n-1}$. Then $\sF|_E \cong p^*\sK\otimes \sO_E(-k\Sigma)$ for some vector bundle $\sK$ on $Z$. Since $\sF|_\Sigma$ and $\sN_{\Sigma/E}$ are numerically flat by Claim \ref{claim:step_2}, the locally free sheaf $\sK$ is numerically flat as well. This immediately implies that 
\begin{align*}
c_1(\sF|_E)) & = p^*c_1(\sK) -k(n-1)\Sigma\\
& \equiv -k(n-1)\Sigma
\end{align*}
and
\begin{align*}
c_2(\sF|_E)) & = p^*c_2(\sK) -(n-2)k\Sigma\cdot p^*c_1(\sK)+\frac{(n-1)(n-2)}{2}k^2\Sigma^2\\
& \equiv 0
\end{align*}
since $\Sigma^2\equiv 0$. The exact sequence 
$$ 0 \to \sF|_E \to T_X|_E \to \sN|_E \to 0$$
then gives
\begin{align*}
c_1(T_X|_E) & = c_1(\sF|_E)+c_1(\sN|_E)\\
& \equiv \Sigma
\end{align*}
using $K_X\cdot F=-1$, and 
\begin{align*}
c_2(T_X|_E) & =c_2(\sF|_E))+c_1(\sF|_E))\cdot c_1(\sN|_E) \\
 & \equiv 0
\end{align*}
using $c_1(\sN)\cdot \Sigma\equiv 0$. In particular, there exists a flat line bundle $\sM$ on $Z$ such that $$\sN|_E \cong p^*\sM\otimes\sO_E((k(n-1)+1)\Sigma).$$
By the projection formula, there exist rational numbers $a_1$, $a_2$ and $a_3$ such that 
\begin{equation}\label{eq:first_computation}
p_*c_2(T_X|_E) = a_1c_1(\sK)+a_2c_1(\sM)+a_3c_1(\sN_{\Sigma/E})
\end{equation}
as cycle classes in $\textup{CH}^1(Z)$.
The exact sequence
$$0 \to T_{E^\circ/Z^\circ} \to T_{E^\circ} \to (p^\circ)^*T_{Z^\circ} \to 0$$
gives
\begin{align*}
c_1(T_{E^\circ}) & = 2\Sigma^\circ-(p^\circ)^*c_1(\sL|_{Z^\circ})+(p^\circ)^*c_1(T_{Z^\circ})\\ 
& \equiv 2\Sigma^\circ+(p^\circ)^*c_1(T_{Z^\circ})
\end{align*}
and 
\begin{align*}
c_2(T_{E^\circ}) & =c_1(T_{E^\circ/Z^\circ})\cdot (p^\circ)^*c_1(T_{Z^\circ})+(p^\circ)^*c_2(T_{Z^\circ})\\ 
& = (2 \Sigma^\circ - (p^\circ)^*c_1(\sL|_{Z^\circ}))\cdot (p^\circ)^*c_1(T_{Z^\circ})+(p^\circ)^*c_2(T_{Z^\circ}).
\end{align*}
Notice that the cokernel $\sN_{E^\circ/X^\circ}$ of the tangent map $T_{E^\circ}\to T_X|_{E^\circ}$ 
is locally free since the morphism $E^\circ \to X$ is unramified by choice of $E^\circ$.
Then we have
\begin{align*}
c_2(T_X|_{E^\circ}) & =  c_2(T_{E^\circ})+c_1(T_{E^\circ})\cdot c_1(\sN_{E^\circ/X^\circ}) \\
& = c_2(T_{E^\circ})+c_1(T_{E^\circ})\cdot (c_1(T_X|_{E^\circ})-c_1(T_{E^\circ}))\\
& \equiv 2 \Sigma^\circ\cdot (p^\circ)^*c_1(T_{Z^\circ})+(p^\circ)^*c_2(T_{Z^\circ})+(2\Sigma^\circ+(p^\circ)^*c_1(T_{Z^\circ}))\cdot (-\Sigma^\circ-(p^\circ)^*c_1(T_{Z^\circ}))\\
& \equiv - \Sigma^\circ\cdot (p^\circ)^*c_1(T_{Z^\circ})+(p^\circ)^*c_2(T_{Z^\circ})-(p^\circ)^*c_1(T_{Z^\circ})^2
\end{align*}
By the projection formula again, there exist rational numbers $b_1$, $b_2$, $b_3$ et $b_4$ such that
\begin{equation}\label{eq:second_computation}
(p^\circ)_*c_2(T_X|_{E^\circ}) = - c_1(T_{Z^\circ}) + 
b_1c_1(\sK|_{Z^\circ})+b_2c_1(\sM|_{Z^\circ})+b_3c_1(\sL|_{Z^\circ})+b_4c_1(\sN_{\Sigma^\circ/E^\circ})
\end{equation}
as cycle classes in $\textup{CH}^1(Z^\circ)$.
Equations \eqref{eq:first_computation} and \eqref{eq:second_computation} together then show that $K_Z$ is $\mathbb{Q}$-Cartier using the fact that $Z \setminus Z^\circ$ has codimension at least $2$ in $Z$. Moreover, $K_Z \cdot C = 0$ for any complete intersection curve $C$ of general elements of very ample linear systems on $Z$ so that Kleiman's criterion for numerical triviality (see \cite[Lemma 4.1]{gkp_bo_bo}) applies to show that $K_Z\equiv 0$.

Suppose that $\phi$ is a $\mathbb{P}^1$-bundle. Then $T_\Sigma$ is numerically projectively flat since both $T_X|_\Sigma$ and $\sN_{\Sigma/X}$ are numerically projectively flat by Claim \ref{claim:step_2} above. This implies that $T_Y$ is numerically projectively flat as well. Then
\cite[Theorem 0.1]{jahnke_radloff_13} applies to show that $Y=Z$ is a finite \'etale quotient of an abelian variety.

Suppose now that $\phi$ is a singular conic bundle or a blow-up. By Lemma \ref{lemma:rational_surface} and Claim \ref{claim:step_1} together, $Z$ contains no rational curve. Then \cite[Corollary 0.3]{bdpp} applied to a resolution of $Z$ shows that $Z$ has canonical singularities. By \cite[Proposition 1.16]{demailly_peternell_schneider94}, the map 
$\det T_\Sigma \to \wedge^{n-2}(T_X|_{\Sigma})$ does not vanish anywhere since $T_X|_{\Sigma}$ is numerically flat by Claim \ref{claim:step_2}. This immediately implies that $T_\Sigma$ is a locally free and numerically flat. By the solution of the Zariski-Lipman conjecture for klt spaces (see \cite[Theorem 6.1]{greb_kebekus_kovacs_peternell10} or \cite[Theorem 1.1]{druel_zl}), we see that $Z\cong \Sigma$ is smooth. Moreover, by \cite[Corollary 1.19]{demailly_peternell_schneider94}, we have $c_2(Z)=0$. As a classical consequence of Yau's theorem on the existence of a K\"ahler-Einstein metric, $Z$ is then covered by a complex torus (see \cite[Chapter IV Corollary 4.15]{kobayashi_diff_geom_vb}). 

\medskip

This shows that $Z$ is a quasi-\'etale quotient of an abelian variety.

\medskip

\noindent\textit{Step 4.} Finally, let us obtain a contradiction.

\begin{claim}\label{claim:step4}
The vector bundle $\sF|_E$ is not numerically flat. 
\end{claim}

\begin{proof}
Suppose first that $\phi$ is a $\mathbb{P}^1$-bundle. Using Claim \ref{claim:step_1}, we then conclude that $\sF$ induces a flat Ehresmann connection on $p$. This immediately implies that the map $T_{E/Z} \to \sN|_{E}$ is an isomorphism, which contradicts Assumption \ref{assumption}.

Suppose now that $\phi$ is a singular conic bundle or a blow-up. Then $T_X|_F \cong \sO_{\mathbb{P}^1}(2)\oplus \sO_{\mathbb{P}^1}^{n-2}\oplus \sO_{\mathbb{P}^1}(-1)$. But this is impossible by Lemma \ref{lemma:rational_curve_uniruled}, proving our claim.
\end{proof}

By Step 3, replacing $Z$ by a finite quasi-\'etale cover, if necessary, we may also assume that $Z$ is an abelian variety. Recall also that $E\cong \mathbb{P}_Z(\sE)$, where $\sE$ is numerically flat. In particular, $\sE$ is flat by the Corlette–Simpson correspondence (see Theorem \ref{thm:numerically_flat}). As a consequence, there is a flat Ehresmann connection $\sH\cong \sO_E^{\oplus n-2}$ on $p$.

Let $\omega\in H^0(X,\Omega_X^1\otimes\sN)$ be a twisted $1$-form defining $\sF$, and let $\sG$ be the foliation on $E$ induced by $\sF$. 
Suppose that $\omega|_E\in H^0(E,\Omega_E^1\otimes\sN|_E)$ vanishes at some point $x \in E$. Then $x \in \Sigma_i$ for some $i\in I$. In addition, $\omega|_E$ vanishes along $\Sigma_i$ since $\Omega_E^1\otimes\sN|_{\Sigma_i}$ is numerically flat (see \cite[Proposition 1.16]{demailly_peternell_schneider94}). Repeating the argument, we see that there exist integers $0 \le k_i \le m_i$ such that $\sN_\sG\cong T_{E/Z}(\sum_{i\in I}k_i\Sigma_i)$ and that $\sG$ is regular.

Suppose first that $\sO_E^{\oplus n-2}\cong\sH=\sG\subset T_E$. Then $\sF|_E$ is numerically flat 
by Theorem \ref{thm:numerically_projectively_flat} and Lemma \ref{lemma:minimal} together. But this contradicts Claim \ref{claim:step4}.

Suppose from now on that $\sH \neq \sG \subset T_E$, and let $x \in X$ be a general point. 
Then there exists $v \in H^0(E,\sH)$ with $v(x) \in \sG_x\setminus \{0\}$ since $n \ge 3$. 
If $h^0(E,\sF|_E)\ge 1$, then $\sF|_E$ is numerically flat 
by Theorem \ref{thm:numerically_projectively_flat} and Lemma \ref{lemma:minimal} together, which is impossible by Claim \ref{claim:step4} again. Thus we have 
$h^0(E,\sF|_E)=0$. But then $v$ induces a non-zero section $s \in H^0(E,\sN|_E)$ vanishing at $x$. 
Let $\sum_{j\in J}d_j \Lambda_j$ be the zero divisor of $s$. Since $\sN|_E\equiv 2c\Sigma$ is nef and $\Sigma^2 \equiv 0$ we see that $\Lambda_j\cdot \Sigma\equiv0$. This easily implies $\Lambda_j^2\equiv 0$ since $\sE$ is numerically flat.
Arguing as in the proof of Claim \ref{claim:step_2}, we conclude that 
$\Lambda_j$ is smooth in codimension $1$ and that $\Lambda_j \to Z$ is quasi-\'etale. 
By Serre's criterion for normality, $\Lambda_j$ is normal and $\Lambda_j \to Z$ is \'etale by Nagata-Zariski purity theorem. Let $j \in J$ such that $x \in \Lambda_j$. Then $\Lambda_j \cap \Sigma = \emptyset$ since $x \not\in \Sigma$. Replacing $Z$ by a finite \'etale cover, if necessary, we may therefore assume that $p$ has $3$ pairwise disjoint sections. Then $E \cong \mathbb{P}^1 \times Z$.
In addition, $\sN_\sG\cong q^*\sR$ for some line bundle $\sR$ on $\mathbb{P}^1$, where $q:=\textup{pr}_{\mathbb{P}^1}$. 
By Lemma \ref{lemma:product}, we have $h^0(E,\sF|_E) \ge h^0(E,\sG) \ge 1$ since $n\ge 4$ by assumption, yielding a contradiction. This finishes the proof of Theorem \ref{thm:suspension_complex_torus}.
\end{proof}

\section{Abundance}\label{section:abundance}

In this section we prove a special case of the abundance conjecture (see Proposition \ref{prop:abundance} below). More precisely, we prove that if the pair $(X,\sF)$ satisfies all the conditions listed in Setup \ref{setup:main}, then $\kappa(X)=\nu(X)$. First, we consider foliations with projectively trivial tangent bundle. The proof of Proposition \ref{prop:semiample_when_projectively_trivial} below
follows an argument that goes back at least as far as \cite[Chapter 9]{brunella}.

\begin{prop}\label{prop:semiample_when_projectively_trivial}
Let $X$ be a complex projective manifold of dimension $n\ge 3$, and let $\sF$ be a regular codimension $1$ foliation. Suppose that $\sF\cong \sL^{\oplus n-1}$ for some line bundle $\sL$. Then $\sL^*$ is semiample with $\nu(\sL^*)\le 1$.
\end{prop}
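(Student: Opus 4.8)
The plan is to reduce the whole statement to the positivity of the single line bundle $\sL^*$, and then to split according to whether $c_1(\sL)$ is numerically trivial. First I would record two elementary reductions. Since $\sL^{\oplus n-1}$ is numerically projectively flat, the pair $(X,\sF)$ fits into Setup \ref{setup:main}, so Lemma \ref{lemma:minimal} shows that $\sF^*\cong(\sL^*)^{\oplus n-1}$ is nef; equivalently, $\sL^*$ is nef. Applying Proposition \ref{prop:vanishing_chern_classes} with $Y=X$ and $\sL_1=\cdots=\sL_{n-1}=\sL$ yields $c_1(\sL)^2\equiv 0$, hence $c_1(\sL^*)^2\equiv 0$. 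Since a semiample line bundle whose numerical self-intersection vanishes has numerical ($=$ Iitaka) dimension at most $1$, the bound $\nu(\sL^*)\le 1$ is automatic once semiampleness is established. Thus the entire content is to prove that $\sL^*$ is semiample.

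Suppose first that $c_1(\sL)\equiv 0$, so that $K_\sF=-(n-1)c_1(\sL)\equiv 0$. Then Proposition \ref{prop:classification_zero_canonical_class} and the remark following it provide a finite \'etale cover $\eta\colon Z\to X$ with $\eta^{-1}\sF\cong\sO_Z^{\oplus n-1}$. As $\eta^{-1}\sF\cong\eta^*\sF\cong(\eta^*\sL)^{\oplus n-1}$, taking determinants gives $(\eta^*\sL)^{\otimes(n-1)}\cong\sO_Z$, so $\eta^*\sL$ is torsion; since $\eta$ is finite \'etale, $\sL$ itself is torsion. Therefore $\sL^*$ is torsion, hence semiample with $\nu(\sL^*)=0$.

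Suppose now that $c_1(\sL)\not\equiv 0$. Then $\sL^*$ is nef and not numerically trivial with $c_1(\sL^*)^2\equiv 0$, so $\nu(\sL^*)=1$. Here the plan is to produce a surjective morphism with connected fibers $f\colon X\to B$ onto a smooth complete curve along whose general fiber $F$ the restriction $\sL^*|_F$ is torsion. Granting such an $f$, Proposition \ref{prop:pull-back} gives a line bundle $\sM$ on $B$ and an integer $m>0$ with $(\sL^*)^{\otimes m}\cong f^*\sM$; since $\sL^*$ is nef with $c_1(\sL^*)\not\equiv 0$, the bundle $\sM$ has positive degree, hence is ample, and $(\sL^*)^{\otimes m}\cong f^*\sM$ is semiample. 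This would prove $\sL^*$ semiample with $\kappa(\sL^*)=\nu(\sL^*)=1$.

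It remains to construct the fibration $f$, and this is where I expect the real difficulty to lie; it is the point at which the argument of \cite[Chapter 9]{brunella} enters. I would exploit that the defining twisted $1$-form $\omega\in H^0(X,\Omega_X^1\otimes\sN)$, together with $\sF\cong\sL^{\oplus n-1}$, makes $\sF$ transversely very special, and that $c_1(\sN)$ is numerically proportional to $c_1(\sL^*)$ by Proposition \ref{prop:vanishing_chern_classes}. Combining this with the dichotomy of Theorem \ref{thm_intro:divisor_zero_square} (either $\sN$ is pseudo-effective or $\sN^*$ is nef), one reduces to a codimension $1$ foliation whose conormal bundle has controlled numerical class and whose canonical class $K_\sF=(n-1)c_1(\sL^*)$ is nef of numerical dimension $1$. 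Following Brunella, I would then establish the abundance-type equality $\kappa(\sL^*)=\nu(\sL^*)=1$ --- for instance by restricting to a general surface section $S\subseteq X$, where the induced foliation has nef canonical class of numerical dimension $1$ and the surface theory of \cite{brunella} yields semiampleness, and then propagating effectivity back to $X$ --- so that the Iitaka map of $\sL^*$ becomes a genuine morphism onto a curve; since $(\sL^*)^2\equiv 0$, the class $\sL^*$ is then numerically trivial on its fibers, which is exactly the input required above. Promoting this numerical data (equivalently, the nef reduction of $K_\sF$) to an honest fibration onto a curve, i.e. proving $\kappa(\sL^*)=\nu(\sL^*)$, is the main obstacle.
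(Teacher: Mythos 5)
Your reductions are correct and match the paper's: $\sL^*$ is nef by Lemma \ref{lemma:minimal}, $c_1(\sL)^2\equiv 0$ by Proposition \ref{prop:vanishing_chern_classes}, and the whole content is the equality $\kappa(\sL^*)=\nu(\sL^*)$ (the paper invokes \cite[Proposition 2.1]{kawamata} for the passage from this equality to semiampleness). Your treatment of the case $c_1(\sL)\equiv 0$ via Proposition \ref{prop:classification_zero_canonical_class} is a valid, slightly different route to the conclusion that $\sL$ is torsion.

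However, in the case $c_1(\sL)\not\equiv 0$ there is a genuine gap, and you acknowledge it yourself: you never actually construct the fibration, nor even prove $\kappa(\sL^*)\ge 0$, which is the first nontrivial step. The sketch you offer would not close it. Restricting to a general surface section $S$ and applying Brunella's surface theory to the induced foliation does not work as stated: sections of $(\sL^*)^{\otimes m}|_S$ do not extend to $X$ in general, so ``propagating effectivity back to $X$'' is precisely the step that fails; and the dichotomy of Theorem \ref{thm_intro:divisor_zero_square} plays no role in resolving this. The paper's argument is substantially different and more delicate. First, it proves $\kappa(\sL^*)\ge 0$ by a case distinction on $q(X)$: when $q(X)>0$ it uses the Albanese map (either a global $1$-form pairs nontrivially with $\sF$, or $\sF$ is induced by a fibration over a curve and Jahnke--Radloff forces $\sL$ to be torsion on fibers); when $q(X)=0$ it uses Proposition \ref{prop:vanishing_chern_classes2} to get $\chi(X,\sO_X)=0$, hence $h^0(X,\Omega_X^p)>0$ for some $p>0$ by Hodge symmetry, and extracts a section of a positive power of $\sL^*$ from the filtration of $\Omega_X^p$. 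Second, writing $-c_1(\sL)\sim_{\mathbb{Q}}\sum_i b_iB_i$, it applies Proposition \ref{prop:trivial_representation_torus_quotient} to show the $B_i$ are disjoint finite \'etale torus quotients with flat normal bundle, and then splits: if some $B_i$ is not a leaf, a foliation by curves $\sG\cong\sL\subset T_X$ transverse to $B_i$ together with a section of $\sO_X(mK_\sG)$ vanishing exactly on $B_i$ yields the fibration by the argument of \cite[Theorem 9.2]{brunella}; if every $B_i$ is a leaf, Lemma \ref{lemma:characterization_a_v} shows $B_i$ is an abelian variety with $\sL|_{B_i}\cong\sO_{B_i}$, and one produces sections of $\sO_X(m_iB_i)$ either via the Albanese map (if $q(X)>0$) or via a cohomology vanishing argument with the restriction sequence (if $q(X)=0$). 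None of this machinery is present, even implicitly, in your proposal, so the main case remains unproved.
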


\begin{proof} 
For the reader's convenience, the proof is subdivided into a number of steps. Notice that 
$\sL^*$ is nef by Lemma \ref{lemma:minimal} and that $\nu(\sL^*)\le 1$ by Proposition \ref{prop:vanishing_chern_classes}. Therefore, in order to prove the 
proposition, it suffices to show that $\kappa(\sL^*)=\nu(\sL^*)$ (see \cite[Proposition 2.1]{kawamata}). Notice also that $\kappa(\sL^*)\le \nu(\sL^*)$ by \cite[Proposition 2.2]{kawamata}. 

\medskip

\noindent\textit{Step 1.} We show that $\kappa(\sL^*)\ge 0$.

\medskip

Suppose first that $q(X)>0$. 

\medskip

If $h^0(X,\sL^*) \ge 1$, then there is nothing to show. Suppose now that $h^0(X,\sL^*)=0$. Let $a\colon X \to A$ be the Albanese morphism, and let $f \colon X \to Y$ be its Stein factorization. Notice that $\dim Y \ge 1$ since $q(X)>0$ by assumption. 
Now the standard pull-back map of K\"ahler differentials $$H^0(X,\Omega_X^1)\otimes \sO_X\cong a^*\Omega_A^1 \to \Omega_X^1$$ identifies with the evaluation map. It follows that $\sF$ is contained in the kernel of the tangent map $$Ta \colon T_X \to a^*T_A$$ since $h^0(X,\sL^*)=0$ by our current assumption. Thus $\dim Y=1$, and $\sF$ is induced by the fibration $f$. Let $F$ be a general fiber of $f$. By \cite[Theorem 0.1]{jahnke_radloff_13}, $F$ is a finite \'etale quotient of an abelian variety. In particular, $\sL|_F$ is a torsion line bundle. This easily implies that there exists a line bundle $\sM$ on $Y$ and
a positive integer $m$ such that $\sL^{\otimes m}\cong f^*\sM$ (see also Proposition \ref{prop:pull-back}). Notice that $\sM^*$ is nef since $\sL^*$ is nef. But then $\kappa(\sM^*)\ge 0$ and hence $\kappa(\sL^*)\ge 0$.

\medskip

Suppose now that $q(X)=0$.

\medskip

If $\sL\equiv 0$, then $\sL$ is a torsion line bundle, and hence $\kappa(\sL^*)=0$. Suppose now that $\sL\not\equiv 0$. From Proposition \ref{prop:vanishing_chern_classes2}, we get $\chi(X,\sO_X)=0$ and hence $h^p(X,\sO_X)>0$ for some $p > 0$. By Hodge symmetry, we then have $h^0(X,\Omega_X^p)>0$. Using the exact sequence 
$$  0 \to \sN^* \to \Omega_X^1 \to \sF^* \to 0$$
we see that $h^0\big(X,(\sN^*)^{\otimes u}\otimes (\sL^*)^{\otimes v}\big)>0$ for some non-negative integers $u$ and $v$ such that $u+v=p$. By Proposition \ref{prop:vanishing_chern_classes}, there exist integers $r>0$ and $s$ such that $\sN^{\otimes r}\cong\sL^{\otimes s}$. It follows that     
$h^0\big(X,(\sL^*)^{\otimes su+rv}\big)>0$. Since $\sL^*$ is nef and $\sL^*\not\equiv 0$, we must have $su+rv \ge 0$. This shows that $\kappa(\sL^*)\ge 0$ in the case where $q(X)=0$.

\medskip

\noindent\textit{Step 2.} Write 
\begin{equation}\label{eq:c_1}
-c_1(\sL) \sim_\mathbb{Q} \sum_{i \in I} b_i B_i
\end{equation}
with $b_i \in \mathbb{Q}_{>0}$. If $\sL\equiv 0$, then $\sL$ is a torsion line bundle since $\kappa(\sL^*)\ge 0$ by Step 1, proving the proposition in this case. Suppose from now on that $\sL\not\equiv 0$. In other words, assume that $I$ is not empty.
Then Proposition \ref{prop:trivial_representation_torus_quotient} applies to show that for any index $i\in I$,
$B_i$ is a finite \'etale quotient of an abelian variety. Moreover, $B_i\cap B_j =\emptyset$ if $i\neq j$ and $\sN_{B_i/X}$ is a flat line bundle. Notice that $\sL|_{B_i}$ is a flat line bundle as well by Equation \eqref{eq:c_1}.

\medskip

\noindent\textit{Step 3.} Suppose now that the composition 
$\sL^{\oplus n-1}|_{B_i}\cong \sF \to T_X|_{B_i} \to \sN_{B_i/X}$ is non-zero. In other words, assume that $B_i$ is not a leaf of $\sF$. Then there exists a foliation by curves $\sG\cong \sL \subset T_X$ such that $B_i$ is everywhere transverse to $\sL$ since both 
$\sL|_{B_i}$ and $\sN_{B_i/X}$ are flat line bundles. In particular, we have $\sN_{B_i/X}\cong \sL|_{B_i}$. Moreover, there is a section $s \in H^0(X,\sO_X(m K_\sG))$ for some integer $m \ge 1$ that vanishes on $B_i$ and nowhere else in some analytic open neighborhood of $B_i$ by Equation \ref{eq:c_1} again. Arguing as in the proof of \cite[Theorem 9.2]{brunella} on page $112$, we conclude that there exists a fibration $f \colon X \to C$ onto a smooth complete curve $C$ such that $B_i$ is a (set-theoretic) fiber of $f$. This easily implies that $\kappa(\sL^*)=\nu(\sL^*)=1$.

\medskip 

\noindent\textit{Step 4.} Suppose finally that $B_i$ is a leaf of $\sF$ for all indices $i \in I$.
Then $T_{B_i} \cong \sF|_{B_i}\cong (\sL|_{B_i})^{\oplus n-1}$ so that Lemma \ref{lemma:characterization_a_v} below applies to show  
that $B_i$ is an abelian variety and $\sL|_{B_i} \cong \sO_{B_i}$. Moreover, $\sN_{B_1/X}$ is a flat vector bundle by Lemma \ref{lemma:BB_vanishing}.

Let $m$ be a positive integer such that $-mc_1(\sL) \sim_\mathbb{Z} \sum_{i \in I} mb_i B_i=:B$. In particular, $mb_i$ is an integer. Let $\gamma\colon X_1 \to X$ be the associated cyclic cover (see \cite[Definition 2.52]{kollar_mori}) which is \'etale away from $B$. Notice that $X_1$ is smooth. Set $B_1:=\gamma^{-1}(B)$. By \cite[Lemma 3.4(1)]{cd1fzerocan}, the foliation $\sF_1$ induced by
$\sF$ on $X_1$ is regular and $\sF_1 \cong \gamma^*\sF$. Using \cite[Lemma 5.12]{uenoLN439}, one readily checks that $\sL^*$ is semiample if and only if so is $\gamma^*\sL^*$. 
Moreover, $\nu(\sL^*)=\nu(\gamma^*\sL^*)$. Therefore, replacing $X$ by $X_1$, we may assume without loss of generality that $-c_1(\sL) \sim_\mathbb{Z} \sum_{i \in I} b_i B_i=:B$ with $b_i \in \mathbb{Z}$.

\medskip

Suppose first that $q(X)>0$. 

\medskip

Let $a\colon X \to A$ be the Albanese morphism, and let $f \colon X \to Y$ be its Stein factorization. By assumption, we have $\dim Y \ge 1$. 

Suppose that $\dim f(B_i) \ge 1$ for some $i \in I$. Then there exists a global $1$-form on $X$ whose restriction to $\sF|_{B_i}$ is non-zero at a general point in $B_i$. This implies $\kappa(\sL^*) = \nu(\sL^*) = 1$. 

Suppose now that $\dim f(B_i) = 0$ for any $i \in I$. Then Lemma \ref{lemma:hodge_index_theorem} applies to show that $\dim Y=1$. Now, by Zariski's Lemma, some non-zero multiple of $B_i$ is a fiber of $f$ since $B_i^2\equiv 0$. This again implies $\kappa(\sL^*) = \nu(\sL^*) = 1$.

\medskip

Suppose finally that $q(X)=0$. 

\medskip

Then $\sO_X(b_iB_i)|_{B_i}\cong\sL^*|_{B_i}\cong \sO_{B_i}$. In other words, 
$\sO_X(B_i)|_{B_i}$ is torsion, say of order $m_i$.
In particular, for any $1 \le k\le m_i-1$, $h^1(B_i,\sO_X(k B_i)|_{B_i})=0$. 
The short exact sequence
$$ 0 \to \sO_X((k-1) B_i) \to \sO_X(k B_i) \to \sO_X(k B_i)|_{B_i} \to 0$$
then gives $h^1(X,\sO_X(k B_i))=0$ for all $1\le k\le m_i-1$. It follows that the restriction map 
$$ H^0(X,\sO_X(m_i B_i))\to H^0(X,\sO_X(m_i B_i)|_{B_i})\cong H^0(X,\sO_{B_i})$$
is surjective. This implies $\kappa(\sL^*) \ge \kappa(\sO_X(B_i)) \ge 1$, and hence 
$\kappa(\sL^*) = \nu(\sL^*) = 1$.
\end{proof}

\begin{lemma}\label{lemma:characterization_a_v}
Let $X$ be a compact Kähler manifold of dimension $n \ge 2$. Suppose that $T_X\cong \sL^{\oplus n}$ for some line bundle $\sL$ on $X$. Then $X$ is a torus.
\end{lemma}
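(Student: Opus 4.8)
The plan is to reduce the statement to the single assertion that $c_1(\sL)\equiv 0$, and then to exploit the splitting $T_X\cong\sL^{\oplus n}$ together with the resulting torus–quotient structure to upgrade ``finite \'etale quotient of a torus'' to ``torus''. First I would record the elementary consequences of the hypothesis. The isomorphism gives $T_X\otimes\sL^*\cong\sO_X^{\oplus n}$ and $\Omega^1_X\otimes\sL\cong\sO_X^{\oplus n}$; in particular $\mathbb{P}(T_X)\cong X\times\mathbb{P}^{n-1}$, so $T_X$ is projectively flat, $c_1(T_X)=n\,c_1(\sL)$, and by Lemma \ref{lemma:chern_classes} one has $c_2(T_X)=\tfrac{n-1}{2n}c_1(T_X)^2$. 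The key positivity input is that $K_X$ is nef: for any nonconstant $f\colon\mathbb{P}^1\to X$ the nonzero map $\sO_{\mathbb{P}^1}(2)=T_{\mathbb{P}^1}\to f^*T_X=\sO_{\mathbb{P}^1}(\deg f^*\sL)^{\oplus n}$ forces $\sL\cdot C\ge 2$, hence $-K_X\cdot C=n\,\sL\cdot C\ge 2n>n+1$; since every $K_X$-negative extremal ray of the Mori cone is generated by a rational curve $C_i$ with $-K_X\cdot C_i\le n+1$, no such ray exists and $K_X$ is nef.

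Next I would prove $K_X\equiv 0$, equivalently $c_1(\sL)\equiv 0$. Substituting $c_2(T_X)=\tfrac{n-1}{2n}c_1(T_X)^2$ into the Miyaoka--Yau inequality for the minimal manifold $X$ yields $K_X^2\cdot H^{n-2}\le 0$ for an ample $H$, while nefness gives $K_X^2\cdot H^{n-2}\ge 0$; hence $K_X^2\cdot H^{n-2}=0$ and the numerical dimension satisfies $\nu(K_X)\le 1$. It remains to exclude $\nu(K_X)=1$. Since $\nu(K_X)\le 1$, abundance applies and $K_X$ is semiample, giving an Iitaka fibration $\phi\colon X\to C$ onto a curve of genus $\ge 2$ whose general fibre $F$ has $K_F\equiv 0$ and $T_F$ numerically flat (because $T_X|_F$ is numerically flat and $N_{F/X}\cong\sO_F$); thus $F$ is a torus and, after a finite base change, $\phi$ is a smooth abelian scheme. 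The dual of $d\phi$ embeds $\phi^*\omega_C$ into $\Omega^1_X\cong(\sL^*)^{\oplus n}$, and tracking degrees in the resulting inclusion $\phi_*T_{X/C}\hookrightarrow(\phi_*\sL)^{\oplus n}$ forces the Hodge bundle $\phi_*\Omega^1_{X/C}$ to have degree $\ge(n-1)(2g-2)$, contradicting the Arakelov--Faltings bound $\deg\phi_*\Omega^1_{X/C}\le(n-1)(g-1)$ for a smooth abelian scheme over a genus-$g$ curve. Hence $\nu(K_X)=0$ and $c_1(\sL)\equiv 0$.

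Finally, $T_X\cong\sL^{\oplus n}$ with $c_1(\sL)\equiv 0$ is numerically flat, so by Theorem \ref{thm:numerically_flat} together with Yau's theorem (\cite[Theorem 0.1]{jahnke_radloff_13}) $X$ is a finite \'etale quotient $A/G$ of an abelian variety, with quotient map $\gamma\colon A\to X$. Then $(\gamma^*\sL)^{\oplus n}\cong\gamma^*T_X=T_A\cong\sO_A^{\oplus n}$ forces $\gamma^*\sL\cong\sO_A$ (a nontrivial numerically trivial line bundle on $A$ has no sections), so $\sL$ is the flat line bundle attached to a character $\chi\colon G\to\mathbb{C}^*$. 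Pulling back the isomorphism $T_X\cong\sL^{\oplus n}$ and using that an isomorphism $\sO_A^{\oplus n}\to\sO_A^{\oplus n}$ on the compact connected $A$ is a constant matrix, the $G$-equivariance forces the linear holonomy $\rho\colon G\to\mathrm{GL}(V)$ to be scalar, $\rho(g)=\chi(g)\,\mathrm{id}$. If $\chi(g)\ne 1$ for some $g\ne e$, the affine transformation $g$ would have a fixed point on $A$, contradicting freeness of the action; hence $G$ acts by translations and $X$ is a complex torus.

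I expect the main obstacle to be the exclusion of the case $\nu(K_X)=1$: the a priori possibility of an abelian-scheme structure over a curve of genus $\ge 2$ is consistent with all the numerical data and can only be eliminated through a global positivity (Arakelov-type) estimate on the Hodge bundle, so the bulk of the work lies there; by contrast the reduction to numerical flatness and the concluding scalar-holonomy argument are comparatively formal.
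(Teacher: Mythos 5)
Your endgame (a numerically flat $T_X$ yields a finite \'etale torus quotient $\gamma\colon A\to X$; pulling back $\sL$ and observing that the linear part of each deck transformation must be the scalar $\chi(g)\,\mathrm{id}$, which by the fixed-point criterion for affine maps of tori forces $\chi\equiv 1$) is essentially the paper's own argument, which runs the same fixed-point computation on the cyclic cover attached to the torsion line bundle $\sL$ after invoking Wang's theorem. The genuine divergence, and the problem, lies in how you reach $c_1(\sL)\equiv 0$. The paper simply quotes \cite[Proposition 4.3]{GKP_proj_flat_JEP} for this, which works in the compact K\"ahler category; you instead run a Mori-theoretic argument. A first, comparatively minor, objection is one of scope: the lemma is stated for compact K\"ahler $X$, while bend-and-break, the Miyaoka--Yau inequality of \cite{G_T_22}, and Iitaka fibrations are projective tools, so at best you prove the projective case (which happens to cover the paper's applications, but not the statement as written).

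The decisive gap is the step ``since $\nu(K_X)\le 1$, abundance applies and $K_X$ is semiample.'' This is not a theorem: abundance for a nef canonical divisor of numerical dimension $1$ is open in dimension $\ge 4$, and even non-vanishing $\kappa(K_X)\ge 0$ is not automatic. In the present situation one can extract $\kappa(K_X)\ge 0$ (if $q(X)>0$ the Albanese differential forces $h^0(X,\sL^*)\ge 1$; if $q(X)=0$ then $c_1(\sL)^2\equiv 0$ gives $\chi(X,\sO_X)=0$, hence a section of some $\Omega^p_X\cong\big((\sL^*)^{\otimes p}\big)^{\oplus\binom{n}{p}}$), but to apply Kawamata's criterion you still need $\kappa(K_X)=\nu(K_X)=1$, i.e., you must actually produce the fibration. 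That is precisely the hard content of Proposition \ref{prop:semiample_when_projectively_trivial} for the analogous splitting $\sF\cong\sL^{\oplus n-1}$, and Step 4 of that proof invokes the very lemma you are proving, so filling your gap by that route would be circular. Unless you supply an independent argument that $\nu(K_X)=1$ forces $\kappa(K_X)=1$ here, the exclusion of $\nu(K_X)=1$ --- which you yourself identify as the bulk of the work --- does not go through. The Arakelov estimate you propose afterwards is fine, but it sits downstream of the unproved semiampleness.
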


\begin{proof}
By \cite[Proposition 4.3]{GKP_proj_flat_JEP}, $c_1(\sL)=0$. If $q(X)=0$, then $\sL$ is torsion. Otherwise, we have $\sL\cong \sO_X$. In either case, $\sL$ is torsion, say of order $m$. Let $\gamma\colon Y \to X$ be the associated cyclic cover (see \cite[Definition 2.52]{kollar_mori}) which is \'etale with Galois group $G=\langle \tau \rangle$. By construction, $T_Y \cong \gamma^*T_X \cong \sO_Y^{\oplus n}$.
By a result proved by Wang (\cite[Corollary 2]{wang_parallisable}), $Y$ is a complex torus, 
$Y=V/\Lambda$, where $V$ is complex vector space of dimension $n$ and $\Lambda \subset V$ is a lattice.
Set $y_0:=\tau(0)$. There exists a $\mathbb{C}$-linear map $L \colon V \to V$ satisfying $L(\Lambda)\subseteq \Lambda$ such that $\tau = t_{y_0}\circ \wb{L}$, where $\wb{L}$ is the automorphism of $Y$ induced by $L$. Notice that the linear map $L$ can be identified with $\tau_*$ acting on $H^0(Y,T_Y)\cong V$.
Since $T_Y \cong \gamma^*T_X \cong (\gamma^*\sL)^{\oplus n}$, we see that $L=\lambda \textup{Id}$ for some $\lambda\in \mathbb{C}$ with $\lambda^m=1$. By \cite[Lemma 13.1.1]{birkenhake_lange}, the set of fixed points of $\wb{L}$ is a positive-dimensional analytic subvariety since $\tau$ has no fixed point. But this immediately implies that $\lambda=1$. As a consequence, $\tau$ is a translation by a point of finite order. This in turn implies that $X$ is torus and that $\sL\cong \sO_X$.
\end{proof}

The following is the main result of this section.

\begin{prop}\label{prop:abundance}
Setting and notation as in \ref{setup:main}. Suppose in addition that $K_X$ is pseudo-effective. Then $K_X$ is abundant.
\end{prop}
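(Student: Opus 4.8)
\emph{The plan is} to reduce to the projectively trivial case $\sF\cong\sL^{\oplus n-1}$ treated in Proposition~\ref{prop:semiample_when_projectively_trivial}, via the Shafarevich morphism of the representation $\rho$. \textbf{Preliminary reductions.} By Lemma~\ref{lemma:minimal}, $K_\sF$ is nef, and adjunction gives $K_X=K_\sF-c_1(\sN)$. Since $\kappa(K_X)\le\nu(K_X)$ always holds, and a pseudo-effective divisor with $\nu(K_X)=n$ is big (so that $\kappa(K_X)=n$), I may assume $\nu(K_X)<n$. Moreover I am free to replace $X$ by any finite \'etale cover: by Lemma~\ref{lemma:finite_etale_cover} this preserves $\kappa(K_X)$ and $\nu(K_X)$, keeps $\sF$ numerically projectively flat, and pulls $K_X$ and $c_1(\sN)$ back correctly. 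First I note that in the target case $\sF\cong\sL^{\oplus n-1}$ abundance is essentially free: Proposition~\ref{prop:vanishing_chern_classes} provides $r>0$ and $s$ with $\sN^{\otimes r}\cong\sL^{\otimes s}$, whence $K_X\sim_\mathbb{Q}\bigl((n-1)+\tfrac{s}{r}\bigr)c_1(\sL^*)$; as $\sL^*$ is semiample by Proposition~\ref{prop:semiample_when_projectively_trivial}, the divisor $K_X$ is semiample (the sign of the coefficient being harmless, since a pseudo-effective anti-nef class is numerically trivial), hence abundant.

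\textbf{The Shafarevich reduction.} To reduce a general $(X,\sF)$ to this situation I would form the Shafarevich morphism $\gamma\colon X\to Y$ attached to $\rho$ (Koll\'ar). By construction its general fibre $F$ satisfies that $\rho\bigl(\mathrm{Im}(\pi_1(F)\to\pi_1(X))\bigr)$ is finite, so the flat $\mathbb{P}^{n-2}$-bundle underlying the projectively flat structure has finite monodromy along the fibres; after a finite \'etale cover this monodromy becomes trivial, so that $\sF$ is \emph{relatively projectively trivial} and $\sF|_F\cong\sL^{\oplus n-1}|_F$ for a line bundle $\sL$ on $X$. When $\gamma$ is constant this already says $\rho(\pi_1(X))$ is finite, and after a finite \'etale cover $\sF\cong\sL^{\oplus n-1}$ globally, so the preliminary reduction applies verbatim. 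When $\gamma$ is non-constant, the fibrewise projective triviality lets me invoke Proposition~\ref{prop:semiample_when_projectively_trivial} on the general fibre, giving that $\sL^*$ restricts to a semiample class of numerical dimension $\le 1$ on $F$, and hence that $K_X$ is semiample in the fibre direction. I would then use Proposition~\ref{prop:pull-back} and Corollary~\ref{cor:numerically trivial} to descend this information along $\gamma$, together with the fact that $\rho$ has generically large image on $Y$, to account for the remaining part of $\nu(K_X)$ coming from the base.

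\textbf{The main obstacle.} The hard part will be precisely this globalization across the Shafarevich fibration. Because Proposition~\ref{prop:semiample_when_projectively_trivial} only yields $\nu(\sL^*)\le 1$ on the fibres, $K_X|_F$ need not be $\mathbb{Q}$-trivial, so $K_X$ cannot simply be pulled back from $Y$; one must instead combine the fibrewise semiampleness with the positivity carried by the base, matching the Kodaira and numerical dimensions on the two pieces rather than on the total space directly. This is where I expect the numerical-triviality results (Proposition~\ref{prop:pull-back}, Corollary~\ref{cor:numerically trivial}), the torus structure of the divisors $B_i$ from Proposition~\ref{prop:trivial_representation_torus_quotient}, and the hypotheses $n\ge 3$ and $K_X$ pseudo-effective all to be needed; once the fibrewise data are assembled into global semiampleness (or at least into the equality $\kappa(K_X)=\nu(K_X)$), the special case closes the proof.
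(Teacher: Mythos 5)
Your overall strategy -- reduce via a Shafarevich-type construction to the projectively trivial case and invoke Proposition~\ref{prop:semiample_when_projectively_trivial} -- is indeed the one the paper follows, but two steps of your argument have genuine gaps. First, your treatment of the ``target case'' $\sF\cong\sL^{\oplus n-1}$ is wrong as stated: Proposition~\ref{prop:vanishing_chern_classes} only says that the classes $c_1(\sL)$ and $c_1(\sN)$ span a subspace of $H^1(X,\Omega^1_X)$ of dimension at most one, i.e.\ it gives a \emph{numerical} proportionality, not an isomorphism $\sN^{\otimes r}\cong\sL^{\otimes s}$. That upgrade requires $q(X)=0$ (so that $\Pic^0(X)$ is torsion), and without it your conclusion fails for a more basic reason: semiampleness and $\kappa$ are not numerical invariants, so from $K_X\equiv\lambda\,c_1(\sL^*)$ with $\sL^*$ semiample you cannot conclude that $K_X$ is semiample or abundant. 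The paper is careful here: it treats the case $q(X)=0$ separately, and when $q>0$ it runs an additional reduction through the Albanese morphism of the fibre (using Hu's log abundance over varieties of maximal Albanese dimension) before it is allowed to trivialize $\rho$ and apply Proposition~\ref{prop:semiample_when_projectively_trivial}. Your proposal omits the Albanese step entirely, and it is exactly what is needed to close the gap you create.

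Second, the globalization across the Shafarevich fibration -- which you correctly flag as the main obstacle -- is not an obstacle one has to overcome by ``matching Kodaira and numerical dimensions on the two pieces'': the paper quotes a ready-made reduction (\cite[Proposition 2.23]{druel_proj_flat}, and \cite[Theorem 1.1]{hu_log_abundance} for the Albanese step) saying that abundance of $K_X$ follows from abundance of $K_F$ for a general fibre $F$, once the base of the Shafarevich map is arranged to be of general type. For that last point the paper does not take the Shafarevich map of $\rho$ itself but of the induced representation $\rho_1$ into $\textup{H}/\textup{Rad}(\textup{H})$ (after Selberg's lemma), so that \cite[Th\'eor\`eme 1]{CCE15} applies; your $\gamma$ attached to $\rho$ does not obviously have a general-type base. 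Finally, your claim that a finite \'etale cover of $X$ trivializes the monodromy along the fibres is not automatic, since a finite-index subgroup of $\pi_1(F)$ need not come from one of $\pi_1(X)$; the paper circumvents this by trivializing $\rho$ only on a finite \'etale cover of an analytic tubular neighbourhood of the fibre, which suffices for the fibrewise application of Lemma~\ref{lemma:characterization_a_v}, Proposition~\ref{prop:trivial_representation_torus_quotient} and Proposition~\ref{prop:log_abundance}.
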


\begin{proof}
Let $\textup{H} \subseteq \textup{PGL}(n-1,\mathbb{C})$ be the Zariski closure of $\rho(\pi_1(X))$. This is a linear algebraic group which has finitely many connected components. Applying Selberg's Lemma and passing to an appropriate finite \'etale cover of $X$, we may assume without loss of generality that $\textup{H}$ is connected and that the image of the induced representation
$$\rho_1 \colon \pi_1(X) \to \textup{H} \to \textup{H}/\textup{Rad}(\textup{H})$$
is torsion free, where $\textup{Rad}(\textup{H})$ denotes the radical of $\textup{H}$. Let 
$$\textup{sh}_{\rho_1}\colon X \dashrightarrow Y$$
be the $\rho_1$-Shafarevich map, where $\textup{sh}_{\rho_1}$ is dominant and
$Y$ is a smooth projective variety. The rational map $\textup{sh}_{\rho_1}$ is almost proper with connected fibers. By \cite[Th\'eor\`eme 1]{CCE15}, we may assume without loss of generality that $Y$ is of general type and that the representation $\rho_1$ factorizes through $\textup{sh}_{\rho_1}$. If $\dim Y = \dim X$, then $X$ is of general type and there is nothing to show.
Suppose from now on that $\dim Y < \dim X$.

Let $F$ be a general (smooth) fiber of $\textup{sh}_{\rho_1}$. By \cite[Proposition 2.23]{druel_proj_flat}, in order to prove that $K_X$ is abundant, it suffices to prove that $K_F$ is abundant. 

\medskip

Suppose first $\dim Y = 0$ and $q(X)=0$. 

\medskip

Since $\rho(\pi_1(X)) \subseteq \textup{Rad}(\textup{H})$, the restriction of $\rho$ to a finite index subgroup of $\pi_1(X)$ is the trivial representation. Thus, there exists a finite \'etale cover $\gamma\colon Y \to X$, as well as a line bundle $\sL$ on $Y$ such that $\gamma^{-1}\sF\cong \sL^{\oplus n-1}$. Then 
Proposition \ref{prop:semiample_when_projectively_trivial} applies to show that $\sL^*$ is semiample. 
This in turn implies that $\sO_X(K_\sF)$ is semiample (\cite[Lemma 5.12]{uenoLN439}).

If $K_\sF\equiv 0$, then our claim follows from Proposition \ref{prop:classification_zero_canonical_class}.

Suppose now that $K_\sF\not\equiv 0$. By Proposition \ref{prop:vanishing_chern_classes} and using the fact that $q(X)=0$, there exist integers $r>0$ and $s$ such that $\sN^{\otimes r}\cong\sO_X(K_\sF)^{\otimes s}$. The claim now follows easily from the adjunction formula $\sO_X(K_X)\cong \sO_X(K_\sF)\otimes \sN^*$.

\medskip

Suppose from now on that either $\dim Y>0$ or $q(X)>0$. 

\medskip

Let $a \colon F \to \textup{A}$ be the Albanese morphism. By construction, we have $\dim \textup{A} = q(F)$. Let $G$ be a general (smooth) fiber of the Stein factorization of $F \to a(F)$. 
By \cite[Theorem 1.1]{hu_log_abundance}, in order to prove that $K_F$ is abundant, it suffices to prove that $K_G$ is abundant. Thus, we may assume without loss of generality that $\dim G >0$. Repeating the process finitely many times, we may also assume that $q(G)=0$. Since $\rho(\pi_1(G)) \subseteq \textup{Rad}(\textup{H})$, the restriction of $\rho$ to a finite index subgroup of $\pi_1(G)$ is the trivial representation. 

Let $U \subseteq X$ be a neighborhood of $G$ in $X$ for the analytic topology. Shrinking $U$, if necessary, we may assume that there exists a smooth projective morphism with connected fibers $f \colon U \to T$ onto a simply connected complex manifold $T$ such that $G$ is a fiber of $f$. Then 
$\sN_{G/U}\cong \sO_{G}^{\oplus \dim X - \dim G}$. Moreover, we have $\dim G < \dim X$ because $\dim Y >0$ or $q(X)>0$ by our current assumption. Since $\rho(\pi_1(G))$ is finite, there exists a finite \'etale cover $g \colon U_1 \to U$ such that the inverse image $G_1$ of $G$ in $U_1$ is connected and such that $\sF_{U_1}:=g^{-1}(\sF|_U)\cong \sL_{U_1}^{\oplus n-1}$ for some line bundle $\sL_{U_1}$ on $U_1$. 

If $G_1$ is a leaf of $\sF_{U_1}$, then $G_1$ is an abelian variety by Lemma \ref{lemma:characterization_a_v}. This in turn implies that $K_G$ is torsion.

Suppose from now on that either $\dim G_1 \le n-2$ or that $\dim G_1 = n-1$ and $G_1$ is not a leaf of $\sF_{U_1}$. Then the composition 
$$\sF_{U_1}|_{G_1}\cong \sL_{U_1}|_{G_1}^{\oplus n-1} \to T_{U_1}|_{G_1} \to \sN_{G_1/U_1}\cong \sO_{G_1}^{\oplus \dim X - \dim G}$$ 
is non-zero, and hence $h^0(G_1,\sL_{G_1}^*)\ge 1$, where 
$\sL_{G_1}:=\sL_{U_1}|_{G_1}$. 

Suppose that $\sL_{G_1}\cong \sO_{G_1}$. If $T_{G_1} \subset \sF_{U_1}|_{G_1}\cong \sO_{G_1}^{\oplus n-1}$ then $\Omega_{G_1}^1$ is generated by its global sections and hence $K_{G_1}$ is abundant. It follows from \cite[Theorem 5.13]{uenoLN439} that $K_G$ is abundant as well. Suppose now that $T_{G_1} \not\subset \sF_{U_1}|_{G_1}$. Then $T_{G_1} \cap (\sF_{U_1}|_{G_1}) \subset T_{G_1}$ is a codimension $1$ foliation with trivial tangent bundle.
Then $K_{G_1}$ is abundant by Proposition \ref{prop:classification_zero_canonical_class}, using the fact that $K_{G_1}=K_X|_{G_1}$ is pseudo-effective. Again, by \cite[Theorem 5.13]{uenoLN439}, we conclude that $K_G$ is abundant.

Suppose finally that $\sL_{G_1}\not\equiv 0$. By Lemma \ref{lemma:minimal}, $\sL_{G_1}^*$ is nef. Moreover, by Proposition \ref{prop:vanishing_chern_classes}, there is a number $\lambda \in \mathbb{Q}$ such that $K_{G_1} \equiv \lambda c_1(\sL_{G_1})$. Observe that $\lambda \le 0$ since $K_G=K_X|_G$ is pseudo-effective. Since $h^0(G_1,\sL_{G_1}^*)\ge 1$, we see that $\kappa(G_1) \ge 0$ by  \cite[Corollary 3.2]{CKP_numerical}. Notice also that
$K_{G_1}$ and hence $K_G$ are nef. Then \cite[Theorem 5.13]{uenoLN439} implies that $\kappa(K_G) \ge 0$.
Write $K_G\sim_\mathbb{Q}\sum_{i \in I} b_i B_i$ with $b_i \in \mathbb{Q}_{>0}$, $B_i$ irreducible and 
$B_i \neq B_j$ if $i\neq j$. By Proposition \ref{prop:trivial_representation_torus_quotient} applied to 
$K_{G_1}=(g|_{G_1})^* K_G\sim_\mathbb{Q}\sum_{i \in I} b_i (g|_{G_1})^*B_i$, $B_i$ is a finite \'etale quotient of an abelian variety and $B_i \cap B_j=\emptyset$ if $i\neq j$. Moreover, we have $B_i^2\equiv 0$. Then Proposition \ref{prop:log_abundance} applies to show that $K_G$ is abundant. This finishes the proof of the proposition.
\end{proof}

\section{Projectively flat foliations with nef conormal bundle}\label{section:nef_conormal}

In this section we address codimension $1$ regular foliations with numerically projectively flat tangent sheaf and nef conormal bundle. The proof of Theorem \ref{thm:case_conormal_nef} below makes use of the following auxiliary result.

\begin{lemma}\label{lemma:isotriviality}
Setting and notation as in \ref{setup:main}. Suppose that $X$ is an abelian scheme over a smooth projective base $Y$ of dimension $m:=\dim Y \le n-1$, and denote by $f \colon X \to Y$ the natural morphism. 
Suppose in addition that $\sN\cong f^*\sM$ for some line bundle $\sM$ on $Y$ and that $T_{X/Y} \not\subseteq \sF$. Then $f$ is isotrivial. 
\end{lemma}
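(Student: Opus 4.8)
The plan is to reduce the isotriviality of $f$ to the vanishing of the Kodaira--Spencer map (equivalently the Higgs field) of the polarized weight-one variation of Hodge structure $R^1f_*\mathbb{C}$ attached to the abelian scheme, and then to force that vanishing by combining the foliation data with the positivity coming from the nef conormal bundle. Write $\sE:=f_*\Omega^1_{X/Y}$, so that $\Omega^1_{X/Y}\cong f^*\sE$ and $T_{X/Y}\cong f^*\sE^*$. Recall that $f$ is isotrivial if and only if the Kodaira--Spencer map $\theta\colon \sE\to \sE^*\otimes\Omega^1_Y$ vanishes identically; equivalently, the extension class of $0\to T_{X/Y}\to T_X\to f^*T_Y\to 0$ vanishes on each fibre, i.e. $T_X|_F\cong\sO_F^{\oplus n}$ for every fibre $F$.

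First I would analyse the situation fibrewise. For a fibre $F$ one has $\sN|_F\cong (f^*\sM)|_F\cong\sO_F$, so $\sF|_F$ is numerically projectively flat with $c_1(\sF|_F)\equiv 0$, hence numerically flat. Since $T_{X/Y}\not\subseteq\sF$, the restriction of the defining twisted $1$-form $\omega\in H^0(X,\Omega^1_X\otimes\sN)$ to a general fibre is a nonzero translation-invariant $1$-form, so $\sF$ cuts out a linear codimension-one foliation on each fibre. Concretely, the vertical part of $\omega$ is a nonzero section $\sigma\in H^0(X,\Omega^1_{X/Y}\otimes\sN)=H^0(Y,\sE\otimes\sM)$, i.e. a nonzero element of $\mathrm{Hom}(\sM^*,\sE)$ whose saturated image is a line subbundle $\sB\subseteq\sE$ with annihilator $\sB^{\perp}=:\sG_0\subseteq\sE^*$ of corank one. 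The intersection $\sF\cap T_{X/Y}$ restricts to $\sO_F^{\oplus\,g-1}$ on a general fibre ($g:=n-m$), so Lemma \ref{lemma:descent_vb} shows that it descends: $\sF\cap T_{X/Y}\cong f^*\sG_0$.

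Next I would extract the differential-geometric consequence of integrability. A local computation with the Gauss--Manin connection, expressing that $\omega$ is a single-valued form on the total space (equivalently, that $\sF$ is involutive), shows that $\theta(\sB)=0$ and $\mathrm{im}\,\theta\subseteq\sG_0\otimes\Omega^1_Y=\sB^{\perp}\otimes\Omega^1_Y$. In other words the variation is already constant in the direction $\sB$, and the remaining variation is confined to the rank-$(g-1)$ piece $\sE/\sB$; this is exactly the numerical content of the linear foliation on the fibres.

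The final step, which I expect to be the main obstacle, is to upgrade this partial vanishing to $\theta\equiv 0$. Here I would use the standing hypothesis that the conormal bundle is nef: since $\sN^*\cong f^*\sM^*$ is nef and $\sF^*$ is nef by Lemma \ref{lemma:minimal}, the conormal sequence $0\to\sN^*\to\Omega^1_X\to\sF^*\to 0$ exhibits $\Omega^1_X$ as an extension of nef bundles, hence $\Omega^1_X$ is nef; its quotient $\Omega^1_{X/Y}\cong f^*\sE$ is then nef, so $\sE$ is nef on $Y$. Combining this with the semipositivity of the Hodge bundle and the numerical projective flatness of $\sF$, I would argue that $\sE$ is in fact numerically flat, that is $c_1(\sE)\equiv 0$; by the semi-negativity theory for variations of Hodge structure (Fujita, Viehweg--Zuo), a polarized weight-one variation whose Hodge bundle is numerically flat has vanishing Higgs field, so $\theta\equiv 0$ and $f$ is isotrivial. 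The crux is precisely the numerical flatness of $\sE$: one must convert the one-dimensional flatness produced by the foliation, together with numerical projective flatness of $\sF$ on $X$, into the full numerical triviality $c_1(\sE)\equiv 0$, thereby transferring the rigidity of $\sF$ on $X$ to the Kodaira--Spencer map of $f$ on $Y$.
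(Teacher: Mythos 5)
Your proposal does not close the argument: the step you yourself call ``the crux'' --- converting the fibrewise flatness produced by the foliation into $c_1(\sE)\equiv 0$ (equivalently $\theta\equiv 0$) --- is announced but never carried out, and it is exactly where all the work lies. Worse, the one concrete input you propose to feed into that step, ``the standing hypothesis that the conormal bundle is nef,'' is not a hypothesis of this lemma. Setup \ref{setup:main} only assumes $\sF$ numerically projectively flat, and the lemma is invoked in the proof of Theorem \ref{thm:case_normal_pseff_non_uniruled}, where $\sN$ is \emph{pseudo-effective} rather than $\sN^*$ nef; an argument resting on nefness of $\sN^*$ therefore cannot prove the statement as used. (The nefness of $\sE:=f_*\Omega^1_{X/Y}$ itself is not the issue --- it holds unconditionally by Griffiths semipositivity --- but nefness alone says nothing about $c_1(\sE)\equiv 0$, and your partial vanishing $\theta(\sB)=0$ only controls a rank-one piece of a rank-$(n-m)$ Hodge bundle.)

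For comparison, the paper's mechanism is the following. One shows that $\sF\cap T_{X/Y}$ and $\sQ:=\sF/(\sF\cap T_{X/Y})$ descend (over an open $Y^\circ$ with small complement) to bundles $\sE^\circ$ and $\sG^\circ$ on $Y$, giving $0\to f^*\sE^\circ\to\sF^\circ\to f^*\sG^\circ\to 0$ with $f^*\sG^\circ$ generically identified with $f^*T_Y$. If this sequence splits on a general fibre, the relative tangent sequence splits there and complex flows of the lifted vector fields give local triviality. If not, the extension class is a \emph{non-zero} section of $R^1f_*\sO_X\otimes\sE^\circ\otimes(\sG^\circ)^*$; restricting to a general complete-intersection curve $C$ and using that $\sF|_C$ is semistable (numerical projective flatness), one sandwiches $\mu^{\textup{max}}(\sE^\circ|_C)\le\mu(\sF|_C)\le\mu^{\textup{min}}(\sG^\circ|_C)$, while $\mu^{\textup{max}}(R^1f_*\sO_X|_C)\le 0$ by Griffiths; the existence of the non-zero section forces equality throughout, hence $\deg\big(f_*\sO(K_{X/Y})|_C\big)=0$, and Ueno's theorem gives isotriviality. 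It is this use of the extension class as a section of the twisted $R^1f_*\sO_X$, combined with the semistability sandwich, that supplies the ``rigidity transfer'' your sketch is missing; without an equivalent device your plan does not yield the conclusion.
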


\begin{proof}
Let $\omega\in H^0(X,\Omega_X^1\otimes \sN)$ be a twisted $1$-form defining $\sF$, and let $\Sigma\subset X$ be the neutral section of $f$. 
By assumption, the composition $\sF \to T_X \to f^*T_Y$ is generically surjective.
Notice that the sheaf $f_*T_{X/Y}$ is locally free of rank $n-m$.
Let $F$ be a general fiber of $f$. The morphism $ \sO_F^{\oplus n-m} \cong T_{X/Y}|_F \to \sN|_F\cong \sO_F$ induced by $\omega$
is generically surjective by assumption, and hence surjective. This implies that the saturation
$\sE$ of $f_*(\sF\cap T_{X/Y})$ in $f_*T_{X/Y}$ has rank $n-m-1$. Let $Y^\circ \subseteq Y$ be an open subset with complement of codimension at least $2$ such that $\sE^\circ:=\sE|_{Y^\circ}$ is a subbundle of $f_*T_{X/Y}$. Set $X^\circ:=f^{-1}(Y^\circ)$ and $f^\circ:=f|_{X^\circ}$.
Then 
$(f^\circ)^*\sE^\circ=\sF^\circ\cap T_{X^\circ/Y^\circ}\subset T_{X^\circ/Y^\circ}$ since $\sF^\circ\cap T_{X^\circ/Y^\circ}$ is saturated in 
$T_{X^\circ/Y^\circ}$. In particular, $\sF^\circ\cap T_{X^\circ/Y^\circ}$
is a subbundle of $\sF^\circ$. Set $\sQ^\circ:=\sF^\circ/(\sF^\circ\cap T_{X^\circ/Y^\circ})$. Notice also that $$\sQ^\circ|_F \cong (f^*T_Y)|_F\cong \sO_F^{\oplus m},$$ where $F$ is a general fiber $F$ of $f^\circ$. In particular, $K_\sF|_F \sim_\mathbb{Z} 0$. Then Lemma \ref{lemma:pull_back} applies to show that there exists a line bundle $\sK$ on $Y$ such that $\sO_X(K_\sF)\cong f^*\sK$.

Let now $F$ be any fiber of $f^\circ$. Then the vector bundle $\sF|_F$ is numerically flat since 
$K_\sF|_F \sim_\mathbb{Z} 0$. It follows that $\sQ^\circ|_F$ is numerically flat as well.
Then Lemma \ref{lemma:descent_vb} applies to show that there exists a vector bundle $\sG^\circ$ on $Y^\circ$ such that $\sQ^\circ \cong (f^\circ)^*\sG^\circ$. By construction, there is an exact sequence
$$0 \to (f^\circ)^*\sE^\circ \to \sF^\circ \to (f^\circ)^*\sG^\circ \to 0.$$

Let again $F$ be a general fiber of $f^\circ$. To prove the statement, it suffices to show that the exact sequence
\begin{equation}\label{eq:relative_tangent_sequence}
0 \to T_{X/Y} |_F \to T_X|_F \to (f^*T_Y)|_F \to 0
\end{equation}
splits. This is because complex flows of vector fields on analytic spaces exist.
If the short exact sequence  
\begin{equation}\label{eq:exact_sequence}
0 \to (f^\circ)^*\sE^\circ|_F \to \sF^\circ|_F \to (f^\circ)^*\sG^\circ|_F \to 0
\end{equation}
splits, then the exact sequence \eqref{eq:relative_tangent_sequence} splits as well, proving our claim. Suppose from now on that the exact sequence \eqref{eq:exact_sequence} does not split. Then the image of $\textup{Id}_{\sG^\circ}\in H^0(Y^\circ,\sE nd(\sG^\circ))$ under the connecting homomorphism $$H^0(Y^\circ,\sE nd(\sG^\circ)) \to R^1 f^\circ_*\big((f^\circ)^*\sE^\circ\otimes((f^\circ)^*\sG^\circ)^*\big)
\cong R^1 f^\circ_*\sO_{X^\circ} \otimes \sE^\circ\otimes(\sG^\circ)^*$$
is non-zero.  
Let $C$ be a complete intersection curve of general elements of a very ample linear system on $Y$. By general choice of $C$, we have $C \subseteq Y^\circ$ since $Y \setminus Y^\circ$ has codimension at least $2$. Moreover,
$$\mu^{\textup{max}}\big((R^1 f^\circ_*\sO_{X^\circ} \otimes \sE^\circ\otimes(\sG^\circ)^*)|_C\big) \ge 0.$$
Then
\begin{eqnarray*}
\mu^{\textup{max}}\big((R^1 f^\circ_*\sO_{X^\circ} \otimes \sE^\circ\otimes(\sG^\circ)^*)|_C\big)
& = & \mu^{\textup{max}}\big(R^1 f^\circ_*\sO_{X^\circ}|_C\big)+\mu^{\textup{max}}\big(\sE^\circ|_C\big)+\mu^{\textup{max}}\big((\sG^\circ)^*|_C\big)\\
& = & \mu^{\textup{max}}\big(R^1 f^\circ_*\sO_{X^\circ}|_C\big)+\mu^{\textup{max}}\big(\sE^\circ|_C\big)-\mu^{\textup{min}}\big(\sG^\circ|_C\big),
\end{eqnarray*}
and hence
$$\mu^{\textup{min}}\big(\sG^\circ|_C\big) \le \mu^{\textup{max}}\big(R^1 f^\circ_*\sO_{X^\circ}|_C\big)+\mu^{\textup{max}}\big(\sE^\circ|_C\big).$$
Since $(R^1 f_*\sO_X)^*\cong f_*\Omega^1_{X/Y}$ is nef (see 
\cite[Theorem 5.2]{griffiths_periods_3}), we have
$$\mu^{\textup{max}}\big(R^1 f^\circ_*\sO_{X^\circ}|_C\big) \le 0.$$ 
On the other hand, viewing $C$ as a curve in $\Sigma$, 
there is an exact sequence 
$$0 \to \sE^\circ|_C \to \sF^\circ|_C \to \sG^\circ|_C \to 0.$$ 
It follows that
$$\mu^{\textup{max}}\big(\sE^\circ|_C\big) \le \mu(\sF|_C) \le \mu^{\textup{min}}\big(\sG^\circ|_C\big)$$
since $\sF|_C$ is semistable by assumption. This gives
$$\mu(\sF|_C) \le \mu^{\textup{min}}\big(\sG^\circ|_C\big)\le \mu^{\textup{max}}\big(R^1 f^\circ_*\sO_{X^\circ}|_C\big)+\mu^{\textup{max}}\big(\sE^\circ|_C\big) \le \mu^{\textup{max}}\big(\sE^\circ|_C\big) \le \mu(\sF|_C),$$
and hence 
$$\mu^{\textup{max}}\big(R^1 f^\circ_*\sO_{X^\circ}|_C\big) = 0 \quad \textup{and}\quad \mu^{\textup{max}}\big(\sE^\circ|_C\big) = \mu(\sF|_C) = \mu^{\textup{min}}\big(\sG^\circ|_C\big).$$
This implies that the vector bundle $f_*\Omega^1_{X/Y}|_C$ is numerically flat. In particular,
we have $\deg (f_*\sO(K_{X/Y})|_C) = 0$.
Then \cite[Corollary 2.3]{ueno_iitaka_conj_av} applies to show that $f$ is isotrivial. This finishes the proof of the lemma.
\end{proof}

The following is the main result of this section.

\begin{thm}\label{thm:case_conormal_nef}
Setting and notation as in \ref{setup:main}. Suppose in addition that the conormal bundle $\sN^*$ of $\sF$ is nef.
Then one of the following holds. 
\begin{enumerate}
\item There exists an abelian variety $A$ as well as a finite \'etale cover $\gamma\colon A \to X$ such that 
$\gamma^{-1}\sF$ is a linear foliation. 
\item There exists an abelian scheme $f\colon B \to C$ onto a smooth complete curve of genus at least $2$ as well as a finite \'etale cover $\gamma\colon B \to X$ such that $\gamma^{-1}\sF$ is induced by $f$.
\item The variety $X$ is of general type, and $\nu(K_\sF)=\dim X$. 
\end{enumerate}
\end{thm}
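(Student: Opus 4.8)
The plan is to exploit the single extra hypothesis --- that $\sN^*$ is nef --- to put $\Omega^1_X$ into a very rigid shape, and then to run a dichotomy on the numerical dimension of $K_\sF$. First I would combine the nefness of $\sN^*$ with that of $\sF^*$ (Lemma \ref{lemma:minimal}): the conormal sequence $0 \to \sN^* \to \Omega^1_X \to \sF^* \to 0$ exhibits $\Omega^1_X$ as an extension of nef bundles, so $\Omega^1_X$ is nef and $K_X = \det\Omega^1_X$ is nef. In particular $K_X$ is pseudo-effective, so Proposition \ref{prop:abundance} applies and $K_X$ is abundant, giving $\kappa(X) = \nu(K_X)$; moreover $K_\sF$ is nef by Lemma \ref{lemma:minimal}, and $X$ carries no rational curve (a nonconstant $\mathbb{P}^1 \to X$ would make $\Omega^1_X$ surject onto $\sO_{\mathbb{P}^1}(-2)$, which is impossible for a nef bundle).

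I would then split according to $\nu(K_\sF)$. The adjunction formula $K_\sF = K_X + c_1(\sN)$ gives $K_X \equiv K_\sF + c_1(\sN^*)$ with both summands nef. If $\nu(K_\sF) = \dim X$, then $K_\sF$ is big, and adding the nef class $c_1(\sN^*)$ keeps it big, so $K_X$ is big; thus $X$ is of general type with $\nu(K_\sF) = \dim X$, which is exactly conclusion (3). This branch is immediate, and it is the only way $X$ can be of general type.

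The substance is the case $\nu(K_\sF) \le \dim X - 1$, where the goal is to prove that $\sF$ is algebraically integrable. Granting this, Proposition \ref{prop:special_case_algebraically_integrable} describes $\gamma^{-1}\sF$, after a finite \'etale cover $\gamma$, as induced by a projection $A \times C \to C$ with $C$ of genus at most $1$, or by an abelian scheme over a curve of genus at least $2$; and here the hypothesis that $\sN^*$ is nef excludes the possibility $C \cong \mathbb{P}^1$ (the leaves of that foliation have conormal bundle $\sO_{\mathbb{P}^1}(-2)$, not nef). The surviving options are $C$ elliptic, where $A \times C$ is an abelian variety and $\gamma^{-1}\sF$ is a linear foliation (conclusion (1)), and $C$ of genus at least $2$ (conclusion (2)). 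To produce the algebraic integrability I would invoke Touzet's structure theorem for codimension $1$ foliations with pseudo-effective conormal bundle \cite{touzet_conpsef} (applicable since $\sN^*$ is nef): its transversely projective alternatives are incompatible with $\sF$ being numerically projectively flat together with $\nu(K_\sF) < \dim X$, leaving only the algebraically integrable alternative. Equivalently, and more hands-on, I would pass to the Iitaka fibration of $K_\sF$ (semiample after the projectively-trivial reduction furnished by Proposition \ref{prop:semiample_when_projectively_trivial}): its general fibers $F$ satisfy $K_\sF|_F \equiv 0$, so $\sF|_F$ is numerically flat and $F$ is a finite \'etale quotient of an abelian variety by the torus characterizations underlying Lemma \ref{lemma:characterization_a_v} and Lemma \ref{lemma:abelian_variety}. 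Koll\'ar's characterization of \'etale quotients of abelian schemes (Lemma \ref{lemma:injectivity_fundamental_groups} together with \cite[Theorem 6.3]{kollar_sh_inventiones}) then realizes the fibration as an abelian scheme, and Lemma \ref{lemma:isotriviality} rules out the configuration in which $\sF$ is transverse rather than tangent to it, so that $\sF$ is the relative tangent foliation and hence algebraically integrable.

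The main obstacle is precisely this last step: establishing algebraic integrability in the non-general-type regime. The delicate points are showing that the relevant fibration has abelian-variety fibers that are genuinely \emph{leaves} of $\sF$ rather than transverse to it --- this is where Lemma \ref{lemma:isotriviality} and the nefness of $\sN^*$ must be combined --- and controlling the global monodromy of the projectively flat structure $\rho$ so that the fiberwise linear foliations assemble into an algebraic foliation on $X$. The general-type branch, by contrast, is routine once the numerical-dimension dichotomy above is in place.
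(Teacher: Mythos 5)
Your opening moves are sound and agree with the paper: the conormal sequence makes $\Omega^1_X$ nef, hence $K_X$ is nef and, by Proposition \ref{prop:abundance}, abundant; and the branch $\nu(K_\sF)=\dim X$ does give conclusion (3) directly, since a nef divisor of maximal numerical dimension is big and $K_X=K_\sF+c_1(\sN^*)$ is then big and nef. The fatal problem is your treatment of the complementary branch. You announce that when $\nu(K_\sF)\le \dim X-1$ the goal is to prove that $\sF$ is \emph{algebraically integrable}, and you then feed this into Proposition \ref{prop:special_case_algebraically_integrable}. But this intermediate claim is false: a linear foliation on an abelian variety $A$ of dimension $n\ge 3$ defined by a generic hyperplane $V\subset \mathbb{C}^n$ satisfies every hypothesis of the theorem ($\sF\cong\sO_A^{\oplus n-1}$ is numerically projectively flat, $\sN^*\cong\sO_A$ is nef, $\nu(K_\sF)=0$), yet its leaves are dense and it is not algebraically integrable. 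Conclusion (1) of the theorem is worded precisely to accommodate such foliations; your route would only ever recover the measure-zero subfamily of linear foliations with compact leaves. For the same reason, neither of your proposed mechanisms can close the gap: Touzet's trichotomy in \cite{touzet_conpsef} does not force algebraic integrability (the linear foliations above are already a counterexample to your claimed exclusion of the other alternatives), and the Iitaka fibration of $K_\sF$ collapses to a point exactly in the problematic case $K_\sF\equiv 0$, so it carries no information there; moreover your appeal to Proposition \ref{prop:semiample_when_projectively_trivial} presupposes $\sF\cong\sL^{\oplus n-1}$, i.e.\ a trivialization of the monodromy $\rho$ that you acknowledge but do not produce.

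For comparison, the paper avoids this trap entirely: after establishing that $\Omega^1_X$ is nef and $K_X$ is semiample, it invokes H\"oring's structure theorem \cite[Theorem 1.2]{hoering_nef_cotan} to realize $X$, after a finite \'etale cover, as an abelian scheme $f\colon X\to Y$ over a canonically polarized base, so that the case $\dim Y=0$ yields conclusion (1) with no integrability statement needed. The cases $\dim Y\ge 1$ are then settled by slope computations on a section $\Sigma$ of $f$, combining the Viehweg--Zuo Arakelov inequality \cite{viehweg_zuo_arakelov} with the polystability of $T_Y$ coming from the K\"ahler--Einstein metric; these force either $\dim Y=1$ with $\sF$ induced by $f$ (conclusion (2)), or $\dim Y=\dim X$ (conclusion (3)), the remaining values of $\dim Y$ being excluded via Lemma \ref{lemma:isotriviality}. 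Some such global structural input replacing ``algebraic integrability'' is indispensable, and your proposal does not supply it.
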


\begin{proof}By Lemma \ref{lemma:minimal}, $\sF^*$ is a nef vector bundle. This in turn implies that $\Omega_X^1$ is nef since $\sN^*$ is nef by assumption. In particular, $K_X$ is nef.
On the other hand, $K_X$ is abundant by Proposition \ref{prop:abundance}. Then 
\cite[Theorem 1.1]{kawamata} applies to show that $K_X$ is semiample. Moreover, by \cite[Theorem 1.2]{hoering_nef_cotan}, replacing $X$ by a finite \'etale cover, if necessary, we may assume that $X$ is an abelian scheme over a smooth projective variety $Y$ with ample canonical divisor (see Lemma \ref{lemma:finite_etale_cover}). Let us denote by $f\colon X \to Y$ the natural morphism and by $\Sigma$ any section of $f$. Set $m:= \dim Y$.

\medskip

Suppose that $m=0$. Then $X$ is an abelian variety. This easily implies 
$\sN^* \cong\sO_X$. As a consequence, $\sF$ is a linear foliation on $X$, and we are in case (1) of Theorem \ref{thm:case_conormal_nef}.

\medskip

Suppose from now on that $m\ge 1$.

\medskip

Set $\sE:=f_*\Omega^1_{X/Y}$ so that 
$\Omega^1_{X/Y} \cong f^*\sE$. By \cite[Theorem 1 and Remark 2]{viehweg_zuo_arakelov}, we have 
\begin{equation}\label{eq:arakelov_inequality}
2 \mu(\sE) \le \mu(\Omega^1_Y),
\end{equation}
where $\mu$ denotes the slope with respect to $K_Y$, which is ample by construction. 
A celebrated theorem proved by Aubin and Yau moreover asserts that $Y$ is K\"ahler-Einstein. As a consequence, the tangent bundle $T_Y$ is polystable with respect to $K_Y$ by a result proved by Kobayashi and L\"ubke independently.

The short exact sequences 
$$0\to \sN^* \to \Omega^1_X \to \sF^* \to 0\quad \textup{and} \quad 0 \to f^*\Omega^1_Y \to \Omega^1_X \to \Omega^1_{X/Y} \to 0$$
give
\begin{equation}\label{eq:slopes}
\frac{\mu(\sN^*|_\Sigma)+(n-1)\mu(\sF^*|_\Sigma)}{n}=\mu(\Omega^1_X|_\Sigma)=\frac{m\mu(\Omega^1_Y)+(n-m)\mu(\sE)}{n}.
\end{equation}

\medskip

Suppose first that the composition $\sN^*|_\Sigma \to \Omega^1_X|_\Sigma \to \Omega^1_{X/Y}|_\Sigma \cong \sE$ vanishes. Then $$\sN^*|_\Sigma \subseteq f^*\Omega^1_Y|_\Sigma\cong \Omega^1_Y$$ and there exists a surjective morphism $$\sF^*|_\Sigma \twoheadrightarrow f^*\sE|_\Sigma\cong \sE.$$ 
Moreover, by Lemma \ref{lemma:numerically_projectively_flat_elementary properties}, the vector bundle $\sF|_\Sigma$ is numerically projectively flat. It follows that
$$\mu(\sN^*|_\Sigma) \le \mu(\Omega^1_Y) \quad \textup{and} \quad \mu(\sF^*|_\Sigma) \le \mu(\sE).$$
Then Equation \eqref{eq:slopes} yields
$$m\mu(\Omega^1_Y)+(n-m)\mu(\sE) = \mu(\sN^*|_\Sigma)+(n-1)\mu(\sF^*|_\Sigma) \le \mu(\Omega^1_Y) + (n-1)\mu(\sE),$$
and hence, using Equation \eqref{eq:arakelov_inequality}, we obtain
$$(m-1)\mu(\Omega^1_Y) \le (m-1)\mu(\sE) \le \frac{m-1}{2}\mu(\Omega^1_Y).$$
This immediately implies $m=1$ since $\mu(\Omega^1_Y)>0$. Moreover, we have $\sN^*=f^*\Omega^1_Y \subset \Omega_X^1$. As a consequence, $\sF$ is induced by $f$, and we are in case (2) of Theorem \ref{thm:case_conormal_nef}. 

\medskip

Suppose finally that the composition $\sN^*|_\Sigma \to \Omega^1_X|_\Sigma \to \Omega^1_{X/Y}|_\Sigma \cong \sE$ is non-zero. Then the composition $\sF|_\Sigma \to T_X|_\Sigma \to f^*T_Y |_\Sigma \cong T_Y$ is generically surjective. Since 
the vector bundle $\sF|_\Sigma$ is numerically projectively flat, we have
\begin{equation}\label{eq:semistability}
\mu(\Omega^1_Y) \le \mu(\sF^*|_\Sigma).
\end{equation}
Together with Equations \eqref{eq:arakelov_inequality} 
and \eqref{eq:slopes} we obtain
\begin{multline*}
2\mu(\sN^*|_\Sigma)+2(n-1)\mu(\Omega^1_Y) \le 2\mu(\sN^*|_\Sigma)+2(n-1)\mu(\sF^*|_\Sigma)=
\\
2m\mu(\Omega^1_Y)+2(n-m)\mu(\sE)\le 2m\mu(\Omega^1_Y)+(n-m)\mu(\Omega^1_Y)=(n+m)\mu(\Omega^1_Y),
\end{multline*}
and thus
$$(n-2-m)\mu(\Omega^1_Y)+2\mu(\sN^*|_\Sigma) \le 0.$$
Notice that $\mu(\sN^*|_\Sigma)\ge 0$ since $\sN^*$ is nef
and that $\mu(\Omega^1_Y)>0$ since $m\ge 1$. In particular, $m \in\{n-2,n-1,n\}$.

\begin{claim}
There exists a line bundle $\sM$ on $Y$ such that $\sN^*\cong f^*\sM$.  
\end{claim}

\begin{proof}
Let $F$ be a general fiber of $f$. The composition $\sN^* \to \Omega^1_X \to \Omega^1_{X/Y}$ is non-zero by our current assumption and induces an injective map $\sN^*|_F \to \Omega^1_{X/Y}|_F\cong \sO_F^{\oplus n-m}$. Since $\sN^*$ is nef, we have $\sN^*|_F\cong\sO_F$. 
The claim now follows from Lemma \ref{lemma:pull_back}.
\end{proof}

Suppose that $m=n$. Then $X=Y$ is of general type and $\nu(K_\sF)=\dim X$ by Lemma \ref{lem:ambient_general_type_foliation_general_type} below. We are in case (3) of Theorem \ref{thm:case_conormal_nef}. 

\medskip

Suppose that $m \in\{n-2,n-1\}$. By Lemma \ref{lemma:isotriviality}, after replacing $X$ by a finite \'etale cover, we may assume that $X \cong Y \times A$ for some abelian variety $A$ of dimension $n-m\in\{1,2\}$ and that $f$ is induced by the projection onto $Y$. Since the composition $\sN^*|_\Sigma \to \Omega^1_X|_\Sigma \to \Omega^1_{X/Y}|_\Sigma \cong \sO_\Sigma^{\oplus n-m}$ is non-zero and $\sN^*$ is nef, we have $\sN\cong \sO_X$.

If $m=n-1$, then $\sF$ is everywhere transverse to $f$. This implies that $T_Y$ is numerically projectively flat. But then $K_Y\equiv 0$ by \cite[Theorem 0.1]{jahnke_radloff_13}, yielding a contradiction.

Suppose finally that $m=n-2$. Then Equations \eqref{eq:arakelov_inequality}, 
\eqref{eq:slopes} and \eqref{eq:semistability} give $\mu(\sF^*|_\Sigma) = \mu(\Omega^1_Y)$ and
$2 \mu(\sE) = \mu(\Omega^1_Y)$. Now this is impossible since $\mu(\sE)=0$ and $\mu(\Omega^1_Y)>0$. This finishes the proof of the theorem.
\end{proof}

\begin{lemma}\label{lem:ambient_general_type_foliation_general_type}
Setting and notation as in \ref{setup:main}. Suppose in addition that $X$ is of general type. Then $\nu(K_\sF) = \dim X$.
\end{lemma}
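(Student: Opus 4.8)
The plan is to reduce the statement to the positivity of a single intersection number and then split according to the dichotomy of Theorem~\ref{thm_intro:divisor_zero_square}. First, by Lemma~\ref{lemma:minimal} the canonical class $K_\sF$ is nef, so the assertion $\nu(K_\sF)=\dim X$ is equivalent to $K_\sF$ being big, i.e.\ to $K_\sF^n>0$ where $n=\dim X$. The adjunction formula gives $K_\sF=K_X+c_1(\sN)$, and by Lemma~\ref{lemma:BB_vanishing} we have $c_1(\sN)^2\equiv 0$; hence upon expanding any self-intersection every term involving $c_1(\sN)^2$ drops out. By Theorem~\ref{thm_intro:divisor_zero_square} either $\sN$ is pseudo-effective or $\sN^*$ is nef, and I would treat the two cases separately.

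If $\sN$ is pseudo-effective the argument is immediate: since $X$ is of general type, $K_X$ is big, so $\nu(K_X)=n$; as $K_\sF-K_X=c_1(\sN)$ is pseudo-effective and $K_\sF$ is itself pseudo-effective (being nef), the monotonicity of Nakayama's numerical dimension yields $\nu(K_\sF)\ge\nu(K_X)=n$, whence equality.

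If $\sN^*$ is nef, then the extension $0\to\sN^*\to\Omega^1_X\to\sF^*\to 0$ exhibits $\Omega^1_X$ as an extension of the nef bundle $\sF^*$ (Lemma~\ref{lemma:minimal}) by the nef bundle $\sN^*$, so $\Omega^1_X$ is nef and $K_X$ is nef; together with general type this makes $K_X$ nef and big (and in the situation where the lemma is applied $K_X$ is in fact ample). The plan is then to invoke Aubin--Yau: $X$ carries a K\"ahler--Einstein metric, so by the theorem of Kobayashi and L\"ubke the bundle $T_X$ is polystable with respect to $K_X$. Writing $\mu$ for the slope with respect to $K_X$, semistability applied to the quotient $T_X\twoheadrightarrow\sN$ gives $\mu(\sN)\ge\mu(T_X)$, which after the expansion $K_\sF^n=K_X^n-nK_X^{n-1}\cdot\sN^*$ (using $c_1(\sN^*)^2\equiv 0$) is exactly $K_\sF^n\ge 0$, merely recovering nefness. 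To upgrade this to strict positivity I would argue by contradiction: if $K_\sF^n=0$ then $\mu(\sN)=\mu(T_X)$, so the minimal-slope quotient $\sN$ of the polystable bundle $T_X$ is a direct summand, and since its kernel is $\sF$ we obtain a splitting $T_X\cong\sF\oplus\sN$. Each summand then carries a Hermite--Einstein metric with the same Einstein factor, so $\tfrac{1}{n-1}c_1(\sF)\equiv c_1(\sN)\equiv\tfrac1n c_1(T_X)$; as $\sF$ is numerically projectively flat by hypothesis and $\sN$ is a line bundle, Lemma~\ref{lemma:numerically_projectively_flat_elementary properties}(3) shows that $T_X\cong\sF\oplus\sN$ is numerically projectively flat. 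By the theorem of Jahnke and Radloff \cite{jahnke_radloff_13} this forces $K_X\equiv 0$, contradicting that $X$ is of general type. Hence $K_\sF^n>0$ and $\nu(K_\sF)=\dim X$.

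I expect the main obstacle to be precisely the passage from $K_\sF^n\ge 0$ to strict positivity, that is, ruling out the borderline case $\mu(\sN)=\mu(T_X)$. Here semistability of $T_X$ is insufficient (indeed for $n=2$ the analogous statement fails, e.g.\ for a product of a curve with a curve), and one genuinely needs the \emph{polystability} of $T_X$ furnished by the Aubin--Yau K\"ahler--Einstein metric, together with the ampleness rather than mere bigness of $K_X$; establishing or reducing to that ampleness (via the canonical model if necessary) is the delicate point, while the remaining steps are formal.
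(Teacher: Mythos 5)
Your reduction is sound up to and including the splitting: the dichotomy from Theorem~\ref{thm_intro:divisor_zero_square}, the disposal of the pseudo-effective case, the nefness of $\Omega^1_X$, the ampleness of $K_X$, and the deduction $T_X\cong\sF\oplus\sN$ from polystability in the borderline case $K_\sF^n=0$ all agree with the paper's argument (which packages the first two points into Corollary~\ref{cor:ambient_general_type_conormal_nef} and then argues by contradiction). The gap is in your final step. A Hermite--Einstein metric with a prescribed Einstein factor on a direct summand determines only its \emph{slope} $c_1\cdot K_X^{n-1}/\mathrm{rank}$, not its first Chern class up to numerical equivalence; on a manifold of Picard number at least $2$, two Hermite--Einstein summands with the same factor can perfectly well satisfy $\frac{1}{n-1}c_1(\sF)\not\equiv c_1(\sN)$ (think of $T_X$ for a product of two curves of general type). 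So the asserted equivalence $\frac{1}{n-1}c_1(\sF)\equiv c_1(\sN)\equiv\frac1n c_1(T_X)$ does not follow, and without it the hypothesis of Lemma~\ref{lemma:numerically_projectively_flat_elementary properties}~(3) is not met and the appeal to Jahnke--Radloff collapses. Worse, the equivalence you want is necessarily \emph{false} in this situation: together with $c_1(\sN)^2\equiv 0$ (Lemma~\ref{lemma:BB_vanishing}) it would give $K_X^2\equiv 0$, contradicting ampleness outright --- a sign that you are asking the K\"ahler--Einstein metric for strictly more than it provides.

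The split case genuinely requires an additional input, and this is where the paper's proof does its real work (Lemma~\ref{lemma:Miyaoka_Yau}). One applies Simpson's Bogomolov--Gieseker inequality to the \emph{Higgs} bundle $(\sF^*\oplus\sO_X,\vartheta)$, whose stability must be checked by hand; combined with $c_2(\sF)\equiv\frac{n-2}{2(n-1)}c_1(\sF)^2$ from projective flatness this yields $c_1(\sF)^2\cdot K_X^{n-2}\le 0$, hence $=0$ since $K_\sF$ is nef (Lemma~\ref{lemma:minimal}) and $K_X$ is ample. On the other hand, Beauville's theorem on split cotangent bundles gives $c_1(\sF)^{n}\equiv 0$ and $c_1(\sN)^2\equiv 0$, from which $c_1(\sF)^2\cdot K_X^{n-2}=\frac{n-2}{n}K_X^n>0$ --- the desired contradiction. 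Note that Beauville's vanishings alone only give $K_X^n=n\,K_\sF^{n-1}\cdot c_1(\sN^*)\ge 0$, which is consistent with everything; the Higgs-theoretic inequality is not a removable convenience but the essential ingredient you are missing.
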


\begin{proof}
We argue by contradiction and assume that $\nu(K_\sF)\le \dim X - 1$. By Corollary \ref{cor:ambient_general_type_conormal_nef}, $\sN^*$ is nef. It follows that $\Omega_X^1$ is nef since $\sF^*$ is also nef by Lemma \ref{lemma:minimal}. This implies that $X$ contains no rational curve, and hence $K_X$ is an ample divisor. A celebrated theorem proved independently to Aubin and Yau then asserts that $X$ is K\"ahler-Einstein. As a consequence, the tangent bundle $T_X$ is polystable with respect to $K_X$ by a result proved by Kobayashi and L\"ubke independently.

Set $n:=\dim X$. Then $$K_\sF^n=(K_X+c_1(\sN))^n=K_X^n+nK_X^{n-1}\cdot c_1(\sN)=0$$ since 
$K_\sF=K_X+c_1(\sN)$ by the adjunction formula and
$c_1(\sN)^2 = 0$ by Lemma \ref{lemma:BB_vanishing}. 
It follows that $\mu(\sN)=\mu(T_X)$, where $\mu$ denotes the slope with respect to $K_X$, and hence $T_X \cong \sF \oplus \sN$ since $T_X$ is polystable.

By Lemma \ref{lemma:Miyaoka_Yau} (1) below, we then have 
$$(2nc_2(\sF)-(n-1)c_1(\sF)^2)\cdot K_X^{n-2} \ge 0.$$
Since $c_2(\sF)=\frac{n-2}{2(n-1)}c_1(\sF)$ (see Lemma \ref{lemma:chern_classes}), we obtain
$$c_1(\sF)^2\cdot K_X^{n-2}=0.$$
But this contradicts Lemma \ref{lemma:Miyaoka_Yau} (2) below, finishing the proof of the lemma.
\end{proof}

\begin{lemma}\label{lemma:Miyaoka_Yau}
Let $X$ be a complex projective manifold with $K_X$ ample, and let $\Omega^1_X:=\sA\oplus\sB$ be a decomposition of $\Omega^1_X$ into locally free subsheaves. Suppose that $\sA$ has rank $a \ge 2$, and set $n:=\dim X$. 
Then
\begin{enumerate}
\item $(2(a+1)c_2(\sA)-a c_1(\sA)^2)\cdot K_X^{n-2} \ge 0$, and
\item $c_1(\sA)^2\cdot K_X^{n-2} > 0$.
\end{enumerate}
\end{lemma}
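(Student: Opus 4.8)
The plan is to put the K\"ahler--Einstein metric to work at the level of curvature forms. Since $K_X$ is ample, Aubin--Yau provides a K\"ahler--Einstein metric $\omega$ with $\mathrm{Ric}(\omega)=-\omega$, so that $[\omega]=2\pi K_X$ in $H^2(X,\mathbb{R})$, and $\Omega^1_X\cong T_X^*$ is polystable with respect to $K_X$, carrying the induced Hermite--Einstein metric $h$. Both $\sA$ and $\sB$ are direct summands of a polystable sheaf, hence polystable of the same slope $\mu(\sA)=\mu(\sB)=\mu(\Omega^1_X)$. The first step is to observe that the Chern ($=$ Levi--Civita) connection of $h$ preserves the splitting: a saturated subbundle of a Hermite--Einstein bundle whose slope equals the ambient slope is parallel, and so is its orthogonal complement. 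Consequently the holomorphic splitting $T_X=\sA^*\oplus\sB^*$ is orthogonal and parallel, the curvature tensor $R$ of $\omega$ is block-diagonal, and the K\"ahler symmetry $R_{i\bar jk\bar l}=R_{k\bar ji\bar l}$ then forces $R$ to be, at every point, the curvature of a local K\"ahler product $R=R_1\oplus R_2$, with $R_1$ a genuine K\"ahler curvature tensor in the $a$ directions of $\sA^*$, the Einstein condition restricting to each factor.

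For part (1) I would then invoke the pointwise Miyaoka--Yau inequality. The algebraic inequality underlying Yau's proof states that for the curvature tensor of any K\"ahler--Einstein metric in dimension $a$ the Chern--Weil forms satisfy $\bigl(2(a+1)\hat c_2-a\,\hat c_1^{\,2}\bigr)\wedge\omega_1^{\,a-2}\ge 0$ pointwise. Applying this to the factor $R_1$ and noting that the Chern forms of $\sA^*$ agree with those of $R_1$ up to the sign that squares away (so $c_2(\sA)=\hat c_2$ and $c_1(\sA)^2=\hat c_1^{\,2}$ as forms supported on the first factor), one expands $\omega=\omega_1+\omega_2$: only the term $\binom{n-2}{a-2}\bigl(2(a+1)c_2(\sA)-a\,c_1(\sA)^2\bigr)\wedge\omega_1^{\,a-2}\wedge\omega_2^{\,b}$ survives in $\bigl(2(a+1)c_2(\sA)-a\,c_1(\sA)^2\bigr)\wedge\omega^{n-2}$, where $b=n-a$. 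This is pointwise nonnegative; integrating over $X$ and using $[\omega]=2\pi K_X$ converts the integral into a positive multiple of $\bigl(2(a+1)c_2(\sA)-a\,c_1(\sA)^2\bigr)\cdot K_X^{n-2}$, which gives (1).

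Part (2) falls out of the same local product picture. Since $\mathrm{Ric}(\omega)=-\omega$ splits as $\mathrm{Ric}_1\oplus\mathrm{Ric}_2$ with $\mathrm{Ric}_1=-\omega_1$, the Chern--Weil representative of $c_1(\sA)=-c_1(\sA^*)$ is $\tfrac{1}{2\pi}\omega_1$. Hence $c_1(\sA)^2\wedge\omega^{n-2}=\tfrac{1}{(2\pi)^2}\omega_1^{\,2}\wedge(\omega_1+\omega_2)^{n-2}=\tfrac{1}{(2\pi)^2}\binom{n-2}{a-2}\,\omega_1^{\,a}\wedge\omega_2^{\,b}$, which is strictly positive at every point precisely because $a\ge 2$ makes $\omega_1^{\,2}\ne 0$. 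Integrating yields $c_1(\sA)^2\cdot K_X^{n-2}>0$.

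The main obstacle, and the step deserving the most care, is the reduction to a pointwise product structure: proving that the holomorphic splitting is orthogonal and parallel for the K\"ahler--Einstein metric, so that the curvature genuinely block-decomposes as that of a local product, and then correctly using the pointwise form of the Miyaoka--Yau inequality for the rank-$a$ factor rather than the weaker Bogomolov--L\"ubke inequality, which would only yield the coefficient $2a$ in place of $2(a+1)$. The extra unit in the coefficient is exactly what the tangent-bundle symmetry $R_{i\bar jk\bar l}=R_{k\bar ji\bar l}$ supplies, so it is essential that $\sA^*$ sits inside $T_X$ as a parallel summand and not merely as an abstract Hermite--Einstein bundle.
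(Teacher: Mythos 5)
Your argument is correct, but it takes a genuinely different route from the paper's. For item (1) the paper does not pass through the K\"ahler--Einstein metric's curvature tensor at all: it forms the system of Hodge bundles $\sE=\Omega^1_X\oplus\sO_X$ with the tautological Higgs field, observes that $\sG=\sA\oplus\sO_X$ is a Higgs subbundle of rank $a+1$, proves that $(\sG,\vartheta|_\sG)$ is \emph{stable} with respect to $K_X$ (using polystability of $\sA$ from Aubin--Yau and Kobayashi--L\"ubke), and then quotes Simpson's Bogomolov--Gieseker inequality for stable Higgs bundles; the coefficient $2(a+1)$ is just the Bogomolov inequality for the rank-$(a+1)$ bundle $\sG$, whose Chern classes coincide with those of $\sA$. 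For item (2) the paper argues cohomologically: Beauville's lemma gives $c_1(\sA)^{a+1}\equiv 0$ and $c_1(\sB)^{b+1}\equiv 0$, so expanding $K_X^{n-2}=(c_1(\sA)+c_1(\sB))^{n-2}$ and $K_X^n$ reduces the claim to $K_X^n>0$. Your route instead establishes that the splitting is parallel for the K\"ahler--Einstein metric, invokes the de Rham local product decomposition, and uses the pointwise Chern--Weil form of the Miyaoka--Yau inequality on the rank-$a$ factor; this buys pointwise positivity and makes (2) transparent, at the cost of having to justify carefully the local product structure and the pointwise integrand inequality (the analytic content that the paper delegates to Simpson). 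Your closing remark correctly identifies why the naive Bogomolov--L\"ubke bound with coefficient $2a$ would be insufficient, and the extra unit is exactly what the Higgs-field trick (paper) or the tangent-bundle curvature symmetry (your version) supplies.
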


\begin{proof}
Let $\sE:=\Omega_X^1\oplus \sO_X$ equipped with the Higgs field $\vartheta$ defined by
$$\Omega_X^1\oplus \sO_X \ni \alpha+f \mapsto (0 + 1)\otimes \alpha \in (\Omega_X^1\oplus \sO_X)\otimes\Omega_X^1.$$
Notice that $\vartheta(\sA\oplus \sO_X) \subset (\sA\oplus \sO_X)\otimes\Omega_X^1$ so that $(\sG:=\sA\oplus \sO_X,\vartheta|_{\sG})$ is a Higgs bundle as well.

Now, a theorem proved independently to Aubin and Yau then asserts that $X$ is K\"ahler-Einstein. As a consequence, the tangent bundle $T_X$ is polystable with respect to $K_X$ by a result proved by Kobayashi and L\"ubke independently. This in turn implies that $\sA$ is polystable with respect to $K_X$ with $\mu(\sA)=\mu(\Omega_X^1)>0$, where $\mu$ denotes the slope with respect to $K_X$.

Next we show that the Higgs bundle $(\sG,\vartheta|_\sG)$ is stable with respect to $K_X$. Let $\sH \subset \sG$ be a saturated subsheaf of rank $0 < r < a+1$ preserved by $\vartheta|_\sG$, and let $p\colon \sH \to \sO_X$ be the composition $\sH \to \sG=\sA \oplus \sO_X \to \sO_X$. 
Observe that $p$ is not the zero map since $\sH$ is preserved by $\vartheta|_\sG$ by assumption. 

If $p$ is injective, then $\mu(\sH) \le 0 < \mu(\sG)=\frac{a}{a+1}\mu(\sA)$. Suppose that $p$ is not injective. Note that we must have $r\ge 2$ and $\mu(\textup{Im}\, p) \le 0$. Moreover, 
$\mu(\textup{Ker}\,p)\le \mu(\sA)$ since $\textup{Ker}\, p \subseteq \sA$ and 
$\sA$ is polystable with respect to $K_X$. Then, we have
$$\mu(\sH) =\frac{(r-1)\mu(\textup{Ker}\,p)+\mu(\textup{Im}\, p)}{r} \le 
\frac{r-1}{r}\mu(\sA)=\frac{r-1}{r}\frac{a+1}{a}\mu(\sG)\le \frac{a^2-1}{a^2}\mu(\sG)<\mu(\sG),$$
proving our claim. 

Item (1) now follows from \cite[Theorem 1 and Proposition 3.4]{simpson_uniformization}. 

Let $b \ge 0$ denote the rank of $\sB$. By \cite[Lemma 3.1]{beauville_split}, we have
$c_1(\sA)^{a+1}\equiv 0$ and $c_1(\sB)^{b+1}\equiv 0$. Then
$$c_1(\sA)^2\cdot K_X^{n-2}  =  c_1(\sA)^2\cdot (c_1(\sA)+c_1(\sB))^{n-2}  = \binom{n-2}{b} c_1(\sA)^a\cdot c_1(\sB)^b,$$
and
$$K_X^n = \binom{n}{b} c_1(\sA)^a\cdot c_1(\sB)^b.$$
It follows that $c_1(\sA)^2)\cdot K_X^{n-2} > 0$ since $K_X^n >0$ by assumption, proving Item (2).
\end{proof}

Kodaira fibrations provide examples of regular foliations by curves $\sF$ on surfaces of general type with $\kappa(\sF)=2$. However, no example of foliations satisfying the conclusion of Theorem \ref{thm:case_conormal_nef}~(3) is known to the author. Let us show that these foliations have no compact leaves.

\begin{prop}
Setting and notation as in \ref{setup:main}. Suppose in addition that $K_X$ is pseudo-effective and that $\sF$ has a compact leaf. Then $\kappa(X)=\nu(X)\in \{0,1\}$.
\end{prop}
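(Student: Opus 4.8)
The plan is to recognise a compact leaf as a smooth divisor whose intrinsic and normal geometry are precisely the input of Proposition \ref{prop:log_abundance}, and then to read off the conclusion from that proposition. First, by Proposition \ref{prop:abundance} the divisor $K_X$ is abundant, so $\kappa(X)=\nu(K_X)$, and this common value is $\ge 0$; it therefore remains only to prove $\nu(K_X)\le 1$.

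Next I would analyse the compact leaf $L$. Since $L$ is a leaf, $\sF|_L=T_L$, whence $\sN|_L=(T_X/\sF)|_L=\sN_{L/X}\cong\sO_X(L)|_L$. By Lemma \ref{lemma:BB_vanishing}~(1) the Bott connection is flat along the leaves of $\sF$, so $\sN|_L$ carries a flat connection; in particular $\sO_X(L)|_L\equiv 0$, that is $L^2\equiv 0$. As $L$ is an effective prime divisor meeting every curve not contained in it non-negatively, while $L\cdot C=0$ for curves $C\subseteq L$ (because $\sO_X(L)|_L\equiv 0$), the divisor $L$ is nef. On the other hand $\sF$ is numerically projectively flat, so by Lemma \ref{lemma:numerically_projectively_flat_elementary properties}~(2) the bundle $T_L\cong\sF|_L$ is numerically projectively flat; since $\dim L=n-1\ge 2$ and $T_L$ has rank $\dim L$, \cite[Theorem 0.1]{jahnke_radloff_13} gives $K_L\equiv 0$. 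Consequently $T_L$ is numerically flat and $L$ is smooth, a finite \'etale quotient of an abelian variety. (That a compact leaf with flat normal bundle is an embedded submanifold follows from Reeb stability.)

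It then remains to arrange that $K_X$ is nef. Since $K_X$ is pseudo-effective, $X$ is not uniruled; were $K_X$ not nef, Theorem \ref{thm:suspension_complex_torus} would present $X$ as a $\mathbb{P}^1$-bundle over a finite \'etale quotient of an abelian variety, forcing $X$ to be uniruled, a contradiction. Hence $K_X$ is nef. Now Proposition \ref{prop:log_abundance}, applied to the single divisor $B:=L$ (smooth, with $K_L\equiv 0$ and $L^2\equiv 0$, the disjointness hypothesis being vacuous, and with $\kappa(X)\ge 0$ and $K_X$ nef already secured), produces a fibration $X\to C$ onto a smooth complete curve having $L$ as a fibre and yields $\kappa(X)=\nu(K_X)\le 1$. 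Combined with $\kappa(X)=\nu(X)\ge 0$ from the first step, this gives $\kappa(X)=\nu(X)\in\{0,1\}$, as required.

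The main obstacle is the middle step: extracting from the single hypothesis that $\sF$ has a compact leaf the full package needed downstream, namely the smoothness and embeddedness of $L$, the vanishing $K_L\equiv 0$ via Jahnke--Radloff, and $L^2\equiv 0$ via the flatness of the Bott connection, so that $L$ matches exactly the hypotheses on the components $B_i$ in Proposition \ref{prop:log_abundance}. A secondary point requiring care is the reduction to the minimal case $K_X$ nef, which leans on the non-minimal classification and hence on the dimension bound $n\ge 4$; for $n=3$ one argues in the same spirit.
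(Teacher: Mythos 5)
Your analysis of the compact leaf is exactly the paper's: abundance of $K_X$ via Proposition \ref{prop:abundance}, the leaf $L$ a finite \'etale quotient of an abelian variety by \cite[Theorem 0.1]{jahnke_radloff_13}, and $L^2\equiv 0$ from flatness of the Bott connection (Lemma \ref{lemma:BB_vanishing}). The divergence, and the problem, is in the last step. You feed these data into Proposition \ref{prop:log_abundance}, which carries the extra hypothesis that $K_X$ be nef, and to secure nefness you invoke Theorem \ref{thm:suspension_complex_torus}. That theorem is only proved for $n\ge 4$, whereas the statement is made in Setup \ref{setup:main}, which allows $n=3$; your closing remark that ``for $n=3$ one argues in the same spirit'' is not an argument, and the paper itself warns (citing Brunella) that the non-minimal classification can fail in dimension $3$. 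So as written your proof has a genuine gap in the three-dimensional case. (For $n\ge 4$ the nefness reduction does work: if $K_X$ were not nef, $X$ would be a $\mathbb{P}^1$-bundle, hence uniruled, contradicting pseudo-effectivity of $K_X$.)

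The repair is immediate, and is what the paper does: the package you have already assembled --- $L$ a smooth prime divisor with $K_L\equiv 0$ and $L^2\equiv 0$, together with $\kappa(X)=\nu(X)\ge 0$ from abundance --- is precisely the hypothesis set of Proposition \ref{prop:compact_leaf}, which requires no nefness of $K_X$ and no dimension restriction beyond $n\ge 2$ (its proof runs an MMP and transports $L$ to a minimal model before applying Lemma \ref{lemma:hodge_index_theorem}). Replacing your appeal to Proposition \ref{prop:log_abundance} by one to Proposition \ref{prop:compact_leaf} deletes the entire nefness step and closes the gap.
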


\begin{proof}
By Proposition \ref{prop:abundance}, $K_X$ is abundant. Let $B$ denote an algebraic leaf of $\sF$. Then $B$ is a finite \'etale quotient of an abelian variety by \cite[Theorem 0.1]{jahnke_radloff_13}. Moreover, $\sN|_B\cong \sO_X(B)|_B$ is a flat vector bundle by Lemma \ref{lemma:BB_vanishing}. This immediately implies that $B^2 \equiv 0$. Proposition \ref{prop:compact_leaf} below then applies to show that $\nu(X)\le 1$, completing the proof of the proposition.
\end{proof}

We refer to \cite{kollar_mori} and \cite{kmm} for standard references concerning the minimal model program.

\begin{prop}\label{prop:compact_leaf}
Let $X$ be a complex projective manifold of dimension $n \ge 2$ with $\kappa(X)=\nu(X)\ge 0$. Suppose that there exists a smooth prime divisor $B \subset X$ with $K_B \equiv 0$ and $B^2\equiv 0$. Then $\nu(X)\le 1$.
\end{prop}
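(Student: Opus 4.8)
The plan is to pass to a minimal model, where the canonical divisor becomes semiample, and then feed the image of $B$ into the Hodge-index estimate of Lemma \ref{lemma:hodge_index_theorem}. Since $\kappa(X)\ge 0$, the divisor $K_X$ is pseudo-effective, so by the minimal model program (\cite{kollar_mori}, \cite{kmm}) I would produce a $\mathbb{Q}$-factorial terminal minimal model $\phi\colon X \map X'$ with $K_{X'}$ nef. As $\kappa$ and $\nu$ are birational invariants (the latter by \cite[Proposition V.2.22]{nakayama04}), $K_{X'}$ is nef and abundant, hence semiample by \cite{kawamata}. Let $f\colon X' \to Z$ be the associated fibration onto the canonical model, so that $K_{X'}\sim_\mathbb{Q} f^*A$ for an ample $\mathbb{Q}$-divisor $A$ on $Z$ and $\dim Z = \nu(K_{X'})=\nu(X)$. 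Thus it suffices to show $\dim Z \le 1$, and I may assume $\dim Z \ge 1$ since otherwise $\nu(X)=0$ and there is nothing to prove.

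First I would record the two numerical facts I need on $B$. By adjunction $K_B=(K_X+B)|_B$, and combined with the hypotheses $K_B\equiv 0$ and $B^2\equiv 0$ (the latter encoding that the normal bundle $\sO_X(B)|_B$ is numerically trivial, i.e. $B|_B\equiv 0$ on $B$) this gives $K_X|_B\equiv 0$ as well as $B|_B\equiv 0$. In particular every curve contained in $B$ is $K_X$-trivial. The crucial step is then to track the strict transform of $B$ along $\phi$: I would prove by induction on the elementary steps $X=X_0\map\cdots\map X_N=X'$ that the strict transform $B_i$ of $B$ stays a prime divisor with $K_{X_i}|_{B_i}\equiv 0$ and $B_i|_{B_i}\equiv 0$. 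Each step contracts an extremal ray $R$ with $K_{X_i}\cdot R<0$, and the locus it modifies is covered by $K_{X_i}$-negative rational curves; since all curves in $B_i$ are $K_{X_i}$-trivial, $B_i$ can be neither the contracted divisor nor contained in a flipping locus, so its strict transform remains a divisor birational to it. Numerical triviality then propagates: comparing pullbacks across the contraction and invoking the negativity lemma, the exceptional correction terms restrict on $B_i$ to classes supported on the contracted part of $B_i$, hence push forward to zero. At the end this yields a prime $\mathbb{Q}$-Cartier divisor $B':=B_N$ on $X'$ with $K_{X'}|_{B'}\equiv 0$ and $B'|_{B'}\equiv 0$.

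With $B'$ in hand the conclusion is immediate. Since $K_{X'}|_{B'}\equiv 0$ and $K_{X'}\sim_\mathbb{Q} f^*A$ with $A$ ample, the restriction $f|_{B'}$ contracts $B'$, i.e. $\dim f(B')=0$; and for any very ample $H$ on $X'$ one has $B'^2\cdot H^{n-2}=(B'|_{B'})\cdot(H|_{B'})^{n-2}=0\ge 0$ because $B'|_{B'}\equiv 0$. Applying Lemma \ref{lemma:hodge_index_theorem} to $f\colon X'\to Z$ and the prime divisor $B'$ then forces $\dim Z=1$, whence $\nu(X)=\dim Z\le 1$. The \textbf{main obstacle} is precisely the inductive control of $B$ along the minimal model program in the middle paragraph: one must verify that the $K$-triviality of the curves in $B$ really prevents $B$ from being contracted or flipped, and that both $K_{\cdot}|_{B_i}\equiv 0$ and the self-restriction $B_i|_{B_i}\equiv 0$ survive each divisorial contraction and flip. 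Everything else is a formal consequence of semiampleness and Lemma \ref{lemma:hodge_index_theorem}.
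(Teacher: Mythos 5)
Your overall skeleton is the same as the paper's: pass to a minimal model, show the image $B'$ of $B$ still satisfies $K|_{B'}\equiv 0$ and $(B')^2\equiv 0$, and conclude with Lemma \ref{lemma:hodge_index_theorem} (handling $\dim Z=0$ separately). The endgame is fine. The problem is the middle paragraph, which you yourself flag as the main obstacle: the step-by-step propagation of $K_{X_i}|_{B_i}\equiv 0$ and $B_i|_{B_i}\equiv 0$ does not go through as described. Concretely, let $\pi\colon X_i\to X_{i+1}$ be a divisorial contraction with exceptional divisor $E\neq B_i$, so $K_{X_i}=\pi^*K_{X_{i+1}}+aE$ with $a>0$. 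Since no curve of $B_i$ is contracted, $\pi|_{B_i}$ is finite onto its image, and for a curve $C'\subset B_i$ one gets $K_{X_{i+1}}\cdot \pi_*C'=-a\,E\cdot C'$. Your claim therefore amounts to $E|_{B_i}\equiv 0$, and there is no step-wise reason for this when $E\cap B_i\neq\emptyset$: the intersection $E\cap B_i$ is then a nonzero effective divisor on $B_i$, and a curve $C'\subset E\cap B_i$ can perfectly well have $E\cdot C'<0$ (e.g.\ when $E|_E$ is anti-ample along $C'$), in which case $K_{X_{i+1}}|_{B_{i+1}}$ would acquire a strictly positive degree on $\pi_*C'$. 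The assertion that ``the exceptional correction terms restrict on $B_i$ to classes supported on the contracted part of $B_i$, hence push forward to zero'' is exactly what needs proof, and it cannot be extracted from the data available at an intermediate step, because nothing is nef there.

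The paper circumvents this with a single global comparison on a common resolution $p\colon Z\to X$, $q\colon Z\to Y$, and it is worth seeing why that works where the induction does not. One first rules out that $B$ is contracted ($B$ would be uniruled by \cite{kawamata_length}, contradicting $K_B\equiv 0$), sets $D:=\phi_*B$, and chooses $\varepsilon>0$ so that $\phi$ is also an MMP for $K_X+\varepsilon B$; then \cite[Lemma 3.38]{kollar_mori} gives $p^*(K_X+\varepsilon B)=q^*(K_Y+\varepsilon D)+F$ with $F\ge 0$ and $q$-exceptional, while nefness of $B$ and the negativity lemma give $p^*B+G=q^*D$ with $G\ge 0$ and $q$-exceptional. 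Restricting to the strict transform $C$ of $B$ and using $K_X|_B\equiv 0$, $B|_B\equiv 0$ yields $0\equiv (q^*K_Y)|_C+\varepsilon G|_C+F|_C$; since $(q^*K_Y)|_C$ is nef and $G|_C$, $F|_C$ are effective, all three terms vanish, giving $K_Y|_D\equiv 0$ and $G\cap C=\emptyset$, hence $D^2\equiv 0$. The decisive input is the nefness of the \emph{final} $K_Y$, which is precisely what is unavailable at the intermediate steps of your induction. I would recommend replacing your middle paragraph by this global argument; as written, the proof has a genuine gap.
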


\begin{proof}
By \cite[Theorem 4.3]{GL}, we may run a minimal model program for $X$ and end with a minimal model. Hence, there exists a birational map $\phi\colon X \map Y$, where $Y$ is a normal $\mathbb{Q}$-factorial projective variety with $K_Y$ nef. Moreover, $Y$ has terminal singularities, and $\phi^{-1}$ does not contract any divisor. Let $p\colon Z \to X$ and $q\colon Z \to Y$ be a common resolution of singularities. Let $C$ be the strict transform of $B$ in $Z$.
If $B$ is contracted by $\phi$, then it is uniruled by \cite[Theorem 1]{kawamata_length}, yielding a contradiction since $K_B\equiv 0$ by assumption. Set $D:=\phi_*B$.

Let $\varepsilon\in\mathbb{Q}$ be a positive number such that $\phi$ is a MMP for $K_X+\varepsilon B$. By \cite[Lemma 3.38]{kollar_mori}, there exists an effective $q$-exceptional $\mathbb{Q}$-divisor $F$ such that 
$$p^*(K_X+\varepsilon B) = q^* (K_Y+\varepsilon D) + F.$$
Moreover, by the negativity lemma, there exists an effective $q$-exceptional $\mathbb{Q}$-divisor $G$ such that
$$p^*B+G=q^*D$$ 
since $B$ is nef. Notice also that $K_X|_B\equiv 0$ by the adjunction formula $K_B=(K_X+B)|_B$.
We then have 
$$
0 \equiv p^*(K_X+\varepsilon B)|_C = q^*(K_Y+\varepsilon D)|_C + F|_C
\equiv (q^* K_Y)|_C + \varepsilon G|_C +F|_C.
$$
Since $G|_C +F|_C$ is effective and $(q^* K_Y)|_C$
is nef, we have 
$$K_Y|_D \equiv 0\quad\textup{and}\quad G \cap C =\emptyset.$$
This immediately implies $D^2\equiv 0$. Now, we have $\kappa(X)=\kappa(Y)$ and $\nu(X)=\nu(Y)$ by \cite[Proposition V.2.7]{nakayama04}, and hence $\kappa(Y)=\nu(Y)$. Then
\cite[Theorem 1.1]{kawamata} applies to show that $K_Y$ is semiample. We denote the Iitaka fibration of $Y$ by 
$f \colon Y \to T$. Since $K_Y|_D \equiv 0$, we have $\dim f(D)=0$. 
If $\dim T =0$, then $\nu(X)= 0$ and there is nothing to show. If $\dim T \ge 1$, then Lemma \ref{lemma:hodge_index_theorem}
applies to show $\nu(X)= 1$.
This finishes the proof of the proposition. 
\end{proof}

\section{The nef reduction map}\label{section:nef_reduction}

In this section we provide a technical tool for the proof of Theorem \ref{thm:case_normal_pseff_non_uniruled} below. We briefly recall the relevant notion first. Let $X$ be a smooth projective variety and let $\sL$ be a nef line bundle on $X$. Then there exists an almost proper, dominant rational map $f \colon X \map Y$ such that
\begin{enumerate}
\item $\sL|_F \equiv 0$ for a general fiber $F$ of $f$,
\item for every general point $x \in X$ and every irreducible curve $C$ passing through $x$ with $\dim f(C) = 1$, we have $\sL \cdot C > 0$. 
\end{enumerate}
The map $f$ is unique up to birational equivalence of $Y$. 
We refer to it as the \textit{nef reduction map} of $\sL$. The dimension of $Y$ is an invariant of $\sL$ called the \textit{nef dimension}. We write $\textup{nd}(\sL):=\dim Y$. We have $\kappa(\sL)\le \nu(\sL)\le \textup{nd}(\sL)$.
We refer to \cite{nef_reduction} for further explanations concerning these notions. Let $\sF$ be a regular foliation on a complex projective manifold and suppose that $\sF$ satisfies all the conditions listed in Setup \ref{setup:main}. Recall from Lemma \ref{lemma:minimal} that $K_\sF$ is nef. Then Proposition \ref{prop:nef_dimension} below gives sufficient conditions that guarantee $\textup{nd}(K_\sF)< \dim X$.

\begin{prop}\label{prop:nef_dimension}
Setting and notation as in \ref{setup:main}. Suppose in addition that $K_X$ is nef and that $\sN$ is pseudo-effective. Set $m:=\min(\nu(K_X),n-2)$. Then 
$\nu(K_X) \le n-1$ and $\textup{nd}(K_\sF)\le m+1\le n-1$.
\end{prop}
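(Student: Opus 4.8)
The plan is to establish the two inequalities separately, keeping in mind throughout the adjunction formula $K_\sF\equiv K_X+c_1(\sN)$, the vanishing $c_1(\sN)^2\equiv 0$ from Lemma \ref{lemma:BB_vanishing}, the nefness of $K_\sF$ from Lemma \ref{lemma:minimal}, and the elementary fact that $K_X^{n-1}\cdot c_1(\sN)\ge 0$, which holds because $K_X^{n-1}$ is a limit of complete intersections of ample classes while $\sN$ is pseudo-effective.

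To prove $\nu(K_X)\le n-1$ I would argue by contradiction. If $\nu(K_X)=n$, then the nef divisor $K_X$ is big, so $X$ is a minimal model of general type and the Miyaoka--Yau inequality $\bigl(2(n+1)c_2(X)-n\,c_1(X)^2\bigr)\cdot K_X^{n-2}\ge 0$ is available. Using the sequence $0\to\sF\to T_X\to\sN\to 0$, the numerical relation $c_2(\sF)\equiv\frac{n-2}{2(n-1)}c_1(\sF)^2$ coming from numerical projective flatness (Lemma \ref{lemma:chern_classes} and Theorem \ref{thm:numerically_projectively_flat}), and $c_1(\sF)\equiv-(K_X+c_1(\sN))$ together with $c_1(\sN)^2\equiv 0$, a direct computation yields $c_1(X)^2\cdot K_X^{n-2}=K_X^n$ and $c_2(X)\cdot K_X^{n-2}=\frac{n-2}{2(n-1)}K_X^n-\frac{1}{n-1}K_X^{n-1}\cdot c_1(\sN)$. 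Substituting these into the Miyaoka--Yau inequality and clearing the factor $\frac{-2}{n-1}$, the inequality collapses to $K_X^n+(n+1)\,K_X^{n-1}\cdot c_1(\sN)\le 0$, which contradicts $K_X^n>0$ and $K_X^{n-1}\cdot c_1(\sN)\ge 0$. Hence $\nu(K_X)\le n-1$.

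For the bound on $\textup{nd}(K_\sF)$ I would first use that $K_X$, being nef hence pseudo-effective, is abundant by Proposition \ref{prop:abundance} and therefore semiample by \cite[Theorem 1.1]{kawamata}. Let $\psi\colon X\to Z$ be its Iitaka fibration, so that $\dim Z=\nu(K_X)$ and a general fiber $G$ is smooth of dimension $n-\nu(K_X)$ with $K_G\sim_{\mathbb{Q}} K_X|_G\sim_{\mathbb{Q}}0$; restricting the adjunction formula gives $K_\sF|_G\equiv c_1(\sN)|_G$, which is nef of square zero. Any curve $C$ contained in $G$ on which $c_1(\sN)|_G$ is trivial is then $K_\sF$-trivial, since $K_X\cdot C=0$ automatically, and such curves move through general points of $X$; by the defining property of the nef reduction $g\colon X\map W$ of $K_\sF$ they must be contracted by $g$. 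Consequently a general fiber of $g$ contains a general fiber of the nef reduction of $c_1(\sN)|_G$ inside $G$, which has dimension $\bigl(n-\nu(K_X)\bigr)-\textup{nd}\bigl(c_1(\sN)|_G\bigr)$, and therefore $\textup{nd}(K_\sF)\le \nu(K_X)+\textup{nd}\bigl(c_1(\sN)|_G\bigr)$.

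It then suffices to show that $\textup{nd}\bigl(c_1(\sN)|_G\bigr)\le 1$, and that $c_1(\sN)|_G\equiv 0$ when $\dim G=1$; granting this, one checks directly that $\nu(K_X)+\textup{nd}\bigl(c_1(\sN)|_G\bigr)\le m+1$ in both ranges $\nu(K_X)\le n-2$ and $\nu(K_X)=n-1$, the truncation at $n-2$ reflecting precisely that a one-dimensional fiber $G$ must itself be $K_\sF$-trivial to keep a positive-dimensional $g$-fiber. This is the main obstacle. Since $c_1(\sN)|_G$ is nef with $(c_1(\sN)|_G)^2\equiv 0$ we already know $\nu\bigl(c_1(\sN)|_G\bigr)\le 1$, but numerical dimension one does not in itself bound the nef dimension, so one must exhibit a genuine fibration of $G$ on whose fibers $c_1(\sN)|_G$ is trivial. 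I would produce it by analysing the position of $G$ relative to $\sF$ through the composite $T_G\to T_X|_G\to\sN|_G$: when $G$ is tangent to $\sF$, the inclusion $\sO_G\hookrightarrow\sF|_G$ furnished by $T_G$, combined with the semistability of the numerically projectively flat bundle $\sF|_G$ and the nefness of $K_\sF$, forces $c_1(\sN)|_G$ to be trivial on the relevant curves, while when $G$ is generically transverse $\sF$ projects generically isomorphically onto $\psi^*T_Z$, so that $c_1(\sF)$ differs from $\psi^*c_1(T_Z)$ only along the degeneracy locus. Turning this dichotomy into the required fibration, uniformly in $\dim G$, is the technical heart of the proof.
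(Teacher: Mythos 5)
Your first inequality is correct as written: assuming $\nu(K_X)=n$, so that the nef divisor $K_X$ is big, and applying the Miyaoka--Yau inequality against $K_X^{n-2}$, your Chern-class computation does collapse to $K_X^n+(n+1)\,K_X^{n-1}\cdot c_1(\sN)\le 0$, contradicting $K_X^n>0$ and $K_X^{n-1}\cdot c_1(\sN)\ge 0$. This is the special case $m=n-2$, $K_X$ big, of what the paper does; the paper instead applies \cite[Theorem B]{G_T_22} against the mixed polarization $K_X^m\cdot H^{n-2-m}$ with $m=\min(\nu(K_X),n-2)$, and this difference matters for the second half.

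The bound $\textup{nd}(K_\sF)\le m+1$ has a genuine gap, which you yourself flag: you reduce to showing $\textup{nd}\bigl(c_1(\sN)|_G\bigr)\le 1$ on a general Iitaka fiber $G$, correctly observe that $\nu\bigl(c_1(\sN)|_G\bigr)\le 1$ does not by itself bound the nef dimension, and then leave the construction of the required fibration as ``the technical heart''. Two inputs are missing. First, because you only ran the Miyaoka--Yau argument in the big case, you never obtain the equality case of the inequality cut against $K_X^m\cdot H^{n-2-m}$, which is what forces $c_1(\sF)^2\cdot K_X^m\cdot H^{n-2-m}=c_1(\sF)\cdot c_1(\sN)\cdot K_X^m\cdot H^{n-2-m}=0$ and hence $c_2(T_X|_G)\cdot H|_G^{n-2-m}=0$; combined with $K_G\sim_{\mathbb{Q}}0$ this is what shows $G$ is a finite \'etale quotient of a complex torus, and nothing in your tangent/transverse dichotomy replaces that input. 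Second, the mechanism that actually converts numerical dimension one into nef dimension one is special to tori: after passing to an abelian cover of $G$, \cite[Proposition 3.3]{GKP_proj_flat_JEP} writes the pull-back of $\sF|_G$ as a flat bundle twisted by a line bundle and filtered by flat subbundles, and the non-vanishing of $\sF|_G\to\sN_{G/X}$ (in the non--algebraically-integrable case) produces a quotient line bundle numerically proportional to $K_\sF|_G$ with a non-zero global section; a nef line bundle with a section on an abelian variety is semiample, so its nef dimension equals its numerical dimension, which is at most $1$. Without producing such a section (or some other genuine fibration of $G$), the step $\textup{nd}(K_\sF|_G)\le 1$ is unproved, and with it the bound $\textup{nd}(K_\sF)\le m+1$. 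Finally, the algebraically integrable case and the case $\nu(K_X)=n-1$ (where $G$ is an elliptic curve and one needs $K_\sF\cdot G=0$, again a consequence of the vanishing above) have to be handled separately, as the paper does.
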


\begin{proof}
Let us first show $\nu(K_X) \le n-1$. Let $H$ be an ample divisor on $X$. By \cite[Theorem B]{G_T_22}, we have 
$$\Big(c_2(X)-\frac{n}{2(n+1)}c_1(X)^2\Big)\cdot K_X^m\cdot H^{n-2-m} \ge 0.$$ 
We have
$$c_2(X)= c_2(\sF)+c_1(\sF)\cdot c_1(\sN)\quad\text{and}\quad c_1(X)^2\equiv 
c_1(\sF)^2+2c_1(\sF)\cdot c_1(\sN)$$
since $c_1(\sN)^2\equiv 0$ by Lemma \ref{lemma:BB_vanishing}. In additon, we have 
$$c_2(\sF)\equiv \frac{n-2}{2(n-1)}c_1(\sF)^2$$
by \cite[Proposition 1.1]{jahnke_radloff_13}.
We obtain
\begin{equation}\label{eq:MY}
\Big(-\frac{1}{n^2-1}c_1(\sF)^2+\frac{1}{n+1}c_1(\sF)\cdot c_1(\sN)\Big)\cdot K_X^m\cdot H^{n-2-m} \ge 0.
\end{equation}
On the other hand, $K_\sF=-c_1(\sF)$ is nef by Lemma \ref{lemma:minimal}. This immediately implies
$$-c_1(\sF)^2 \cdot K_X^m\cdot H^{n-2-m} \le 0 $$
and 
$$c_1(\sF)\cdot c_1(\sN)\cdot K_X^m\cdot H^{n-2-m} \le 0$$
since $\sN$ is pseudo-effective by assumption. Equation \eqref{eq:MY} then gives 
\begin{equation}\label{eq:vanshing1}
c_1(\sF)^2 \cdot K_X^m\cdot H^{n-2-m}=c_1(\sF)\cdot c_1(\sN)\cdot K_X^m\cdot H^{n-2-m}=0.
\end{equation}
Therefore, we have $K_X^{m+2}\cdot H^{n-2-m}=0$, and hence $K_X^{n}=0$ since $m=\min(\nu(K_X),n-2)$. This shows $\nu(K_X)\le n-1$. 

\medskip

Let us now show $\textup{nd}(K_\sF) \le m+1$. 
If $\sF$ is algebraically integrable, then $\textup{nd}(K_\sF) \le 1$ by Proposition \ref{prop:special_case_algebraically_integrable}. Suppose from now on that $\sF$ is not algebraically integrable.

By Proposition \ref{prop:abundance} and \cite[Theorem 1.1]{kawamata} together, $K_X$ is semiample. We denote the Iitaka fibration by 
$f\colon X \to Y$. Let $F$ be a general fiber of $f$. Then $F$ is smooth. By construction, there exists an ample divisor $A$ on $Y$ such that $K_X \sim_\mathbb{Q} f^*A$. In particular, we have $K_F \sim_\mathbb{Q} 0$ by the adjunction formula. 

\medskip

Suppose first that $m = \dim Y = \nu(K_X) \le n-2$. 

\medskip

Then $c_2(T_X|_F)\cdot H|_F^{n-2-m}=\frac{1}{A^m} c_2(X)\cdot K_X^m\cdot H^{n-2-m}=0$ by Equation \eqref{eq:vanshing1}. The exact sequence
$$0 \to T_F\to T_X|_F\to \sN_{F/X}\cong \sO_F^{\oplus m }\to 0$$
then gives $c_2(T_F)\cdot H|_F^{n-2-m}=0$. 
As a classical consequence of Yau's theorem on the existence of a K\"ahler-Einstein metric, $F$ is then covered by a complex torus (see \cite[Chapter IV Corollary 4.15]{kobayashi_diff_geom_vb}). Thus, there exist an abelian variety $F_1$ of dimension at least $2$
and a finite \'etale cover $\eta\colon F_1 \to F$. Then \cite[Proposition 3.3]{GKP_proj_flat_JEP}
applies to show that there exists a line bundle $\sL_1$ on $F_1$ as well as a flat vector bundle $\sG_1$ on $F_1$ such that $\eta^*(\sF|_F) \cong \sG_1 \otimes\sL_1$. In addition, $\sG_1$ admits a filtration $0=F_0\sG_1 \subsetneq F_1\sG_1 \subsetneq \cdots \subsetneq F_{n-1}\sG_1=\sG_1$
by flat subbundles $F_i\sG_1$ with $\textup{rank}\, F_i\sG_1 = i$. By Equation \eqref{eq:vanshing1}, we have

\begin{equation}\label{eq:vanshing2}
c_1(\eta^*(\sF|_F))^2\cdot \eta^*(H|_F)^{n-2-m} = 0.
\end{equation}

If $K_\sF|_F\equiv 0$, then $\textup{nd}(K_\sF) \le \dim \textup{Iit}(X) = m \le m+1$. 

\medskip

Suppose that $K_\sF|_F\not\equiv 0$.

\medskip

If $m=0$, then $F=X$ and there exists a generically surjective morphism
$\Omega^1_{F_1} \to \eta^*(\sF|_F)^*$. This immediately implies $h^0(F_1,\sO_{F_1}(\eta^*(K_\sF|_F)))\ge 1$. Then \cite[Th\'eor\`eme VI.5.1]{debarre_cours_spe} applies to show that 
the line bundle $\sO_{F_1}(\eta^*(K_\sF|_F))$ is semiample. Notice that
$\nu(\sO_{F_1}(\eta^*(K_\sF|_F))) = 1$ by Equation \eqref{eq:vanshing2}. This easily implies $\textup{nd}(K_\sF) =1 = m+1$.

Suppose that $m\ge 1$. Since $\sF$ is not algebraically integrable by our current assumption, the composition $\sF|_F \to T_X|_F \to \sN_{F/X}\cong\sO_F^{\oplus m}$ is non-zero. As a consequence, there is an index $1 \le i \le n-1$
such that the line bundle $\sQ_i:=\big((F_{i+1}\sG_1/F_i\sG_1)\otimes \sL_1\big)^*$ has a non-zero global section. Then \cite[Th\'eor\`eme VI.5.1]{debarre_cours_spe} again applies to show  that $\sQ_i$ is semiample. We have $\nu(\sQ_i)=1$ since
$c_1(\sQ_i) \equiv c_1(\sL_1) \equiv \frac{1}{n-1}\eta^*(K_\sF|_F)$. This implies $\textup{nd}(K_\sF) \le \dim Y+1 = m+1$.

\medskip

Suppose finally that $\nu(K_X) = n-1$. In particular, $m=n-2$.

\medskip

Then $F$ is a smooth curve of genus $1$. By Equation \eqref{eq:vanshing1}, we have
$$K_\sF\cdot F = \frac{1}{A^{n-1}} K_\sF\cdot K_X^{n-1}= \frac{1}{A^{n-1}}c_1(\sF)\cdot (c_1(\sF)+c_1(\sN)) \cdot K_X^{n-2}=0.$$
As before, this implies $\textup{nd}(K_\sF) \le n-1 = m+1$, completing the proof of Proposition \ref{prop:nef_dimension}. 
\end{proof}

The following is the main result of this section.

\begin{prop}\label{prop:nef_reduction_map}
Setting and notation as in \ref{setup:main}. 
Suppose in addition that $K_X$ is nef and that $\sN$ is pseudo-effective.
Then, there exist an abelian scheme $f_1 \colon X_1 \to Y_1$ over a smooth projective base and a finite \'etale cover $\gamma \colon X_1 \to X$ such that $f_1$ is the nef reduction map of $\gamma^*K_\sF$. Moreover, there exist line bundles $\sM_1$ and $\sK_1$ on $Y_1$ such that $\gamma^*\sN \cong f_1^*\sM_1$ and $\gamma^*\sO_X(K_\sK) \cong f_1^*\sK_1$. In addition, if $F_1$ is a general fiber of $f_1$, then $(\gamma^{-1}\sF)|_{F_1} \cap T_{F_1}$ is a linear foliation on $F_1$ of codimension at most $1$.  
\end{prop}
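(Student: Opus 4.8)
The plan is to dispatch the algebraically integrable case first and then reduce the general case to a statement about the fibres of the nef reduction map. If $\sF$ is algebraically integrable, Proposition~\ref{prop:special_case_algebraically_integrable} already produces, after a finite \'etale cover, an abelian scheme over a curve (or an abelian variety) inducing $\sF$; one checks directly that this fibration is the nef reduction of $K_\sF$ and that the remaining assertions hold, so from now on I assume $\sF$ is not algebraically integrable. By Lemma~\ref{lemma:minimal} the divisor $K_\sF$ is nef, and by Proposition~\ref{prop:abundance} together with \cite[Theorem~1.1]{kawamata} the divisor $K_X$ is semiample. Proposition~\ref{prop:nef_dimension} gives $\textup{nd}(K_\sF)\le n-1$, so the nef reduction map $\pi\colon X\map W$ of $K_\sF$ has positive-dimensional general fibre $F$, which I may take smooth, with $K_\sF|_F\equiv 0$.

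First I would show that a general fibre $F$ of $\pi$ is a finite \'etale quotient of an abelian variety. Since $K_\sF|_F\equiv 0$ the restriction $\sF|_F$ is numerically flat, so $c_1(\sF|_F)\equiv 0$ and $c_2(\sF|_F)\equiv 0$. As $F$ is a general fibre of the proper fibration $\pi$ over a dense open $W^\circ\subseteq W$, one has $\sN_{F/X}\cong\sO_F^{\oplus\textup{nd}(K_\sF)}$, whence $c_t(T_F)=c_t(T_X|_F)=c_t(\sF|_F)\cdot c_t(\sN|_F)$; this gives $c_2(T_F)\equiv 0$ and $c_1(T_F)\equiv c_1(\sN|_F)$. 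The adjunction formula yields $K_X|_F\equiv -c_1(\sN|_F)$, which is nef; on the other hand $\sN|_F$ is pseudo-effective, being the restriction of the pseudo-effective class $\sN$ to a general member of a covering family, so $K_X|_F\cdot A^{\dim F-1}\le 0$ for an ample divisor $A$. Kleiman's criterion for numerical triviality (see \cite[Lemma~4.1]{gkp_bo_bo}) then forces $K_X|_F\equiv 0$, hence $c_1(T_F)\equiv 0$ and $c_2(T_F)\equiv 0$, and Yau's theorem in the form \cite[Chapter~IV Corollary~4.15]{kobayashi_diff_geom_vb} shows that $F$ is covered by a complex torus. Equivalently, one may identify $F$ up to isogeny with a codimension $\le 1$ subtorus of the torus-quotient Iitaka fibres of $K_X$, using the structure produced in the proof of Proposition~\ref{prop:nef_dimension}.

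Next I would upgrade this fibrewise statement to a global abelian scheme structure. Over $W^\circ$ the map $\pi$ is a proper smooth morphism all of whose fibres are torus quotients by Lemma~\ref{lemma:smooth_deformation_torus_quotient}, which provides, after a finite \'etale cover, an abelian scheme with a level three structure over an open base. To globalise, I would invoke Lemma~\ref{lemma:injectivity_fundamental_groups} to see that $X$ has generically large fundamental group along $F$, and then \cite[Theorem~6.3]{kollar_sh_inventiones} to conclude that, after replacing $X$ by a finite \'etale cover $\gamma\colon X_1\to X$, the nef reduction is birational to an abelian scheme over a projective base; Lemma~\ref{lemma:contraction_abelian_scheme} then removes the birational modification and yields a genuine abelian scheme $f_1\colon X_1\to Y_1$ over a smooth projective base, which by construction is the nef reduction of $\gamma^*K_\sF$. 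Since $K_\sF|_{F_1}\equiv 0$ and $K_X|_{F_1}\equiv 0$ on a general fibre $F_1$ of $f_1$, both $\gamma^*\sO_X(K_\sF)$ and $\gamma^*\sN$ restrict to numerically trivial line bundles there; after a further finite \'etale cover trivialising them on the fibres, Lemma~\ref{lemma:pull_back} and Proposition~\ref{prop:pull-back} produce line bundles $\sM_1$ and $\sK_1$ on $Y_1$ with $\gamma^*\sN\cong f_1^*\sM_1$ and $\gamma^*\sO_X(K_\sF)\cong f_1^*\sK_1$. For the final assertion, $\sF|_{F_1}$ is flat by Theorem~\ref{thm:numerically_flat}, so the induced foliation $(\gamma^{-1}\sF)|_{F_1}\cap T_{F_1}$ has numerically flat tangent bundle on the abelian variety $F_1$ and is therefore linear; its codimension in $T_{F_1}$ equals the rank of the image of $T_{F_1}\to\sN|_{F_1}$, which is at most one.

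I expect the main obstacle to be the globalisation step: passing from the fibrewise torus-quotient statement for the rational map $\pi$ to an honest abelian scheme over a projective base that is literally the nef reduction. The delicate points are controlling the degenerate fibres over $W\setminus W^\circ$, arranging the hypotheses of Lemma~\ref{lemma:injectivity_fundamental_groups} and \cite[Theorem~6.3]{kollar_sh_inventiones} on a suitable model, and checking that the contraction supplied by Lemma~\ref{lemma:contraction_abelian_scheme} is compatible with the nef reduction. A secondary technical point is justifying that $\sN|_F$ is pseudo-effective for a general fibre $F$, which is precisely what makes Kleiman's criterion applicable and forces $K_X|_F\equiv 0$.
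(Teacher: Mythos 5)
Your Step 1 (general fibres of the nef reduction are torus quotients) is a correct variant of the paper's argument: you get $K_X|_F\equiv 0$ by playing the nefness of $K_X$ against the pseudo-effectivity of $\sN$ restricted to a covering family, and then conclude via vanishing of $c_1$ and $c_2$ and Yau's theorem, where the paper instead splits into the cases $T_F\subseteq\sF|_F$ and $T_F\not\subseteq\sF|_F$ and invokes Proposition \ref{prop:classification_zero_canonical_class}. Your endgame (Koll\'ar's Theorem 6.3, then Lemma \ref{lemma:contraction_abelian_scheme}, then Lemma \ref{lemma:pull_back}) is the paper's Step 3. But there is a genuine gap in the middle, at exactly the point you flag as ``the main obstacle'': you cannot simply ``invoke Lemma \ref{lemma:injectivity_fundamental_groups}'' to get that $X$ has generically large fundamental group on $F$. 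That lemma applies to a projective \emph{morphism} between smooth varieties all of whose fibres are torus quotients, and its proof uses in an essential way that the bad locus in the base has codimension at least $2$, so that $\pi_1$ of the open part equals $\pi_1(X)$. The nef reduction map is only an almost proper rational map; applying the lemma over $W^\circ$ would at best give injectivity of $\pi_1(F)\to\pi_1(X^\circ)$, and the surjection $\pi_1(X^\circ)\twoheadrightarrow\pi_1(X)$ can kill the image. Closing this gap is the entire content of the paper's Step 2 (about two pages and three claims): one argues by contradiction, builds from the hypothetical ``small $\pi_1$'' subvarieties $G\subseteq F$ an auxiliary equidimensional fibration $p^\bullet\colon X^\bullet\to W^\bullet$ with $\codim(X\setminus X^\bullet)\ge 2$, proves local triviality and that \emph{every} fibre of $p^\bullet$ (including the degenerate ones, via normality and the Zariski--Lipman argument) is a torus quotient, and only then applies Lemma \ref{lemma:injectivity_fundamental_groups} to $p^\bullet$ to reach a contradiction. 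Nothing in your sketch substitutes for this.

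A secondary but real issue: Lemma \ref{lemma:pull_back} requires $\sN|_{F_1}$ and $\sO_X(K_\sF)|_{F_1}$ to be \emph{torsion} on a general fibre, whereas your Step 1 only yields numerical triviality; on an abelian variety a numerically trivial line bundle need not be torsion, and a finite \'etale cover will not trivialise it. The paper's Step 1 proves the stronger statement $K_\sF|_F\sim_{\mathbb{Q}}0$ (using that $\sF|_F/(\sF|_F\cap T_F)$ sits inside the trivial bundle $\sN_{F/X}$, respectively that $K_{\sF|_F\cap T_F}\sim_{\mathbb{Q}}0$ by Proposition \ref{prop:classification_zero_canonical_class}), and combines it with $K_F$ torsion to get $\sN|_F$ torsion; you would need to add such an argument before the descent step.
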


\begin{proof} For the reader’s convenience, the proof is subdivided into a number of steps. Let 
$$f \colon X \map Y$$ be the nef reduction map of $K_\sF$. By Proposition \ref{prop:nef_dimension} above, we have $m:=\dim Y \le n-1$.

\medskip

\noindent\textit{Step 1. Fibers of the nef reduction map.} 

\medskip

Let us show that a general fiber $F$ of $f$ is a finite \'etale quotient of an abelian variety and that $K_\sF|_F \sim_\mathbb{Q}0$. Then $\sN|_F$ is a torsion line bundle by the adjunction formula $K_\sF=K_X+c_1(\sN)$.

\medskip

Suppose first that $F$ is a very general fiber. Then $\sN|_F$ is pseudo-effective while $K_X|_F$ is nef and $K_\sF|_F\equiv 0$ by construction. It follows that $K_X|_F\equiv 0$ and $\sN|_F\equiv 0$ since $K_\sF=K_X+c_1(\sN)$ by the adjunction formula. Let now $F$ be a general fiber of $f$. By \cite[Lemma 2.13]{druel_guenancia}, we have $K_X|_F\equiv 0$. By the adjunction formula again, we have $\sN|_F\equiv 0$. 
Now, the vector bundle $\sF|_F$ is numerically flat by Theorem \ref{thm:numerically_projectively_flat} since $K_\sF|_F\equiv 0$. It follows that the composition
$$\sF|_F \to T_X|_F \to \sN_{F/X}\cong \sO_F^{\oplus m}$$ has constant rank, and hence
$\sF|_F\cap T_F$ is a numerically flat subbundle of $T_F$.

If $T_F \subseteq \sF|_F$, then $F$ is a finite \'etale quotient of an abelian variety as a classical consequence of Yau's theorem on the existence of a K\"ahler-Einstein metric (see \cite[Chapter IV Corollary 4.15]{kobayashi_diff_geom_vb}). 
Moreover, $K_\sF|_F \sim_\mathbb{Q}0$ since $\sF|_F/ T_F \subset \sN_{F/X}\cong \sO_F^{\oplus m}$ is numerically flat.

Suppose that $\sF|_F\cap T_F \subsetneq T_F$. Then $F$ is again a finite \'etale quotient of an abelian variety by Proposition \ref{prop:classification_zero_canonical_class} since $K_F\equiv K_X|_F\equiv 0$ by the adjunction formula. Moreover, 
$K_{\sF|_F\cap T_F}\sim_\mathbb{Q} 0$, and hence $K_\sF|_F \sim_\mathbb{Q}0$ since 
$\sF|_F/(\sF|_F\cap T_F) \cong \sN_{F/X}\cong \sO_F^{\oplus m}$. This proves our claim.

\medskip

\noindent\textit{Step 2. Next, we show that $X$ has generically large fundamental group on $F$.} 

\medskip

We argue by contradiction and assume that $X$ does not have generically large fundamental group on $F$. There exists a positive-dimensional finite \'etale quotient of an abelian variety $G \subseteq F$ passing through a general point of $F$ such that the image of the natural morphism $\pi_1(G) \to \pi_1(X)$ is finite. By \cite[3.10]{kollar_sh_inventiones}, the algebraic varieties $G$ fit together to form an almost proper rational map $g \colon X \map Z$ such that $f$ factorizes through $g$. Let $X^\circ \subseteq X$ and $Z^\circ \subseteq Z$ be dense open sets such that the restriction $g^\circ\colon X^\circ \to Z^\circ \subseteq Z$ of $g$ to $X^\circ$ is a smooth projective morphism with connected fibers. Shrinking $Z^\circ$, we may assume that there exists a finite \'etale cover $X_1^\circ \to X^\circ$ such that the fibration $g_1^\circ\colon X_1^\circ \to Z_1^\circ$ obtained as the Stein factorization of the composition $X_1^\circ \to X^\circ \to Z^\circ$ is an abelian scheme equipped with a level three structure (see Lemma \ref{lemma:smooth_deformation_torus_quotient}). We obtain a diagram as follows:
\begin{center}
\begin{tikzcd}
  X_1^\circ \ar[d, "{g_1^\circ}"']\ar[r] &  X^\circ \ar[r, hook]\ar[d, "{g^\circ}"']  & X \ar[d, dashed, "g"'] \ar[dr, dashed, "f", bend left] & \\
  Z_1^\circ \ar[r] & Z^\circ \ar[r, hook] & Z \ar[r, dashed] & Y.  
\end{tikzcd}
\end{center}

Let $G_1$ be a fiber of $g_1^\circ$. There is a split exact sequence of fundamental groups
\begin{center}
\begin{tikzcd}[column sep=small]
  1  \ar[r] & \pi_1(G_1) \ar[r] & \pi_1(X_1^\circ) \ar[r] & \pi_1(Z_1^\circ) \ar[r]\ar[l, bend right=30] & 1.
\end{tikzcd}
\end{center}
Let $K \subseteq \pi_1(G_1)$ denotes the kernel of the representation
$$\pi_1(G_1) \to \pi_1(X_1^\circ) \to \pi_1(X) \to \textup{PGL}(n-1,\mathbb{C})$$
induced by $\rho$. Then the subset $K \cdot \pi_1(Z_1^\circ)$ of 
$\pi_1(X_1^\circ)$ is a (normal) subgroup. Moreover, it has finite index since the image of the composition $\pi_1(G_1) \to \pi_1(X_1^\circ) \to \pi_1(X)$
is finite by our current assumption. Therefore, replacing $X_1^\circ$ by a further finite \'etale cover, if necessary, we may assume that the representation  
$$\pi_1(X_1^\circ) \to \pi_1(X) \to \textup{PGL}(n-1,\mathbb{C})$$
induced by $\rho$ factorizes through $Z_1^\circ$. Shrinking $Z_1^\circ$, if necessary, we may assume in addition that $\sF|_{X_1^\circ}\cong \sL_{X_1^\circ}^{\oplus n-1}$
for some line bundle $\sL_{X_1^\circ}$ on $X_1^\circ$.
Notice that by construction, $\sL_{X_1^\circ}|_{G_1} \equiv 0$. The short exact sequence
$$0  \to T_{G_1} \cong \sO_{G_1}^{\oplus n-m} \to T_{X_1^\circ}|_{G_1} \to \sN_{G_1/X_1^\circ}\cong \sO_{G_1}^{\oplus m} \to 0$$ 
then shows that $\sL_{X_1^\circ}|_{G_1}\cong \sO_{G_1}$. Therefore, shrinking $Z^\circ$, we may assume that $\sF|_{X_1^\circ}\cong \sO_{X_1^\circ}^{\oplus n-1}$.

\begin{claim}\label{claim:triviality}
The fibration $g_1^\circ$ is locally trivial for the analytic topology, and hence $X_1^\circ/Z_1^\circ\cong Z_1^\circ\times A/Z_1^\circ$ for some positive-dimensional abelian variety $A$.
\end{claim}

\begin{proof}[Proof of Claim \ref{claim:triviality}]

$\ $

\medskip

Suppose first that $T_{G}\subseteq \sF|_{G}$ for a general fiber $G$ of $g$. 

\medskip

Then, there exists a regular foliation $\sG^\circ$ on $Z^\circ$ such that $\sF|_{X^\circ}=(g^\circ)^{-1}\sG^\circ$. Let $L_1$ be a leaf of $\sG_1^\circ:=\sG^\circ|_{Z_1^\circ}$ and set $M_1:=(g_1^\circ)^{-1}(L_1)$. Notice that $M_1$ is a leaf of $\sF|_{X_1^\circ}$.
By construction, there is an exact sequence:
$$0 \to T_{M_1/L_1} \to \sF|_{M_1}\cong T_{M_1} \to (g_1^\circ|_{M_1})^*T_{L_1}\to 0.$$
Since $\sF|_{X_1^\circ}\cong \sO_{X_1^\circ}^{\oplus n-1}$, the above exact sequence 
is locally split. A classical result of complex analysis says that complex flows of vector fields on analytic spaces exist. As a consequence, the fibration $g_1^\circ|_{M_1}\colon M_1 \to L_1$ is locally trivial for the analytic topology. Let $\sA(3)$ be the fine moduli space of polarized abelian varieties with a level three structure, and let $Z_1^\circ \to \sA(3)$ be the morphism corresponding to $g_1^\circ$. Then every leaf of $\sG_1^\circ$ is contained in a fiber of $Z_1^\circ \to \sA(3)$. Thus, either $\sG_1^\circ$ has algebraic leaves, or the morphism $Z_1^\circ \to \sA(3)$ is constant. In the former case, 
$\sF$ is algebraically integrable. By Proposition \ref{prop:special_case_algebraically_integrable}, we must have $K_\sF\equiv 0$ and $F=X$ is a finite \'etale quotient of an abelian variety since $\sN$ is pseudo-effective by assumption. In particular, $X$ has generically large fundamental group on $F$, yielding a contradiction. In the latter case, $X_1^\circ/Z_1^\circ \cong Z_1^\circ\times A/Z_1^\circ$ for some positive-dimensional abelian variety $A$, proving the claim in this case.

\medskip

Suppose now that $T_{G}\not\subseteq \sF|_{G}$ for a general fiber $G$ of $g$.

\medskip

Then the composition
$$\sF|_{G_1}\cong \sO_{G_1}^{\oplus n-1} \to T_{X_1^\circ}|_{G_1} \to (g_1^\circ)^*T_{Z_1^\circ}|_{G_1}\cong \sO_{G_1}^{\oplus \dim Z^\circ}$$ is generically surjective, and hence surjective. This immediately implies that the exact sequence 
$$0 \to T_{X_1^\circ/Z_1^\circ} \to T_{X_1^\circ} \to (g_1^\circ)^*T_{Z_1^\circ} \to 0$$
is locally split. As before, it follows that the fibration $g_1^\circ$ is locally trivial for the analytic topology, and hence $X_1^\circ/Z_1^\circ\cong Z_1^\circ\times A/Z_1^\circ$
for some positive-dimensional abelian variety $A$. This finishes the proof of Claim \ref{claim:triviality}.
\end{proof}

Let $W$ be the normalization of the closed subvariety of the Chow variety of $X$ whose general point parametrizes a general fiber of $g$, and let $U$ be the normalization of the universal cycle. By Claim \ref{claim:triviality}, the morphism $p$ is generically isotrivial. Hence, there exists a smooth projective variety $W_1$ as well as a generically finite morphism $\eta\colon W_1 \to W$ and a rational map $W_1\times A \map U$ fitting into the following commutative diagram:
\begin{center}
\begin{tikzcd}[row sep=large, column sep=huge]
  W_1\times A  \ar[d, "{\textup{pr}_{W_1}}"']\ar[r, dashed] & U \ar[d, "p"'] \ar[r, "q"] & X \ar[d, dashed, "{g}"]\\
  W_1\ar[r, "\eta"] & W \ar[r, dashed] & Z.   
\end{tikzcd}
\end{center}

\begin{claim}\label{claim:well-defined}
The map $W_1 \times A \map U$ is a morphism.
\end{claim}

\begin{proof}
Let $p_1\colon Z \to W_1\times A$ be a composition of a finite number of blow-ups with smooth centers such that the induced rational map $q_1\colon Z \to U$ is a morphism. Let $Z_{(w_1,a)}$ be a positive-dimensional fiber of $p_1$ over $(w_1,a)\in W_1\times A$. Then $Z_{(w_1,a)}$ is rationally chain connected, and the composition $$q_1(Z_{(w_1,a)}) \subseteq U_{\eta(w_1)} \to X$$ is finite, where $U_{\eta(w_1)}$ denotes the fiber of $p$ over the point $\eta(w_1)$. Thus, if $\dim q_1(Z_{(w_1,a)}) \ge 1$, then $U_{\eta(w_1)}$ contains a rational curve $C$ such that $\dim q(C) \ge 1$. By Corollary \ref{cor:numerically trivial}, we must have $q(C)\cdot K_\sF=0$. But this contradicts Lemma \ref{lemma:rational_curve_uniruled} since $K_X$ is nef by assumption. Therefore, every fiber of $p_1$ is contracted by $q_1$. Our claim now follows from the rigidity lemma (see \cite[Lemma 1.15]{debarre}). 
\end{proof}

A consequence of Claim \ref{claim:well-defined} is that every fiber of $p$ is irreducible. This implies that the exceptional locus $E$ of $q$ satisfies $E=p^{-1}(p(E))$. 
Set $X^\bullet:=X \setminus q(E)$ and $W^\bullet:=W \setminus p(E)$. Then the rational map $p\circ q^{-1}$ induces an equidimensional morphism $p^\bullet\colon X^\bullet \to W^\bullet$ with irreducible fibers. By construction, $X \setminus X^\bullet$ has codimesnion at leat $2$. Shrinking $W^\bullet$, if necessary, we may assume without loss of generality that $W^\bullet$ is smooth and that there exists a smooth divisor $D^\bullet$ on $W^\bullet$ such that the restriction of $p^\bullet$ to
$(p^\bullet)^{-1}(W^\bullet\setminus D^\bullet)$ is a smooth morphism. Notice that every smooth fiber of $p^\bullet$ is a finite \'etale quotient of an abelian variety by Lemma \ref{lemma:smooth_deformation_torus_quotient}.

\begin{claim}\label{claim:special_fibers}
Let $w\in D^\bullet$ be a general point. Then the fiber $(p^\bullet)^{-1}(w)$ (with its reduced structure) is a finite \'etale quotient of an abelian variety.
\end{claim}

\begin{proof}[Proof of Claim \ref{claim:special_fibers}]
The statement is local on $D^\bullet$, hence we may shrink $W^\bullet$ and assume that $D^\bullet$ is irreducible. Let $m$ denote the multiplicity of $(p^\bullet)^*D^\bullet$, and let $\gamma^\bullet\colon W_1^\bullet \to W^\circ $ be a quasi-finite morphism with $W_1^\bullet$ smooth such that $(\gamma^\bullet)^*D^\bullet = m D_1^\bullet$ with $D_1^\bullet$ smooth (and $D_1^\bullet\neq \emptyset$). We may also assume that 
$\gamma^\bullet$ is \'etale away from $\textup{Supp}\, D_1^\bullet$. Let $X_1^\bullet$ be the normalization of the product $W_1^\bullet \times_{W^\bullet} X^\bullet$. Then an easy local computation shows that the natural morphism $X_1^\bullet \to X^\bullet$ is \'etale away from a closed subset of codimension at least $2$, and hence \'etale by the Nagata-Zariski purity theorem. In particular, $X_1^\bullet$ is smooth. Shrinking $W_1^\bullet$ further, if necessary, we may assume that the natural morphism $g_1^\bullet \colon X_1^\bullet \to W_1^\bullet$ has generically reduced fibers. Observe also that its fibers are irreducible since $p$ has irreducible fibers and $X_1^\bullet \to X^\bullet$ is \'etale.

Let $w \in D_1^\bullet$ be a general point. By \cite[Proposition 18.13]{eisenbud}, the scheme theoretic fiber $G_w:=(g_1^\bullet)^{-1}(w)$ is Cohen-Macaulay, and hence reduced by Serre's criterion. Moreover, by the adjunction formula, $\omega_{G_w}\cong \omega_{X_1^\bullet}|_{G_w}$. In particular, 
$G_w$ is Gorenstein. By miracle flatness, the morphism $g_1^\bullet$ is flat (see \cite[Tag 00R4]{stacks-project}). Because the functions $w \mapsto h^0\big(G_w,\omega_{X_1^\bullet}^{\otimes \pm 1}|_{G_w}\big)$ are upper semicontinuous in the Zariski topology on $W_1^\bullet$ (see \cite[Theorem 12.8]{hartshorne77}), we have $\omega_{G_w}\cong\sO_{G_w}$.
Let $\nu_w\colon \wt{G}_w \to G_w$ be the normalization morphism. By Lemma \ref{lemma:conductor}, there exists an effective Weil divisor $E_w$ on $\wt{G}_w$ such that $\omega_{\wt{G}_w}\cong \nu_w^*\omega_{G_w}(-E_w)\cong \sO_{\wt{G}_w}(-E_w)$. Suppose that $E_w \neq 0$. Then $K_{\wt{G}_w}$ is not pseudo-effective and hence $G_w$ is uniruled by \cite[Corollary 0.3]{bdpp} applied to a resolution of $\wt{G}_w$. Let $C \subseteq G_w$ be a rational curve. By Corollary \ref{cor:numerically trivial}, we have $C\cdot K_\sF=0$. But this contradicts Lemma \ref{lemma:rational_curve_uniruled} since $K_X$ is nef. This shows that $E_w=0$, and hence $G_w$ is normal by Lemma \ref{lemma:conductor}. Then \cite[Corollary 0.3]{bdpp} applies to show that $G_w$ has canonical singularities since $K_{G_w}=0$.

Now, replacing $W_1^\bullet$ with a neighborhood of $w$ for the analytic topology, we may assume without loss of generality the $W_1^\bullet$ is biholomorphic to the open unit ball  
in $\mathbb{C}^{\,\dim W_1^\bullet}$. The same argument used in the proof of \cite[Lemma 5.2.2]{kollar_sh_inventiones}
shows that there is a surjective homomorphism
$$\pi_1(G_{w_1}) \twoheadrightarrow \pi_1(X_1^\bullet),$$
where $\omega_1 \in W_1^\bullet\setminus D_1^\bullet$ is any point.
This implies that the image of the composition
$$\pi_1(X_1^\bullet) \to \pi_1(X^\circ) \to \pi_1(X)$$
is finite by our current assumption. Thus, replacing $X_1^\bullet$ by a finite \'etale cover, if necessary, we may assume that there exists a line bundle $\sL_{X_1^\bullet}$ on $X_1^\bullet$ such that $\sF|_{X_1^\bullet} \cong \sL_{X_1^\bullet}^{\oplus n-1}$. By Step 1, the restriction of $\sL_{X_1^\bullet}$ to a general (smooth) fiber of $g_1^\bullet$ is a torsion line bundle. Moreover, shrinking $(W_1^\bullet,w)$ further, we may assume without loss of generality that $\sL_{X_1^\bullet}$ is a torsion line bundle as well (see Lemma \ref{lemma:pull_back}). Thus, replacing $X_1^\bullet$ by a finite \'etale cover, if necessary, we may therefore assume that 
$\sL_{X_1^\bullet}\cong \sO_{X_1^\bullet}$. 
It follows that the composition
$$\sF|_{G_w} \to T_{X_1^\bullet}|_{G_w} \to \sN_{G_w/X_1^\bullet}\cong \sO_{G_w}^{\oplus m}$$ has constant rank. This in turn implies that the reflexive sheaf $\sF|_{G_w}\cap T_{G_w}$ is a numerically flat locally free sheaf.

Suppose first that $T_{G_w}\subseteq \sF|_{G_w}$ for a general point $w\in D_1^\bullet$. Then $T_{G_w}$ is locally free. By the solution of the Zariski-Lipman conjecture for klt spaces (see \cite[Theorem 6.1]{greb_kebekus_kovacs_peternell10} or \cite[Theorem 1.1]{druel_zl}), we see that $G_w$ is smooth. The claim now follows from Lemma \ref{lemma:smooth_deformation_torus_quotient}.

Suppose finally that $T_{G_w}\not\subseteq \sF|_{G_w}$ for a general point $w\in D_1^\bullet$.
Then the composition
$$\sF|_{G_w} \to T_{X_1^\bullet}|_{G_w} \to (g_1^\bullet)^*T_{W_1^\bullet}|_{G_w}$$
is generically surjective, and hence surjective. This in turn implies that $g_1^\bullet$ is a locally trivial fibration for the analytic topology in a neighborhood of a general point in $D_1^\bullet$. In particular, $G_w$ is smooth. The claim now follows again from Lemma \ref{lemma:smooth_deformation_torus_quotient}. 
\end{proof}

By Claim \ref{claim:special_fibers}, replacing $W^\bullet$ by an open subset with complement of codimension at least $2$, we may assume that every fiber of $p^\bullet$
(with its reduced structure) is a finite \'etale quotient of an abelian variety. By Lemma \ref{lemma:injectivity_fundamental_groups} applied to $p^\bullet\colon X^\bullet \to W^\bullet$, $X^\bullet$ has generically large fundamental group on $G$. On the other hand, the inclusion induces an isomorphism $\pi_1(X^\bullet)\cong \pi_1(X)$ of fundamental groups since $X\setminus X^\bullet$ is a closed subset of codimension at least $2$ by construction. It follows that $X$ has generically large fundamental group on $G$, yielding a contradiction. This shows that $X$ has generically large fundamental group on $F$.

\medskip

\noindent\textit{Step 3. End of proof.} 

\medskip

By \cite[Theorem 6.3]{kollar_sh_inventiones} and Step 2 together, there exists a finite \'etale cover $\gamma\colon X_1 \to X$ such that the fibration $f_1\colon X_1 \map Y_1$ obtained as the Stein factorization of
the almost proper map $X_1 \to X \map Y$ is birational to an abelian group scheme $f_1 \colon X_2 \to Y_2$ over a projective base equipped with a level three structure.

Let $W_1$ be the normalization of the closed subvariety of the Chow variety of $X_1$ whose general point parametrizes a general fiber of $f_1$, and let $U_1$ be the normalization of the universal cycle. Replacing $Y_2$ by a birational modification, if necessary, we may assume that there exist a birational morphism $Y_2 \to W_1$ and a commutative diagram
\begin{center}
\begin{tikzcd}[row sep=large, column sep=huge]
  X_2  \ar[d, "{f_2}"']\ar[r, dashed] & U_1 \ar[d] \ar[r] & X_1 \ar[d, dashed, "f_1"]\\
  Y_2\ar[r] & W_1 \ar[r, dashed, "\textup{birational}"] & Y_1. &   
\end{tikzcd}
\end{center}
Arguing as in the proof of Claim \ref{claim:well-defined}, we see that rational map $X_2 \map U_1$ is a morphism. Then Lemma \ref{lemma:contraction_abelian_scheme} applies to show that $X_1$ is an abelian scheme 
$f_3\colon X_1 \to Y_3$
over a smooth projective base $Y_3$ and that there exists a birational morphism $Y_2 \to Y_3$ such that $X_2/Y_2 \cong Y_2 \times_{Y_3} X_1/Y_2$. This finishes the proof of first statement of Proposition \ref{prop:nef_reduction_map}.

\medskip

Finally, we show that there exist line bundles $\sM_3$ and $\sK_3$ on $Y_3$ such that $\gamma^*\sN \cong f_3^*\sM_3$ and $\gamma^*\sO_X(K_\sK) \cong f_3^*\sK_3$.
By Step 1, $K_\sF|_F \sim_\mathbb{Q}0$ and $\sN|_F$ is a torsion line bundle. Then Lemma \ref{lemma:pull_back} applies to show that there exists a positive integer $k$ such that    
$\gamma^*\sN^{\otimes k} \cong f_3^*\sM_3$ and $\gamma^*\sO_X(k K_\sK) \cong f_3^*\sK_3$, where 
$\sM_3$ and $\sK_3$ are line bundles on $Y_3$. Let $\Sigma_3 \subseteq X_1$ be the neutral section of $f_3$. Then $\sM_3 \cong (\gamma^*\sN|_{\Sigma_3})^{\otimes k}$ viewing 
$(\gamma^*\sN|_{\Sigma_3})^{\otimes k}$ as a line bundle on $Y_3$. Hence
$\gamma^*\sN^{\otimes k}\cong f_3^*(\gamma^*\sN|_{\Sigma_3})^{\otimes k}$. Replacing $X_1$ by the associated cyclic \'etale cover (see \cite[Definition 2.52]{kollar_mori}), we may assume without loss of generality that $k=1$. This completes the proof of the proposition.
\end{proof}

\begin{cor}
Setting and notation as in \ref{setup:main}. Suppose in addition that $X$ is of general type and $\dim X \ge 4$. Then $\sN^*$ is nef.
\end{cor}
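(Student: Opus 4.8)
The plan is to argue by contradiction, assuming that $\sN^*$ is not nef, and to funnel the situation into the dichotomy of Theorem \ref{thm_intro:divisor_zero_square}. That theorem asserts that either $\sN$ is pseudo-effective or $\sN^*$ is nef (its proof rests on $\sN^2\equiv 0$, which holds here by Lemma \ref{lemma:BB_vanishing} since $\sF$ has codimension $1$). Under the contradiction hypothesis the second alternative is excluded, so $\sN$ must be pseudo-effective. The remaining task is to show that a projective manifold $X$ of general type with $\dim X\ge 4$, equipped as in Setup \ref{setup:main} with a regular codimension $1$ numerically projectively flat foliation, cannot have pseudo-effective normal bundle.

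The first step is to establish that $K_X$ is nef, and this is where the hypothesis $\dim X\ge 4$ enters decisively. If $K_X$ were not nef, then Theorem \ref{thm:suspension_complex_torus} would provide a $\mathbb{P}^1$-bundle structure $\phi\colon X\to Y$ onto a finite \'etale quotient of an abelian variety. The fibers of $\phi$ are rational curves sweeping out $X$, so $X$ would be uniruled and hence of Kodaira dimension $-\infty$, contradicting the assumption that $X$ is of general type. Therefore $K_X$ is nef.

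With $K_X$ nef and $\sN$ pseudo-effective, all the hypotheses of Proposition \ref{prop:nef_dimension} are in force, and it yields $\nu(K_X)\le n-1$, where $n:=\dim X$. On the other hand, $X$ being of general type means that $K_X$ is big, whence $\nu(K_X)=\dim X=n$. These two statements are incompatible, so the contradiction hypothesis was untenable; consequently $\sN^*$ is nef, as claimed.

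I expect the argument to be short, its only genuine subtlety lying in the correct assembly of the previously established results. The key observation is that, once $\sN$ is pseudo-effective and $K_X$ is nef, Proposition \ref{prop:nef_dimension} already pins the numerical dimension of $K_X$ strictly below $\dim X$, which a general-type variety cannot tolerate. The step that truly uses the bound $\dim X\ge 4$ is the reduction to the case $K_X$ nef via the non-minimal classification of Theorem \ref{thm:suspension_complex_torus}; this is precisely the point at which the statement would be in jeopardy in lower dimension.
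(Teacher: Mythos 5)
Your argument is correct and is essentially the paper's own proof, merely reorganized as a contradiction: the paper likewise invokes Theorem \ref{thm:suspension_complex_torus} to get $K_X$ nef, then Proposition \ref{prop:nef_dimension} to exclude pseudo-effectivity of $\sN$, and concludes by Theorem \ref{thm:divisor_zero_square}. The three ingredients and the role of the hypothesis $\dim X \ge 4$ are identified exactly as in the paper.
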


\begin{proof}
By Theorem \ref{thm:suspension_complex_torus}, $K_X$ is nef. Then Proposition \ref{prop:nef_dimension} applies to show that $\sN$ is not pseudo-effecive. The claim now follows from Theorem \ref{thm:divisor_zero_square}.
\end{proof}

\section{Projectively flat foliations with pseudo-effective normal bundle}\label{section:pseff_normal}

\begin{thm}\label{thm:case_normal_pseff_non_uniruled}
Setting and notation as in \ref{setup:main}. Suppose in addition that $K_X$ is nef and that
$\sN$ is pseudo-effective. Then $K_\sF\equiv 0$.
\end{thm}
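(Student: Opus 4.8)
The plan is to feed Proposition \ref{prop:nef_reduction_map} into the problem and then prove that the base of the resulting abelian scheme is a point. After replacing $X$ by the finite étale cover $\gamma\colon X_1\to X$ furnished by that proposition---which is harmless because $K_{\gamma^{-1}\sF}=\gamma^*K_\sF$ and $\nu(K_{\gamma^{-1}\sF})=\nu(K_\sF)$ by Lemma \ref{lemma:finite_etale_cover}, so that $K_\sF\equiv 0$ if and only if $\gamma^*K_\sF\equiv 0$---I may assume that $f\colon X\to Y$ is an abelian scheme over a smooth projective base, equal to the nef reduction map of $K_\sF$, that $\sN\cong f^*\sM$ and $\sO_X(K_\sF)\cong f^*\sK$ for line bundles $\sM,\sK$ on $Y$, and that for a general fibre $F$ the foliation $\sF|_F\cap T_F$ is linear of codimension at most $1$. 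Since $K_\sF\cong f^*\sK$ while $\dim Y=\textup{nd}(K_\sF)$, one has $\textup{nd}(\sK)=\dim Y$; hence the whole theorem reduces to the assertion $\dim Y=0$. Indeed, once $\dim Y=0$ the variety $X$ is an abelian variety and Lemma \ref{lemma:abelian_variety} yields $K_\sF\equiv 0$.

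As a warm-up, and to see the role of the hypotheses, I would first treat the algebraically integrable case by hand: Proposition \ref{prop:special_case_algebraically_integrable} then forces $\sF$ into one of two shapes, and the case of an abelian scheme over a curve of genus $\ge 2$ is impossible because there $\sN\cong f^*T_C$ has negative degree on multisections, contradicting pseudo-effectivity of $\sN$; in the remaining case the base curve cannot be $\mathbb{P}^1$ (that would violate nefness of $K_X$), so it is elliptic, $X$ is an abelian variety and $K_\sF\equiv 0$. For the general case I argue by contradiction, assuming $m:=\dim Y\ge 1$, and split according to whether $T_{X/Y}\subseteq\sF$. If $T_{X/Y}\not\subseteq\sF$, then Lemma \ref{lemma:isotriviality} applies and $f$ is isotrivial, so after a further étale base change $X\cong Y\times A$; here $\sF\cap T_{X/Y}$ is the pull-back of a corank-$1$ subbundle $\sW\subseteq\sO_Y^{\oplus(n-m)}$ whose line-bundle quotient $\sL''$ is globally generated and embeds into $\sM$, and the adjunction formula gives $\sK\equiv K_Y+\sL''+E_0$ with $E_0$ effective. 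If instead $T_{X/Y}\subseteq\sF$, then $\sF=f^{-1}\sG$ for a regular codimension-$1$ foliation $\sG$ on $Y$ with $\sN_\sG\cong\sM$ pseudo-effective, and $\sK\equiv K_\sG+\det f_*\Omega^1_{X/Y}$.

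The main obstacle is to exclude $m\ge 1$, that is, to show that $\sK$ cannot have maximal nef dimension over a positive-dimensional base. My plan is to restrict to a section $\Sigma\cong Y$ of $f$ and to use that $\sF|_\Sigma$ is again numerically projectively flat by Lemma \ref{lemma:numerically_projectively_flat_elementary properties}, hence semistable with respect to any ample $H$ on $Y$ and subject both to the Bogomolov--Gieseker equality of Theorem \ref{thm:numerically_projectively_flat} and to the numerical Chern-class identities $c_i(\sF|_\Sigma)\equiv\tfrac{1}{(n-1)^i}\binom{n-1}{i}c_1(\sF|_\Sigma)^i$ valid for numerically projectively flat bundles (compare Lemma \ref{lemma:chern_classes}). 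Substituting the filtration $0\to\sW\to\sF|_\Sigma\to\sT\to 0$ (respectively the descent datum for $\sG$) into these identities, and playing them against the two positivity inputs that are specific to this theorem---pseudo-effectivity of $\sM$ and nefness of $f_*\Omega^1_{X/Y}\cong(R^1f_*\sO_X)^*$ from Griffiths \cite{griffiths_periods_3}---should force the relevant intersection numbers against $H^{m-1}$, and in particular $c_1(\sK)\cdot H^{m-1}$, to vanish, contradicting $\textup{nd}(\sK)=m\ge 1$. I expect this numerical elimination, not the geometric reductions, to be the delicate heart of the argument: in contrast to Theorem \ref{thm:case_conormal_nef}, the base $Y$ need not be of general type, so there is no Kähler--Einstein metric on $Y$ to exploit, and the Arakelov/Bogomolov bookkeeping must be carried out using only the nefness of $f_*\Omega^1_{X/Y}$ and the pseudo-effectivity of $\sM$.

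With $m=0$ in hand, $X$ is an abelian variety and $K_\sF\equiv 0$ by Lemma \ref{lemma:abelian_variety}, which finishes the proof. Along the way I would record the two consistency checks implicit above: that the isotrivial product case leaves no room for a genuine positive-dimensional base (ruled out by the vanishing just described), and that in the descended case the pseudo-effective normal bundle $\sN_\sG\cong\sM$, together with $c_1(\sM)^2\equiv 0$ coming from $c_1(\sN)^2\equiv 0$ (Lemma \ref{lemma:BB_vanishing}) and injectivity of $f^*$ on cohomology, is compatible only with $\sK\equiv 0$.
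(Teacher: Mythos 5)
Your reduction via Proposition \ref{prop:nef_reduction_map}, the observation that everything comes down to $\dim Y=0$, and the dichotomy according to whether $T_{X/Y}\subseteq\sF$ all match the paper's proof, as does the treatment of the algebraically integrable warm-up. But the crucial step --- excluding $m=\dim Y\ge 1$ --- is exactly the part you leave as a hoped-for ``numerical elimination'' (``should force \dots to vanish'', ``I expect this \dots to be the delicate heart''), and that plan does not go through. The slope and Chern-class data you propose to use on a section $\Sigma\cong Y$ only yield one-sided inequalities: semistability of $\sF|_\Sigma$ gives $\mu(\sW)\le\mu(\sF|_\Sigma)\le\mu(\sT)$, global generation of $\sL''$ gives $c_1(\sL'')\cdot H^{m-1}\ge 0$, and nefness of $\sK$ gives $c_1(\sK)\cdot H^{m-1}\ge 0$; these are mutually consistent for any nonnegative value of $c_1(\sK)\cdot H^{m-1}$, so nothing forces it to vanish. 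Moreover, in the isotrivial product case $X\cong Y\times A$ the bundle $f_*\Omega^1_{X/Y}$ is trivial, so the Griffiths positivity input you invoke is vacuous there; and your closing assertion that pseudo-effectivity of $\sN_\sG$ together with $c_1(\sM)^2\equiv 0$ is ``compatible only with $\sK\equiv 0$'' is essentially the statement of the theorem one dimension down, asserted rather than proved.

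The paper's actual argument for this step uses substantially heavier geometric input that your bookkeeping cannot replace. In the branch $T_{X/Y}\subseteq\sF$ it first proves that $K_Y$ is nef, applies the Miyaoka--Yau-type inequality of Proposition \ref{prop:nef_dimension} to the descended foliation $\sG$ to pin down $\dim Y=2$, and then eliminates the surface cases using Brunella's classification (\cite{brunella_aens}). In the branch $T_{X/Y}\not\subseteq\sF$ it exploits the global sections of $\sN$ coming from $T_{X/Y}\to\sN$: the components $B_i$ of their zero divisors are shown to be smooth with $K_{B_i}\equiv 0$ and $B_i^2\equiv 0$, after which the abundance statement $\kappa(X)=\nu(X)\ge 0$ (Proposition \ref{prop:abundance}) and the extension-theorem-based Proposition \ref{prop:log_abundance} give $\nu(K_\sF)\le 1$, and Lemma \ref{lemma:product} finishes. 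You would need to supply arguments of this kind (or genuinely new ones) to close the gap; as written, the central claim of the proof is unestablished.
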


\begin{proof}
Let $f \colon X \map Y$ be the nef reduction map of $K_\sF$, and let $F$ be a general fiber of $f$. Set $m:= \dim Y$. Notice that $m \le n-1$ by Proposition \ref{prop:nef_dimension}. By Proposition \ref{prop:nef_reduction_map}, replacing $X$ by a finite \'etale cover, if necessary, we may assume without loss of generality that $f \colon X \to Y$ is an abelian group scheme over a smooth projective base equipped with a level three structure. We may also assume that $\sF|_F \cap T_F$ is a linear foliation of codimension at most $1$ and that there exist line bundles $\sM$ and $\sK$ on $Y$ such that $\sN\cong f^*\sM$ and $\sO_X(K_\sF)\cong f^*\sK$. Let $\Sigma\subset X$ be the neutral section of $f$. Notice that $\sK \cong \sO_X(K_\sF)|_\Sigma$.

\medskip

Notice that $m=0$ if and only if $K_\sF\equiv 0$. 

\medskip

Let us first show that $T_{X/Y}\not\subseteq \sF$. We argue by contradiction and assume that $T_{X/Y}\subseteq \sF$. In particular, we have $m \ge 1$.

\medskip

If $m =1$, then $\sF=T_{X/Y}$. In other words, $\sF$ is algebraically integrable. By Proposition \ref{prop:special_case_algebraically_integrable}, $X$ is a finite \'etale quotient of an abelian variety and $K_\sF\equiv 0$ since $\sN$ is pseudo-effective by assumption, yielding a contradiction.

Suppose now that $m \ge 2$. There is a regular foliation $\sG$ on $Y$ such that $\sF=f^{-1}\sG$ since $T_{X/Y}\subseteq \sF$ by assumption (see \cite[Lemma 6.7]{fano_fols}). Then $\sM \cong \sN_\sG$. Moreover, there is 
an exact sequence 
$$0 \to T_{X/Y} \to \sF \to f^*\sG \to 0.$$
In addition, the composition
$$T_{X/Y}|_\Sigma \subset \sF|_\Sigma \subset T_X|_\Sigma \to \sN_{\Sigma/X}$$
is an isomorphism, and hence $$\sF|_\Sigma\cong T_{X/Y}|_\Sigma\oplus \sG.$$ 
By Lemma \ref{lemma:numerically_projectively_flat_elementary properties}, the vector bundles $\sF|_\Sigma$ and $\sG$ are numerically projectively flat. Moreover, we have $$\frac{1}{n-1}c_1(\sF|_\Sigma)\equiv\frac{1}{d}c_1(T_{X/Y}|_\Sigma)\equiv 
\frac{1}{m-1}c_1(\sG).$$
Since $f$ is the nef reduction map of $K_\sF$,
we must have $\textup{nd}(\sK)=m$. It follows that the nef reduction map of $K_\sG\equiv\frac{m-1}{n-1}c_1(\sK)$ is the identity map. Notice that $\sN_\sG$ is pseudo-effective because $\sN$ is pseudo-effective by assumption.
\begin{claim}
The divisor $K_Y$ is nef.
\end{claim}

\begin{proof}
We argue by contradiction and assume that $K_Y$ is not nef. Then there is a rational curve $C$ on $Y$ such that $K_Y \cdot C <0$. Let $B \to C$ denotes the normalization of $C$, and set 
$S:=B \times_Y X$ and $\sigma:=B \times_Y \Sigma$. By \cite[Lemma 5.9.3]{kollar_sh_inventiones}, 
we have $(S/B,\sigma) \cong (B \times A/B,B\times 0_A)$
for some abelian variety $A$. Moreover, $K_{X/Y}|_S = K_{S/B} \sim_\mathbb{Z} 0$. This immediately implies that $K_X$ is not nef, yielding a contradiction. This proves the claim.
\end{proof}
Applying Proposition \ref{prop:nef_dimension} to $\sG$, we see that 
$m = 2$. 

\medskip

If $\kappa(Y)=2$, then $\sN_\sG^*$ is pseudo-effective by \cite[Lemme 9]{brunella_aens} and \cite[Corollaire 1]{brunella_aens}, and hence $\sN\equiv 0$ since $\sN \cong f^*\sN_\sG$
is pseudo-effective by assumption. By Theorem \ref{thm:case_conormal_nef} and Proposition \ref{prop:nef_dimension} together, we have $K_\sF\equiv 0$, yielding a contradiction.

From \cite[Section 5]{brunella_aens}, we see that either $Y$ is an elliptic surface or the minimal model of $Y$ is an abelian variety. In the latter case, $X$ is an abelian variety by \cite[Lemma 5.9.3]{kollar_sh_inventiones}. Then Lemma \ref{lemma:abelian_variety} applies to show that $K_\sF\equiv 0$. This again gives a contradiction. Suppose finally that $Y$ is an elliptic surface over a smooth connected curve $B$. By 
\cite[Proposition 2]{brunella_aens}, the fibers (with their reduced structure) of $Y \to B$ are smooth curves of genus $1$. Then, by \cite[Proposition 3]{brunella_aens}, we have $m=\textup{nu}(\sK)\le 1$. This again yields a contradiction.

\medskip

This shows that $T_{X/Y}\not\subseteq \sF$. 

\medskip

Then Lemma \ref{lemma:isotriviality} applies to show that, after replacing $X$ by a finite \'etale cover, we may assume that $X \cong Y \times A$ for some abelian variety $A$ of dimension and that $f$ is induced by the projection onto $Y$.

\medskip

Let us show that $m \le 1$. The composition
$$\sO_X^{\oplus n-m}\cong T_{X/Y} \to T_X \to \sN$$
is generically surjective, and hence $h^0(X,\sN)\ge 1$. Since $\sN\cong f^*\sM$, there exist prime divisors $D_i$ on $Y$ for $i \in I$ and positive integers $m_i \ge 1$ such that $\sN \cong \sO_X(\sum_{i\in I}m_i B_i)$, where $B_i:=f^*D_i$. 

Then $\sO_{B_i}^{\oplus d} \cong T_{X/Y}|_{B_i} \subseteq \sF|_{B_i}$. Since $\sF$ is numerically projectively flat and $K_\sF$ is nef by Lemma \ref{lemma:minimal} we see that $\sF|_{B_i}$ is numerically flat.

If $B_i$ is a leaf of $\sF$, then it is a finite \'etale quotient of an abelian variety by \cite[Theorem 0.1]{jahnke_radloff_13}. Moreover, $\sN|_{B_i}$ is a flat line bundle by Lemma \ref{lemma:BB_vanishing}, and hence $B_i^2\equiv 0$ since $\sN|_{B_i}\cong \sN_{B_i/X}$.

Suppose now that $B_i$ is not a leaf of $\sF$. Then the composition
$$\sF|_{B_i} \to T_X|_{B_i} \to \sN_{B_i/X}$$
is generically surjective. Since $\sF|_{B_i}$ is numerically flat, it follows that $B_i^2$ is numerically equivalent to an effective cycle.

Now, by Lemma \ref{lemma:BB_vanishing}, we have $\sN^2\equiv 0$. This immediately implies that $B_i^2 \equiv 0$ for every $i\in I$ and that $B_i \cap B_j=\emptyset$ if $i \neq j$.

If $\sN\equiv 0$, then $K_\sF\equiv 0$ by Theorem \ref{thm:case_conormal_nef} and Proposition \ref{prop:nef_dimension} together. Then $m=0$. Suppose from now on that $\sN\not\equiv 0$.

Let $i\in I$ such that $B_i$ is not a leaf of $\sF$. Let us show that $B_i$ is smooth with $K_{B_i}\equiv 0$.
The exact sequence
$$0 \to \sF|_{B_i} \to T_X|_{B_i} \to \sN|_{B_i} \to 0 $$
shows that $T_X|_{B_i}$ is numerically flat. In particular, $K_X|_{B_i}\equiv 0$, and hence 
$\omega_{B_i} \equiv 0$ by the adjunction formula. Let $\nu_i\colon \wt{B}_i \to B_i$ be the normalization morphism. By Lemma \ref{lemma:conductor}, there exists an effective Weil divisor $E_i$ on $\wt{B}_i$ such that $\omega_{\wt{B}_i}\cong \nu_i^*\omega_{B_i}(-E_i)$. Suppose that $E_i \neq 0$. Then $K_{\wt{B}_i}$ is not pseudo-effective and hence $B_i$ is uniruled by \cite[Corollary 0.3]{bdpp} applied to a resolution of $\wt{B}_i$. Let $C \subseteq B_i$ be a rational curve. By Corollary \ref{cor:numerically trivial}, we have $C\cdot K_\sF=0$. But this contradicts Lemma \ref{lemma:rational_curve_uniruled} since $K_X$ is nef. 
This shows that $E_i=0$, and hence $B_i$ is normal by Lemma \ref{lemma:conductor}. Then \cite[Corollary 0.3]{bdpp} applies to show that $B_i$ has canonical singularities since $K_{B_i}\equiv 0$.
The natural map $T_X|_{B_i} \to \sN_{B_i/X}$ is generically surjective and hence surjective since 
$T_X|_{B_i}$ and $\sN_{B_i/X}$ are numerically flat vector bundles. It follows that the tangent sheaf $T_{B_i}$ is locally free and numerically flat. By the solution of the Zariski-Lipman conjecture for canonical spaces (see \cite[Theorem 6.1]{greb_kebekus_kovacs_peternell10} or \cite[Theorem 1.1]{druel_zl}), we conclude that $B_i$ is smooth. 

By Proposition \ref{prop:abundance}, we have $\kappa(X)=\nu(X) \ge 0$. Applying
Proposition \ref{prop:log_abundance} to $(X,\sum_{i \in I}B_i)$, we see that 
$K_\sF$ is semiample with $\nu(K_\sF) \le 1$. This proves that $m=1$.

\medskip

By Lemma \ref{lemma:product} and Lemma \ref{lemma:minimal} together, the vector bundle $\sF$ is then numerically flat, and hence $K_\sF=0$, yielding a contradiction. This
finishes the proof of the theorem.
\end{proof}

\section{Proof of Theorem \ref{thm_intro:main}}\label{section:proof}

We are now in position to prove Theorem \ref{thm_intro:main}.

\begin{proof}[Proof of Theorem \ref{thm_intro:main}] Suppose first that $K_X$ is not nef. Then there exists a $\mathbb{P}^1$-bundle structure $\phi\colon X \to Y$ onto a finite \'etale quotient of an abelian variety, and $\sF$ induces a flat Ehresmann connection on $\phi$ by Theorem \ref{thm:suspension_complex_torus}.

Suppose from now on that $K_X$ is nef. 

If $\sN$ is pseudo-effective, then Theorem \ref{thm:case_normal_pseff_non_uniruled} applies to show that $K_\sF\equiv 0$. By Proposition \ref{prop:classification_zero_canonical_class}, there exists an abelian variety $A$ and a finite \'etale cover $\gamma\colon A \to X$ such that $\gamma^{-1}\sF$ is a linear foliation on $A$.

Suppose finally that $\sN$ is not pseudo-effective. Then $\sN^*$ is nef by Theorem \ref{thm_intro:divisor_zero_square}. Then Theorem \ref{thm:case_conormal_nef} applies to show that either there exists a smooth complete curve $B$ of genus at least $2$ as well as an abelian variety $A$, and a finite \'etale cover $\gamma \colon B \times A \to X$ such that $\gamma^{-1}\sF$ is induced by the projection morphism $B \times A \to B$, or $X$ is of general type and $\nu(K_\sF)=\dim X$. In the latter case, $\Omega_X^1$ is nef by Lemma \ref{lemma:minimal}, and hence $K_X$ is ample. In addition, $\kappa(K_\sF)=\dim X$ by Lemma \ref{lem:ambient_general_type_foliation_general_type}. This finishes the proof of theorem. 
\end{proof}

\begin{rem}
If we relax the integrability condition, then the conclusion of Theorem \ref{thm_intro:main} is false. Indeed, let $E$ be an elliptic curve, and let $B$ be an abelian variety of dimension $\dim B =n-1 \ge  1$. Set $A:= E \times B$. Let $\sM$ be a line bundle of degree $2$ on $E$, and set $\sL:=\textup{pr}_E^*\sM^*$. Let $(s_i,t_i) \in H^0(A,\sL^*)^{\times 2}$ for $i \in \{1,\ldots,n-1\}$. Suppose that the zero sets of the $s_i$ and the $t_i$ are pairwise disjoint. Write $T_A=\oplus_{1 \le i \le n}\sO_A v_i$. Then the sections $s_i$ and $t_i$ give an injective map of vector bundles $\sL \subset \sO_A v_i \oplus \sO_A v_{i+1}$. In addition, the induced map
$\sD=\sL^{\oplus n-1} \to T_A$ is an injective map of vector bundles as well. The vector bundle is obviously numerically projectively flat.
\end{rem}

\newcommand{\etalchar}[1]{$^{#1}$}
\providecommand{\bysame}{\leavevmode\hbox to3em{\hrulefill}\thinspace}
\providecommand{\MR}{\relax\ifhmode\unskip\space\fi MR }
\providecommand{\MRhref}[2]{%
  \href{http://www.ams.org/mathscinet-getitem?mr=#1}{#2}
}
\providecommand{\href}[2]{#2}

\end{document}